\documentclass[11pt]{amsart}

\usepackage[T1]{fontenc}
\usepackage[utf8]{inputenc} 
\usepackage{lmodern}
\usepackage{microtype}
\usepackage{mathpazo}

\usepackage[margin=3.5cm]{geometry}
\usepackage{setspace}
\usepackage{enumitem}
\usepackage{parskip}

\usepackage{amsmath}
\usepackage{amssymb}
\usepackage{amsthm}
\usepackage{mathtools}
\usepackage{bm}
\usepackage{mathrsfs}

\usepackage{graphicx}
\usepackage{caption}
\usepackage{subcaption}
\usepackage{booktabs}
\usepackage{array}
\usepackage{multirow}
\usepackage{tabularx}
\allowdisplaybreaks[4]

\usepackage[dvipsnames,table]{xcolor}
\usepackage{soul}

\usepackage{tikz}
\usetikzlibrary{
    arrows.meta,
    calc,
    decorations.markings,
    fit,
    matrix,
    patterns,
    positioning,
    shapes.geometric
}

\usepackage[
    colorlinks=true,
    linkcolor=MidnightBlue,
    citecolor=ForestGreen,
    urlcolor=BrickRed
]{hyperref}
\usepackage[nameinlink,capitalise,noabbrev]{cleveref}
\crefname{equation}{Eq.}{Eqs.}
\Crefname{equation}{Eq.}{Eqs.}

\newtheorem{theorem}{Theorem}[section]
\newtheorem{lemma}[theorem]{Lemma}

\crefname{fact}{Fact}{Facts}
\Crefname{fact}{Fact}{Facts}
\newtheorem{proposition}[theorem]{Proposition}
\newtheorem{corollary}[theorem]{Corollary}
\newtheorem{conjecture}[theorem]{Conjecture}

\theoremstyle{definition}

\theoremstyle{remark}

\theoremstyle{plain}

\theoremstyle{remark}

\newcommand{\R}{\mathbb{R}}

\newcommand{\E}{\mathbb{E}}
\newcommand{\Prob}{\mathbb{P}}
\newcommand{\Var}{\operatorname{Var}}

\newcommand{\down}{\mathbf{D}}
\newcommand{\up}{\mathbf{U}}
\newcommand{\downup}{\mathbf{P}}
\newcommand{\lift}{\mathbf{T}}
\newcommand{\gen}{\mathbf{L}}

\renewcommand{\deg}{\mathrm{deg}}
\newcommand{\res}{\mathrm{res}}

\title[Mixing Times of Switch Chains via HDX]{Mixing Times of Switch Chains\\via High-Dimensional Expansion}
\author[Sawyer Jack Robertson]{Sawyer Jack Robertson\textsuperscript{$\dagger$}}
\thanks{\textsuperscript{$\dagger$}Simons Institute for the Theory of Computing, University of California Berkeley}
\date{\today}

\keywords{Markov chain mixing times, random graphs, spectral gaps, degree sequences}
\subjclass[2020]{Primary \texttt{60J10}; Secondary \texttt{05C80, 05C81, 05E45}}

\begin{document}

\begin{abstract}
    The switch chain is a Markov chain defined on the set of labelled realizations of a given graphical degree sequence. At each step, a pair of vertex-disjoint edges is chosen at random and the process attempts to replace them with a uniformly chosen perfect matching of the same four vertices, rejecting any proposal that would create a multiple edge. The resulting process is reversible with respect to the uniform distribution on all realizations. We investigate the mixing time of this chain by viewing realizations as the facets of a simplicial complex and studying a variant of the original process called the \emph{simplicial switch chain}, which we analyze using tools from the theory of high-dimensional expansion. Our technical contributions include a proof that links of faces of sufficiently high codimension are strong spectral expanders and a comparison between the Dirichlet energies of large block updates and two-edge updates.

    Our main result is an $O(\Delta^{2}m\log m)$ bound on the mixing time of both simplicial and classical switch chains whenever $m\ge C\Delta^{8}$, where $m$ is the number of edges, $\Delta$ is the maximum prescribed degree, and $C>0$ is an absolute constant. For sequences on $n$ vertices with fixed maximum degree, this proves that the chain mixes in $O(n\log n)$ steps, resolving a longstanding conjecture of Cooper, Dyer, and Greenhill and extending its conclusion to irregular degree sequences.
\end{abstract}

\maketitle

\section{Introduction}\label{sec:introduction}

Let $n\ge 1$ be a positive integer and let $V$ denote a ground set of $n$ vertices. Let $\mathbf{d} = (d_{v})_{v\in V}$ be a sequence of positive integers which is \emph{graphical}, i.e., such that there exists at least one simple graph with vertex set $V$ and with vertex degree sequence $\mathbf{d}$. We call such a graph a \emph{realization} of $\mathbf{d}$. We write $m = \frac{1}{2}\sum_{v\in V}d_{v}$ to denote the number of edges in any realization of $\mathbf{d}$, and $\Delta=\max_{v\in V}d_{v}$ for the maximum prescribed degree. We assume throughout that $\mathbf{d}$ supplies at least two such realizations.

The subject of this paper is a natural process on the set of realizations called the \emph{switch chain}, defined as follows. From the current realization, the chain stays put with probability $1/2$. Otherwise, it chooses an unordered pair of vertex-disjoint edges uniformly at random and then chooses uniformly one of the three perfect matchings of their four endpoints. If the proposed matching contains an edge of the current realization, the chain stays put; otherwise, it replaces the selected edges by the proposed matching. This chain is irreducible and reversible with respect to the uniform distribution on realizations (these facts, respectively, follow from the Hakimi--Havel algorithm, see, e.g.~\cite{bereg2017transforming}, and the symmetry of the transition probabilities). One motivation for our results is the following conjecture of Cooper, Dyer, and Greenhill~\cite{cooper2007sampling}, whose work established polynomial mixing time bounds for regular degree sequences, following earlier work in the setting of bipartite switch chains by Kannan, Tetali, and Vempala~\cite{kannan1999simple}.

\begin{conjecture}[Cooper, Dyer, and Greenhill~{\cite{cooper2007sampling}}]\label{conj:cdg}
    For each integer $d\ge3$, there exists a constant $C>0$, depending only on $d$, such that the switch chain $\mathbf{S}$ on labelled $d$-regular graphs on $n$ vertices satisfies
        \begin{align*}
            t_{\mathrm{mix}}(\mathbf{S}) &\le Cn\log n.
        \end{align*}
\end{conjecture}

In~\cref{conj:cdg}, $t_{\mathrm{mix}}(\mathbf{S})$ denotes the total variation mixing time of the chain at distance $1/4$ (see~\cref{eq:mixing-time}). A simple coupon collector heuristic motivates the conjectured scale. Each proposed switch shuffles two edges, suggesting that roughly $m\log m$ steps are needed before every edge of the initial realization has had an opportunity to be updated. For fixed $d$, this is of order $n\log n$.

Polynomial mixing of the switch chain is known for regular graphs~\cite{cooper2007sampling,cooper2012corrigendum}, positive irregular degree sequences satisfying $3\le\Delta\le\sqrt{2m}/3$~\cite{greenhill2018switch}, and families satisfying general stability criteria~\cite{amanatidis2020rapid,gao2021mixing,erdos2022mixing}. These results cover a variety of sparse, dense, and heavy-tailed sequences, although the quantitative bounds can be large. For fixed maximum degree, Cooper, Dyer, and Greenhill give an $O_{\Delta}(n^{9}\log n)$ mixing bound for regular graphs, while Greenhill and Sfragara give an $O_{\Delta}(n^{10}\log n)$ bound for irregular degree sequences. The regular bipartite setting is perhaps more well understood: Tikhomirov and Youssef obtained sharp spectral estimates~\cite{tikhomirov2023sharp} and optimal $O_{\Delta}(n\log n)$ mixing for fixed degree~\cite{tikhomirov2024regularized}. Additionally, recent work of Fu, Qin, and Wang~\cite{fu2026spectral} establishes polynomial mixing for arbitrary bipartite degree sequences. We remark that OpenAI has very recently announced the analogous conclusion for all simple undirected degree sequences, a claim which is pending verification by the mathematical community~\cite{openai2026polynomial}.

In this paper we present improved bounds on the mixing time of the switch chain in the regime where $m\gtrsim\Delta^{8}$. This includes $\Delta\lesssim n^{1/8}$ for arbitrary positive graphical degree sequences and $\Delta\lesssim n^{1/7}$ for regular sequences. Our main result is the following theorem.

\begin{theorem}\label{thm:intro-mixing}
    There exists an absolute constant $C>0$ with the following property. If $m\ge C\Delta^{8}$, then
        \begin{align*}
            t_{\mathrm{mix}}(\mathbf{S}) &\le C\Delta^{2}m\log m.
        \end{align*}
\end{theorem}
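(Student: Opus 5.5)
We plan to deduce the mixing bound from a modified log-Sobolev (or entropy-contraction) inequality for the switch chain, obtained by viewing realizations as facets of a simplicial complex $X$ on ground set $\binom{V}{2}$. The complex is pure of dimension $m-1$, and its faces are edge sets contained in some realization. We would take the uniform measure on facets. The down-up walk $\downup_{m\to m-k}$ removes $k$ uniformly random edges and resamples a uniform completion. Its two-edge version $\downup_{m\to m-2}$ is, up to the rejection of multi-edges and laziness, the \emph{simplicial switch chain}. Whenever the two removed edges are vertex-disjoint and the completion changes them, it is literally a switch.

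We proceed in four steps.

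\textbf{Step 1: local spectral expansion of links.} For a face $\sigma$ of codimension $k$ with $k\ge k_0:=C\Delta^{c}$, we will show that the 1-skeleton walk of the link $X_\sigma$ has second eigenvalue at most $O(\Delta^{a}/k)$. The link is the set of realizations of the residual degree sequence $\mathbf{d}-\deg_\sigma$ avoiding the edges of $\sigma$. Its 1-skeleton walk picks a random residual edge and then a random second edge in a random completion. We would bound its spectral gap by a coupling or path-switching argument. Two pairs $e,f$ compatible with the residual constraints can be related by a bounded number of local switches, since residual degrees are at most $\Delta$ and most vertex pairs are free once $k\gg \Delta^{O(1)}$. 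This gives marginals of each edge that are nearly uniform among residual-compatible pairs, with pairwise correlations of order $\Delta^{O(1)}/k$. From this we get $\lambda_2(X_\sigma)\le \Delta^{O(1)}/k$.

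\textbf{Step 2: trickle-down or local-to-global at high codimension.} We then apply the Alev--Lau / Kaufman--Oppenheim local-to-global theorem, or its entropic version (Chen--Liu--Vigoda entropy factorization), only to faces of codimension at least $k_0$. Since $\prod_{j\ge k_0}(1-\Delta^{a}/j)$ behaves like $(k_0/m)^{O(1)}$ rather than decaying to zero, this would show that the block chain $\downup_{m\to m-k_0}$ has entropy contraction (or spectral gap) of order $(k_0/m)^{\,1+o(1)}$. Ideally the rate is exactly $\Omega(k_0/m)$, because the constants $\Delta^{a}/j$ are summable against $\log$ only in the mild sense needed. The condition $m\ge C\Delta^8$ enters here: we need $k_0\le m$ with room, and the error terms must be $o(1)$.

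\textbf{Step 3: comparison of block updates to two-edge updates.} This is the main obstacle. The chain $\downup_{m\to m-k_0}$ resamples $k_0$ edges jointly, namely a uniform realization of a residual sequence on $2k_0$ endpoints. We must show its Dirichlet form (or entropy production) is at most $\mathrm{poly}(\Delta)\cdot k_0^{2}$ times that of the two-edge switch moves inside the link. Our first attempt is to bound the spectral gap of the switch chain restricted to each link $X_\sigma$ with $\mathrm{codim}\,\sigma=k_0$. Links are small switch instances, of size $k_0=\mathrm{poly}(\Delta)$. We would use canonical paths of length $O(k_0)$ with congestion $\mathrm{poly}(\Delta)$, following the Cooper--Dyer--Greenhill multicommodity flow adapted to residual sequences. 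Any polynomial in $\Delta$ is acceptable, because it only affects the $\Delta^2$ prefactor after tuning. The rejection of multi-edges means links include forbidden pairs. We handle this by noting that each residual vertex sees at most $\Delta$ forbidden partners. Combining this with Step 2 and the standard decomposition of Dirichlet forms gives relative entropy decay per step of order $1/(\Delta^{2}m)$ for the simplicial switch chain.

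\textbf{Step 4: from simplicial to classical chain and conclusion.} The classical chain $\mathbf{S}$ and the simplicial switch chain share the stationary measure, and their transition rates differ by at most constant factors on each allowed switch. Non-disjoint edge pairs in the simplicial chain yield trivial or comparable moves. Dirichlet form comparison then transfers the entropy decay, at least in the modified log-Sobolev form. Finally, $\log\log(1/\pi_{\min})=O(\log m)$ since the number of realizations is at most $m^{2m}$. This yields $t_{\mathrm{mix}}(\mathbf{S})\le C\Delta^{2}m\log m$.
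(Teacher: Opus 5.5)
There is a genuine gap where your Steps 1 and 2 meet. A link bound of the form $\lambda_2(\downup^{(\sigma)})\le\Delta^{a}/k$ at codimension $k$ is not summable, so the local-to-global product is
\[
\prod_{c=k_0+1}^{m}\Bigl(1-\frac{\Delta^{a}}{c}\Bigr)\approx\Bigl(\frac{k_0}{m}\Bigr)^{\Delta^{a}}.
\]
This does decay with $m$, since $\sum_c 1/c$ diverges. The block walk therefore gets a gap of order $(k_0/m)^{1+\Delta^{a}}$, not $(k_0/m)^{1+o(1)}$, and you end up with a mixing bound polynomial in $m$ with a $\Delta$-dependent exponent. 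An entropic local-to-global theorem incurs the same loss under such link bounds.

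In variance terms, $\lambda_2\le\Delta^a/k$ only gives $\Var_{\eta}\bigl(\sum_{e\in\eta}f(e)\bigr)\lesssim(1+\Delta^{a})k\Var(f)$. What you need is $(1+\varepsilon_k)k\Var(f)$ with $\sum_k\varepsilon_k/k$ small. Your heuristic of ``pairwise correlations of order $\Delta^{O(1)}/k$'' cannot deliver this, because even the unconstrained configuration model has correlations of order $1/k$; only the non-constant part acting on mean-zero $f$ matters.

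The missing idea is the summable bound $\lambda_2(\downup^{(\sigma)})\le 200\Delta^2/((k-1)\sqrt k)$, valid for $k\ge\max\{10\Delta^2,\Delta^4/2\}$. The paper proves it by lifting to stub pairings. There $\lift f$ is, up to an additive constant, an eigenfunction with eigenvalue $2k-1$ of the transposition Laplacian. This turns the variance on valid pairings into an energy that can be compared with the explicit unconstrained model. The at most $8k\Delta^2$ invalid transpositions per valid pairing then contribute only a relative error $O(\Delta^2/\sqrt k)$. With this, $\sum_{c\ge j}\gamma_c\le 400\Delta^2/\sqrt{j-1}\le 1/2$ for $j\ge 10^6\Delta^4$, and Alev--Rao gives $\Var\le(2k/j)\,\mathcal{E}_{\downup_j}$ on every link.

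Even granting Step 2, two later steps would not yield the stated bound.

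First, a $\mathrm{poly}(\Delta)\,k_0^2$ loss in Step 3 with $k_0=\mathrm{poly}(\Delta)$ produces $\Delta^{p}m\log m$ for some $p$ well above $2$. Such losses are not absorbed by ``tuning'', since the theorem asserts $\Delta^2 m\log m$. The paper's comparison $\mathcal{E}_{\downup_k}\le C'k\,\mathcal{E}_{\downup_2}$ has an absolute $C'$. It restricts further to depth $\ell=k_0\Delta^4\le\sqrt k$, which is where $m\ge C\Delta^8$ comes from. It then shows that, with probability at least $1-\ell/(4k)$, the restricted link is isomorphic to a switch-complex link with all prescribed degrees at most $10$, where a crude but absolute canonical-path constant applies. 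The bad restrictions are absorbed into the variance.

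Second, Alev--Rao and canonical paths give a Poincar\'e inequality, not an MLSI, so your $\log\log(1/\pi_{\min})$ conversion in Step 4 has no support. A gap of order $1/m$ alone gives only $m^2\log m$, because $\log(1/\pi_{\min})\approx m\log m$. The paper instead proves the warm-start estimate $D(\downup_2^{r}(\sigma,\cdot)\,\Vert\,\pi_m)\le193\Delta^2\log(2m)$ for $r\ge3m\log(2m)$, by comparing the lifted chain on pairings with random transpositions (Diaconis--Saloff-Coste). It then applies Salez's lemma, $t_{\mathrm{mix}}\le r+4(K+1)/(1-\lambda_2)$, with $K$ this entropy bound. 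This is exactly where the factor $\Delta^2$ originates. The same argument with a lazy transposition kernel transfers the bound to $\mathbf{S}$.
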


\Cref{thm:intro-mixing} is proved in~\cref{sec:classical-switch-chain}. Since $n/2\le m\le\Delta n/2$, for each fixed $\Delta$ and all sufficiently large $n$ this gives a mixing time bound of a constant multiple of $n\log n$, which in turn proves~\cref{conj:cdg} and extends its conclusion to irregular degree sequences.

We prove~\cref{thm:intro-mixing} by first introducing a simplicial analogue of the switch chain. Concretely, a \emph{simplicial complex} $\mathcal{X}$ is a finite, nonempty family of finite sets closed under taking subsets. Its elements are called \emph{faces}, and its inclusion-maximal faces are called \emph{facets}. We write $\mathcal{X}(j)=\{\sigma\in\mathcal{X}:|\sigma|=j\}$ and call its elements \emph{$j$-faces}. The $1$-faces are called vertices, whose elements we identify with their singleton sets, and the $2$-faces are called edges. We say that $\mathcal{X}$ is \emph{pure} if each of its facets has the same cardinality. With some slight abuse of terminology, we adopt the convention that the complex is of \emph{dimension $m$} if each of its facets has $m$ elements.

For $1\le k\le m$, the \emph{down-$k$-up-$k$ walk} $\downup_{k}$ on the facets of a pure $m$-dimensional complex chooses $k$ elements of the current facet uniformly at random, deletes them, and then samples a new facet uniformly among those containing the retained face. The completions of a face $\sigma\in\mathcal{X}(j)$ with $0\le j\le m$ are described by its \emph{link}, denoted $\mathcal{X}^{(\sigma)}$. This is the pure $(m-j)$-dimensional complex whose facets are given by
    \begin{align*}
        \mathcal{X}^{(\sigma)}(m-j) &= \{\tau\setminus\sigma:\tau\in\mathcal{X}(m),\ \sigma\subseteq\tau\}.
    \end{align*}
If $\sigma\in\mathcal{X}(m-k)$, its link is pure of dimension $k$, and its facets are the sets of $k$ elements that complete $\sigma$ to a facet. For $k\ge2$, the \emph{link walk} $\downup^{(\sigma)}$ is the Markov chain corresponding to the simple random walk on the vertices of the link (see~\cref{eq:link-walk-kernel}).

For a subset of edges $\sigma\subseteq\binom{V}{2}$, write $\deg_{\sigma}(v)$ for the number of edges in $\sigma$ incident to $v$. The \emph{switch complex} $\mathcal{R}_{\mathbf{d}}$ is the pure $m$-dimensional simplicial complex with facets
    \begin{align*}
        \mathcal{R}_{\mathbf{d}}(m) &= \Bigl\{\sigma\subseteq\binom{V}{2} \,:\, \deg_{\sigma}(v)=d_{v}\text{ for each }v\in V \Bigr\},
    \end{align*}
i.e., with facets consisting of the realizations of $\mathbf{d}$. A vertex of $\mathcal{R}_{\mathbf{d}}$ is a graph edge that occurs in a realization, and a $j$-face is a set of $j$ graph edges that can be completed to a realization.

On $\mathcal{R}_{\mathbf{d}}$, we call $\downup_{2}$ the \emph{simplicial switch chain}. Unlike the classical switch chain, the two edges deleted at the beginning of each step need not be vertex-disjoint, and the next realization is chosen uniformly from among valid completions rather than by proposing a matching and rejecting invalid proposals. We illustrate an example of a switch complex and a step of this process on the set of cubic graphs on $8$ vertices in~\cref{fig:down-two-up-two}.

Down-up walks on simplicial complexes have become a central object of study in sampling and counting algorithms. This explosion of work owes much to the \emph{local-to-global method}, which, roughly speaking, works by combining lower bounds on the spectral gaps of link walks to obtain spectral gap bounds for walks on the whole complex, and hence estimates on their relaxation and mixing times. These ideas form the bedrock of the broader theory of \emph{high dimensional expansion}. The method can be implemented in several ways (see, e.g.,~\cite{alev2020improved}), but perhaps the best known is through \emph{trickle-down theorems}, notably in the work of Oppenheim~\cite{oppenheim2018local} and Kaufman and Oppenheim~\cite{kaufman2020high}. These tools assert that if the links of $(m-2)$-faces have sufficiently strong spectral expansion, and the graphs of the larger links are connected, then expansion estimates propagate to those larger links.

However, switch complexes do not generally satisfy the strong hypotheses needed for a trickle-down argument. For example, the links of faces of dimension $m-2$ are disjoint unions of one or more copies of $K_{2}$: once one completion edge is specified, the residual degrees determine the other. These links are in general disconnected, and disconnectedness can also persist in links of larger dimension. Moreover, even among links whose graphs are connected, it is not clear how to organize and describe their graph structure in a way that makes the spectral estimates required by trickle-down theorems accessible. We illustrate some examples of links and their associated graphs in~\cref{fig:realization-links-depth-preview}.

Our main technical result provides a method to circumvent these obstructions and hence to rescue the local-to-global method. We show that the links of the switch complexes exhibit what we call \emph{deep link expansion}: for faces $\sigma$ with sufficiently large codimension $k=m-|\sigma|$ (or ``depth''), the corresponding links exhibit strong spectral expansion, even though links of faces with smaller codimension may be disconnected. For a reversible Markov kernel $\mathbf{Q}$, we denote its second largest eigenvalue, counted with multiplicity, by $\lambda_{2}(\mathbf{Q})$.

\begin{theorem}[Deep link expansion]\label{thm:intro-link-eigenvalue-realization-complex}
    There exists an absolute constant $C>0$ with the following property. Let $k$ be an integer with $2\le k\le m$, and suppose that $k\ge C\Delta^{4}$. Let $\sigma\in\mathcal{R}_{\mathbf{d}}(m-k)$ be arbitrary. Then
        \begin{align*}
            \lambda_{2}(\downup^{(\sigma)}) &\le C\Delta^{2}k^{-3/2}.
        \end{align*}
\end{theorem}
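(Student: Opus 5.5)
The plan is to recognize the link $\mathcal{R}_{\mathbf{d}}^{(\sigma)}$ as a switch complex in its own right. Write $\mathbf{d}'=\mathbf{d}-\deg_{\sigma}$ for the residual degree sequence, which has $k$ edges and maximum degree at most $\Delta$. The facets of the link are exactly the simple graphs realizing $\mathbf{d}'$ that avoid the forbidden graph $\sigma$. The link walk then has the explicit kernel
\begin{align*}
\downup^{(\sigma)}(e,f)=\frac{\Prob[f\in G\mid e\in G]}{k-1},
\end{align*}
where $G$ is a uniform completion. Its stationary measure is $\pi(e)=\Prob[e\in G]/k$. The goal is therefore to bound the nontrivial spectrum of the operator with kernel $\Prob[e,f\in G]/(\Prob[e\in G]\Prob[f\in G])$ acting on $L^{2}(\pi)$.

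\textbf{Step 1 (asymptotic enumeration).} I would use switching arguments in the style of McKay, valid because $k\ge C\Delta^{4}$, to obtain edge probabilities and pair correlations to second order. For vertex-disjoint $e,f$ I aim for an expansion
\begin{align*}
\frac{\Prob[e,f\in G]}{\Prob[e\in G]\,\Prob[f\in G]}=1+\varphi(e)+\varphi(f)+\psi(e,f),
\end{align*}
with $\varphi=O(\Delta^{2}/k)$ and $|\psi(e,f)|=O(\Delta^{4}/k^{2})$, uniformly in $\sigma$. For pairs sharing an endpoint $u$, the ratio is $O(1)$ and is governed by the residual degree $d'_{u}$. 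The forbidden edges of $\sigma$ only perturb the switching counts by factors $1+O(\Delta^{2}/k)$, since each vertex has at most $\Delta$ forbidden partners.

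\textbf{Step 2 (spectral decomposition).} Take $g\perp\mathbf{1}$ in $L^{2}(\pi)$. The rank-two part with kernel $\varphi(e)+\varphi(f)$ maps $g$ to the constant $\langle\varphi,g\rangle_{\pi}$, which is killed by the projection onto $\mathbf{1}^{\perp}$, so it contributes nothing. The remainder $\psi$ has Schur row sums $O(\Delta^{4}/k^{2})$. The row sums of its local part (pairs sharing a vertex) are only $O(\Delta/k)$, which is too large for the target bound $C\Delta^{2}k^{-3/2}$ when $\Delta\ll\sqrt{k}$, so this part must be treated separately.

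\textbf{Step 3 (the local part).} I would split $L^{2}(\pi)$ into the subspace $W$ of \emph{vertex-linear} functions $g(uv)=h(u)+h(v)$ and its $\pi$-orthogonal complement. On $W$ the degree constraints give $\sum_{e\in G}g(e)=\sum_{u}d'_{u}h(u)$, which is deterministic. Hence $\Var\bigl(\sum_{e\in G}g(e)\bigr)=0$, and this forces $\langle g,\downup^{(\sigma)}g\rangle_{\pi}\approx-\|g\|_{\pi}^{2}/(k-1)$ on $W\cap\mathbf{1}^{\perp}$. On $W^{\perp}$, the star kernel $\mathbf{1}[u\in e\cap f]$ acts through the averaging map $g\mapsto\bigl(u\mapsto\sum_{e\ni u}\pi(e)g(e)\bigr)$, whose range lies in $W$; so it vanishes on $W^{\perp}$ up to errors coming from the nonuniform weights $\pi$ and the degree dependence of the correlations. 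These errors are of relative size $O(\Delta/\sqrt{k})$ beyond the $\Delta/k$ scale, which yields the target $C\Delta^{2}k^{-3/2}$. Finally, the cross terms between $W$ and $W^{\perp}$ would be controlled by Cauchy--Schwarz.

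\textbf{Main obstacle.} I expect the hardest step to be Step 1 with the needed precision: uniform second-order enumeration in the presence of an arbitrary forbidden graph $\sigma$, including the exact form of the correlations for adjacent pairs that makes the cancellation in Step 3 work. My first attempt would be to derive both the first-order term and the adjacent-pair term from a single switching identity, rather than estimating them separately, so that the vertex-linear structure appears exactly and not merely approximately.
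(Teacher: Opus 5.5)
Your reduction is sound, and cleaner than you use it. The kernel formula is correct, and the $\varphi(e)+\varphi(f)$ part does vanish on $\mathbf{1}^{\perp}$. Since $\sum_{e\in G}g(e)$ is deterministic for $g\in W$ and $\mathbf{1}\in W$, expanding $\E\bigl[(\sum_{e\in G}g(e))(\sum_{f\in G}g'(f))\bigr]$ shows that $W$ is exactly invariant under $\downup^{(\sigma)}$, with $\downup^{(\sigma)}=-\frac{1}{k-1}\mathbf{I}$ on $W\cap\mathbf{1}^{\perp}$. Also, $W^{\perp}$ is precisely the kernel of $g\mapsto\bigl(\sum_{e\ni u}\pi(e)g(e)\bigr)_{u}$. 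So a star kernel whose coefficient depends only on the shared vertex contributes only a diagonal term of order $\max_{e}\pi(e)$ there. There are no cross terms and no weight errors to control.

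Everything therefore rests on Step~1, which you do not prove and which is false as stated. Take all residual degrees equal to $1$, so completions are perfect matchings of the $2k$ active vertices avoiding $\sigma$. Let $\{u,w\}$ be the only edge of $\sigma$ among them, so $\Delta=2$. For distinct $a,b,c,d,v,z\notin\{u,w\}$, direct counting gives:
\begin{itemize}
\item $R(uv,wz)=\frac{2k-2}{2k-3}$;
\item $R(uv,cd)=R(ab,wz)=\frac{2k-2}{2k-4}$;
\item $R(ab,cd)=\frac{(2k-6)(2k-3)(2k-2)}{(2k-5)(2k-4)^{2}}$.
\end{itemize}
The mixed difference $R(uv,wz)-R(uv,cd)-R(ab,wz)+R(ab,cd)$ annihilates every $\varphi(e)+\varphi(f)$ and equals $-\frac{1}{2k}+O(k^{-2})$. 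Hence $|\psi|\ge\frac{1}{8k}(1+o(1))$ for one of these vertex-disjoint pairs. That is not $O(\Delta^{4}/k^{2})$ once $k\ge C\Delta^{4}$ with $C$ large.

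The mechanism is that a $\sigma$-edge joining an endpoint of $e$ to an endpoint of $f$ changes, at first order, how conditioning on $f$ depletes the stubs available at that endpoint of $e$. Such $f$ carry only $O(\Delta^{2}/k)$ of $\pi$-mass for each fixed $e$, so a weighted row-sum version of Step~1 might still suffice. But establishing it requires a second-order enumeration of completions, uniform over an arbitrary forbidden graph $\sigma$ under only $k\ge C\Delta^{4}$. That enumeration must isolate every such non-additive $\Theta(1/k)$ correlation and bound the remainder by $O(\Delta^{4}/k^{2})$ in weighted $\ell^{1}$. This is the whole difficulty, it is not an available McKay-type estimate, and nothing in the proposal supplies it. Likewise, the relative error $O(\Delta/\sqrt{k})$ in Step~3 is asserted rather than derived.

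The paper needs no such precision. It lifts $f$ to $\lift f$ on stub pairings, where the transposition Laplacian satisfies the exact identity $\gen\lift f=(2k-1)\lift f-\sum_{\alpha}\widehat{f}(\alpha)$. On valid pairings, $(2k-1)\|\lift f\|_{\pi_{\mathcal{Q}}}^{2}$ then splits into two pieces:
\begin{itemize}
\item a valid-transposition energy, bounded by comparison with the unrestricted configuration model using only first-order marginal bounds $p_{\alpha}=\frac{1}{2k}(1+O(\Delta^{2}/k))$;
\item a boundary term from at most $8k\Delta^{2}$ invalid transpositions per pairing, handled by Cauchy--Schwarz, which is the source of the factor $k^{-3/2}$.
\end{itemize}
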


\Cref{thm:intro-link-eigenvalue-realization-complex} is proved in~\cref{sec:deep-link-expansion}.

By using a local-to-global theorem of Alev and Rao~\cite{alev2025expanderizing} (see~\cref{thm:alev-rao}) in conjunction with~\cref{thm:intro-link-eigenvalue-realization-complex}, we are able to bound the spectral gap of the down-$k$-up-$k$ walk on $\mathcal{R}_{\mathbf{d}}$.

\begin{corollary}\label{cor:intro-higher-order-eigenvalue}
    There exists an absolute constant $C>0$ with the following property. Let $k$ be an integer with $2\le k\le m$, and suppose that $k\ge C\Delta^{4}$. Then the down-$k$-up-$k$ walk on $\mathcal{R}_{\mathbf{d}}(m)$ satisfies
        \begin{align*}
            \lambda_{2}(\downup_{k}) &\le 1-\frac{k}{Cm}.
        \end{align*}
\end{corollary}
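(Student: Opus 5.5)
We will derive \cref{cor:intro-higher-order-eigenvalue} from \cref{thm:intro-link-eigenvalue-realization-complex} using the local-to-global theorem of Alev and Rao (\cref{thm:alev-rao}). The point of that theorem is that we only need expansion of links of \emph{deep} faces. Roughly, it says the following. Suppose every face $\tau$ of codimension $j$ in the range $k\le j\le m$ has a link walk with $\lambda_{2}(\downup^{(\tau)})\le\gamma_{j}$. Then the spectral gap of $\downup_{k}$ is bounded below by a telescoping product over $j$ from $k$ to $m$ of factors like $(1-\gamma_{j})$, normalized by $k/m$. Links of shallow faces, with codimension less than $k$, never enter. This is exactly what lets us bypass the disconnected links of codimension $2$.

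First, fix $C_{0}$ to be the constant of \cref{thm:intro-link-eigenvalue-realization-complex}. For every $j\ge k\ge C\Delta^{4}$ (with $C\ge C_{0}$) and every $\tau\in\mathcal{R}_{\mathbf{d}}(m-j)$, that theorem gives $\gamma_{j}:=C_{0}\Delta^{2}j^{-3/2}$ as a bound on $\lambda_{2}(\downup^{(\tau)})$. The key observation is that this is summable in $j$, and that its tail is small:
\begin{align*}
\sum_{j\ge k}\gamma_{j}\le 3C_{0}\Delta^{2}k^{-1/2}\le 3C_{0}C^{-1/2}.
\end{align*}
The last inequality uses $k\ge C\Delta^{4}$, so that $\Delta^{2}k^{-1/2}\le C^{-1/2}$. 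Taking $C$ large makes this tail at most $1/2$.

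Second, we plug these bounds into \cref{thm:alev-rao}. Its conclusion should take a form like
\begin{align*}
1-\lambda_{2}(\downup_{k})\ \ge\ \frac{k}{m}\prod_{j=k}^{m-1}(1-\gamma_{j}),
\end{align*}
or an equivalent form with a sum of the $\gamma_{j}$ appearing in the exponent. In either form, the estimate $\prod(1-\gamma_{j})\ge 1-\sum\gamma_{j}\ge 1/2$ gives $\lambda_{2}(\downup_{k})\le 1-k/(2m)$. The exact normalization, for instance $\binom{m}{k}$-type factors or the $j$ versus $j+1$ indexing convention, only affects the absolute constant. After that we enlarge $C$ to absorb it.

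The main obstacle is matching the precise hypotheses of \cref{thm:alev-rao} to our setting. We need to check three things.
\begin{enumerate}
\item The theorem uses only links at codimension $\ge k$, or more precisely $\ge k$ up to an additive shift, which costs only a constant factor in $C$.
\item Its indexing of dimension and codimension agrees with the convention used in \cref{thm:intro-link-eigenvalue-realization-complex}.
\item The range $j\le m$ is covered. At $j=m$ the face is empty and the link is the whole complex, so that case is also covered by \cref{thm:intro-link-eigenvalue-realization-complex} since $2\le m$.
\end{enumerate}
If the theorem instead requires bounds on weighted or lazy versions of the link walks, we would first convert, using that these differ from $\downup^{(\tau)}$ by an affine change that preserves the order of $\gamma_{j}$.
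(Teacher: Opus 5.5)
Your proposal is correct and follows the paper's argument. The paper obtains the corollary from \cref{lem:higher-order-poincare} with $\sigma=\varnothing$ and $k=m$; that lemma's proof is exactly \cref{thm:alev-rao} combined with deep link expansion and a tail bound making the product at least $1/2$, which gives $1-\lambda_{2}(\downup_{k})\ge k/(2m)$. The one indexing point to settle is that the product in \cref{thm:alev-rao} runs over face sizes $0,\dots,m-k-1$, i.e.\ codimensions $k+1,\dots,m$ (including the empty face), and your summable bound covers all of these.
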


\Cref{cor:intro-higher-order-eigenvalue} is proved in~\cref{sec:spectral-gaps}. Notably, our proof does not use trickle-down theorems.

We then prove the following comparison theorem for the Dirichlet energies of the down-$k$-up-$k$ walk and the down-$2$-up-$2$ walk, with a comparison constant linear in $k$.

\begin{theorem}\label{thm:intro-linear-comparison}
    There exist absolute constants $C,C'>0$ with the following property. Let $k$ be an integer satisfying $C\Delta^{8}\le k\le m$. Then, for every $f\in\ell^{2}(\mathcal{R}_{\mathbf{d}}(m),\pi_{m})$,
        \begin{align*}
            \mathcal{E}_{\downup_{k}}(f,f) &\le C'k\,\mathcal{E}_{\downup_{2}}(f,f).
        \end{align*}
\end{theorem}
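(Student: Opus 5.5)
We compare the two Dirichlet energies through link decompositions, using \cref{thm:intro-link-eigenvalue-realization-complex} at the intermediate level. The starting point is the standard local decomposition of down-up energies. For $\tau\in\mathcal{R}_{\mathbf{d}}(m-k)$, let $f_{\tau}$ be the function on facets of $\mathcal{X}^{(\tau)}$ given by $f_\tau(\eta)=f(\tau\cup\eta)$. Then
\begin{align*}
\mathcal{E}_{\downup_{k}}(f,f) = \E_{\tau\sim\pi_{m-k}}\bigl[\Var_{\tau}(f_{\tau})\bigr],
\end{align*}
where $\Var_\tau$ is the variance under the uniform measure on facets of the link, i.e.\ on realizations containing $\tau$. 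Likewise,
\begin{align*}
\mathcal{E}_{\downup_{2}}(f,f)=\E_{\rho\sim\pi_{m-2}}\bigl[\Var_{\rho}(f_\rho)\bigr].
\end{align*}
Sampling $\rho$ by first drawing $\tau\sim\pi_{m-k}$ and then a uniform $(k-2)$-subset of a uniform completion, we get
\begin{align*}
\mathcal{E}_{\downup_2}(f,f)=\E_{\tau}\bigl[\mathcal{E}^{(\tau)}_{\downup_2}(f_\tau,f_\tau)\bigr],
\end{align*}
where $\mathcal{E}^{(\tau)}_{\downup_2}$ is the down-$2$-up-$2$ energy inside the $k$-dimensional link complex $\mathcal{X}^{(\tau)}$. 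It therefore suffices to prove, uniformly over $\tau\in\mathcal{R}_{\mathbf{d}}(m-k)$ with $k\ge C\Delta^8$, the local Poincaré inequality
\begin{align*}
\Var_\tau(g)\le C'k\,\mathcal{E}^{(\tau)}_{\downup_2}(g,g).
\end{align*}
This says that the down-$2$-up-$2$ walk on the link $\mathcal{X}^{(\tau)}$ has spectral gap at least $1/(C'k)$.

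To prove this, we apply the local-to-global machinery (\cref{thm:alev-rao}, as in the proof of \cref{cor:intro-higher-order-eigenvalue}) inside the complex $\mathcal{X}^{(\tau)}$ itself. Its own links are links $\mathcal{X}^{(\tau\cup\eta)}$ of $\mathcal{R}_{\mathbf{d}}$. For depth $j\ge C\Delta^4$, \cref{thm:intro-link-eigenvalue-realization-complex} gives $\lambda_2\le C\Delta^2j^{-3/2}$. The product $\prod_{j}(1-C\Delta^2 j^{-3/2})$ over $C\Delta^4\le j\le k$ is bounded below by an absolute constant, because
\begin{align*}
\sum_{j\ge C\Delta^4}\Delta^2j^{-3/2}\lesssim \Delta^2(C\Delta^4)^{-1/2}=O(C^{-1/2}).
\end{align*}
The deep levels therefore contribute only a constant loss: going from $\downup_k$ down to $\downup_{j_0}$ with $j_0\approx C\Delta^4$ costs at most a factor $O(k/j_0)$. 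What remains is the shallow regime, where we must compare $\downup_{j_0}$ with $\downup_2$ on links of depth $\le j_0$, which may be disconnected.

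\textbf{Main obstacle.} A generic comparison in the shallow regime would cost a factor depending on $\Delta$, which would ruin linearity in $k$. My plan is to use the slack $k\ge C\Delta^8$ and avoid paying for the shallow levels through spectral gaps at all. Instead, I would use a direct path or canonical-flow argument. Any two completions $\eta,\eta'$ of a face at depth $j_0$ differ on at most $2j_0$ edges, and their symmetric difference decomposes into alternating cycles. I would route each cycle through two-edge switches, using auxiliary edges chosen from the abundant deep part of the complex; there are $k-j_0\ge C\Delta^8-C\Delta^4$ edges available, which supplies the needed flexibility. Each transition would then be loaded with congestion $O(j_0^2)\cdot\mathrm{poly}(\Delta)/k$. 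Provided $j_0^2\,\mathrm{poly}(\Delta)\lesssim k$, this is absorbed when $k\ge C\Delta^8$, and the comparison constant stays $O(k)$. Verifying that the path lengths and congestion are $\Delta$-polynomial with exponent compatible with $\Delta^8$ is the step I expect to be delicate.
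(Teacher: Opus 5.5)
Your reduction to a uniform local Poincar\'e inequality on depth-$k$ links matches the paper: \cref{lem:dirichlet-energy-decomposition}. So does your first stage, which uses deep link expansion plus \cref{thm:alev-rao} to descend to depth $j_0\asymp\Delta^{4}$ at cost $2k/j_0$; this is \cref{lem:higher-order-poincare}.

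The gap is the shallow stage. To finish with an absolute $C'$ you need $\mathcal{E}_{\downup_{j_0}^{(\tau)}}(g,g)\le O(j_0)\,\mathcal{E}_{\downup_{2}^{(\tau)}}(g,g)$. No canonical path or flow argument can give this, however the auxiliary edges are chosen.
\begin{itemize}
\item Such an argument bounds each $(g(\eta)-g(\eta'))^{2}$ separately by Cauchy--Schwarz along a path $\gamma$, at the cost of a factor $|\gamma|$.
\item Average the resulting congestion against the $\downup_{2}$-transition weights $\pi(\eta)\downup_{2}(\eta,\eta')$, whose total is at most $1$. This shows the comparison constant is at least $\E|\gamma|^{2}$, where $(\eta,\eta')$ is a stationary step of $\downup_{j_0}^{(\tau)}$.
\item Each $\downup_{2}$-step changes $|\,\cdot\,\triangle\eta'|$ by at most four, so $|\gamma|\ge|\eta\triangle\eta'|/4$.
\item By \cref{lem:single-pair-marginals}, a fresh completion of a depth-$j_0$ face retains only $O(\Delta^{2})$ of the old $j_0$ edges in expectation. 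Hence $\E|\eta\triangle\eta'|\ge2j_0-O(\Delta^{2})$ and $\E|\gamma|^{2}\gtrsim j_0^{2}$.
\end{itemize}
Routing through the $k-j_0$ deep edges spreads the load but cannot lower this average. The lower bound also does not improve as $k$ grows. So a congestion decaying like $1/k$ is not attainable, and your scheme yields at best a constant of order $kj_0\gtrsim k\Delta^{4}$ in place of $C'k$. The hypothesis $k\ge C\Delta^{8}$ cannot absorb a multiplicative loss of this kind.

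The missing idea is to pay path-method costs only at an absolute-constant depth. Write $\sigma$ for the depth-$k$ face (your $\tau$). The paper takes $\ell=k_0\Delta^{4}\le\sqrt{k}$ and splits $\mathcal{E}_{\downup_{\ell}^{(\sigma)}}(g,g)=\E_{\rho}\Var_{\pi_{\ell}^{\sigma\cup\rho}}(g^{(\rho)})$ according to the retained face $\rho$.
\begin{itemize}
\item \emph{Good restrictions.} Via the modified residual degrees in \cref{eq:identical-links}, for most $\rho$ the link $\mathcal{R}_{\mathbf{d}}^{(\sigma\cup\rho)}$ is a link of a switch complex with all degrees at most $10$. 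There, deep link expansion already holds from the absolute depth $k_0=10^{10}$. The crude finite comparison of \cref{lem:finite-comparison} at that depth costs only an absolute constant, giving \cref{eq:degree-ten-poincare}. Averaging these terms yields $C_{10}\ell\,\mathcal{E}_{\downup_{2}^{(\sigma)}}(g,g)$.
\item \emph{Bad restrictions.} By \cref{lem:good-restriction-probability}, they have probability at most $\ell/(4k)$ from every completion. That lemma is a union bound over six-edge clusters, and it needs $\ell\le\sqrt{k}$; this is where $k\gtrsim\Delta^{8}$ enters. Their contribution is bounded by the second moment of the centered $g$. After the factor $2k/\ell$ it is at most $\frac{1}{2}\Var(g)$ and is absorbed.
\end{itemize}
So the slack in $k$ controls an additive error term, not a multiplicative loss in the comparison constant.
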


\Cref{thm:intro-linear-comparison} is proved in~\cref{sec:spectral-gaps}. We then obtain the following theorem as an immediate consequence.

\begin{theorem}\label{thm:intro-two-edge-eigenvalue}
    There exists an absolute constant $C>0$ with the following property. If $m\ge C\Delta^{8}$, then
        \begin{align*}
            1-\frac{2}{m} &\le\lambda_{2}(\downup_{2})\le1-\frac{1}{Cm}.
        \end{align*}
\end{theorem}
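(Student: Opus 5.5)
The upper bound follows by chaining \cref{cor:intro-higher-order-eigenvalue} with \cref{thm:intro-linear-comparison} through the variational characterization of the spectral gap. The lower bound comes from a test function.

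\emph{Upper bound.} Assume $m\ge C\Delta^{8}$ with $C$ at least the maximum of the constants in \cref{cor:intro-higher-order-eigenvalue} and \cref{thm:intro-linear-comparison}. Choose $k=\lceil C_{0}\Delta^{8}\rceil$, with $C_{0}$ large enough that both results apply; then $k\le m$, and $k\ge C_0\Delta^4$ as well. All walks $\downup_{j}$ are reversible with respect to the uniform measure $\pi_{m}$ on realizations. For such a kernel, $1-\lambda_{2}$ is the infimum of $\mathcal{E}(f,f)/\Var_{\pi_{m}}(f)$ over nonconstant $f$.

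\cref{cor:intro-higher-order-eigenvalue} gives $\mathcal{E}_{\downup_{k}}(f,f)\ge \frac{k}{C_1 m}\Var(f)$. \cref{thm:intro-linear-comparison} gives $\mathcal{E}_{\downup_{2}}(f,f)\ge \frac{1}{C'k}\mathcal{E}_{\downup_{k}}(f,f)$. Combining the two, $\mathcal{E}_{\downup_{2}}(f,f)\ge \frac{1}{C'C_1 m}\Var(f)$, and the factor $k$ cancels. This is the reason for the linear dependence in the comparison theorem. Taking the infimum yields $\lambda_{2}(\downup_{2})\le 1-\frac{1}{Cm}$ after enlarging $C$.

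\emph{Lower bound.} I would use the standard down-up fact that the local chain cannot beat the trivial eigenvalue coming from the one-skeleton. Concretely, take $f(\tau)=\sum_{e\in\tau}g(e)=\sum_{e} g(e)\mathbf{1}[e\in\tau]$, a linear statistic for a function $g$ on vertices of $\mathcal{R}_{\mathbf{d}}$. Under $\downup_{2}=\up_{m-2}\down_{m}$ (in the paper's notation), at most two edges change per step, and each edge is removed with probability $2/m$. A direct computation gives
\[
\mathcal{E}_{\downup_{2}}(f,f)=\tfrac12\E\bigl[(f(\tau')-f(\tau))^{2}\bigr].
\]
A cleaner route uses the factorization $\downup_{2}=\down^{*}\down$, where $\down$ maps $\ell^{2}(\mathcal{X}(m))\to\ell^{2}(\mathcal{X}(m-2))$. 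The spectrum of $\down^{*}\down$ on the nonzero part equals that of $\down\down^{*}=\downup'_{m-2}$, the up-down walk on $(m-2)$-faces. Restricting further to the space lifted from vertices, each vertex of $\mathcal{X}(m-2)$-faces is kept with probability $1-2/m$. The lazy part therefore forces an eigenvalue at least $1-2/m$, provided some nonconstant function lives on that subspace.

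Equivalently, choose $f(\tau)=\mathbf{1}[e\in\tau]-p_e$ for an edge $e$ appearing in some but not all realizations; such an edge exists because there are at least two realizations. Then $\langle f,\downup_{2}f\rangle\ge (1-2/m)\|f\|^{2}$, since with probability $1-2/m$ the edge $e$ is retained and $f$ is unchanged. The nonnegativity of the retained-but-modified contribution needs a short check: conditioned on deleting $e$, the expected product is $\Var$-type and nonnegative by the $\down^{*}\down$ positivity. By Courant--Fischer applied to $f\perp\mathbf{1}$, this gives $\lambda_{2}\ge 1-2/m$.

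\emph{Main obstacle.} Once the cited results are granted, the upper bound is immediate. The only care needed is that $k$ must satisfy $C\Delta^{8}\le k\le m$, which is why the hypothesis reads $m\ge C\Delta^{8}$. The lower bound's positivity step is the only genuinely new verification.
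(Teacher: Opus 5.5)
Your upper bound is correct. Chaining \cref{cor:intro-higher-order-eigenvalue} at $k=\lceil C_0\Delta^{8}\rceil$ with \cref{thm:intro-linear-comparison} cancels $k$ exactly as you say. The paper takes a more direct route: it uses $k=m$ in the comparison, where $\mathcal{E}_{\downup_{m}}(f,f)=\Var_{\pi_m}(f)$, so the corollary is not needed at all.

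The lower bound has a genuine gap.

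\emph{The claimed inequality is the whole content.} Take $f=\mathbf{1}[e\in\tau]-p$ with $p=\Prob_{\tau\sim\pi_m}(e\in\tau)$, and let $\epsilon=\Prob(e\in\tau,\ e\notin\tau')$ for one stationary step $\tau\to\tau'$. Reversibility gives exactly $\langle f,\downup_2 f\rangle_{\pi_m}=p(1-p)-\epsilon$. So your claim $\langle f,\downup_2 f\rangle\ge(1-2/m)\|f\|_{\pi_m}^2$ is equivalent to $\epsilon\le\frac{2}{m}p(1-p)$. That is precisely what has to be proved.

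\emph{Positivity of $\down^{*}\down$ does not give it.} Write $\langle f,\downup_2 f\rangle=\E_{\rho\sim\pi_{m-2}}[(\E[f\mid\rho])^2]$ and keep only the faces with $e\in\rho$. This yields only $(1-2/m)p(1-p)^2$, i.e.\ $\lambda_2\ge(1-2/m)(1-p)$, which loses a factor $(1-p)$. Your decomposition also ignores realizations with $e\notin\tau$. From those, a step can add $e$, and the product $f(\tau)f(\tau')=-p(1-p)$ is negative.

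\emph{No general down-up fact rescues this.} On the complex of all $m$-subsets of an $(m+1)$-element set, one has $\downup_2=\frac{1}{3}\mathbf{I}+\frac{2}{3m}(\mathbf{J}-\mathbf{I})$, with $\mathbf{J}$ the all-ones matrix. Hence $\lambda_2=\frac{1}{3}-\frac{2}{3m}<1-\frac{2}{m}$ for $m\ge 3$.

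\emph{What is missing.} The argument needs two inputs specific to the switch complex.
\begin{enumerate}[label=(\alph*)]
\item After deleting two edges there are at most three completions, the original among them. So the conditional probability of losing $e$ is at most $2/3$, and $\epsilon\le\frac{2p}{m}\cdot\frac{2}{3}$.
\item For $e=\{u,v\}$, the marginal satisfies $p\le\frac{d_ud_v}{2m-2-4\Delta^2}\le\frac{1}{3}$ once $m\ge 6\Delta^2$. The paper gets this from \cref{lem:single-pair-marginals} with $\sigma=\varnothing$ and $k=m$, summing over the $d_ud_v$ stub pairs projecting to $e$.
\end{enumerate}
With these, $1-\lambda_2(\downup_2)\le\frac{4p/(3m)}{p(1-p)}\le\frac{2}{m}$. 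Choosing an edge that lies in some but not all realizations is not enough; you need its marginal $p$ to be small.
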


\Cref{thm:intro-two-edge-eigenvalue} is proved in~\cref{sec:spectral-gaps}.

By then using a relative entropy estimate and a lemma of Salez~\cite{salez2024cutoff}, we obtain the following bound on the total variation mixing time of the simplicial switch chain.

\begin{corollary}\label{cor:intro-salez-sharp-switch-mixing}
    There exists an absolute constant $C>0$ with the following property. If $m\ge C\Delta^{8}$, then
        \begin{align*}
            t_{\mathrm{mix}}(\downup_{2},1/4) &\le C\Delta^{2}m\log(m).
        \end{align*}
\end{corollary}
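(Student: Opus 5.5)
The plan is to go from the spectral gap of \cref{thm:intro-two-edge-eigenvalue} to total variation mixing through a modified log-Sobolev / relative entropy contraction estimate. Using $L^2$ alone would cost a $\log(1/\pi_{\min})$ factor. Here $\pi_m$ is uniform on the realizations, whose number is at most $m^{O(m)}$, so this would give $O(m^2\log m)$, which is too weak. Instead I would invoke the lemma of Salez~\cite{salez2024cutoff}. I use it in the following form: if the chain satisfies an entropy contraction or modified log-Sobolev inequality with constant $\rho$, and the stationary law is uniform, then $t_{\mathrm{mix}}(1/4)\lesssim \rho^{-1}\log\log(1/\pi_{\min})$. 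More practically, I would use a version in which one step of the chain started at a point already has a sparse, well-spread law, so that the initial entropy is only $O(\log m)$ rather than $O(m\log m)$ after a short burn-in. So the target is a relative entropy decay $\mathrm{Ent}(\mu\downup_2)\le (1-\frac{c}{\Delta^2 m})\mathrm{Ent}(\mu)$ up to lower order, combined with a bound on the effective initial entropy.

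The first step is entropy contraction for the block walk. For $k\asymp C\Delta^{8}$ I would redo the local-to-global argument in its entropic form. The deep link expansion $\lambda_2(\downup^{(\sigma)})\le C\Delta^2 k^{-3/2}$ of \cref{thm:intro-link-eigenvalue-realization-complex} is summable over depths $j\ge C\Delta^4$, because $\sum_j \Delta^2 j^{-3/2}\lesssim \Delta^2 (C\Delta^4)^{-1/2}$ is small. This places us in the regime where spectral independence bounds give entropic independence and hence approximate tensorization of entropy for $\downup_k$ at rate $k/(Cm)$. For the links of depth at most $k$, which are complexes with only $k$ free edges, I would use a crude bound. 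Such a link has at most $(\Delta^2 k)^{O(k)}$ facets, so its log-Sobolev constant is at worst $\mathrm{poly}(k)$. Since $k$ is a constant power of $\Delta$, this loss is absorbed into the polynomial-in-$\Delta$ constant.

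The second step transfers the block entropy bound to $\downup_2$. I would do this by the same canonical-path / decomposition comparison that gives \cref{thm:intro-linear-comparison}, applied to entropy functionals. Alternatively, I would argue that inside each link of depth $k$ the down-$2$-up-$2$ walk has a log-Sobolev constant at least of order $1/(\Delta^2 k)$, and then compose: a block update is realized by running $\downup_2$ restricted to the link. This composition is where the factor $\Delta^2$ arises, giving a contraction rate of order $1/(\Delta^2 m)$. The third step bounds the starting entropy. By the Salez lemma it suffices to control $\log(1/\pi_{\min})$ only doubly logarithmically, and $\log\log(m^{O(m)})=O(\log m)$. Combining this with the rate from the second step gives $t_{\mathrm{mix}}\le C\Delta^2 m\log m$.

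The main obstacle is the second step. Entropy comparison, unlike Dirichlet comparison, does not follow from the $L^2$ statement of \cref{thm:intro-linear-comparison}. What I need is a restricted modified log-Sobolev inequality for $\downup_2$ on links of depth $k$ with only a $\Delta^2 k$ loss. I would first try to obtain it by induction on depth, using the deep-link eigenvalue bound at each level. If that fails, I would settle for the crude polynomial-in-$k$ bound and accept a worse power of $\Delta$ in the constant.
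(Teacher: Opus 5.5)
Your argument depends on an entropy contraction (modified log-Sobolev inequality) for $\downup_2$ at rate $c/(\Delta^{2}m)$. This is never established, as you note yourself, and both routes you sketch toward it fail.

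\begin{itemize}
\item \textbf{Step 1.} \cref{thm:intro-link-eigenvalue-realization-complex} is purely spectral. Local-to-global results such as \cref{thm:alev-rao} turn it into a spectral gap for $\downup_k$. Approximate tensorization of entropy needs more: local \emph{entropy} contraction in the links, or extra structure such as marginal boundedness or fractional log-concavity. Eigenvalue bounds alone do not supply this.
\item \textbf{Step 2.} The count of at most $(\Delta^{2}k)^{O(k)}$ facets only bounds $\log(1/\pi_{\min})$ in a link, i.e.\ the loss in passing from a spectral gap to a log-Sobolev constant. It says nothing about the gap of $\downup_2^{(\sigma)}$ itself. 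The crude path argument (exactly \cref{lem:finite-comparison}) loses a factor of order $(2k-1)!!$. For $k\asymp\Delta^{8}$ this is $\exp(\Omega(\Delta^{8}\log\Delta))$, not $\mathrm{poly}(k)$.
\item \textbf{Even granting Step 1.} Suppose you import the order-$1/k$ link gap that the proof of \cref{prop:linear-comparison} actually establishes (\cref{eq:target-local-poincare}). The generic gap-to-log-Sobolev conversion still costs up to another factor $k\log k$. The composition then gives a rate of roughly $1/(mk\log k)$, i.e.\ a bound with $\Delta^{8}\log\Delta$ in place of $\Delta^{2}$.
\end{itemize}
The claimed $1/(\Delta^{2}k)$ link constant is precisely the missing estimate, so the proposal does not reach the stated bound.

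The missing idea is that no entropy contraction is needed. \Cref{lem:salez-entropy-gap} is not an MLSI statement. For an irreducible, reversible, positive semidefinite kernel it gives $t_{\mathrm{mix}}\le r+4(K+1)/(1-\lambda_{2})$ whenever $D(\mathbf{Q}^{r}(x,\cdot)\,\Vert\,\pi)\le K$ for every $x$. Since \cref{thm:intro-two-edge-eigenvalue} already gives $1-\lambda_{2}(\downup_2)\ge 1/(Cm)$, it suffices to show $K=O(\Delta^{2}\log m)$ for some $r\asymp m\log m$.

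The burn-in really must be this long, contrary to your remark about one step. After one step the law is supported on at most $m^{2}$ realizations. Its entropy relative to $\pi_m$ is therefore at least $\log|\mathcal{R}_{\mathbf{d}}(m)|-2\log m$, which is of order $m\log m$ for, e.g., regular sequences.

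The paper proves the entropy bound (\cref{prop:two-edge-entropy-bound}) in four moves:
\begin{enumerate}
\item It lifts $\downup_2$ to a walk on valid stub pairings; projection does not increase relative entropy.
\item It compares this walk, via \cref{lem:entropy-chain-rule}, with the unconstrained two-pair resampling walk $\mathbf{K}$ on all pairings. Since at most $8m\Delta^{2}$ transpositions are invalid, each step costs at most $24\Delta^{2}/m$ in relative entropy.
\item It bounds the $\chi^{2}$-distance of $\mathbf{K}^{r}(X,\cdot)$ from uniform by $1$, using the Diaconis--Saloff-Coste random transposition estimate \cref{thm:random-transposition-l2}.
\item It combines the two pieces with \cref{lem:weak-triangle-inequality}.
\end{enumerate}
The factor $\Delta^{2}$ in the final bound comes from $K$, not from any loss in a log-Sobolev constant.
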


\Cref{cor:intro-salez-sharp-switch-mixing} is proved in~\cref{sec:mixing-times-simplicial}.

Finally, in~\cref{sec:classical-switch-chain}, we explain how to convert the results concerning the relaxation and mixing times of the simplicial switch chain back to the setting of the classical switch chain. This proves~\cref{thm:intro-mixing}.

Our main results apply whenever $m\ge C\Delta^{8}$, allowing the maximum degree to grow with the number of edges. Improving this degree range remains an interesting question. A systematic approach to such comparisons could also be useful for analyzing other simplicial walks. Other directions for future work include investigating these techniques for simplicial complexes whose facets consist of $\mathbf{d}$-realizations satisfying additional constraints such as bipartiteness or connectivity, as well as complexes formed from other graph families like planar triangulations. More generally, it would be of interest to understand to what extent deep link expansion extends beyond switch complexes, and whether there are structural conditions that guarantee it.

The results and questions presented herein connect to an extensive literature concerning switch chains, with a history perhaps dating as far back as the work of Ryser~\cite{ryser1957combinatorial} in the 1950s. Rapid mixing has been established for graph classes and degree sequences of various families and types~\cite{miklos2013towards,erdos2018new,erdos2021half}, and further work has developed general criteria for rapid mixing and studied their relation to graphicality~\cite{amanatidis2020rapid,gao2021mixing,erdos2022mixing,erdos2025fully}. Related work has also considered alternative chains for bipartite graphs~\cite{carstens2018speeding}, switches that preserve connectivity~\cite{feder2006local}, graphs with additional degree constraints~\cite{erdos2015decomposition}, and regular directed graphs~\cite{greenhill2011polynomial}. More broadly, related questions have been studied for random walks on trees and matchings~\cite{diaconis2002random}, sampling hypergraphs with prescribed degrees~\cite{dyer2021sampling}, and graph processes generated by random edits~\cite{chung2025stochastically}.

This paper is organized as follows. In~\cref{sec:background}, we review some further background on Markov chains and simplicial complexes required for the remainder of the paper. In~\cref{sec:deep-link-expansion}, we prove the deep link expansion theorem. In~\cref{sec:spectral-gaps}, we prove the linear energy comparison and spectral gap bounds, and in~\cref{sec:mixing-times-simplicial}, we prove the entropy and mixing time bounds. In~\cref{sec:classical-switch-chain}, we convert the results concerning the simplicial switch chain back to the setting of the classical switch chain.

\clearpage

\begin{figure}[p]
\centering
\begingroup
\setlength{\parskip}{0pt}
\captionsetup{font=small,labelfont=bf,skip=5pt}
\captionsetup[subfigure]{font=small,labelfont=bf,skip=4pt,
  justification=raggedright,singlelinecheck=false}

\def\CubeEdges{1/2,1/4,1/5,2/3,2/6,3/4,3/7,4/8,5/6,5/8,6/7,7/8}
\def\SwitchedEdges{1/3,1/4,1/5,2/3,2/4,2/6,3/7,4/8,5/6,5/8,6/7,7/8}
\def\OverviewSix{1/2,1/4,1/5,2/3,2/6,3/4,3/8,4/7,5/6,5/8,6/7,7/8}
\def\OverviewFive{1/2,1/4,1/6,2/3,2/5,3/4,3/7,4/8,5/6,5/8,6/7,7/8}

\def\MoveSelected{1/2,3/4}
\def\MoveAdded{1/3,2/4}
\def\MoveRetained{1/4,1/5,2/3,2/6,3/7,4/8,5/6,5/8,6/7,7/8}
\def\HoldSelected{1/2,1/4}
\def\HoldAdded{1/2,1/4}
\def\HoldRetained{1/5,2/3,2/6,3/4,3/7,4/8,5/6,5/8,6/7,7/8}

\tikzset{
  rf ordinary/.style={draw=black,line width=.8pt},
  rf deleted/.style={draw=red!85!black,line width=2.15pt},
  rf added/.style={draw=blue!85!black,line width=2.15pt},
  rf arrow/.style={draw=black,line width=.8pt,
    -{Latex[length=2.6mm,width=1.9mm]}},
  rf arrow label/.style={font=\small,above=3pt,inner sep=1pt},
  rf plane/.style={cm={1,0,0,.50,(0,0)}},
  rf sheet/.style={fill=black!7,draw=black!45,line width=.55pt,dash pattern=on 3pt off 2.5pt},
  rf level/.style={font=\normalsize,anchor=west,inner sep=1pt},
}

\newcommand{\RFVertices}{%
  \coordinate (v5) at (-1,1);
  \coordinate (v6) at (1,1);
  \coordinate (v7) at (1,-1);
  \coordinate (v8) at (-1,-1);
  \coordinate (v1) at (-.48,.48);
  \coordinate (v2) at (.48,.48);
  \coordinate (v3) at (.48,-.48);
  \coordinate (v4) at (-.48,-.48);
}

\newcommand{\RFGraph}[4][]{%
  \begin{scope}[#1]
    \RFVertices
    \edef\RFList{#2}%
    \ifx\RFList\empty\else
      \foreach \u/\v in \RFList {\draw[rf ordinary] (v\u)--(v\v);}
    \fi
    \edef\RFList{#3}%
    \ifx\RFList\empty\else
      \foreach \u/\v in \RFList {\draw[rf deleted] (v\u)--(v\v);}
    \fi
    \edef\RFList{#4}%
    \ifx\RFList\empty\else
      \foreach \u/\v in \RFList {\draw[rf added] (v\u)--(v\v);}
    \fi
    \foreach \v in {1,...,8} {
      \node[circle,fill=black,inner sep=0pt,minimum size=3.25pt] at (v\v) {};
    }
  \end{scope}%
}

\newcommand{\RFOutline}[2]{%
  \expandafter\def\csname RFBlobPath#1\endcsname{#2}%
}
\RFOutline{0}{%
  (-.24,-.72)
  .. controls (-.42,.18) and (-.22,1.12) .. (.58,1.51)
  .. controls (1.62,2.02) and (2.85,1.84) .. (4.05,1.66)
  .. controls (5.25,1.48) and (6.11,1.86) .. (7.42,1.84)
  .. controls (8.80,1.82) and (10.66,1.90) .. (11.37,1.21)
  .. controls (12.08,.52) and (11.97,-.64) .. (11.24,-1.20)
  .. controls (10.51,-1.76) and (8.73,-1.51) .. (7.30,-1.61)
  .. controls (5.87,-1.71) and (4.47,-1.93) .. (3.16,-1.76)
  .. controls (1.85,-1.59) and (.00,-1.77) .. (-.24,-.72)
  -- cycle
}
\RFOutline{1}{%
  (-.35,-.57)
  .. controls (-.45,.35) and (-.05,1.27) .. (.83,1.62)
  .. controls (1.89,2.04) and (3.02,1.61) .. (4.35,1.60)
  .. controls (5.68,1.59) and (6.64,2.01) .. (8.05,1.85)
  .. controls (9.46,1.69) and (10.92,1.68) .. (11.46,1.00)
  .. controls (12.00,.32) and (11.86,-.74) .. (11.02,-1.28)
  .. controls (10.18,-1.82) and (8.96,-1.89) .. (7.59,-1.65)
  .. controls (6.22,-1.41) and (4.45,-1.83) .. (2.99,-1.78)
  .. controls (1.53,-1.73) and (-.21,-1.86) .. (-.35,-.57)
  -- cycle
}
\RFOutline{2}{%
  (-.25,-.95)
  .. controls (-.58,-.08) and (-.22,1.15) .. (.58,1.48)
  .. controls (1.68,1.93) and (3.10,1.68) .. (4.26,1.81)
  .. controls (5.42,1.94) and (6.45,1.62) .. (7.77,1.66)
  .. controls (9.09,1.70) and (10.63,2.02) .. (11.43,1.25)
  .. controls (12.23,.48) and (11.78,-.53) .. (11.16,-1.10)
  .. controls (10.54,-1.67) and (8.54,-1.71) .. (7.02,-1.77)
  .. controls (5.50,-1.83) and (4.35,-1.49) .. (3.00,-1.62)
  .. controls (1.65,-1.75) and (.14,-1.98) .. (-.25,-.95)
  -- cycle
}
\RFOutline{10}{%
  (-.31,-.74)
  .. controls (-.55,.17) and (-.05,1.14) .. (.70,1.43)
  .. controls (1.79,1.85) and (2.91,1.99) .. (4.22,1.85)
  .. controls (5.53,1.71) and (6.47,1.47) .. (7.91,1.65)
  .. controls (9.35,1.83) and (10.73,1.88) .. (11.35,1.17)
  .. controls (11.97,.46) and (12.02,-.71) .. (11.12,-1.32)
  .. controls (10.22,-1.93) and (8.77,-1.62) .. (7.44,-1.62)
  .. controls (6.11,-1.62) and (4.70,-1.96) .. (3.21,-1.78)
  .. controls (1.72,-1.60) and (-.02,-1.84) .. (-.31,-.74)
  -- cycle
}
\RFOutline{11}{%
  (-.23,-.59)
  .. controls (-.33,.37) and (-.02,1.28) .. (.88,1.60)
  .. controls (1.90,1.96) and (3.19,1.52) .. (4.44,1.62)
  .. controls (5.69,1.72) and (6.85,1.95) .. (8.24,1.85)
  .. controls (9.63,1.75) and (10.75,1.94) .. (11.43,1.20)
  .. controls (12.11,.46) and (11.82,-.59) .. (11.12,-1.13)
  .. controls (10.42,-1.67) and (8.92,-1.98) .. (7.48,-1.72)
  .. controls (6.04,-1.46) and (4.37,-1.58) .. (2.98,-1.73)
  .. controls (1.59,-1.88) and (-.11,-1.77) .. (-.23,-.59)
  -- cycle
}
\RFOutline{12}{%
  (-.30,-.77)
  .. controls (-.50,.14) and (-.26,1.15) .. (.58,1.57)
  .. controls (1.59,2.07) and (2.91,1.87) .. (4.19,1.72)
  .. controls (5.47,1.57) and (6.35,1.89) .. (7.75,1.84)
  .. controls (9.15,1.79) and (10.65,1.84) .. (11.35,1.19)
  .. controls (12.05,.54) and (11.99,-.59) .. (11.26,-1.17)
  .. controls (10.53,-1.75) and (8.86,-1.55) .. (7.47,-1.68)
  .. controls (6.08,-1.81) and (4.62,-1.72) .. (3.14,-1.78)
  .. controls (1.66,-1.84) and (-.04,-1.95) .. (-.30,-.77)
  -- cycle
}
\newcommand{\RFBlob}[1]{%
  \path[rf sheet] \csname RFBlobPath#1\endcsname;
}

\newcommand{\RFSheet}[4]{%
  \begin{scope}[shift={(#1,#2)}]
    \begin{scope}[rf plane]
      \RFBlob{#3}
      #4
    \end{scope}
    \node[rf level] at (12.20,0) {$\mathcal{R}(#3)$};
  \end{scope}%
}

\begin{subfigure}{\linewidth}
\centering
\resizebox{\linewidth}{!}{%
\begin{tikzpicture}[line cap=round,line join=round]
  \path[use as bounding box] (-.60,-.78) rectangle (13.95,10.30);

  \RFSheet{.12}{.20}{0}{%
    \RFGraph[shift={(5.48,0)},scale=.78]{}{}{}
    \node[font=\normalsize] at (7.25,0) {$\varnothing$};
  }

  \RFSheet{-.12}{1.65}{1}{%
    \RFGraph[shift={(1.30,0)},scale=.78]{1/4}{}{}
    \RFGraph[shift={(4.00,0)},scale=.78]{5/8}{}{}
    \RFGraph[shift={(6.70,0)},scale=.78]{7/8}{}{}
    \RFGraph[shift={(9.40,0)},scale=.78]{2/4}{}{}
    \node[font=\Large] at (11.05,0) {$\cdots$};
  }

  \RFSheet{.22}{3.10}{2}{%
    \RFGraph[shift={(1.30,0)},scale=.78]{1/4,1/5}{}{}
    \RFGraph[shift={(4.00,0)},scale=.78]{3/4,5/8}{}{}
    \RFGraph[shift={(6.70,0)},scale=.78]{1/2,7/8}{}{}
    \RFGraph[shift={(9.40,0)},scale=.78]{2/4,6/7}{}{}
    \node[font=\Large] at (11.05,0) {$\cdots$};
  }

  \node[font=\Large,inner sep=0pt] at (5.55,4.47) {$\vdots$};
  \node[font=\Large,inner sep=0pt] at (12.85,4.47) {$\vdots$};

  \RFSheet{-.18}{5.55}{10}{%
    \RFGraph[shift={(1.30,0)},scale=.78]{1/4,1/5,2/3,2/6,3/7,4/8,5/6,5/8,6/7,7/8}{}{}
    \RFGraph[shift={(4.00,0)},scale=.78]{1/2,1/4,2/3,3/4,3/7,4/8,5/6,5/8,6/7,7/8}{}{}
    \RFGraph[shift={(6.70,0)},scale=.78]{1/2,1/4,1/5,2/3,2/6,3/4,5/6,5/8,6/7,7/8}{}{}
    \RFGraph[shift={(9.40,0)},scale=.78]{1/4,1/5,2/3,2/4,2/6,4/8,5/6,5/8,6/7,7/8}{}{}
    \node[font=\Large] at (11.05,0) {$\cdots$};
  }

  \RFSheet{.16}{7.00}{11}{%
    \RFGraph[shift={(1.30,0)},scale=.78]{1/4,1/5,2/3,2/6,3/4,3/7,4/8,5/6,5/8,6/7,7/8}{}{}
    \RFGraph[shift={(4.00,0)},scale=.78]{1/2,1/4,2/3,2/5,3/4,3/7,4/8,5/6,5/8,6/7,7/8}{}{}
    \RFGraph[shift={(6.70,0)},scale=.78]{1/2,1/4,1/5,2/3,2/6,3/4,4/7,5/6,5/8,6/7,7/8}{}{}
    \RFGraph[shift={(9.40,0)},scale=.78]{1/4,1/5,2/3,2/4,2/6,3/7,4/8,5/6,5/8,6/7,7/8}{}{}
    \node[font=\Large] at (11.05,0) {$\cdots$};
  }

  \RFSheet{-.06}{8.45}{12}{%
    \RFGraph[shift={(1.30,0)},scale=.78]{1/2,1/4,1/5,2/3,2/6,3/4,3/7,4/8,5/6,5/8,6/7,7/8}{}{}
    \RFGraph[shift={(4.00,0)},scale=.78]{1/2,1/4,1/6,2/3,2/5,3/4,3/7,4/8,5/6,5/8,6/7,7/8}{}{}
    \RFGraph[shift={(6.70,0)},scale=.78]{1/2,1/4,1/5,2/3,2/6,3/4,3/8,4/7,5/6,5/8,6/7,7/8}{}{}
    \RFGraph[shift={(9.40,0)},scale=.78]{1/3,1/4,1/5,2/3,2/4,2/6,3/7,4/8,5/6,5/8,6/7,7/8}{}{}
    \draw[black,line width=1.0pt] (1.30,0) circle[radius=1.40cm];
    \node[font=\normalsize,inner sep=0pt] at (2.70,1.05) {$\sigma$};
    \node[font=\Large] at (11.05,0) {$\cdots$};
  }
\end{tikzpicture}%
}
\caption{An illustration of the simplicial complex $\mathcal{R}$, viewed as stacked ``levels.'' Each level contains shows a selection of $j$-faces for various values $j=12,11,\dotsc,0$. A facet $\sigma$ is circled for emphasis.}
\label{fig:down-two-up-two-facets}
\end{subfigure}

\par\vspace{8pt}

\begin{subfigure}{\linewidth}
\centering
\resizebox{\linewidth}{!}{%
\begin{tikzpicture}[line cap=round,line join=round]
  \path[use as bounding box] (-.25,-1.19) rectangle (14.05,1.30);
  \RFGraph[shift={(1.42,0)},scale=.94]{\MoveRetained}{\MoveSelected}{}
  \RFGraph[shift={(6.90,0)},scale=.94]{\MoveRetained}{}{}
  \RFGraph[shift={(12.38,0)},scale=.94]{\MoveRetained}{}{\MoveAdded}
  \draw[rf arrow] (2.72,0)--node[rf arrow label]{down $2$}(5.60,0);
  \draw[rf arrow] (8.20,0)--node[rf arrow label]{up $2$}(11.08,0);
\end{tikzpicture}%
}
\caption{An illustration of a non-holding step of the walk $\downup_2$ starting from $\sigma$: delete two edges (red), obtaining a $10$-face, then add two different edges (blue).}
\label{fig:down-two-up-two-nonholding}
\end{subfigure}

\par\vspace{8pt}

\begin{subfigure}{\linewidth}
\centering
\resizebox{\linewidth}{!}{%
\begin{tikzpicture}[line cap=round,line join=round]
  \path[use as bounding box] (-.25,-1.19) rectangle (14.05,1.30);
  \RFGraph[shift={(1.42,0)},scale=.94]{\HoldRetained}{\HoldSelected}{}
  \RFGraph[shift={(6.90,0)},scale=.94]{\HoldRetained}{}{}
  \RFGraph[shift={(12.38,0)},scale=.94]{\HoldRetained}{}{\HoldAdded}
  \draw[rf arrow] (2.72,0)--node[rf arrow label]{down $2$}(5.60,0);
  \draw[rf arrow] (8.20,0)--node[rf arrow label]{up $2$}(11.08,0);
\end{tikzpicture}%
}
\caption{An illustration of a holding step of the walk $\downup_2$ starting from $\sigma$: delete two edges (red), obtaining a $10$-face, then restore the same two edges (blue).}
\label{fig:down-two-up-two-holding}
\end{subfigure}

\caption{This figure contains an illustration of the simplicial complex $\mathcal{R}$ consisting of labelled cubic graphs on eight vertices in addition to a diagrammatic illustration of steps of the down-$2$-up-$2$ walk $\downup_2$ on the $19,355$ facets of $\mathcal{R}$.}
\label{fig:down-two-up-two}
\endgroup
\end{figure}

\clearpage
\input{figures/fig2-links.tex}
\clearpage

\section{Background and notation}\label{sec:background}

We begin by reviewing some general background on Markov chains.

Let $\mathcal{Y}$ be a finite set. For any positive weight function $h\in\R^{\mathcal{Y}}$, we write $\ell^{2}(\mathcal{Y},h)$ for the Hilbert space of real-valued functions on $\mathcal{Y}$ with inner product $\langle f,g\rangle_{h}=\sum_{x\in\mathcal{Y}}h(x)f(x)g(x)$ and induced norm $\|f\|_{h}^{2}=\langle f,f\rangle_{h}$. When $h$ is also a probability measure, these are $\langle f,g\rangle_{h}=\E_{x\sim h}(f(x)g(x))$ and $\|f\|_{h}^{2}=\E_{x\sim h}(f(x)^{2})$. We also use the notation $\|f\|_{2}^{2}$ for the ordinary squared $\ell^{2}$-norm of $f\in\R^{\mathcal{Y}}$.

Let $\mathbf{Q}$ be a Markov kernel on a finite set $\mathcal{Y}$ with at least two elements, and let $\pi$ be a stationary probability measure satisfying $\pi(y)>0$ for every $y\in\mathcal{Y}$. We regard $\mathbf{Q}$ as both a linear operator on functions, acting by left multiplication, and a transition kernel on probability measures, acting by right multiplication. For $f\in\ell^{2}(\mathcal{Y},\pi)$, the \emph{Dirichlet energy} of $f$ is given by
    \begin{align*}
        \mathcal{E}_{\mathbf{Q}}(f,f) &= \frac{1}{2}\sum_{x,y\in\mathcal{Y}}\pi(x)\mathbf{Q}(x,y)(f(x)-f(y))^{2} = \langle f,(\mathbf{I}-\mathbf{Q})f\rangle_{\pi},
    \end{align*}
where $\mathbf{I}$ is the identity operator. For any probability measure $\mu$ on $\mathcal{Y}$ and $f\in\R^{\mathcal{Y}}$, we define the \emph{variance} of $f$ according to
    \begin{align*}
        \Var_{x\sim\mu}(f(x)) &= \sum_{x\in\mathcal{Y}}\mu(x)(f(x)-\E_{y\sim\mu}(f(y)))^{2}.
    \end{align*}
When $\mathbf{Q}$ is reversible with respect to $\pi$, it is self-adjoint with respect to $\langle\cdot,\cdot\rangle_{\pi}$, and hence admits an orthonormal basis of real eigenfunctions in $\ell^{2}(\mathcal{Y},\pi)$. We denote its eigenvalues, counted with multiplicity, by
    \begin{align*}
        1 &=\lambda_{1}(\mathbf{Q})\ge\lambda_{2}(\mathbf{Q})\ge\cdots\ge\lambda_{|\mathcal{Y}|}(\mathbf{Q})\ge-1.
    \end{align*}
For a reversible kernel $\mathbf{Q}$, we call $1-\lambda_{2}(\mathbf{Q})$ its \emph{spectral gap}. We will use the following two variational characterizations of $1-\lambda_{2}(\mathbf{Q})$. First,
    \begin{align*}
        1-\lambda_{2}(\mathbf{Q}) &= \inf_{\substack{f\in\ell^{2}(\mathcal{Y},\pi)\\ \Var_{x\sim\pi}(f(x))>0}} \frac{\mathcal{E}_{\mathbf{Q}}(f,f)}{\Var_{x\sim\pi}(f(x))}.
    \end{align*}
Second,
    \begin{align}\label{eq:spectral-gap-variational-characterization-2}
        1-\lambda_{2}(\mathbf{Q}) &= \inf_{\substack{f\in\ell^{2}(\mathcal{Y},\pi)\\ \E_{x\sim\pi}(f(x))=0,\ f\ne0}} \frac{\mathcal{E}_{\mathbf{Q}}(f,f)}{\|f\|_{\pi}^{2}}.
    \end{align}
For additional background on the various connections between spectral theory and Markov chain mixing times, we refer the reader to, e.g.,~\cite{levin2017markov}. 

For probability measures $\mu,\nu$ on $\mathcal{Y}$, we define their \emph{relative entropy} $D(\mu\,\Vert\,\nu)$ according to
    \begin{align*}
        D(\mu\,\Vert\,\nu) &=\sum_{\substack{y\in\mathcal{Y}\\\mu(y)>0}}\mu(y)\log\frac{\mu(y)}{\nu(y)}.
    \end{align*}
In the preceding equation, we set $D(\mu\,\Vert\,\nu)=+\infty$ if there exists $y\in\mathcal{Y}$ such that $\nu(y) = 0$ and $\mu(y) > 0$. Note that $D(\mu\,\Vert\,\nu) = 0$ if and only if $\mu = \nu$.

We will make use of the following two facts regarding relative entropy. The first is known as the \emph{weak triangle inequality} for relative entropy; see also~{\cite[Cor.~A.3]{altschuler2026shifted}}.

\begin{lemma}\label{lem:weak-triangle-inequality}
    Let $\mu,\mu',\nu$ be probability measures on $\mathcal{Y}$. Suppose that $\nu$ is strictly positive on all of $\mathcal{Y}$, and moreover that $\mu(y)>0$ implies $\mu'(y)>0$ for each $y\in\mathcal{Y}$. Then
        \begin{align*}
            D(\mu\,\Vert\,\nu) &\le 2 D(\mu\,\Vert\,\mu') + \log\Bigl(1+\Bigl\|\frac{\mu'}{\nu}-\mathbf{1}\Bigr\|_{\nu}^{2} \Bigr).
        \end{align*}
\end{lemma}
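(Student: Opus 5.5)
We prove the inequality by splitting $\log(\mu/\nu)=\log(\mu/\mu')+\log(\mu'/\nu)$ and controlling the second piece with Jensen and Cauchy--Schwarz. First dispose of degenerate cases. If $D(\mu\,\Vert\,\mu')=+\infty$ there is nothing to prove. The hypothesis that $\mu(y)>0$ implies $\mu'(y)>0$ makes $D(\mu\,\Vert\,\mu')$ finite. The positivity of $\nu$ makes $D(\mu\,\Vert\,\nu)$ finite. Throughout, sums run over $S=\{y:\mu(y)>0\}$, on which $\mu'$ and $\nu$ are positive.

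Write
\begin{align*}
D(\mu\,\Vert\,\nu) = \sum_{y\in S}\mu(y)\log\frac{\mu(y)}{\mu'(y)} + \sum_{y\in S}\mu(y)\log\frac{\mu'(y)}{\nu(y)} = D(\mu\,\Vert\,\mu') + \E_{\mu}\Bigl[\log\frac{\mu'}{\nu}\Bigr].
\end{align*}
Now apply Jensen's inequality (concavity of $\log$) to the second term:
\begin{align*}
\E_{\mu}\Bigl[\log\frac{\mu'}{\nu}\Bigr] = \frac12\E_{\mu}\Bigl[\log\frac{\mu'^2}{\nu^2}\Bigr] \le \frac12\log \E_{\mu}\Bigl[\frac{\mu'^{2}}{\nu^{2}}\Bigr].
\end{align*}
Changing measure to $\mu'$ gives
\begin{align*}
\E_{\mu}\Bigl[\frac{\mu'^2}{\nu^2}\Bigr]=\sum_{y\in S}\mu'(y)\,\frac{\mu(y)}{\mu'(y)}\,\frac{\mu'(y)^2}{\nu(y)^2}.
\end{align*}
This form mixes $\mu/\mu'$ with $\mu'/\nu$, and the two factors must be decoupled. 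That decoupling is the main obstacle, since it must produce exactly the constant $2$ in front of $D(\mu\,\Vert\,\mu')$.

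The cleaner route is the Donsker--Varadhan variational formula: for any function $g$,
\begin{align*}
\E_\mu[g]\le D(\mu\,\Vert\,\rho)+\log\E_\rho[e^{g}].
\end{align*}
Restart from $D(\mu\,\Vert\,\nu)=D(\mu\,\Vert\,\mu')+\E_\mu[\log(\mu'/\nu)]$. Apply Donsker--Varadhan with reference measure $\mu'$ and $g=\log(\mu'/\nu)$ on $S$:
\begin{align*}
\E_\mu\Bigl[\log\frac{\mu'}{\nu}\Bigr]\le D(\mu\,\Vert\,\mu')+\log\sum_{y}\mu'(y)\frac{\mu'(y)}{\nu(y)}.
\end{align*}
The restriction to $S$ only decreases the sum, which is harmless. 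Finally,
\begin{align*}
\sum_y \frac{\mu'(y)^2}{\nu(y)}=\E_\nu\Bigl[\Bigl(\frac{\mu'}{\nu}\Bigr)^2\Bigr]=1+\Bigl\|\frac{\mu'}{\nu}-\mathbf 1\Bigr\|_\nu^2,
\end{align*}
since $\E_\nu[\mu'/\nu]=1$. Summing the two pieces gives the claimed bound with the factor $2$.

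The only care needed is justifying Donsker--Varadhan when $\mu'$ has zeros outside $S$. This follows from the Gibbs inequality $D(\mu\,\Vert\,\tilde\rho)\ge0$ applied to the tilted probability measure $\tilde\rho\propto\mu' e^{g}\mathbf 1_S$, which is well defined because $S$ is nonempty and $\mu'>0$ on $S$.
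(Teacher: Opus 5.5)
Your final argument is correct and is essentially the Jensen proof the paper alludes to but omits. The Donsker--Varadhan/Gibbs step with the tilted measure $\tilde\rho\propto \mu'^{2}\nu^{-1}\mathbf{1}_S$ is exactly Jensen's inequality applied in the form $D(\mu\,\Vert\,\nu)-2D(\mu\,\Vert\,\mu')=\E_{\mu}[\log(\mu'^{2}/(\mu\nu))]\le\log\sum_{y\in S}\mu'(y)^{2}/\nu(y)\le\log(1+\|\mu'/\nu-\mathbf{1}\|_{\nu}^{2})$. The abandoned first attempt (Jensen on $\mu'^{2}/\nu^{2}$) and the vacuous case $D(\mu\,\Vert\,\mu')=+\infty$ can simply be deleted.
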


The proof of~\cref{lem:weak-triangle-inequality} is straightforward and uses Jensen's inequality; we do not include it here. The second fact is related to the so-called chain rule for relative entropy (see, e.g.,~\cite[Thm.~2.5.3]{cover2006elements}). 

\begin{lemma}\label{lem:entropy-chain-rule}
    Let $\mathbf{Q},\mathbf{Q}'$ be Markov kernels on $\mathcal{Y}$. Let $\mu,\nu$ be probability measures on $\mathcal{Y}$. Then
        \begin{align*}
            D(\mu \mathbf{Q}\,\Vert\,\nu\mathbf{Q}') &\le D(\mu\,\Vert\,\nu) + \sum_{\substack{x\in\mathcal{Y} \\ \mu(x) >0 }}\mu(x) D(\mathbf{Q}(x,\cdot)\,\Vert\,\mathbf{Q}'(x,\cdot)),
        \end{align*} 
    where the right-hand side is interpreted as $+\infty$ if $D(\mathbf{Q}(x,\cdot)\,\Vert\,\mathbf{Q}'(x,\cdot))=+\infty$ for some $x\in\mathcal{Y}$ such that $\mu(x)>0$.
\end{lemma}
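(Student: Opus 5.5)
We prove \cref{lem:entropy-chain-rule} by passing to the joint law on $\mathcal{Y}\times\mathcal{Y}$ and using the data processing inequality (monotonicity of relative entropy under marginalization). Define joint measures
\begin{align*}
    \Gamma(x,y) &= \mu(x)\mathbf{Q}(x,y), & \Gamma'(x,y) &= \nu(x)\mathbf{Q}'(x,y).
\end{align*}
The second marginals of $\Gamma$ and $\Gamma'$ are $\mu\mathbf{Q}$ and $\nu\mathbf{Q}'$. Marginalization is a Markov kernel, so data processing gives $D(\mu\mathbf{Q}\,\Vert\,\nu\mathbf{Q}')\le D(\Gamma\,\Vert\,\Gamma')$. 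If one prefers not to cite data processing, the same inequality follows from the log-sum inequality. For each fixed $y$, apply it to the sums $\sum_x\Gamma(x,y)$ and $\sum_x\Gamma'(x,y)$, then sum over $y$.

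Next we compute $D(\Gamma\,\Vert\,\Gamma')$ exactly by the chain rule. Consider the pairs $(x,y)$ with $\Gamma(x,y)>0$, so that $\mu(x)>0$ and $\mathbf{Q}(x,y)>0$. On these pairs, $\log\frac{\Gamma}{\Gamma'}$ splits as
\begin{align*}
    \log\frac{\Gamma(x,y)}{\Gamma'(x,y)} &= \log\frac{\mu(x)}{\nu(x)}+\log\frac{\mathbf{Q}(x,y)}{\mathbf{Q}'(x,y)}.
\end{align*}
Integrating against $\Gamma$, the first term gives $D(\mu\,\Vert\,\nu)$, because $\sum_y\mathbf{Q}(x,y)=1$. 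The second term gives $\sum_{x:\mu(x)>0}\mu(x)D(\mathbf{Q}(x,\cdot)\,\Vert\,\mathbf{Q}'(x,\cdot))$. The claimed bound follows.

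The main care point is the bookkeeping of infinite values and zero denominators. Suppose the right-hand side is finite. Then $\mu\ll\nu$, and $\mathbf{Q}(x,\cdot)\ll\mathbf{Q}'(x,\cdot)$ whenever $\mu(x)>0$. Together these give $\Gamma\ll\Gamma'$, so the splitting above is legitimate and every term is finite. If the right-hand side is $+\infty$, there is nothing to prove. The convention stated in the lemma is exactly this case: a single $x$ with $\mu(x)>0$ and infinite conditional divergence already makes the right-hand side $+\infty$. Finally, the data processing step is valid whenever $\Gamma\ll\Gamma'$, which ensures $\mu\mathbf{Q}\ll\nu\mathbf{Q}'$.
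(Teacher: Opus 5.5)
Your argument is correct and is essentially the approach the paper has in mind. The paper omits the proof and says only that it follows from the log-sum inequality. Your fiberwise log-sum (data processing) bound $D(\mu\mathbf{Q}\,\Vert\,\nu\mathbf{Q}')\le D(\Gamma\,\Vert\,\Gamma')$, followed by the exact chain-rule splitting of $D(\Gamma\,\Vert\,\Gamma')$, is precisely that argument, and your treatment of the infinite cases via $\Gamma\ll\Gamma'$ is sound.
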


The proof of~\cref{lem:entropy-chain-rule} is straightforward and uses the log-sum inequality; we do not include it here.

For probability measures $\mu,\nu$ on $\mathcal{Y}$, define their total variation distance $\|\mu-\nu\|_{\mathrm{TV}}$ by
    \begin{align*}
        \|\mu-\nu\|_{\mathrm{TV}} &= \frac{1}{2}\sum_{x\in\mathcal{Y}}|\mu(x)-\nu(x)| = \max_{A\subseteq\mathcal{Y}}|\mu(A)-\nu(A)|.
    \end{align*}
For $0<\varepsilon<1$, the \emph{total variation mixing time} $t_{\mathrm{mix}}(\mathbf{Q},\varepsilon)$ of $\mathbf{Q}$ is given by
    \begin{align*}
        t_{\mathrm{mix}}(\mathbf{Q},\varepsilon) &= \min\Bigl\{s\in\mathbb{Z}_{\ge0}: \max_{x\in\mathcal{Y}} \|\mathbf{Q}^{s}(x,\cdot)-\pi\|_{\mathrm{TV}} \le\varepsilon\Bigr\}.
    \end{align*}
If this set is empty, we set $t_{\mathrm{mix}}(\mathbf{Q},\varepsilon)=+\infty$. We write
    \begin{align}\label{eq:mixing-time}
        t_{\mathrm{mix}}(\mathbf{Q}) &= t_{\mathrm{mix}}(\mathbf{Q},\varepsilon=1/4).
    \end{align}

Next let $\mathcal{X}$ be a pure $m$-dimensional simplicial complex, with $m\ge1$. For $0\le j\le m$, we equip $\mathcal{X}(j)$ with a probability measure $\pi_{j}$. We start by letting $\pi_{m}$ denote the uniform probability measure on the facets. Then, for $j=m-1,\dotsc,0$, we write
    \begin{align}\label{eq:induced-probability-measure}
        \pi_{j}(\sigma) &= \frac{1}{j+1} \sum_{\substack{\tau\in\mathcal{X}(j+1)\\ \sigma\subseteq\tau}} \pi_{j+1}(\tau)
    \end{align}
for each $\sigma\in\mathcal{X}(j)$. In other words, after iterating~\cref{eq:induced-probability-measure}, $\pi_{j}$ is the distribution obtained by choosing a facet uniformly at random and then choosing one of its $j$-faces uniformly at random.

Next fix $1\le j\le m$. We define the $j$-th \emph{down operator} $\down_{j}:\ell^{2}(\mathcal{X}(j),\pi_{j})\to\ell^{2}(\mathcal{X}(j-1),\pi_{j-1})$ as follows. Let $f\in\ell^{2}(\mathcal{X}(j),\pi_{j})$ and $\sigma\in\mathcal{X}(j-1)$. We write
    \begin{align*}
        (\down_{j} f)(\sigma) &= \frac{1}{j}\sum_{\substack{\tau\in\mathcal{X}(j)\\ \sigma\subseteq\tau}} \frac{\pi_{j}(\tau)}{\pi_{j-1}(\sigma)}f(\tau).
    \end{align*}

Fix $0\le j\le m-1$. We define the $j$-th \emph{up operator} $\up_{j}:\ell^{2}(\mathcal{X}(j),\pi_{j})\to\ell^{2}(\mathcal{X}(j+1),\pi_{j+1})$ as follows. Let $f\in\ell^{2}(\mathcal{X}(j),\pi_{j})$ and $\sigma\in\mathcal{X}(j+1)$. We write
    \begin{align*}
        (\up_{j} f)(\sigma) &= \frac{1}{j+1} \sum_{\substack{\tau\in\mathcal{X}(j)\\ \tau\subseteq\sigma}} f(\tau).
    \end{align*}

The following proposition establishes the adjoint relationship between the up and down operators. Its proof is routine and is not presented here (see, e.g.,~\cite[Prop.~2.11]{oppenheim2018local} or~\cite[Lem.~3.6]{kaufman2020high}).

\begin{proposition}\label{prop:up-down-adjoint}
    Let $j$ be an integer with $0\le j\le m-1$, and let $f\in\ell^{2}(\mathcal{X}(j),\pi_{j})$ and $g\in\ell^{2}(\mathcal{X}(j+1),\pi_{j+1})$. Then the operators $\up_{j}$ and $\down_{j+1}$ are adjoint with respect to the inner products induced by $\pi_{j}$ and $\pi_{j+1}$; specifically,
        \begin{align*}
            \langle \up_{j} f,g\rangle_{\pi_{j+1}} &= \langle f,\down_{j+1}g\rangle_{\pi_{j}}.
        \end{align*}
\end{proposition}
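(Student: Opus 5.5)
We verify the identity directly by expanding both sides as double sums over incident pairs $(\tau,\sigma)$ with $\tau\in\mathcal{X}(j)$, $\sigma\in\mathcal{X}(j+1)$, and $\tau\subseteq\sigma$. The only input needed is the defining recursion \cref{eq:induced-probability-measure} for $\pi_{j}$.

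For the left-hand side, we substitute the definition of $\up_{j}$:
\begin{align*}
    \langle \up_{j} f,g\rangle_{\pi_{j+1}} &= \sum_{\sigma\in\mathcal{X}(j+1)}\pi_{j+1}(\sigma)g(\sigma)\frac{1}{j+1}\sum_{\substack{\tau\in\mathcal{X}(j)\\ \tau\subseteq\sigma}}f(\tau).
\end{align*}

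For the right-hand side, we use the definition of $\down_{j+1}$, in which the normalization is $\frac{1}{j+1}$ and the ratio is $\pi_{j+1}(\sigma)/\pi_{j}(\tau)$. Multiplying by $\pi_{j}(\tau)$ cancels the denominator, giving
\begin{align*}
    \langle f,\down_{j+1}g\rangle_{\pi_{j}} &= \sum_{\tau\in\mathcal{X}(j)}f(\tau)\frac{1}{j+1}\sum_{\substack{\sigma\in\mathcal{X}(j+1)\\ \tau\subseteq\sigma}}\pi_{j+1}(\sigma)g(\sigma).
\end{align*}

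Both expressions are the finite sum of $\frac{1}{j+1}\pi_{j+1}(\sigma)f(\tau)g(\sigma)$ over the same set of incident pairs, so interchanging the order of summation shows they are equal.

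The only point needing care is that the division by $\pi_{j}(\tau)$ in $\down_{j+1}$ is legitimate, i.e.\ that $\pi_{j}(\tau)>0$ for every $\tau\in\mathcal{X}(j)$. This follows from purity. Every face lies in some facet, and $\pi_{m}$ is uniform, hence positive on every facet. Iterating \cref{eq:induced-probability-measure} downward then shows that each $\pi_{j}$ is positive on all of $\mathcal{X}(j)$.

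There is no genuine obstacle. The statement is bookkeeping once positivity is checked, and no property of the particular complex $\mathcal{R}_{\mathbf{d}}$ is used.
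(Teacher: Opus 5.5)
Your proof is correct and is exactly the routine verification the paper omits (it defers to Oppenheim and to Kaufman--Oppenheim): both inner products reduce to the sum of $\frac{1}{j+1}\pi_{j+1}(\sigma)f(\tau)g(\sigma)$ over incident pairs $\tau\subseteq\sigma$. Your use of purity to get $\pi_{j}>0$ on $\mathcal{X}(j)$ correctly justifies that $\down_{j+1}$ is well defined, and this positivity is the only place the recursion for $\pi_{j}$ enters, since the summation exchange works for any strictly positive weights.
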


For $1\le k\le m$, the down-$k$-up-$k$ walk introduced in~\cref{sec:introduction} has the operator representation
    \begin{align*}
        \downup_{k} &= \up_{m-1}\up_{m-2}\cdots\up_{m-k}\, \down_{m-k+1}\down_{m-k+2}\cdots\down_{m}.
    \end{align*}
This defines a Markov kernel on $\mathcal{X}(m)$ which, by~\cref{prop:up-down-adjoint}, is reversible with respect to the uniform measure $\pi_{m}$ and has nonnegative eigenvalues (see, e.g.,~\cite{alev2020improved} for further background). 

For $\sigma\in\mathcal{X}(j)$ with $0\le j\le m$, let $\pi^{\sigma}_{m-j}$ denote the uniform probability measure on the facets of the link $\mathcal{X}^{(\sigma)}$. We extend this measure to the other face levels via~\cref{eq:induced-probability-measure}, writing $\pi^{\sigma}_{\ell}$ for the measure on $\mathcal{X}^{(\sigma)}(\ell)$.

For $\sigma\in\mathcal{X}(j)$ with $0\le j\le m-2$, the link walk $\downup^{(\sigma)}$ introduced in~\cref{sec:introduction} has kernel
    \begin{align}\label{eq:link-walk-kernel}
        \downup^{(\sigma)}(\tau,\rho) &=\begin{cases}
            \frac{\pi_{2}^{\sigma}(\{\tau,\rho\})}{2\pi_{1}^{\sigma}(\tau)} &\text{if } \{\tau,\rho\}\in\mathcal{X}^{(\sigma)}(2),\\
            0 &\text{otherwise},
        \end{cases}
    \end{align} 
for each $\tau,\rho\in\mathcal{X}^{(\sigma)}(1)$. This walk is reversible with respect to the measure $\pi^{\sigma}_{1}$, but it need not be irreducible in general.

We will use the following theorem, due to Alev and Rao, which provides a local-to-global-type bound on the spectral gap of $\downup_{k}$. We have simplified its statement somewhat to apply it directly to the present setting.

\begin{theorem}[Alev--Rao~{\cite[Cor.~4.2]{alev2025expanderizing}}]\label{thm:alev-rao}
    Let $\mathcal{X}$ be a pure $m$-dimensional simplicial complex with at least two facets. For $0\le j\le m-2$, define
        \begin{align*}
            \gamma_{j} &= \max_{\sigma\in\mathcal{X}(j)} \lambda_{2}(\downup^{(\sigma)}).
        \end{align*}
    Let $k$ be an integer with $1\le k\le m$. Then the down-$k$-up-$k$ walk on $\mathcal{X}(m)$ satisfies
        \begin{align}\label{eq:local-to-global-eigenvalue}
            1-\lambda_{2}(\downup_{k}) &\ge \frac{k}{m} \prod_{j=0}^{m-k-1}(1-\gamma_{j}).
        \end{align}
\end{theorem}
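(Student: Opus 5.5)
The plan is to reduce the spectral gap of $\downup_{k}$ to a contraction estimate for the composite down operator, and to obtain that estimate by a Garland-type decomposition into link walks. Write $\down^{(k)}=\down_{m-k+1}\cdots\down_{m}$ and $\up^{(k)}=\up_{m-1}\cdots\up_{m-k}$. By \cref{prop:up-down-adjoint}, $\downup_{k}=(\down^{(k)})^{*}\down^{(k)}$. Each down operator maps constants to constants and preserves means. Hence, by \cref{eq:spectral-gap-variational-characterization-2},
\begin{align*}
    \lambda_{2}(\downup_{k}) &= \sup_{\E_{\pi_m} f=0,\ f\ne0}\frac{\|\down^{(k)}f\|_{\pi_{m-k}}^{2}}{\|f\|_{\pi_m}^{2}}.
\end{align*}
It therefore suffices to prove
\begin{align*}
    \|\down^{(k)}f\|_{\pi_{m-k}}^{2} &\le \Bigl(1-\frac{k}{m}\prod_{j=0}^{m-k-1}(1-\gamma_{j})\Bigr)\|f\|_{\pi_m}^{2}
\end{align*}
for all mean-zero $f$.

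The local input is the standard decomposition of the non-lazy up-down walk at level $i+1$ as an average of link walks. For $0\le i\le m-2$ and $g\in\ell^{2}(\mathcal{X}(i+1),\pi_{i+1})$, I would verify directly from the definitions and \cref{eq:link-walk-kernel} that
\begin{align*}
    \frac{i+2}{i+1}\langle \down_{i+2}g,\down_{i+2}g\rangle_{\pi_{i+2}}-\frac{1}{i+1}\|g\|^{2}_{\pi_{i+1}}
    &= \E_{\sigma\sim\pi_{i}}\langle g_{\sigma},\downup^{(\sigma)}g_{\sigma}\rangle_{\pi^{\sigma}_{1}},
\end{align*}
where $g_{\sigma}(v)=g(\sigma\cup\{v\})$ is the localization of $g$ to the link of $\sigma$. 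In each link I split $g_{\sigma}$ into its $\pi_1^{\sigma}$-mean and a mean-zero part. The mean of $g_{\sigma}$ is exactly $(\down_{i+1}g)(\sigma)$. Applying the definition of $\gamma_{i}$ to the mean-zero part gives the Garland inequality
\begin{align*}
    \frac{i+2}{i+1}\|\down_{i+2}g\|^{2}-\frac{1}{i+1}\|g\|^{2} &\le \gamma_{i}\|g\|^{2}+(1-\gamma_{i})\|\down_{i+1}g\|^{2}.
\end{align*}
Here I used $\E_{\sigma}\|g_{\sigma}\|^{2}_{\pi_1^\sigma}=\|g\|^{2}_{\pi_{i+1}}$. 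This is the only place the links enter. Note that the disconnected links of switch complexes are harmless here, since $\gamma_{i}$ is simply allowed to equal $1$.

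Next I would iterate this recursion. Set $a_{\ell}=\|\down_{\ell+1}\cdots\down_{m}f\|^{2}_{\pi_\ell}$ for $0\le\ell\le m$, so that $a_{m}=\|f\|^{2}$ and $a_{0}=0$ for mean-zero $f$. The inequality then reads
\begin{align*}
    (i+2)a_{i+2}-\bigl(1+(i+1)\gamma_i\bigr)a_{i+1}\le (i+1)(1-\gamma_i)\,a_{i},
\end{align*}
which controls the differences $a_{i+1}-a_{i}$ from below. Specifically, writing $b_{i}=(i+1)a_{i+1}-i\,a_{i}$, the differences satisfy a recursion of the form $b_{i+1}\ge(1-\gamma_{i})b_{i}$ up to telescoping terms. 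Starting from $b_{0}=a_{1}\ge0$, this gives $a_{\ell+1}-a_{\ell}\ge\Gamma_{\ell}\,\cdot(\text{positive quantity})$ with $\Gamma_{\ell}=\prod_{j<\ell}(1-\gamma_{j})$. Summing the increments from $\ell=m-k$ up to $m$ and normalizing by the total $a_m-a_0=\|f\|^2$ should yield
\begin{align*}
    a_{m}-a_{m-k} &\ge \frac{k}{m}\,\Gamma_{m-k}\,a_{m},
\end{align*}
which is exactly the required contraction. The factor $k/m$ comes from comparing a sum of $k$ top increments with the total of $m$ increments, using that the normalized increments are nondecreasing in $\ell$ after the weights $\Gamma_\ell$ are accounted for.

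I expect the main obstacle to be this last bookkeeping step. The recursion must be solved so as to produce exactly the product $\prod_{j=0}^{m-k-1}(1-\gamma_{j})$ and the prefactor $k/m$, without losing extra factors in the levels below $m-k$. The first thing I would try is the substitution above, which turns the second-order recursion in $a_\ell$ into a monotonicity statement for rescaled increments. If that does not close, I would instead follow Alev--Lau's route, which proves the corresponding statement for the lazy down-up walks one level at a time and composes the resulting bounds.
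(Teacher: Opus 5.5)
The paper gives no proof of this statement; it is quoted from Alev--Rao. So your argument has to stand on its own, and it has a genuine gap in the recursion.

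\textbf{Where it breaks.} First, the Garland identity is mistyped. Since $g$ lives on $\mathcal{X}(i+1)$, the left-hand side must be $\frac{i+2}{i+1}\|\up_{i+1}g\|^{2}_{\pi_{i+2}}-\frac{1}{i+1}\|g\|^{2}_{\pi_{i+1}}$, the quadratic form of the non-lazy part of $\down_{i+2}\up_{i+1}$. With that correction your Garland inequality is fine.

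The real problem comes next. Write $f_{\ell}=\down_{\ell+1}\cdots\down_{m}f$ and take $g=f_{i+1}$. You then replace $\|\up_{i+1}f_{i+1}\|^{2}$ by $a_{i+2}$. In fact $\|\up_{i+1}\down_{i+2}f_{i+2}\|^{2}\le a_{i+2}$, which is the wrong direction, and the resulting linear recursion is false. Take $K_{4}$ as a pure $2$-dimensional complex, and let $f$ equal $+1$ on the edges $12,34$, $-1$ on $13,24$, and $0$ on $14,23$. Then $\down_{2}f=0$, so $a_{0}=a_{1}=0<a_{2}$, yet your inequality at $i=0$ reads $2a_{2}\le0$.

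What Garland legitimately gives for a single $f$ comes from $\langle\up_{i+1}f_{i+1},f_{i+2}\rangle_{\pi_{i+2}}=a_{i+1}$ and Cauchy--Schwarz. This yields the ratio recursion $1-\frac{a_{i+1}}{a_{i+2}}\ge\frac{i+1}{i+2}(1-\gamma_{i})(1-\frac{a_{i}}{a_{i+1}})$, in which every step up consumes one more $\gamma_{i}$. Using only $\gamma_{0},\dots,\gamma_{m-k-1}$, it gives $a_{m-k}\le\bigl(1-\frac{1}{m-k+1}\prod_{j=0}^{m-k-1}(1-\gamma_{j})\bigr)a_{m}$. This falls short of the target by the factor $k(m-k+1)/m$. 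Recovering that factor from levels above $m-k$ would need $\gamma_{m-k},\dots,\gamma_{m-2}$, which the statement does not allow.

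This is exactly the paper's situation. For switch complexes $\gamma_{m-2}=1$. Moreover, every $(m-1)$-face lies in a unique facet, so $\down_{m}$ is an isometry and $a_{m}=a_{m-1}$ for every $f$. Hence the increments are not nondecreasing. Any level-by-level scheme, including composing one-level Alev--Lau bounds, either loses the factor above or picks up the vanishing factor $1-\gamma_{m-2}$.

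\textbf{The missing idea.} Compare each level with the top level rather than with the next one. Fix $0\le t\le m-k-1$ and $\tau\in\mathcal{X}(t)$. Set $M=m-t\ge2$ and $h(\eta')=f(\tau\cup\eta')$ on the facets of the link.
\begin{itemize}
\item Let $D$ push link-facet functions to link vertices, so $(Dh)(v)=f_{t+1}(\tau\cup\{v\})$ and $\E_{\pi^{\tau}_{M}}h=f_{t}(\tau)$.
\item Its adjoint $U$ averages over the $M$ vertices of a facet, and $DU=\frac{1}{M}\mathbf{I}+\frac{M-1}{M}\downup^{(\tau)}$.
\item $UD$ and $DU$ have the same nonzero spectrum. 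Also $\lambda_{2}(DU)\ge0$, because $\downup^{(\tau)}$ has zero trace on at least $M$ vertices.
\item Therefore $\Var_{\pi^{\tau}_{1}}(Dh)\le\frac{1+(M-1)\gamma_{t}}{M}\Var_{\pi^{\tau}_{M}}(h)$.
\end{itemize}
Averaging over $\tau\sim\pi_{t}$ gives
\[
a_{t+1}-a_{t}\le\frac{1+(m-t-1)\gamma_{t}}{m-t}\,(a_{m}-a_{t}),\qquad\text{i.e.}\qquad a_{m}-a_{t+1}\ge\frac{m-t-1}{m-t}\,(1-\gamma_{t})\,(a_{m}-a_{t}).
\]
Iterate over $t=0,\dots,m-k-1$ with $a_{0}=0$. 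The prefactors telescope to $k/m$, giving exactly $a_{m}-a_{m-k}\ge\frac{k}{m}\prod_{j=0}^{m-k-1}(1-\gamma_{j})\,a_{m}$, and only the allowed links are used. Equivalently, you can induct on dimension through vertex links using the law of total variance.
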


We remark that in~\cref{eq:local-to-global-eigenvalue}, if $k=m$, then the product is empty and equals $1$; in this case, the walk $\downup_{k}$ samples directly from $\pi_{m}$ and has second eigenvalue $0$.

\section{Deep link expansion}\label{sec:deep-link-expansion}

In this section we study the links of the switch complex $\mathcal{R}_{\mathbf{d}}$, defined in~\cref{sec:introduction}. Our main goal is to prove the following theorem, which relates their spectral expansion to the codimension of the corresponding faces. We are able to guarantee that, beyond a cutoff codimension which is polynomial in $\Delta$, the link walks $\downup^{(\sigma)}$ become strong spectral expanders.

\begin{theorem}[Deep link expansion]\label{thm:link-eigenvalue-switch-complex}
    Let $2\le k\le m$ be an integer satisfying
        \begin{align*}
            k &\ge \max\{10\Delta^{2},\frac{1}{2}\Delta^{4}\}.
        \end{align*}
    Let $\sigma\in\mathcal{R}_{\mathbf{d}}(m-k)$. Then
        \begin{align}\label{eq:link-eigenvalue-switch-complex}
            \lambda_{2}(\downup^{(\sigma)}) &\le \frac{200\,\Delta^{2}}{(k-1)\sqrt{k}}.
        \end{align}
\end{theorem}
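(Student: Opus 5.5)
The plan is to reduce the link of $\sigma$ to a switch complex for a residual degree sequence, and then analyze that link walk directly. Write $\mathbf{r}=(r_v)$ with $r_v=d_v-\deg_\sigma(v)$; then $\sum_v r_v=2k$. A $k$-set $\tau$ is a facet of $\mathcal{R}_{\mathbf{d}}^{(\sigma)}$ exactly when $\tau$ is a realization of $\mathbf{r}$ on the complement graph $H=K_V\setminus\sigma$. So the link is the complex of $\mathbf{r}$-factors of a host graph $H$ whose complement has maximum degree at most $\Delta$.

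By~\cref{eq:link-walk-kernel}, $\downup^{(\sigma)}$ is a walk on the edges $e$ of $H$ that occur in some $\mathbf{r}$-factor. From $e$, it picks a uniform factor $\tau\ni e$ and then moves to a uniform other edge $e'\in\tau\setminus\{e\}$. Write $N$ for the number of $\mathbf{r}$-factors and $N(e)$, $N(e,e')$ for the number of factors containing $e$, respectively both $e$ and $e'$. Then
\[
\pi_1^\sigma(e)=\frac{N(e)}{kN},\qquad \downup^{(\sigma)}(e,e')=\frac{N(e,e')}{(k-1)N(e)}.
\]
The goal is to show this operator is close to the rank-one projection onto constants, up to an error of size $O(\Delta^2/(k\sqrt k))$.

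The key step is a \emph{pairwise near-independence} estimate obtained by switching arguments inside the link. For edges $e,e'$ that are vertex-disjoint and not too constrained, I want
\[
\frac{N(e,e')\,N}{N(e)N(e')}=\frac{k}{k-1}\bigl(1+O(\Delta^2/k)\bigr).
\]
For edges sharing a vertex, or with high residual degree endpoints, I only want a crude bound $N(e,e')N/(N(e)N(e'))=O(\Delta^2)$. Both would be proved by double counting $\mathbf{r}$-factors related by a single switch. The conditions $k\ge 10\Delta^2$ and $k\ge\frac12\Delta^4$ are what make the number of available switches at an edge about $k\pm O(\Delta^2)$. They also keep the forbidden edges of $H$ (at most $\Delta$ per vertex, from $\sigma$) and the collisions among residual degrees negligible.

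Given these estimates, decompose $\downup^{(\sigma)}=\mathbf{J}+\mathbf{E}$, where $\mathbf{J}(e,e')=\frac{k}{k-1}\pi_1^\sigma(e')\mathbf 1_{e'\ne e}$ approximately. This $\mathbf{J}$ is a projection onto constants perturbed by $O(1/k)$ diagonal terms. I then bound $\lambda_2\le \|\mathbf{E}\|$ on mean-zero functions, using a Schur test in $\ell^2(\pi_1^\sigma)$ (row and column sums of $|\mathbf{E}|$ weighted by $\pi_1^\sigma$).

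Two kinds of contribution enter the Schur test:
\begin{itemize}
\item \textbf{Disjoint pairs.} Each row of $|\mathbf{E}|$ has total mass $O(\Delta^2/k)\cdot\frac1{k-1}\cdot(\text{relative weight})$.
\item \textbf{Adjacent pairs.} Each edge has only $O(\Delta)$ neighbors per endpoint, each of stationary mass $\approx 1/k$.
\end{itemize}
A plain Schur bound gives $O(\Delta^2/k)$. To gain the extra $\sqrt k$, I would instead bound $\|\mathbf{E}\|$ through $\|\mathbf{E}\|^2\le\|\mathbf{E}^*\mathbf{E}\|$, i.e. by a second-moment (trace or Hilbert--Schmidt) estimate. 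The Hilbert--Schmidt norm of $\mathbf{E}$ in $\ell^2(\pi_1^\sigma)$ is
\[
\sum_{e,e'}\pi_1(e)\,\mathbf{E}(e,e')^2/\pi_1(e').
\]
Here there are about $k^2$ disjoint pairs, each contributing $(\Delta^2/k^2)^2\cdot k^{2}$, which gives $O(\Delta^4/k^2)$. There are $O(k\Delta)$ adjacent pairs, contributing $O(\Delta^4/k^{3})\cdot k$. Together these yield $\|\mathbf{E}\|\lesssim\Delta^2/k$ again. So the sharper $k^{-3/2}$ must come from making the disjoint-pair error \emph{multiplicatively} $1+O(\Delta^2/k^{3/2})$ on average, i.e. from a refined cancellation. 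Alternatively it could come from Garland-type control: $\lambda_2$ of the link equals $\lambda_2$ of $(k-1)\downup^{(\sigma)}$ restricted appropriately.

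The main obstacle is the refined near-independence estimate with error $O(\Delta^2/\sqrt k)$ relative to $k$, uniformly over all faces $\sigma$, including degenerate residual sequences. I would first try a two-round switching count that compares $N(e,e')$ with averages over $e'$ in a fixed neighborhood. Concentration of the number of valid switches then gives fluctuations of order $\sqrt k$ rather than $k$, and this is where the $\frac12\Delta^4$ threshold is consumed. Finally, the constant $200$ comes from tracking the explicit constants through these switchings.
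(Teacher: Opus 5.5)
\paragraph{There is a genuine gap.} It sits at the step you yourself flag as the main obstacle.

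\emph{The rate is not reached.} Granting all your switching estimates, the Schur and Hilbert--Schmidt bounds give only $\lambda_2(\downup^{(\sigma)})=O(\Delta^2/k)$. That is too weak downstream: $\sum_\ell \Delta^2/\ell$ diverges, so the Alev--Rao product is no longer bounded below. The improvement to $k^{-3/2}$ is never supplied.

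\emph{The route cannot reach it.} The bound $\lambda_2\le\|\mathbf{E}\|$ is two-sided, and the link walk genuinely has eigenvalues at or below $-1/(k-1)$.
\begin{itemize}
\item Take any $h$ on the active vertices with $\sum_u\res_\sigma(u)h(u)=0$, and set $f(\{u,v\})=h(u)+h(v)$.
\item Then $\sum_{e\in\eta}f(e)=\sum_u\res_\sigma(u)h(u)=0$ for every completion $\eta$, so $f$ is centered.
\item By the identity in the next paragraph, $\langle f,\downup^{(\sigma)}f\rangle_{\pi_1^\sigma}=-\|f\|^2_{\pi_1^\sigma}/(k-1)$.
\item On centered functions your $\mathbf{J}$ acts as the diagonal multiplier $-\frac{k}{k-1}\pi_1^\sigma(e)$, of size $O(\Delta^2/k^2)$.
\item Hence $\|\mathbf{E}\|\ge(1-O(\Delta^2/k))/(k-1)$, which already exceeds $200\Delta^2/((k-1)\sqrt k)$ once $k\gg\Delta^4$.
\end{itemize}
Consistently with this, the adjacent-pair entries deviate by a relative amount of order one; for example $N(e,e')=0$ when the shared vertex has residual degree $1$. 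So no sharpening of entrywise near-independence will rescue an operator-norm bound. You need an argument that sees only the positive side of the spectrum.

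\paragraph{The missing idea.} For $f$ centered under $\pi_1^\sigma$, your formula for $\downup^{(\sigma)}(e,e')$ gives
\[
\Var_{\eta\sim\pi_k^\sigma}\Bigl(\sum_{e\in\eta}f(e)\Bigr)=k\|f\|^2_{\pi_1^\sigma}+k(k-1)\langle f,\downup^{(\sigma)}f\rangle_{\pi_1^\sigma}.
\]
So $\lambda_2\le\varepsilon/(k-1)$ is equivalent to the one-sided bound $\Var(\sum_{e\in\eta}f(e))\le(1+\varepsilon)k\|f\|^2$. The star functions above simply have zero variance, which is harmless. The factor $1/(k-1)$ comes for free, and only a vanishing relative error $\varepsilon=O(\Delta^2/\sqrt k)$ is needed. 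Your closing phrase names the right target, but it must be applied to the variance of linear statistics, not to individual pair counts fed into a norm bound.

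\paragraph{How the paper proves the variance bound.} The paper's covariance lemma lifts to stub pairings.
\begin{itemize}
\item For the transposition Laplacian on all pairings, $\gen\lift f=(2k-1)\lift f-\sum_\alpha\widehat f(\alpha)$.
\item On valid pairings, $(2k-1)\|\lift f\|^2$ therefore splits into a valid-neighbour energy and an invalid-neighbour term.
\item The valid-neighbour energy is at most $(2k+220\Delta^2)k\|f\|^2$, by comparison with the unrestricted configuration model.
\item The invalid-neighbour term is handled by Cauchy--Schwarz, using $\|\gen_{2}\lift f\|\le\sqrt{192}\,k\Delta^2\|f\|$.
\item This yields $x^2\le1+O(\Delta^2/k)+O(\Delta^2/\sqrt k)\,x$. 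The hypothesis $k\ge\frac12\Delta^4$ first gives $x=O(1)$, and substituting back is where the $\sqrt k$ enters.
\end{itemize}
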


The proof of~\cref{thm:link-eigenvalue-switch-complex} exploits the connection between the expansion of the link walk and the correlations among edges of a random completion. From a current vertex $\{u,v\}$ of the link, the walk chooses a uniform completion containing $\{u,v\}$ and then selects uniformly from its other $k-1$ edges. Strong expansion can therefore be established by controlling the tendency of random completions to concentrate their edges in particular regions of the link. At the variational level, we do this by bounding $\Var_{\eta\sim\pi^{\sigma}_{k}}\Bigl(\sum_{\{u,v\}\in\eta}f(\{u,v\})\Bigr)$ relative to $k\Var_{\{u,v\}\sim\pi_{1}^{\sigma}}(f(\{u,v\}))$, uniformly over centered functions $f$ on the vertices of the link. The following lemma provides the required bound.

\begin{lemma}\label{lem:covariance}
    Let $2\le k\le m$ be an integer satisfying
        \begin{align*}
            k &\ge \max \{10 \Delta^{2}, \frac{1}{2}\Delta^{4}\}.
        \end{align*}
    Let $\sigma\in\mathcal{R}_{\mathbf{d}}(m-k)$ and let $f\in\ell^{2}(\mathcal{R}_{\mathbf{d}}^{(\sigma)}(1),\pi_{1}^{\sigma})$. Then
        \begin{align}\label{eq:covariance-bound}
            \Var_{\eta\sim\pi^{\sigma}_{k}} \Bigl(\sum_{\{u,v\}\in \eta}f(\{u,v\})\Bigr) &\le (1+\frac{200\,\Delta^{2}}{\sqrt{k}}) k\,\Var_{\{u,v\}\sim\pi^{\sigma}_{1}}(f(\{u,v\})).
        \end{align}
\end{lemma}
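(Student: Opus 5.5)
The plan is to reduce \cref{eq:covariance-bound} to an operator-norm bound for the matrix of pairwise edge correlations of a uniform completion, and to estimate those correlations by switching arguments. Fix $\sigma\in\mathcal{R}_{\mathbf{d}}(m-k)$ and write $r_v=d_v-\deg_\sigma(v)\le\Delta$ for the residual degrees. A uniform $\eta\sim\pi^\sigma_k$ is then a uniform simple graph with degree sequence $\mathbf{r}$ that avoids the edges of $\sigma$, and $\sum_v r_v=2k$. For a link vertex $e$ write $p_e=\Prob(e\in\eta)$, so that $\pi^\sigma_1(e)=p_e/k$. For $e\ne e'$ write $p_{ee'}=\Prob(e,e'\in\eta)$.

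\textbf{Reduction.} Since $|\eta|=k$ always, replacing $f$ by $f+c$ shifts the statistic by the constant $ck$, so neither side of \cref{eq:covariance-bound} changes. We may therefore assume $\sum_e p_ef(e)=0$. Then $\E\sum_{e\in\eta}f(e)=0$, and
\begin{align*}
\Var\Bigl(\sum_{e\in\eta}f(e)\Bigr)=\sum_e p_ef(e)^2+\sum_{e\ne e'}p_{ee'}f(e)f(e'),
\end{align*}
while $k\Var_{\pi^\sigma_1}(f)=\sum_e p_ef(e)^2$. Because $f$ is centered, we may subtract $p_ep_{e'}$ from each off-diagonal term at the cost of the diagonal correction $-\sum_e p_e^2 f(e)^2\le0$. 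The statement thus reduces to
\begin{align*}
\sum_{e\ne e'}M_{ee'}f(e)f(e')\le \frac{200\Delta^2}{\sqrt k}\sum_e p_ef(e)^2,\qquad M_{ee'}=p_{ee'}-p_ep_{e'}.
\end{align*}
This is a norm bound for the symmetric kernel $M$ on $\ell^2(p)$, which I would attack by a weighted Schur test. That test requires, for each $e$,
\begin{align*}
\frac{1}{p_e}\sum_{e'}|M_{ee'}|\lesssim\frac{\Delta^2}{\sqrt k}.
\end{align*}

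\textbf{Correlation estimates.} I would split the sum over $e'$ by position relative to $e$.
\begin{itemize}
\item If $e'$ shares an endpoint with $e$, or equals the partner edge forced by residual degrees, then $p_{ee'}\le p_e$. There are at most $2\Delta$ such $e'$ carrying conditional mass, and a switching count should show $\Prob(e'\in\eta\mid e\in\eta)=O(\Delta/k)$ for each of them. This makes their total contribution $O(\Delta^2/k)$.
\item If $e'$ is vertex-disjoint from $e$, I would compare $\Prob(e'\in\eta\mid e\in\eta)$ with $p_{e'}$ using the standard switching between graphs containing $e$ and graphs not containing $e$. This is where the hypothesis $k\ge\max\{10\Delta^2,\tfrac12\Delta^4\}$ enters: it guarantees that the forward and backward switching counts are $k(1+O(\Delta^2/k))$, hence nonzero and comparable.
\end{itemize}

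\textbf{Main obstacle.} The crude disjoint estimate $|M_{ee'}|\le p_ep_{e'}\,O(\Delta^2/k)$, summed over the $\sim k$ units of mass $\sum_{e'}p_{e'}=k$, only gives $O(\Delta^2)$, which misses the needed factor $\sqrt k$. To recover it I would expand the conditional-to-unconditional ratio to first order. The leading $O(\Delta^2/k)$ correction should depend on $e'$ only through a few local quantities: the residual degrees of its endpoints, and whether those endpoints are adjacent in $\sigma$ to the endpoints of $e$. That makes the leading part of $M$ a low-rank kernel $p_ep_{e'}\sum_i a_i(e)b_i(e')/k$ plus a remainder of size $p_ep_{e'}O(\Delta^4/k^2)$.

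The remainder then contributes $O(\Delta^4/k)\le O(\Delta^2/\sqrt k)$ via Schur, using $k\ge\tfrac12\Delta^4$. The low-rank part is handled by Cauchy--Schwarz: its quadratic form is $\frac1k\sum_i\bigl(\sum_e p_ea_i(e)f(e)\bigr)\bigl(\sum_{e'}p_{e'}b_i(e')f(e')\bigr)$. The constant components of $a_i$ and $b_i$ are killed by centering, and what remains has weighted norm $O(\Delta^2\sqrt k)$, giving a contribution of order $\Delta^2/\sqrt k$. Making this first-order switching expansion precise, with explicit constants summing to $200$, is the step I expect to be hardest.
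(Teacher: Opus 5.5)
Your plan has a genuine gap, and it shows up in an exact identity. Since $|\eta|=k$ for every completion, $\sum_{e'\ne e}M_{ee'}=(k-1)p_e-p_e(k-p_e)=-p_e(1-p_e)$. Hence $\frac{1}{p_e}\sum_{e'}|M_{ee'}|\ge 1-p_e$ for every $e$. No absolute-value (Schur) bound on $M$ can therefore be $O(\Delta^2/\sqrt k)$ once $k\gg\Delta^4$.

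Much of that mass sits on the adjacent edges, where your estimate is wrong. For $e=\{u,v\}$, the candidates $e'=\{u,x\}$ range over all admissible $x$, which can be of order $k$ of them, not $O(\Delta)$. Their correlations are of relative order one, not $O(\Delta/k)$. Indeed $\sum_x\Prob(\{u,x\}\in\eta\mid e\in\eta)=r_u-1$, while $\sum_x p_{\{u,x\}}=r_u-p_e$. So the adjacent terms at each endpoint sum to exactly $-p_e(1-p_e)$, and they contribute at least $2(1-p_e)$ to the normalized row sum, not $O(\Delta^2/k)$. Heuristically $M_{e,\{u,x\}}\approx -p_e\,p_{\{u,x\}}/r_u$.

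These terms are harmless only because of their sign. At leading order they form $-\sum_u r_u^{-1}\bigl(\sum_{e\ni u}p_ef(e)\bigr)^2$ plus a diagonal of size $O(\max_e p_e)\sum_e p_ef(e)^2$. Nothing in your plan uses that sign.

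The disjoint part has a related problem: your Cauchy--Schwarz bookkeeping does not give $\Delta^2/\sqrt k$. Take a rank-one piece $p_ep_{e'}a(e)b(e')/k$ with both factors nonconstant. Even after centering, the best you get is $\bigl|\sum_e p_ea(e)f(e)\bigr|\le\|a\|_\infty\sqrt{k}\,\bigl(\sum_ep_ef(e)^2\bigr)^{1/2}$. The piece is then bounded by $\|a\|_\infty\|b\|_\infty\sum_ep_ef(e)^2$, which is the same crude $O(\Delta^2)$ you were trying to beat.

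What you would actually need has three parts:
\begin{itemize}
\item the first-order disjoint correlation is additive in $e$ and $e'$, so centering removes it up to adjacent-edge corrections;
\item the remaining first-order interaction is supported on nearby pairs;
\item there is a uniform second-order remainder $O(\Delta^4/k^2)$ with explicit constants, valid down to $k\approx\frac12\Delta^4$.
\end{itemize}
That second-order switching expansion is the whole difficulty in your formulation, and the proposal does not supply it.

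The paper avoids computing correlations altogether. On all stub pairings, the transposition Laplacian satisfies $\gen\lift f=(2k-1)\lift f-\sum_\alpha\widehat f(\alpha)$ exactly. This identity encodes the full configuration-model covariance, including the negative same-vertex correlations, and centering removes the constant. Validity is then treated as a perturbation: $(2k-1)\|\lift f\|^2_{\pi_{\mathcal{Q}}}=\langle\lift f,\gen_1\lift f\rangle_{\pi_{\mathcal{Q}}}+\langle\lift f,\gen_2\lift f\rangle_{\pi_{\mathcal{Q}}}$. The first term needs only first-order two-pair marginal ratios $1+O(\Delta^2/k)$. The second is handled by Cauchy--Schwarz, with $\|\gen_2\lift f\|_{\pi_{\mathcal{Q}}}=O(k\Delta^2)\|f\|_{\pi_1^\sigma}$ coming from counting invalid transpositions. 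The factor $\Delta^2/\sqrt k$ arises from that Cauchy--Schwarz step, so no second-order expansion is ever needed.
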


To make the preceding remarks a bit more concrete, we first show how~\cref{thm:link-eigenvalue-switch-complex} is proved assuming the validity of~\cref{lem:covariance}. 

\begin{proof}[Proof of~\cref{thm:link-eigenvalue-switch-complex} assuming the validity of~\cref{lem:covariance}]
    To prove~\cref{thm:link-eigenvalue-switch-complex}, we use the variational characterization of $1-\lambda_{2}(\downup^{(\sigma)})$ in~\cref{eq:spectral-gap-variational-characterization-2}. Let $f\in\ell^{2}(\mathcal{R}_{\mathbf{d}}^{(\sigma)}(1),\pi_{1}^{\sigma})$ be a nonzero function satisfying $\E_{\{u,v\}\sim\pi_{1}^{\sigma}}(f(\{u,v\}))=0$. It follows that
        \begin{align*}
            \E_{\eta\sim\pi^{\sigma}_{k}}\Bigl(\sum_{\{u,v\}\in \eta}f(\{u,v\})\Bigr) &= k \E_{\{u,v\}\sim\pi_{1}^{\sigma}}(f(\{u,v\})) = 0.
        \end{align*}
    For $\{\{u,v\},\{w,z\}\}\in\mathcal{R}_{\mathbf{d}}^{(\sigma)}(2)$, the definition of $\pi_{2}^{\sigma}$ gives
        \begin{align*}
            \pi_{2}^{\sigma}(\{\{u,v\},\{w,z\}\}) &= \frac{1}{\binom{k}{2}} \sum_{\substack{\eta\in\mathcal{R}_{\mathbf{d}}^{(\sigma)}(k)\\ \{\{u,v\},\{w,z\}\}\subseteq \eta}}\pi_{k}^{\sigma}(\eta)\\
            &= \frac{1}{\binom{k}{2}} \Prob_{\eta\sim\pi_{k}^{\sigma}}(\{u,v\},\{w,z\}\in \eta).
        \end{align*}
    Thus, by~\cref{eq:link-walk-kernel},
        \begin{align*}
            \Prob_{\eta\sim\pi_{k}^{\sigma}}(\{u,v\},\{w,z\}\in \eta) &= \binom{k}{2}\pi_{2}^{\sigma}(\{\{u,v\},\{w,z\}\})\\
            &= k(k-1)\pi_{1}^{\sigma}(\{u,v\})\downup^{(\sigma)}(\{u,v\},\{w,z\}).
        \end{align*}
    The preceding equation still holds if $\{u,v\}\ne\{w,z\}$ and $\{\{u,v\},\{w,z\}\}\notin\mathcal{R}_{\mathbf{d}}^{(\sigma)}(2)$, since both sides are zero in this case. It follows from the preceding facts that
        \begin{align*}
            &\Var_{\eta\sim\pi_{k}^{\sigma}}\Bigl(\sum_{\{u,v\}\in\eta}f(\{u,v\})\Bigr)\\
            &\quad=\E_{\eta\sim\pi_{k}^{\sigma}}\Bigl(\Bigl(\sum_{\{u,v\}\in\eta}f(\{u,v\})\Bigr)^{2}\Bigr)\\
            &\quad=\sum_{\{u,v\}\in\mathcal{R}_{\mathbf{d}}^{(\sigma)}(1)}\Prob_{\eta\sim\pi_{k}^{\sigma}}(\{u,v\}\in\eta)f(\{u,v\})^{2}\\
            &\qquad+\sum_{\{u,v\}\in\mathcal{R}_{\mathbf{d}}^{(\sigma)}(1)} \sum_{\substack{\{w,z\}\in\mathcal{R}_{\mathbf{d}}^{(\sigma)}(1)\\ \{w,z\}\ne\{u,v\}}} \Prob_{\eta\sim\pi_{k}^{\sigma}}(\{u,v\},\{w,z\}\in\eta)\,f(\{u,v\})f(\{w,z\})\\
            &\quad= k\|f\|_{\pi_{1}^{\sigma}}^{2}+k(k-1)\sum_{\{u,v\}\in\mathcal{R}_{\mathbf{d}}^{(\sigma)}(1)}\pi_{1}^{\sigma}(\{u,v\})f(\{u,v\})(\downup^{(\sigma)}f)(\{u,v\}) \\
            &\quad=k\|f\|_{\pi_{1}^{\sigma}}^{2}+k(k-1)\langle f,\downup^{(\sigma)}f\rangle_{\pi_{1}^{\sigma}}.
        \end{align*}
    The second to last equality also uses $\downup^{(\sigma)}(\{u,v\},\{u,v\})=0$. By the assumption on $k$ and~\cref{lem:covariance}, it follows that
        \begin{align*}
            k\|f\|_{\pi_{1}^{\sigma}}^{2} +k(k-1)\langle f,\downup^{(\sigma)}f\rangle_{\pi_{1}^{\sigma}} &\le (1+\frac{200\,\Delta^{2}}{\sqrt{k}}) k\|f\|_{\pi_{1}^{\sigma}}^{2}.
        \end{align*}
    Subtracting $k\|f\|_{\pi_{1}^{\sigma}}^{2}$ and dividing by $k(k-1)\|f\|_{\pi_{1}^{\sigma}}^{2}$ gives
        \begin{align*}
            \frac{\langle f,\downup^{(\sigma)}f\rangle_{\pi_{1}^{\sigma}}} {\|f\|_{\pi_{1}^{\sigma}}^{2}} &\le \frac{200\,\Delta^{2}}{(k-1)\sqrt{k}}.
        \end{align*}
    Taking the supremum over all such $f$ therefore proves~\cref{eq:link-eigenvalue-switch-complex}. 
\end{proof}

The remainder of this section is therefore dedicated to the proof of~\cref{lem:covariance}, which requires several steps. We begin by representing completions of $\sigma$ in the form of ``valid pairings'' of a fixed set of ``stubs,'' a construction which can be considered a specialized or local form of the configuration model. Working with all pairings preserves the degree constraints while permitting violations of simplicity, allowing us to perform explicit calculations on generic pairings and then control the effect of the restrictions needed to obtain simple completions of $\sigma$. The configuration model, more generally, is well-known; see~\cite[Ch.~7]{vanderhofstad2017random} or~\cite{wormald1999models}. See also~\cite{mckay2010subgraphs,tikhomirov2023sharp} for instances of similar local constructions.

For the remainder of this section, fix an integer $2\le k\le m$ and $\sigma\in\mathcal{R}_{\mathbf{d}}(m-k)$. For each $v\in V$, define the \emph{residual degree}
    \begin{align*}
        \res_{\sigma}(v) &= d_{v}-\deg_{\sigma}(v),
    \end{align*}
i.e., $\res_{\sigma}(v)$ is the number of edges incident to $v$ that still need to be added to realize $\mathbf{d}$ from the partial realization $\sigma$. Since $\sigma$ has $m-k$ edges, we have that
    \begin{align*}
        \sum_{v\in V} \res_{\sigma}(v) &= 2m-2|\sigma|=2k,
    \end{align*}
and moreover that
    \begin{align*}
        \res_{\sigma}(v) + \deg_{\sigma}(v) &= d_{v}\le \Delta
    \end{align*}
for each $v\in V$. We equip each $v\in V$ with $\res_{\sigma}(v)$ labeled edge ends, called \emph{stubs}. Formally, the set $\mathcal{S}$ of stubs is given by
    \begin{align*}
        \mathcal{S} &= \{(v,a)\,:\, v\in V,\;\res_{\sigma}(v) >0,\; 1\le a\le \res_{\sigma}(v)\}.
    \end{align*}
The index $a$ associated to a stub $(v,a)$ is not particularly important in the calculations to follow, so we use the shorthand $\vec{v}$ to denote a stub of the form $(v,\cdot)$. A \emph{pairing} $X$ is a partition of $\mathcal{S}$ into unordered pairs. $X$ is called \emph{valid} if, for each pair of stubs $\{\vec{u},\vec{v}\}\in X$, the vertices $u$ and $v$ are distinct, the edge $\{u,v\}$ is not already present in $\sigma$, and if $\{\vec{w},\vec{z}\}\in X$ is distinct from $\{\vec{u},\vec{v}\}$, then $\{w,z\}\ne\{u,v\}$.

Let $\mathcal{P}$ denote the set of all pairings of $\mathcal{S}$, and let $\mathcal{Q}$ denote the set of valid pairings so that $\mathcal{Q}\subseteq\mathcal{P}$. Observe that for each $X\in\mathcal{Q}$, the corresponding set of edges
    \begin{align*}
        \Phi(X) &:= \{\{u,v\}\,:\,\{\vec{u},\vec{v}\}\in X\}
    \end{align*}
defines a facet of $\mathcal{R}_{\mathbf{d}}^{(\sigma)}$. If $\Phi(X)=\eta$, we say that $X$ \emph{projects} to $\eta$. We denote by $\pi_{\mathcal{Q}}$ the uniform law on $\mathcal{Q}$. Note that $\mathcal{Q},\mathcal{P}$ depend implicitly on $\sigma$, but we suppress this dependence in the notation. 

For each $f\in\ell^{2}(\mathcal{R}_{\mathbf{d}}^{(\sigma)}(1),\pi_{1}^{\sigma})$, we define a corresponding function $\widehat{f}\in\R^{\binom{\mathcal{S}}{2}}$ on pairs of stubs according to
    \begin{align*}
        \widehat{f}(\{ \vec{v}, \vec{u}\}) &=\begin{cases}
            f(\{v,u\}), &\text{if }\{v,u\}\text{ is a vertex of }\mathcal{R}_{\mathbf{d}}^{(\sigma)},\\
            0, &\text{otherwise}.
        \end{cases}
    \end{align*}
Define the linear map $\lift :\ell^{2}(\mathcal{R}_{\mathbf{d}}^{(\sigma)}(1),\pi_{1}^{\sigma}) \to \mathbb{R}^{\mathcal{P}}$ by writing
    \begin{align*}
        (\lift f)(X) &= \sum_{\{\vec{v},\vec{u}\}\in X} \widehat{f}(\{ \vec{v}, \vec{u}\})
    \end{align*}
for each $X\in \mathcal{P}$. The map $\lift$ is called the \emph{lift operator}.

Let $\alpha = \{\vec{v}, \vec{u}\}\in\binom{\mathcal{S}}{2}$ be a pair of stubs. Let $p_{\alpha}$ denote the marginal probability that $\alpha$ is included in a random pairing $X$ drawn from $\pi_{\mathcal{Q}} $, i.e.,
    \begin{align*}
        p_{\alpha} &= \Prob_{X\sim\pi_{\mathcal{Q}} }(\alpha\in X).
    \end{align*} 
The following lemma establishes a basic calculus for the measures $\pi_{\mathcal{Q}} $ and $\pi_{k}^{\sigma}$ and the lift operator.

\begin{lemma}\label{lem:uniform-projection-lift}
    The following facts hold.
        \begin{enumerate}[label=(\roman*)]
            \item Each $\eta'\in\mathcal{R}_{\mathbf{d}}^{(\sigma)}(k)$ has exactly $\prod_{v\in V}\res_{\sigma}(v)!$ preimages in $\mathcal{Q}$ satisfying $\Phi(X) = \eta'$. Consequently $X\sim\pi_{\mathcal{Q}} $ implies $\Phi(X)\sim\pi_{k}^{\sigma}$.
        \end{enumerate}
    Furthermore, let $f\in\ell^{2}(\mathcal{R}_{\mathbf{d}}^{(\sigma)}(1),\pi_{1}^{\sigma})$. Then
        \begin{enumerate}[label=(\roman*)]
            \setcounter{enumi}{1}
            \item $\E_{X\sim\pi_{\mathcal{Q}} }((\lift f)(X)) =k\E_{\{u,v\}\sim\pi_{1}^{\sigma}}(f(\{u,v\}))$,
            \item $\Var_{X\sim\pi_{\mathcal{Q}} }((\lift f)(X)) =\Var_{\eta\sim\pi_{k}^{\sigma}}(\sum_{\{u,v\}\in \eta}f(\{u,v\}))$,
            \item and $\sum_{\alpha\in\binom{\mathcal{S}}{2}}p_{\alpha}\widehat{f}(\alpha)^{2} =k\|f\|_{\pi_{1}^{\sigma}}^{2}$.
        \end{enumerate}
\end{lemma}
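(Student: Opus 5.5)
The plan is to prove (i) by a direct bijective count, and then to derive (ii)--(iv) from (i) by pushing expectations from $\pi_{\mathcal{Q}}$ forward to $\pi_{k}^{\sigma}$.

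For (i), fix a facet $\eta'$ of the link, so $\eta'$ is a set of $k$ edges with $\deg_{\eta'}(v)=\res_{\sigma}(v)$ for every $v$. A valid pairing $X$ with $\Phi(X)=\eta'$ is determined by choosing, at each vertex $v$, a bijection between the $\res_{\sigma}(v)$ stubs at $v$ and the $\res_{\sigma}(v)$ edges of $\eta'$ incident to $v$. Each edge $\{u,v\}\in\eta'$ then becomes the pair consisting of its assigned stub at $u$ and its assigned stub at $v$. Conversely, every preimage determines such a family of bijections. This uses validity: distinct pairs project to distinct edges, so each pair of $X$ corresponds to exactly one edge of $\eta'$ and no edge is used twice. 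Any pairing built this way is automatically valid. There are no loops; edges of $\eta'$ are not in $\sigma$, because $\sigma\cup\eta'$ is a realization and hence $\eta'\cap\sigma=\varnothing$; and there are no repeated edges. Hence the fibre has exactly $\prod_{v}\res_{\sigma}(v)!$ elements. Every $X\in\mathcal{Q}$ projects to some facet, and all fibres have the same size, so the pushforward of the uniform law on $\mathcal{Q}$ is uniform on $\mathcal{R}_{\mathbf{d}}^{(\sigma)}(k)$, i.e.\ it equals $\pi_{k}^{\sigma}$.

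For (ii) and (iii), observe first that for $X\in\mathcal{Q}$ every pair of $X$ projects to an edge $\{u,v\}\in\Phi(X)$. Each such edge is a vertex of the link, so $\widehat f$ agrees with $f$ there. Consequently
\begin{align*}
(\lift f)(X)=\sum_{\{u,v\}\in\Phi(X)}f(\{u,v\}),
\end{align*}
so $\lift f$ restricted to $\mathcal{Q}$ equals $F\circ\Phi$ with $F(\eta)=\sum_{e\in\eta}f(e)$. By (i), the law of $(\lift f)(X)$ under $\pi_{\mathcal{Q}}$ is the law of $F(\eta)$ under $\pi_{k}^{\sigma}$. This gives (iii) immediately and reduces (ii) to showing $\E_{\pi_{k}^{\sigma}}F=k\,\E_{\pi_{1}^{\sigma}}f$. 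That identity follows from the iterated form of \cref{eq:induced-probability-measure} in the link: $\pi_{1}^{\sigma}$ is the law of a uniformly chosen element of a $\pi_{k}^{\sigma}$-random facet. Therefore $\E_{\pi_{k}^{\sigma}}F=k\,\E_{\eta}\bigl[\tfrac1k\sum_{e\in\eta}f(e)\bigr]=k\,\E_{\pi_{1}^{\sigma}}f$.

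For (iv), exchange sums:
\begin{align*}
\sum_{\alpha}p_{\alpha}\widehat f(\alpha)^{2}=\E_{X\sim\pi_{\mathcal{Q}}}\Bigl(\sum_{\alpha\in X}\widehat f(\alpha)^{2}\Bigr)=\E_{X\sim\pi_{\mathcal{Q}}}((\lift f^{2})(X)).
\end{align*}
The last equality holds because $\widehat{f^{2}}=\widehat f^{\,2}$, which is true since both vanish off link vertices. Applying (ii) to $f^{2}$ gives $k\,\E_{\pi_{1}^{\sigma}}f^{2}=k\|f\|^{2}_{\pi_{1}^{\sigma}}$.

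The only real care point is the bijection in (i): one must check that the validity conditions exactly match the preimage condition, in particular that $\eta'$ avoids $\sigma$ and that the no-repeated-edge condition makes the stub-to-edge assignment well defined. The remaining parts are bookkeeping.
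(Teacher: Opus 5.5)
Your proposal is correct and follows essentially the same route as the paper. For (i) it uses the bijection between preimages and independent stub-to-edge assignments at each vertex, giving fibres of size $\prod_{v}\res_{\sigma}(v)!$. For (ii)--(iii) it pushes $\pi_{\mathcal{Q}}$ forward to $\pi_{k}^{\sigma}$ and uses that $\pi_{1}^{\sigma}$ is the law of a uniform edge of a uniform completion, and for (iv) it exchanges sums with $\widehat{f^{2}}=\widehat{f}^{\,2}$. Your explicit check that the constructed pairings are valid (no loops, $\eta'\cap\sigma=\varnothing$, no repeated edges) spells out a detail the paper leaves implicit.
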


\begin{proof}
    First we prove claim \emph{(i)}. Let $\eta'\in\mathcal{R}_{\mathbf{d}}^{(\sigma)}(k)$ be fixed. Since $\eta'$ is a completion of $\sigma$, each $v\in V$ has exactly $\res_{\sigma}(v)$ incident edges in the graph $\eta'$. Assign these edges bijectively to the labeled stubs $(v,1),\dotsc,(v,\res_{\sigma}(v))$. There are $\res_{\sigma}(v)!$ such assignments at $v$, and the assignments at different vertices can be chosen independently. After choosing these assignments, each edge $\{u,v\}\in \eta'$ has exactly one stub at $u$ and one stub at $v$. By pairing these stubs we obtain a valid pairing which projects to $\eta'$.
 
    Conversely, suppose $X\in\mathcal{Q}$ is a valid pairing which projects to $\eta'$. Then each edge $\{u,v\}\in \eta'$ is the projection of exactly one pair $\{\vec{u}, \vec{v}\}$ in $X$. This identifies the stub assigned to each endpoint of this edge, and therefore recovers each assignment from the preceding step as $X$ varies over all valid pairings projecting to $\eta'$.
    
    Therefore this construction defines a bijection from the set of valid pairings projecting to $\eta'$ to the set of assignments of edges to stubs; in turn,
        \begin{align}\label{eq:preimages}
            |\{ X\in\mathcal{Q} \,:\, \Phi(X) = \eta'\}| &= \prod_{v\in V} \res_{\sigma}(v)!.
        \end{align}    
    Since the right-hand side of~\cref{eq:preimages} does not depend on $\eta'$, claim~\emph{(i)} follows.

    For claim~\emph{(ii)}, it follows from statement~\emph{(i)} that the random variable $(\lift f)(X)$ where $X\sim\pi_{\mathcal{Q}}$ has the same law as the random variable $\sum_{\{u,v\}\in\eta}f(\{u,v\})$ where $\eta\sim\pi_{k}^{\sigma}$. Additionally, it can be checked that
        \begin{align}\label{eq:expectation-sum-f}
            \E_{\eta\sim\pi_{k}^{\sigma}} \Bigl( \sum_{\{u,v\}\in \eta} f(\{u,v\}) \Bigr) &= \sum_{\{u,v\}\in \mathcal{R}_{\mathbf{d}}^{(\sigma)}(1)} \Prob_{\eta\sim \pi^{\sigma}_{k}}(\{u,v\}\in \eta) f(\{u,v\})\notag\\
            &= k \sum_{\{u,v\}\in \mathcal{R}_{\mathbf{d}}^{(\sigma)}(1)} \pi_{1}^{\sigma}(\{u,v\}) f(\{u,v\})\notag\\
            &= k \E_{\{u,v\}\sim\pi_{1}^{\sigma}}(f(\{u,v\})).
        \end{align}
    Thus claim~\emph{(ii)} follows, as does claim~\emph{(iii)} for similar reasons.

    Finally, we prove claim \emph{(iv)}. By definition, $p_{\alpha}=\Prob_{X\sim\pi_{\mathcal{Q}} }(\alpha\in X)$. Thus, after some rearranging, we have
        \begin{align*}
            \sum_{\alpha\in\binom{\mathcal{S}}{2}}p_{\alpha}\widehat{f}(\alpha)^{2} &= \sum_{\alpha\in\binom{\mathcal{S}}{2}} \Bigl(\sum_{\substack{X\in\mathcal{Q}\\\alpha\in X}}{\pi_{\mathcal{Q}}} (X)\Bigr) \widehat{f}(\alpha)^{2}\\
            &= \sum_{X\in\mathcal{Q}}{\pi_{\mathcal{Q}}} (X) \sum_{\{\vec{v},\vec{u}\}\in X}\widehat{f}(\{\vec{v},\vec{u}\})^{2}\\
            &= \E_{X\sim\pi_{\mathcal{Q}}} ((\lift(f^{2}))(X))\\
            &= \E_{\eta\sim\pi_{k}^{\sigma}} \Bigl(\sum_{\{u,v\}\in \eta} f(\{u,v\})^{2}\Bigr)\\
            &= k \|f\|_{\pi_{1}^{\sigma}}^{2},
        \end{align*}
    where the final two equalities use the projection law and statement~\emph{(ii)}, applied to $f^{2}$.
\end{proof}

Let $\{\vec{v}, \vec{u}\}\in\binom{\mathcal{S}}{2}$ be a pair of distinct stubs. For a pairing $X\in\mathcal{P}$, let $\theta_{\vec{v},\vec{u}}X\in\mathcal{P}$ denote the pairing obtained by interchanging these two stubs everywhere in $X$; for example, if $\{\vec{v}, \vec{w}\}, \{\vec{u}, \vec{z}\}\in X$ with all four stubs distinct, then $\theta_{\vec{v},\vec{u}}X$ will replace these two pairs with $\{\vec{u}, \vec{w}\}$ and $\{\vec{v}, \vec{z}\}$. Alternatively, if $\{\vec{v}, \vec{u}\}\in X$, then $\theta_{\vec{v},\vec{u}}X = X$. The map $X\mapsto \theta_{\vec{v},\vec{u}} X$ is called a \emph{transposition}. Note that all such transpositions are involutions.

The next lemma provides a useful bound on the marginal probability $p_{\alpha}$.

\begin{lemma}\label{lem:single-pair-marginals}
    Suppose $k\ge 6\Delta^{2}$. Let $\alpha=\{\vec{v},\vec{u}\}\in\binom{\mathcal{S}}{2}$ satisfy $\{u,v\}\in\mathcal{R}_{\mathbf{d}}^{(\sigma)}(1)$. Then
        \begin{align}\label{eq:pa-bound}
            \frac{1-5\Delta^{2}/k}{2k} &\le p_{\alpha} \le \frac{1}{2k-2-4\Delta^{2}}.
        \end{align}
\end{lemma}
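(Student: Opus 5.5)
Fix $\alpha=\{\vec v,\vec u\}$ with $\{u,v\}$ a vertex of the link. I plan to use a switching (double counting) argument based on the transpositions $\theta$. By symmetry among stubs, every pair $\beta$ of stubs at $u,v$ has the same marginal: a permutation of stub labels at a single vertex preserves $\mathcal{Q}$. Write $A=\{X\in\mathcal{Q}:\alpha\in X\}$, so $p_\alpha=|A|/|\mathcal{Q}|$. I will compare $A$ with the set $B$ of valid pairings in which $\vec v$ is paired with some stub $\vec w\neq\vec u$. Note $|\mathcal{Q}|=|A|+|B|$.

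\emph{Upper bound.} Map $X\in A$ to pairings in $B$ as follows. Choose another pair $\{\vec w,\vec z\}\in X$, with $\{\vec w,\vec z\}\neq\alpha$ and either orientation, and apply $\theta_{\vec u,\vec w}$. This replaces $\{\vec v,\vec u\},\{\vec w,\vec z\}$ by $\{\vec v,\vec w\},\{\vec u,\vec z\}$. There are $2(k-1)$ ordered choices. The result fails to be valid only if
\begin{itemize}
\item $w=v$ or $z=u$ (a loop),
\item $\{v,w\}$ or $\{u,z\}$ lies in $\sigma$, or
\item $\{v,w\}$ or $\{u,z\}$ is already an edge of $\Phi(X)$.
\end{itemize}
Each of these forces $w$ to be a neighbour of $v$ (in $\sigma\cup\Phi(X)$, or equal to $v$), or $z$ to be such a neighbour of $u$. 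That gives at most about $\Delta$ choices of vertex, each carrying at most $\Delta$ stubs, so at most $4\Delta^2$ bad ordered choices in total. Hence each $X\in A$ has at least $2k-2-4\Delta^2$ images in $B$. Conversely, each $Y\in B$ is hit by at most one $X\in A$: undo the switch by swapping the stubs $\vec w$ and $\vec u$. This gives $(2k-2-4\Delta^2)|A|\le|B|\le|\mathcal{Q}|$, which is the upper bound.

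\emph{Lower bound.} Reverse the roles. Each $Y\in B$ contains $\{\vec v,\vec w\}$ and $\{\vec u,\vec z\}$, and $\theta_{\vec u,\vec w}$ sends it to a pairing containing $\alpha$ and $\{\vec w,\vec z\}$. This image is invalid only if
\begin{itemize}
\item $w=z$,
\item $\{w,z\}\in\sigma$, or
\item $\{w,z\}$ already appears in $\Phi(Y)$.
\end{itemize}
The edge $\{u,v\}$ itself cannot cause a clash, because it is absent from $\Phi(Y)$ when $Y\in B$ and $\vec u$ is not matched to $\vec v$ — I need to double-check this case, since $\{u,v\}$ might appear via other stubs, in which case those $Y$ should be counted as bad as well. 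Each $X\in A$ has at most $2(k-1)\le 2k$ preimages. Write $B_{\mathrm{bad}}$ for the set of $Y\in B$ whose image is invalid. Then $|B|-|B_{\mathrm{bad}}|\le 2k|A|$, so I need $|B_{\mathrm{bad}}|\lesssim 5\Delta^2 |\mathcal{Q}|/(2k)$. Combining $|\mathcal{Q}|=|A|+|B|$ with these counts yields $p_\alpha\ge (1-5\Delta^2/k)/(2k)$ after rearrangement.

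The main obstacle is bounding $B_{\mathrm{bad}}$, since badness is a property of the random pair $(\vec w,\vec z)$. Given $\vec w$, the stub $\vec z$ must be paired with $\vec u$ and lie at a vertex equal to $w$ or adjacent to $w$. I expect to handle this by a second switching, or by summing over $\vec w$ and using the upper bound already proved, which controls the probability that $\vec u$ is paired with any particular stub. There are at most $\Delta$ stubs at $w$ and at most $\Delta$ neighbours of $w$, each with at most $\Delta$ stubs, so about $\Delta^2$ bad $\vec z$ per $\vec w$. Each has probability at most $1/(2k-2-4\Delta^2)$. This gives a bad fraction of order $\Delta^2/k$. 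The hypothesis $k\ge6\Delta^2$ keeps the denominators positive and absorbs the constants into the $5\Delta^2/k$.
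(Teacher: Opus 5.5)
Your upper bound is the paper's argument. The reversed two-pair switching for the lower bound is also set up correctly. If you keep the exact count of $2k-2$ preimages per $X\in A$, you get $|\mathcal{Q}|\le(2k-1)|A|+|B_{\mathrm{bad}}|$, so it suffices to show $|B_{\mathrm{bad}}|\le(5\Delta^{2}/k)|\mathcal{Q}|$. The case you flagged is real: $\{u,v\}$ can lie in $\Phi(Y)$ via a pair disjoint from $\{\vec v,\vec u\}$, and then the image is invalid. It is cheap to handle, though. A union bound over the at most $\Delta^{2}$ stub pairs projecting to $\{u,v\}$, together with your upper bound, gives probability at most $\Delta^{2}/(2k-2-4\Delta^{2})\le\Delta^{2}/k$. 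The paper uses the same estimate.

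The genuine gap is the other bad event: $\{\vec v,\vec w\},\{\vec u,\vec z\}\in Y$ and either $z=w$, or $\{w,z\}\in\sigma$, or $\{w,z\}$ is the projection of a third pair of $Y$. Your sketch does not bound this as written, for two reasons.

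\emph{Wrong kind of probability.} The bound you quote, $\Prob(\{\vec u,\vec z\}\in Y)\le1/(2k-2-4\Delta^{2})$, is a single-pair marginal. Summed over about $2k$ choices of $\vec w$ and $\Delta^{2}$ choices of $\vec z$, it gives only $O(\Delta^{2})$. To reach $O(\Delta^{2}/k)$ you need the joint probability of two disjoint pairs, that is, a bound $\Prob(\{\vec u,\vec z\}\in Y\mid\{\vec v,\vec w\}\in Y)\lesssim1/(2k)$. Your upper bound, as stated, does not provide this.

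\emph{The bad stubs are not fixed.} The stubs at $\Phi(Y)$-neighbours of $w$ depend on $Y$ itself. So that part is really a three-pair event: some other stub $\vec w'$ at $w$ is paired with a stub at $z$. It cannot be handled by a union bound over $\Delta^{2}$ stubs chosen in advance.

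The missing observation is this: conditionally on some disjoint pairs lying in $Y$, the remaining pairs form a uniform valid pairing for the link of the enlarged face (e.g.\ $\sigma\cup\{\{v,w\}\}$ with $k-1$ pairs left). Hence your upper bound applies again with $k$ replaced by $k-1$ or $k-2$. Condition first on $\{\vec v,\vec w\}\in Y$, and for the third-pair part also on $\{\vec u,\vec z\}\in Y$. This bounds the bad event by
$\Delta(\res_{\sigma}(w)+\deg_{\sigma}(w))/(2k-6-4\Delta^{2})\le\Delta^{2}/(2k-6-4\Delta^{2})$, which is at most $4\Delta^{2}/k$ when $k\ge6\Delta^{2}$. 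Averaging over $\vec w$ and adding the $\Delta^{2}/k$ from the first case fits exactly within the budget $5\Delta^{2}/k$, which closes your argument.

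The paper avoids joint marginals altogether. It switches from valid pairings with $\{v,u\}\notin\Phi(X)$. The two stubs released by forming $\alpha$ are re-paired with the two stubs of a freely chosen third pair $\beta\in X$. Validity then reduces to a deterministic neighbourhood count on $\beta$, at the cost of a crude $(2k)^{2}$ bound on reverse degrees. The only probabilistic input is the union bound for $\{v,u\}\in\Phi(X)$.
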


We remark that the bounds in~\cref{lem:single-pair-marginals} are implied by those appearing in the preprint~\cite[Lem.~4.2]{larkin2025subgraphs}; in fact, the constants supplied by the latter are sharper than those in~\cref{eq:pa-bound}. However, both versions bound the relative error by an absolute constant times $\Delta^{2}/k$. In the interest of completeness, we give a proof below.

\begin{proof}[Proof of~\cref{lem:single-pair-marginals}]
    We prove both bounds in~\cref{eq:pa-bound} by constructing suitable bipartite graphs and using degree bounds to compare the sizes of their two vertex classes.
    
    For the upper bound, let $A=\{X\in\mathcal{Q}:\alpha\in X\}$ and $B=\mathcal{Q}$, and take disjoint copies of these sets as the two vertex classes. For $X\in A$ and $Y\in B$, draw an edge $\{X,Y\}$ if there exists a stub $\vec{z}\in\mathcal{S}\setminus \alpha$ such that $\theta_{\vec{u},\vec{z}} X=Y$.
    
    Let $X\in A$ and $\vec{z}\in\mathcal{S}\setminus\alpha$ be fixed, and let $\vec{w}$ denote the partner of $\vec{z}$ in $X$. Then $\theta_{\vec{u},\vec{z}} X$ is a valid pairing if \emph{(i)} the vertex $z$ does not belong to the set $N_{\sigma\cup\Phi(X)}[v]$ consisting of $v$ and neighbors of $v$ in the graph induced by $\sigma\cup\Phi(X)$, and \emph{(ii)} the vertex $w$ does not belong to the set $N_{\sigma\cup\Phi(X)}[u]$ consisting of $u$ and its neighbors in the graph induced by $\sigma\cup\Phi(X)$. There are at most $\Delta(\Delta+1)\le 2\Delta^{2}$ stubs $\vec{z}$ which fail to satisfy the first condition, and similarly at most $2\Delta^{2}$ stubs $\vec{w}$ and hence partners $\vec{z}$ which fail to satisfy the second condition. Distinct choices of $\vec{z}$ produce distinct pairings $Y$, since $\vec{z}$ is the partner of $\vec{v}$ in $Y$. Therefore, for each $X\in A$, there are at least $2k-2-4\Delta^{2}$ edges incident to $X$ in the bipartite graph.

    Now let $Y\in B$ be fixed. If $\alpha\in Y$, then the degree of $Y$ is zero. If $\alpha\notin Y$, the stub $\vec{v}$ has a partner stub $\vec{z}\in\mathcal{S}$, i.e., such that $\{\vec{v}, \vec{z}\}\in Y$. The only possible neighbor of $Y$ is $X=\theta_{\vec{u},\vec{z}}Y$, and this is a neighbor precisely when $X$ is valid. Combining the two cases, the degree of $Y$ is at most one. 

    By counting the edges spanning both sides and using the degree bounds obtained above, it follows that
        \begin{align*}
            |A|(2k-2-4\Delta^{2}) &= p_{\alpha}|\mathcal{Q}|(2k-2-4\Delta^{2}) \le |B|=|\mathcal{Q}|,
        \end{align*}
    which, up to rearranging, is the upper bound in~\cref{eq:pa-bound}.

    For the lower bound, let $A'$ denote the set of all pairings $X\in\mathcal{Q}$ such that $\{v,u\}\notin\Phi(X)$, and let $B'$ denote the set of all pairings $Y\in\mathcal{Q}$ such that $\alpha\in Y$. Let $X\in A'$, and let $\{\vec{z},\vec{v}\},\{\vec{w},\vec{u}\}\in X$ denote the (necessarily distinct) pairs containing the stubs $\vec{v},\vec{u}$ in $X$. For each choice of a distinct third pair $\beta=\{\vec{u'},\vec{v'}\}\in X$ together with an orientation $(\vec{u'},\vec{v'})$, draw an edge $\{X,Y\}$ from $X$ to the pairing
        \begin{align*}
            Y &= (X\setminus\{\{\vec{v},\vec{z}\},\{\vec{u},\vec{w}\},\beta\})\cup\{\alpha,\{\vec{z},\vec{u'}\},\{\vec{w},\vec{v'}\}\},
        \end{align*}
    provided $Y\in\mathcal{Q}$ as written.

    For each $X\in A'$, \emph{a priori} there are $2(k-2)$ oriented choices for the pair $\beta$. By the same neighborhood count as in the preceding step, at least $2(k-2)-4\Delta^{2}$ choices satisfy the condition that $u'$ avoids the closed neighborhood of $z$ in $\sigma\cup \Phi(X)$ and $v'$ avoids the closed neighborhood of $w$ in $\sigma\cup \Phi(X)$. Each of these choices produces a valid pairing $Y$ via the surgery above: indeed, the two edges $\{z,u'\}$ and $\{w,v'\}$ are non-loops and absent from $\sigma\cup \Phi(X)$. Moreover, neither equals $\{v,u\}$ since $z,w\notin\{v,u\}$ by validity of $X$ and the absence of $\{v,u\}$ from $\Phi(X)$. Furthermore they cannot equal each other either: equality would require either $z=w$ and $u'=v'$, making $\beta$ project to a loop, or $z=v'$ and $u'=w$, contradicting the neighborhood conditions because $\{u',v'\}\in \Phi(X)$. Thus each of these choices gives a valid pairing $Y$ containing $\alpha$. For fixed $X$, the partners of $\vec{z}$ and $\vec{w}$ in $Y$ recover the ordered pair $(\vec{u'},\vec{v'})$, so distinct oriented choices of $\beta$ produce distinct neighbors. Hence the degree of each vertex $X\in A'$ is at least $2(k-2)-4\Delta^{2}$.

    For the opposite side, let $Y\in B'$ be fixed. Specifying the ordered stubs $\vec{z},\vec{w}$ determines their partners $\vec{u'},\vec{v'}$ in $Y$, and hence recovers both the original pairing $X$ and the orientation of $\beta$. There are at most $(2k)^{2}$ choices of these ordered stubs. Some choices may not give a valid inverse, but including them in the upper bound is unproblematic.
    
    By counting edges once again, it follows that
        \begin{align*}
            |A'|(2(k-2)-4\Delta^{2}) &\le |B'|4k^{2}=|\mathcal{Q}|p_{\alpha}4k^{2}.
        \end{align*}
    There are at most $\res_{\sigma}(v)\res_{\sigma}(u)\le\Delta^{2}$ pairs of stubs projecting to $\{u,v\}$, so the union bound and the upper bound in~\cref{eq:pa-bound} give 
        \begin{align}\label{eq:prob-vu-notin-fq}
            \Prob_{X\sim \pi_{\mathcal{Q}} } (\{v,u\}\notin \Phi(X)) &\ge 1-\frac{\Delta^{2}}{2k-2-4\Delta^{2}} \ge 1-\frac{\Delta^{2}}{k},
        \end{align}
    and hence it follows that
        \begin{align*}
            |\mathcal{Q}| (2(k-2)-4\Delta^{2})(1-\frac{\Delta^{2}}{k}) &\le |\mathcal{Q}|p_{\alpha}4k^{2}.
        \end{align*}
    Rearranging and simplifying, we have
        \begin{align*}
            p_{\alpha} &\ge \frac{(2(k-2)-4\Delta^{2})}{4k^{2}} (1-\frac{\Delta^{2}}{k}) \ge \frac{1}{2k}(1-\frac{\Delta^{2}}{k})(1-\frac{4\Delta^{2}}{k}) \ge\frac{1-5\Delta^{2}/k}{2k}.
        \end{align*}
    Here, we used $4+4\Delta^{2}\le8\Delta^{2}$ and $(1-x)(1-y)\ge1-x-y$ for nonnegative $x,y$. This proves the lower bound in~\cref{eq:pa-bound} and completes the proof.
\end{proof}

The transpositions defined in the preceding discussion give rise to a simple graph $\mathcal{G}$ with vertices consisting of the set $\mathcal{P}$ and where, for distinct pairings $X,Y\in\mathcal{P}$, we say that $X\sim Y$ in $\mathcal{G}$ if $Y=\theta_{\vec{v},\vec{u}}X$ for some transposition $\theta_{\vec{v},\vec{u}}$.

Let $\gen$ denote the combinatorial Laplacian of $\mathcal{G}$, that is, the linear operator on $\R^{\mathcal{P}}$ defined by the equation
    \begin{align}\label{eq:gen-definition}
        (\gen g)(X) &=\sum_{\substack{Y\in\mathcal{P}\\Y\sim X}}(g(X)-g(Y)) =\frac{1}{2}\sum_{\{\vec{v},\vec{u}\}\in\binom{\mathcal{S}}{2}} (g(X)-g(\theta_{\vec{v},\vec{u}}X)),
    \end{align}
for each $g\in\R^{\mathcal{P}}$ and $X\in\mathcal{P}$. To obtain the second equality, observe that for distinct $X,Y$, the equation $Y=\theta_{\vec{v},\vec{u}}X$ has exactly two unordered solutions $\{\vec{v},\vec{u}\}$ when $X\sim Y$, and none otherwise.

The following lemma shows that $\gen$ acts on lifted functions by scalar multiplication, up to an additive constant. This will be useful in later calculations.

\begin{lemma}\label{lem:gen-lift}
    Let $f\in\ell^{2}(\mathcal{R}_{\mathbf{d}}^{(\sigma)}(1),\pi_{1}^{\sigma})$ and $X\in\mathcal{P}$. Then
        \begin{align*}
            (\gen\lift f)(X) &=(2k-1)(\lift f)(X)-\sum_{\alpha\in\binom{\mathcal{S}}{2}}\widehat{f}(\alpha).
        \end{align*}
\end{lemma}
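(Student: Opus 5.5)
We compute $(\gen\lift f)(X)$ directly from the second expression in~\cref{eq:gen-definition}, expanding $(\lift f)(X)-(\lift f)(\theta_{\vec{v},\vec{u}}X)$ for each unordered pair of stubs $\{\vec{v},\vec{u}\}$. The plan is to sort the transpositions by how the two stubs sit in $X$, compute the change in $\lift f$ in each case, and then sum.

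The case analysis has two cases, with $|\mathcal{S}|=2k$.
\begin{enumerate}
\item[(a)] If $\{\vec{v},\vec{u}\}\in X$, then $\theta_{\vec{v},\vec{u}}X=X$ and the difference is $0$.
\item[(b)] Otherwise $\vec{v}$ is paired with $\vec{w}$ and $\vec{u}$ with $\vec{z}$, and these four stubs are distinct. Then $\theta_{\vec{v},\vec{u}}X$ replaces the pairs $\{\vec{v},\vec{w}\},\{\vec{u},\vec{z}\}$ by $\{\vec{u},\vec{w}\},\{\vec{v},\vec{z}\}$. The difference is
\begin{align*}
\widehat f(\{\vec v,\vec w\})+\widehat f(\{\vec u,\vec z\})-\widehat f(\{\vec u,\vec w\})-\widehat f(\{\vec v,\vec z\}).
\end{align*}
\end{enumerate}
Each transposition in case (b) corresponds to choosing two distinct pairs $\beta,\beta'\in X$ together with one stub from each. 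For a fixed unordered pair $\{\beta,\beta'\}=\{\{a,b\},\{c,d\}\}$ of pairs in $X$, there are four such transpositions. Summing their differences gives
\begin{align*}
4\bigl(\widehat f(\beta)+\widehat f(\beta')\bigr)-2\bigl(\widehat f(\{a,c\})+\widehat f(\{a,d\})+\widehat f(\{b,c\})+\widehat f(\{b,d\})\bigr).
\end{align*}
This holds because each of the four cross pairs appears in exactly two of the four swaps.

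Next we sum over all $\binom{k}{2}$ unordered pairs $\{\beta,\beta'\}$ of pairs in $X$. Each $\beta\in X$ lies in $k-1$ of them, so the first part totals $4(k-1)(\lift f)(X)$. Every $\alpha\in\binom{\mathcal S}{2}$ with $\alpha\notin X$ is a cross pair for exactly one $\{\beta,\beta'\}$, namely the pairs containing its two stubs. So the second part totals
\begin{align*}
-2\Bigl(\sum_{\alpha}\widehat f(\alpha)-(\lift f)(X)\Bigr).
\end{align*}
Multiplying the sum of both parts by the factor $\tfrac12$ from~\cref{eq:gen-definition} gives
\begin{align*}
2(k-1)(\lift f)(X)+(\lift f)(X)-\sum_\alpha\widehat f(\alpha)=(2k-1)(\lift f)(X)-\sum_\alpha\widehat f(\alpha),
\end{align*}
which is the claimed identity.

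The main point needing care is the bookkeeping in case (b): checking that each cross pair $\alpha\notin X$ arises from exactly two transpositions, and that we have correctly included the pairs $\alpha\in X$ that are added back. The lemma places no validity condition on $X$, since $\widehat f$ is simply defined to be $0$ on pairs that do not project to link vertices, so no further case distinctions are needed.
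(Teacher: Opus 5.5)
Your proof is correct and is essentially the paper's argument, with the double count done in the other order. The paper reduces by linearity to the indicators $\mathbf{1}_{\alpha}(X)=\mathbf{1}_{\{\alpha\in X\}}$: each $\alpha\notin X$ is introduced by exactly two transpositions and each $\alpha\in X$ is removed by exactly $2(2k-2)$, giving $(\gen\mathbf{1}_{\alpha})(X)=(2k-1)\mathbf{1}_{\alpha}(X)-1$. Your grouping by unordered pairs $\{\beta,\beta'\}$ of pairs in $X$ yields exactly these counts (two swaps per cross pair, $4(k-1)=2(2k-2)$ removals of each $\beta\in X$), so the two computations agree term by term.
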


\begin{proof}
    For $\alpha\in\binom{\mathcal{S}}{2}$, define $\mathbf{1}_{\alpha}\in\R^{\mathcal{P}}$ by writing $\mathbf{1}_{\alpha}(X)=\mathbf{1}_{\{\alpha\in X\}}$ for $X\in\mathcal{P}$. If $\alpha\notin X$, exactly two transpositions introduce $\alpha$, each exchanging one of its stubs with the partner of the other. If $\alpha\in X$, exactly $2(2k-2)$ transpositions remove $\alpha$, each exchanging one of its stubs with a stub outside $\alpha$. All other transpositions leave the indicator unchanged. Thus, by~\cref{eq:gen-definition},
        \begin{align*}
            (\gen \mathbf{1}_{\alpha})(X) &=(2k-1)\mathbf{1}_{\alpha}(X)-1.
        \end{align*}
    Since
        \begin{align*}
            (\lift f)(X) &=\sum_{\alpha\in\binom{\mathcal{S}}{2}}\widehat{f}(\alpha)\mathbf{1}_{\alpha}(X),
        \end{align*}
    the claimed identity follows by linearity.
\end{proof}

Along similar lines, we record the following immediate consequence of the definition of the lift operator for later use.

\begin{lemma}\label{lem:increment-lift-f}
    Let $f\in\ell^{2}(\mathcal{R}_{\mathbf{d}}^{(\sigma)}(1),\pi_{1}^{\sigma})$, $X\in\mathcal{P}$, and $\{\vec{v},\vec{u}\}\in\binom{\mathcal{S}}{2}$. If $\{\vec{v},\vec{u}\}\in X$, then $(\lift f)(\theta_{\vec{v},\vec{u}}X)-(\lift f)(X)=0$. Otherwise, let $\vec{z},\vec{w}$ be the respective partners of $\vec{v},\vec{u}$ in $X$. Then
        \begin{align*}
            (\lift f)(\theta_{\vec{v},\vec{u}}X)-(\lift f)(X) &= \widehat{f}(\{\vec{v},\vec{w}\})+\widehat{f}(\{\vec{u},\vec{z}\})\\
            &\qquad-\widehat{f}(\{\vec{v},\vec{z}\})-\widehat{f}(\{\vec{u},\vec{w}\}).
        \end{align*}
\end{lemma}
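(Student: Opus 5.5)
The statement is an immediate consequence of the definition of $\lift$. The plan is to observe that a transposition changes at most two pairs of $X$ and to compare the two sums term by term.

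\emph{First case:} $\{\vec{v},\vec{u}\}\in X$. The transposition $\theta_{\vec{v},\vec{u}}$ fixes every pair of $X$. It maps the pair $\{\vec{v},\vec{u}\}$ to itself, and it fixes each pair avoiding both stubs. Hence $\theta_{\vec{v},\vec{u}}X=X$, as already noted when transpositions were introduced, and the difference of lifts is $0$.

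\emph{Second case:} $\{\vec{v},\vec{u}\}\notin X$. Let $\vec{z},\vec{w}$ be the partners of $\vec{v},\vec{u}$ in $X$. The four stubs $\vec{v},\vec{u},\vec{z},\vec{w}$ are then distinct. Indeed, $\vec{z}\ne\vec{u}$ because $\{\vec{v},\vec{u}\}\notin X$, and $\vec{z}\ne\vec{w}$ because $X$ is a partition into pairs.

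Interchanging $\vec{v}$ and $\vec{u}$ sends the pairs $\{\vec{v},\vec{z}\}$ and $\{\vec{u},\vec{w}\}$ to $\{\vec{u},\vec{z}\}$ and $\{\vec{v},\vec{w}\}$. Every other pair of $X$ avoids both $\vec{v}$ and $\vec{u}$, so it is left unchanged. Therefore
$$\theta_{\vec{v},\vec{u}}X=\bigl(X\setminus\{\{\vec{v},\vec{z}\},\{\vec{u},\vec{w}\}\}\bigr)\cup\{\{\vec{v},\vec{w}\},\{\vec{u},\vec{z}\}\}.$$

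Write $(\lift f)(\theta_{\vec{v},\vec{u}}X)$ and $(\lift f)(X)$ as sums of $\widehat{f}$ over their pairs. The common pairs cancel. What remains is $\widehat{f}(\{\vec{v},\vec{w}\})+\widehat{f}(\{\vec{u},\vec{z}\})-\widehat{f}(\{\vec{v},\vec{z}\})-\widehat{f}(\{\vec{u},\vec{w}\})$, which is the claimed formula.

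This holds for every $X\in\mathcal{P}$, valid or not. The reason is that $\widehat{f}$ is defined on all pairs of stubs, taking the value $0$ on pairs that do not project to vertices of the link. No validity considerations therefore enter.

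There is no real obstacle here. The only point needing care is the distinctness of the four stubs in the second case, which guarantees that exactly two pairs are removed and two new pairs are added.
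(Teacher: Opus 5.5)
Your proof is correct and is exactly the direct verification from the definition of $\lift$ that the paper intends; the paper records this lemma as an immediate consequence of that definition and gives no written proof. Your check that the four stubs are distinct is the one detail worth making explicit, since it guarantees that $\theta_{\vec{v},\vec{u}}X$ differs from $X$ in exactly two removed and two added pairs.
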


Next, denote by $\pi_{\mathcal{P}}$ the uniform law on $\mathcal{P}$. For each $g\in\ell^{2}(\mathcal{P},\pi_{\mathcal{P}} )$, define the \emph{all pairings energy}, denoted $\mathcal{E}_{\mathrm{all}}(g,g)$, according to
    \begin{align*}
        \mathcal{E}_{\mathrm{all}}(g,g) &= \langle g,\gen g\rangle_{\pi_{\mathcal{P}} }.
    \end{align*}
Note that by expanding the inner product and by~\cref{eq:gen-definition}, it follows that
    \begin{align*}
        \mathcal{E}_{\mathrm{all}}(g,g) &= \frac{1}{2}\E_{X\sim\pi_{\mathcal{P}} }\Bigl( \sum_{Y\sim X}(g(Y)-g(X))^{2}\Bigr)\\
        &=\frac{1}{4}\E_{X\sim\pi_{\mathcal{P}} }\Bigl( \sum_{\{\vec{v},\vec{u}\}\in\binom{\mathcal{S}}{2}}(g(\theta_{\vec{v},\vec{u}}X)-g(X))^{2}\Bigr).
    \end{align*}
Along similar lines, define the restricted \emph{valid pairings energy} according to
    \begin{align*}
        \mathcal{E}_{\mathrm{good}}(g,g) &= \frac{1}{2}\E_{X\sim\pi_{\mathcal{Q}} }\Bigl( \sum_{Y\sim X}(g(Y)-g(X))^{2}\Bigr)\\
        &= \frac{1}{4}\E_{X\sim\pi_{\mathcal{Q}} }\Bigl( \sum_{\{\vec{v},\vec{u}\}\in\binom{\mathcal{S}}{2}} (g(\theta_{\vec{v},\vec{u}}X)-g(X))^{2}\Bigr).
    \end{align*}
Here, only the starting pairing is required to be valid. The next lemma is the first of the two main estimates which facilitate the proof of~\cref{lem:covariance}.

\begin{lemma}\label{lem:energy-estimate}
    Assume $k\ge 10 \Delta^{2}$. Let $f\in \ell^{2}(\mathcal{R}_{\mathbf{d}}^{(\sigma)}(1),\pi_{1}^{\sigma})$. Then
        \begin{align}\label{eq:lemma-bound-1}
            \mathcal{E}_{\mathrm{good}}(\lift f,\lift f) &\le (2k+220\Delta^{2})k\|f\|_{\pi_{1}^{\sigma}}^{2}.
        \end{align}
\end{lemma}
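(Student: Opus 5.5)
We expand the energy directly using \cref{lem:increment-lift-f}. For $X\in\mathcal{Q}$ and a transposition $\theta_{\vec{v},\vec{u}}$ with $\{\vec{v},\vec{u}\}\notin X$, write $\vec{z},\vec{w}$ for the partners of $\vec{v},\vec{u}$. The increment is a signed sum of four terms $\widehat{f}$. By $(a+b-c-d)^{2}\le 4(a^{2}+b^{2}+c^{2}+d^{2})$ we could bound each squared increment crudely, but this loses a constant factor: it would give roughly $8k$ rather than $2k$ in front. So we keep the exact expansion instead,
\begin{align*}
    \bigl(\widehat{f}(\{\vec{v},\vec{w}\})+\widehat{f}(\{\vec{u},\vec{z}\})-\widehat{f}(\{\vec{v},\vec{z}\})-\widehat{f}(\{\vec{u},\vec{w}\})\bigr)^{2},
\end{align*}
and sort the resulting terms into two groups: diagonal squares, and cross terms.

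For the \emph{removed} pairs, the terms $\widehat{f}(\{\vec{v},\vec{z}\})^{2}$ and $\widehat{f}(\{\vec{u},\vec{w}\})^{2}$ are squares of pairs $\alpha\in X$. Each $\alpha\in X$ is destroyed by exactly $2(2k-2)$ transpositions. Summing with the prefactor $\tfrac14$ and taking $\E_{X\sim\pi_{\mathcal{Q}}}$, these contribute about $(2k-2)\E_{\mathcal{Q}}\sum_{\alpha\in X}\widehat{f}(\alpha)^{2}$, which by \cref{lem:uniform-projection-lift}\emph{(iv)} is $(2k-2)k\|f\|^{2}_{\pi_1^\sigma}$. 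This is the main $2k\cdot k\|f\|^2$ term.

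For the \emph{created} pairs, the squares $\widehat{f}(\{\vec{v},\vec{w}\})^{2}$ range over pairs $\beta\notin X$. Each such $\beta$ is created by exactly two transpositions, so summing gives essentially $\E_{\mathcal{Q}}\sum_{\beta\notin X}\widehat{f}(\beta)^{2}\le\sum_{\beta}\widehat{f}(\beta)^{2}$. We convert this to the $p_\beta$-weighted sum using the lower bound of \cref{lem:single-pair-marginals}, $p_{\beta}\ge(1-5\Delta^{2}/k)/(2k)$, valid since $k\ge10\Delta^2$. This gives $\sum_\beta\widehat{f}(\beta)^2\le 2k(1+O(\Delta^2/k))\sum_\beta p_\beta\widehat f(\beta)^2$, which is $O(k)\cdot k\|f\|^{2}$. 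That is too large, and it is the point to watch: the created contribution must be $O(\Delta^{2})k\|f\|^{2}$, not $O(k)\cdot k\|f\|^2$.

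Re-counting carefully: a fixed $\beta=\{\vec{a},\vec{b}\}\notin X$ is created by only two transpositions. Its weight in the energy is therefore $\E_{\mathcal{Q}}[\mathbf{1}_{\beta\notin X}]\cdot O(1)\cdot\widehat f(\beta)^{2}$, summed over all $\beta$. This is $\sum_\beta \widehat f(\beta)^2$, which is genuinely about $2k\sum_\beta p_\beta\widehat f(\beta)^2$. So the total diagonal is about $(2k+2k)k\|f\|^2$ with the crude count, and the cross terms must be used to cancel.

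This is where \cref{lem:gen-lift} enters, and it suggests the cleaner route. On all of $\mathcal{P}$ we have $\mathcal{E}_{\mathrm{all}}(\lift f,\lift f)=\langle\lift f,\gen\lift f\rangle_{\pi_{\mathcal{P}}}=(2k-1)\|\lift f\|^{2}-(\sum_\alpha\widehat f(\alpha))\E\lift f$, an exact identity. My plan is therefore to use the pointwise identity
\begin{align*}
    \tfrac12\sum_{Y\sim X}(g(Y)-g(X))^{2}=g(X)(\gen g)(X)+\tfrac12\bigl((\gen g^{2})(X)-2g(X)(\gen g)(X)\bigr)\cdot(-1)+\dots,
\end{align*}
more precisely $\tfrac12\sum_{Y\sim X}(g(Y)-g(X))^2 = g(X)(\gen g)(X)-\tfrac12(\gen(g^2))(X)$ up to sign conventions, applied with $g=\lift f$ and averaged over $\pi_{\mathcal{Q}}$. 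The term $\E_{\mathcal{Q}}[g\,\gen g]$ equals, by \cref{lem:gen-lift}, $(2k-1)\E_{\mathcal{Q}}[(\lift f)^2]-c\,\E_{\mathcal{Q}}[\lift f]$, where $c=\sum_\alpha\widehat f(\alpha)$. The term $\E_{\mathcal{Q}}[\gen(g^{2})]$ measures the flux of $\pi_{\mathcal{Q}}$ out of $\mathcal{Q}$. It has no sign a priori and needs control; we expand $g^{2}$ into $\sum_{\alpha,\alpha'}\widehat f(\alpha)\widehat f(\alpha')\mathbf{1}_\alpha\mathbf{1}_{\alpha'}$ and use the near-uniformity of the two-pair marginals.

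I expect the main obstacle to be controlling $\E_{\mathcal{Q}}[(\lift f)^{2}]$, which is the variance we are ultimately trying to bound in \cref{lem:covariance}, so using it would be circular. The fallback, then, is to bound the energy directly, term by term. The diagonal terms give $(2k-2)k\|f\|^{2}$ from removed pairs, plus created-pair squares. The cross terms $\widehat f(\{\vec v,\vec w\})\widehat f(\{\vec u,\vec z\})$ and similar products are bounded by AM–GM with a $\Delta^2/k$-weighted split. The created-pair squares are summed over $\beta\notin X$ whose endpoints are both stubs, and counting transpositions creating $\beta$ from valid $X$ weighs them by $\Prob(\beta\notin X,\ \text{structure})$. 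I will charge these to $p_\beta$ via the bipartite switching comparison of \cref{lem:single-pair-marginals}, which gives the $220\Delta^{2}$ error after collecting constants.
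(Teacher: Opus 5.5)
\paragraph{There is a genuine gap.} It sits exactly where your plan becomes vague.

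First, your diagonal bookkeeping is off by a factor of two in both places. With the prefactor $\tfrac14$, the removed-pair squares contribute $\tfrac14\cdot 2(2k-2)\,\E_{X\sim\pi_{\mathcal{Q}}}\sum_{\alpha\in X}\widehat{f}(\alpha)^{2}=(k-1)k\|f\|_{\pi_{1}^{\sigma}}^{2}$ by \cref{lem:uniform-projection-lift}(iv). The created-pair squares contribute $\tfrac14\cdot 2\,\E_{X\sim\pi_{\mathcal{Q}}}\sum_{\beta\notin X}\widehat{f}(\beta)^{2}\le\tfrac12\|\widehat{f}\|_{2}^{2}\le(k+10\Delta^{2})k\|f\|_{\pi_{1}^{\sigma}}^{2}$. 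So the created squares are not supposed to be $O(\Delta^{2})k\|f\|^{2}$. Together with the removed squares they \emph{are} the main term $2k\cdot k\|f\|^{2}$, and no diagonal excess needs cancelling.

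What you actually have to show is that the signed cross terms average to $O(\Delta^{2})k\|f\|_{\pi_{1}^{\sigma}}^{2}$, and AM--GM cannot do this because these terms are not pointwise small. For instance, the removed--removed products sum exactly to $(\lift f)(X)^{2}-\sum_{\alpha\in X}\widehat{f}(\alpha)^{2}$. Its $\pi_{\mathcal{Q}}$-mean is essentially the quantity \cref{lem:covariance} is trying to bound, which is the circularity you yourself identified. Bounding it by Cauchy--Schwarz or AM--GM costs $(k-1)\sum_{\alpha\in X}\widehat{f}(\alpha)^{2}$, i.e.\ order $k\cdot k\|f\|^{2}$. The removed--created products involve the stub sums $\sum_{\beta\ni\vec{s}}\widehat{f}(\beta)$, are of the same order, and have no sign control. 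The switching arguments of \cref{lem:single-pair-marginals} control inclusion probabilities of pairs, which handles squares but not signed products. So the claimed ``$220\Delta^{2}$ after collecting constants'' is not supported.

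\paragraph{The missing idea is to never expand the square under $\pi_{\mathcal{Q}}$.}

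\emph{Transfer step.} For a fixed transposition $\theta_{\vec{v},\vec{u}}$, the squared increment is nonnegative. By \cref{lem:increment-lift-f}, it depends on $X$ only through the partners $(\vec{z},\vec{w})$ of $\vec{v},\vec{u}$. Hence its $\pi_{\mathcal{Q}}$-mean is a nonnegative combination of the two-pair probabilities $\Prob_{X\sim\pi_{\mathcal{Q}}}(\{\vec{v},\vec{z}\},\{\vec{u},\vec{w}\}\in X)$, and the same formula holds under $\pi_{\mathcal{P}}$. Applying the upper-bound switching of \cref{lem:single-pair-marginals} twice (once after conditioning on the first pair) bounds the ratio of these probabilities by $1+20\Delta^{2}/k$. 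This gives $\mathcal{E}_{\mathrm{good}}(\lift f,\lift f)\le(1+20\Delta^{2}/k)\,\mathcal{E}_{\mathrm{all}}(\lift f,\lift f)$ with no cross-term analysis at all.

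\emph{Exact computation on $\mathcal{P}$.} You did write down the exact identity on $\mathcal{P}$, but then moved it to $\pi_{\mathcal{Q}}$, which is what made it circular. On $\mathcal{P}$ it reads $\mathcal{E}_{\mathrm{all}}(\lift f,\lift f)=(2k-1)\Var_{\pi_{\mathcal{P}}}(\lift f)$ by \cref{lem:gen-lift}. This variance is explicitly computable in the unrestricted configuration model from three inputs:
\begin{itemize}
\item $\Prob_{\pi_{\mathcal{P}}}(\alpha\in X)=1/(2k-1)$;
\item $\Prob_{\pi_{\mathcal{P}}}(\alpha,\beta\in X)=1/((2k-1)(2k-3))$ for disjoint $\alpha,\beta$;
\item the identity expressing $\sum_{\alpha\cap\beta=\varnothing}g(\alpha)g(\beta)$ through $(\sum g)^{2}$, $\|g\|_{2}^{2}$ and the stub sums, which enter with a favorable sign.
\end{itemize}
Together these give $\mathcal{E}_{\mathrm{all}}(\lift f,\lift f)\le(1+1/k)\|\widehat{f}\|_{2}^{2}$.

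\emph{Final step.} Your conversion $\|\widehat{f}\|_{2}^{2}\le(2k+20\Delta^{2})k\|f\|_{\pi_{1}^{\sigma}}^{2}$ via the lower bound on $p_{\alpha}$ is exactly the last ingredient. The three factors then multiply to the stated bound.
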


\begin{proof}
    We first prove three separate estimates and then combine them at the end. First, we compare $\mathcal{E}_{\mathrm{all}}(\lift f, \lift f)$ with $\mathcal{E}_{\mathrm{good}}(\lift f, \lift f)$. We claim that
        \begin{align}\label{eq:good-vs-all-energy}
            \mathcal{E}_{\mathrm{good}}(\lift f,\lift f) &\le (1+\frac{20\Delta^{2}}{k})\mathcal{E}_{\mathrm{all}}(\lift f,\lift f).
        \end{align}
    To this end, fix $\{\vec{v},\vec{u}\}\in\binom{\mathcal{S}}{2}$. By~\cref{lem:increment-lift-f}, it follows that
        \begin{align*}
            &\E_{X\sim\pi_{\mathcal{Q}}}(((\lift f)(\theta_{\vec{v},\vec{u}}X)-(\lift f)(X))^{2})\\
            &\quad=\sum_{(\vec{z},\vec{w})}\sum_{\substack{X\in\mathcal{Q}\\\{\vec{v},\vec{z}\},\{\vec{u},\vec{w}\}\in X}}\pi_{\mathcal{Q}}(X)((\lift f)(\theta_{\vec{v},\vec{u}}X)-(\lift f)(X))^{2}\\
            &\quad=\sum_{(\vec{z},\vec{w})}\Prob_{X\sim\pi_{\mathcal{Q}}}(\{\vec{v},\vec{z}\},\{\vec{u},\vec{w}\}\in X)\times\Bigl(\widehat{f}(\{\vec{v},\vec{w}\})\\
            &\qquad+\widehat{f}(\{\vec{u},\vec{z}\})-\widehat{f}(\{\vec{v},\vec{z}\})-\widehat{f}(\{\vec{u},\vec{w}\})\Bigr)^{2}.
        \end{align*}
    In the last two expressions the outer sum is over ordered pairs of distinct stubs in $\mathcal{S}\setminus\{\vec{v},\vec{u}\}$. The first equality groups pairings according to the partners of $\vec{v}$ and $\vec{u}$; pairings containing $\{\vec{v},\vec{u}\}$ contribute zero. The same expression holds if we interchange $\pi_{\mathcal{Q}} $ with $\pi_{\mathcal{P}}$. Thus,
        \begin{align}\label{eq:increment-lift-f-bound-a}
            &\E_{X\sim\pi_{\mathcal{Q}}}(((\lift f)(\theta_{\vec{v},\vec{u}}X)-(\lift f)(X))^{2})\notag\\
            &\quad\le \Bigl(\max_{(\vec{z},\vec{w})}\frac{\Prob_{X\sim\pi_{\mathcal{Q}}}(\{\vec{v},\vec{z}\},\{\vec{u},\vec{w}\}\in X)}{\Prob_{X\sim\pi_{\mathcal{P}}}(\{\vec{v},\vec{z}\},\{\vec{u},\vec{w}\}\in X)}\Bigr)\notag\\
            &\qquad\times\E_{X\sim\pi_{\mathcal{P}}}(((\lift f)(\theta_{\vec{v},\vec{u}}X)-(\lift f)(X))^{2}).
        \end{align}
    The claimed comparison~\cref{eq:good-vs-all-energy} therefore reduces to a uniform bound on the ratios appearing above.

    For any two disjoint pairs $\{\vec{v},\vec{u}\},\{\vec{z},\vec{w}\}\in\binom{\mathcal{S}}{2}$ satisfying $\{u,v\},\{z,w\}\in\mathcal{R}_{\mathbf{d}}^{(\sigma)}(1)$, we obtain from~\cref{lem:single-pair-marginals} and the proof of the upper bound appearing therein (applied with the first pair held fixed) that
        \begin{align}\label{eq:mu-sigma-two-pairs}
            &\Prob_{X\sim\pi_{\mathcal{Q}} }(\{\vec{v},\vec{u}\}\in X,\ \{\vec{z},\vec{w}\}\in X)\notag\\
            &\quad= \Prob_{X\sim\pi_{\mathcal{Q}} }(\{\vec{v},\vec{u}\}\in X) \Prob_{X\sim\pi_{\mathcal{Q}} }(\{\vec{z},\vec{w}\}\in X\mid\{\vec{v},\vec{u}\}\in X)\notag\\
            &\quad\le \frac{1}{(2k-2-4\Delta^{2})(2k-4-4\Delta^{2})}.
        \end{align}
    This bound also holds if either pair does not appear in any valid pairing, since the joint inclusion probability is then zero.

    Separately, for the unrestricted law, fixing one pair leaves $2k-2$ stubs to pair, so
        \begin{align}\label{eq:nu-sigma-single-pair}
            \Prob_{X\sim\pi_{\mathcal{P}} }(\{\vec{v}, \vec{u}\}\in X)&=\frac{(2k-3)!!}{(2k-1)!!}=\frac{1}{2k-1},
        \end{align}
    since the number of ways to partition a set of $2k$ objects (in this case stubs) into pairs is $(2k-1)!!$. Similarly, for a second pair $\{\vec{z},\vec{w}\}\in\binom{\mathcal{S}}{2}$ disjoint from the first, one has
        \begin{align}\label{eq:nu-sigma-two-pairs}
            \Prob_{X\sim\pi_{\mathcal{P}} }( \{\vec{v},\vec{u}\}\in X,\ \{\vec{z},\vec{w}\}\in X) &= \frac{(2(k-2)-1)!!}{(2k-1)!!} = \frac{1}{(2k-1)(2k-3)}.
        \end{align}
    Computing the ratio of~\cref{eq:mu-sigma-two-pairs} and~\cref{eq:nu-sigma-two-pairs}, we have
        \begin{align}\label{eq:nu-sigma-mu-sigma-ratio}
            \frac{(2k-1)(2k-3)}{(2k-2-4\Delta^{2})(2k-4-4\Delta^{2})} &= \Bigl(1+\frac{1+4\Delta^{2}}{2k-2-4\Delta^{2}}\Bigr) \Bigl(1+\frac{1+4\Delta^{2}}{2k-4-4\Delta^{2}}\Bigr)\notag\\
            &\le (1+\frac{5\Delta^{2}}{k})^{2}\notag\\
            &= 1+\frac{10\Delta^{2}}{k}+\frac{25\Delta^{4}}{k^{2}} \le 1+\frac{20\Delta^{2}}{k},
        \end{align}
    since $\Delta\ge1$ and $k \ge 10\Delta^{2}$. Substituting this uniform ratio bound into~\cref{eq:increment-lift-f-bound-a} and summing over all $\{\vec{v},\vec{u}\}\in\binom{\mathcal{S}}{2}$ proves~\cref{eq:good-vs-all-energy}.

    The second estimate relates $\mathcal{E}_{\mathrm{all}}(\lift f,\lift f)$ to $\|\widehat{f}\|_{2}^{2}$. We claim that
        \begin{align}\label{eq:third-ingredient}
            \mathcal{E}_{\mathrm{all}}(\lift f,\lift f) &\le ( 1+\frac{1}{k} )\|\widehat{f}\|_{2}^{2}.
        \end{align}
    To prove the claim we require an estimate on the variance of $\lift f$ over $\pi_{\mathcal{P}}$. A slightly more generic estimate works here. Suppose $g\in\R^{\binom{\mathcal{S}}{2}}$. We claim that 
        \begin{align}\label{eq:variance-bound-g}
            \Var_{X\sim\pi_{\mathcal{P}} } \Bigl(\sum_{\alpha\in X} g(\alpha) \Bigr) &\le \frac{2k-2}{(2k-1)(2k-3)} \|g\|_{2}^{2}.
        \end{align}
    It suffices to prove~\cref{eq:variance-bound-g} in the case $\sum_{\alpha}g(\alpha)=0$. Observe first that
        \begin{align}\label{eq:disjoint-pairs-identity}
            \sum_{\substack{(\alpha,\beta)\in\binom{\mathcal{S}}{2}\times\binom{\mathcal{S}}{2}\\\alpha\cap\beta=\varnothing}} g(\alpha)g(\beta) &= \Bigl(\sum_{\alpha\in\binom{\mathcal{S}}{2}} g(\alpha)\Bigr)^{2}  + \|g\|_{2}^{2} - \sum_{\vec{v} \in\mathcal{S}}\Bigl( \sum_{\substack{\alpha\in\binom{\mathcal{S}}{2} \\ \vec{v}\in\alpha}} g(\alpha) \Bigr)^{2},
        \end{align}
    where the first sum is over all ordered pairs $(\alpha,\beta)$ of disjoint pairs of stubs. To see this, expand the rightmost term:
        \begin{align*}
            \sum_{\vec{v} \in\mathcal{S}}\Bigl(\sum_{\substack{\alpha\in\binom{\mathcal{S}}{2} \\ \vec{v}\in\alpha}} g(\alpha) \Bigr)^{2} &= \sum_{\vec{v} \in\mathcal{S}} \sum_{\substack{(\alpha,\beta)\in\binom{\mathcal{S}}{2}\times\binom{\mathcal{S}}{2}\\\vec{v}\in\alpha\cap\beta}} g(\alpha)g(\beta).
        \end{align*}  
    The coefficient of $g(\alpha)g(\beta)$ in this expansion is $|\alpha\cap\beta|$, which equals $1$ when the pairs share exactly one stub, $2$ when they coincide, and $0$ when they are disjoint. Therefore~\cref{eq:disjoint-pairs-identity} follows upon inspection. Since we assume that $\sum_{\alpha} g(\alpha) = 0$, the first term on the right-hand side vanishes; the sum of squares is nonnegative, so it follows that
        \begin{align}\label{eq:disjoint-pairs-inequality}
            \sum_{\substack{(\alpha,\beta)\in\binom{\mathcal{S}}{2}\times\binom{\mathcal{S}}{2}\\\alpha\cap\beta=\varnothing}} g(\alpha)g(\beta) &\le \|g\|_{2}^{2}.
        \end{align}
    By~\cref{eq:nu-sigma-single-pair}, we have
        \begin{align*}
            \E_{X\sim\pi_{\mathcal{P}} }\Bigl(\sum_{\alpha\in X}g(\alpha)\Bigr) &= \frac{1}{2k-1}\sum_{\alpha} g(\alpha)=0.
        \end{align*}
    Expanding the square, we obtain
        \begin{align*}
            \Var_{X\sim\pi_{\mathcal{P}}}\Bigl(\sum_{\alpha\in X}g(\alpha)\Bigr) &= \E_{X\sim\pi_{\mathcal{P}}}\Bigl(\Bigl(\sum_{\alpha\in X}g(\alpha)\Bigr)^{2}\Bigr)\\
            &= \sum_{\alpha,\beta\in\binom{\mathcal{S}}{2}}\Prob_{X\sim\pi_{\mathcal{P}}}(\alpha,\beta\in X)\,g(\alpha)g(\beta)\\
            &= \frac{\|g\|_{2}^{2}}{2k-1}+\frac{1}{(2k-1)(2k-3)}\sum_{\alpha\cap\beta=\varnothing}g(\alpha)g(\beta).
        \end{align*}
    The last equality separates the diagonal terms $\alpha=\beta$ and uses the fact that distinct pairs sharing a stub cannot both belong to $X$. Applying~\cref{eq:disjoint-pairs-inequality} now gives
        \begin{align*}
            \Var_{X\sim\pi_{\mathcal{P}}}\Bigl(\sum_{\alpha\in X}g(\alpha)\Bigr) &\le \Bigl(\frac{1}{2k-1}+\frac{1}{(2k-1)(2k-3)}\Bigr)\|g\|_{2}^{2}\\
            &= \frac{2k-2}{(2k-1)(2k-3)}\|g\|_{2}^{2}.
        \end{align*}
    Thus~\cref{eq:variance-bound-g} holds for all $g$, and hence if $g=\widehat{f}$, we have
        \begin{align}\label{eq:variance-bound-lift-f}
            \Var_{X\sim\pi_{\mathcal{P}} }((\lift f)(X)) &\le \frac{2k-2}{(2k-1)(2k-3)}\|\widehat{f}\|_{2}^{2}.
        \end{align}

    Next we transfer~\cref{eq:variance-bound-lift-f} to an energy bound by computing the unrestricted quadratic form for $\lift f$. By~\cref{eq:nu-sigma-single-pair} and the definition of $\lift f$, it follows that
        \begin{align*}
            \E_{Y\sim\pi_{\mathcal{P}} }((\lift f)(Y)) &= \sum_{\alpha\in\binom{\mathcal{S}}{2}} \widehat{f}(\alpha)\Prob_{Y\sim\pi_{\mathcal{P}} }(\alpha\in Y) = \frac{1}{2k-1}\sum_{\alpha\in\binom{\mathcal{S}}{2}}\widehat{f}(\alpha).
        \end{align*}
    Therefore, using the preceding and~\cref{lem:gen-lift}, we have
        \begin{align}\label{eq:gen-lift-f-variance}
            (\gen\lift f)(X) &=(2k-1)(\lift f)(X)-\sum_{\alpha\in\binom{\mathcal{S}}{2}}\widehat{f}(\alpha)\notag\\
            &=(2k-1)(\lift f)(X) -(2k-1)\E_{Y\sim\pi_{\mathcal{P}} }((\lift f)(Y))\notag\\
            &=(2k-1)((\lift f)(X)-\E_{Y\sim\pi_{\mathcal{P}} }((\lift f)(Y))),
        \end{align}
    where $X\in\mathcal{P}$ is fixed. Consequently, by applying~\cref{eq:variance-bound-lift-f} and~\cref{eq:gen-lift-f-variance}, we have that
        \begin{align*}
            \mathcal{E}_{\mathrm{all}}(\lift f,\lift f) &= \langle\lift f,\gen\lift f\rangle_{\pi_{\mathcal{P}} }\\
            &= (2k-1)( \E_{X\sim\pi_{\mathcal{P}} }((\lift f)(X)^{2}) -(\E_{X\sim\pi_{\mathcal{P}} }((\lift f)(X)))^{2})\\
            &= (2k-1)\Var_{X\sim\pi_{\mathcal{P}} }((\lift f)(X))\\
            &\le \frac{2k-2}{2k-3}\|\widehat{f}\|_{2}^{2}\\
            &= ( 1+\frac{1}{2k-3} )\|\widehat{f}\|_{2}^{2}\\
            &\le ( 1+\frac{1}{k} )\|\widehat{f}\|_{2}^{2}.
        \end{align*}
    The last inequality uses $k\ge3$, which follows from our assumption $k\ge 10\Delta^{2}$. This proves~\cref{eq:third-ingredient}.

    Finally, we bound $\|\widehat{f}\|_{2}^{2}$. By~\cref{lem:uniform-projection-lift}\emph{(iv)} and the lower bound in~\cref{lem:single-pair-marginals}, which applies to each pair $\alpha$ with $\widehat{f}(\alpha)\ne0$, it follows that
        \begin{align*}
            \|\widehat{f}\|_{2}^{2} &\le \frac{2k}{1-5\Delta^{2}/k} \sum_{\alpha\in\binom{\mathcal{S}}{2}} p_{\alpha}\widehat{f}(\alpha)^{2} = \frac{2k^{2}}{1-5\Delta^{2}/k}\|f\|_{\pi_{1}^{\sigma}}^{2}.
        \end{align*}
    Since $k\ge10\Delta^{2}$, we have $0\le5\Delta^{2}/k\le1/2$. Using $(1-x)^{-1}\le1+2x$ for $0\le x\le1/2$, we have
        \begin{align}\label{eq:hat-f-norm-bound}
            \|\widehat{f}\|_{2}^{2} &\le 2k^{2}(1+\frac{10\Delta^{2}}{k})\|f\|_{\pi_{1}^{\sigma}}^{2}\notag\\
            &= (2k+20\Delta^{2})k\|f\|_{\pi_{1}^{\sigma}}^{2}.
        \end{align}

    It remains to put all three ingredients together. Combining~\cref{eq:good-vs-all-energy,eq:third-ingredient}, we obtain
        \begin{align*}
            \mathcal{E}_{\mathrm{good}}(\lift f,\lift f) &\le (1+\frac{20\Delta^{2}}{k})\mathcal{E}_{\mathrm{all}}(\lift f,\lift f)\\
            &\le (1+\frac{20\Delta^{2}}{k})(1+\frac{1}{k})\|\widehat{f}\|_{2}^{2}\\
            &= (1+\frac{1}{k}+\frac{20\Delta^{2}}{k}+\frac{20\Delta^{2}}{k^{2}})\|\widehat{f}\|_{2}^{2}\\
            &\le (1+\frac{50\Delta^{2}}{k})\|\widehat{f}\|_{2}^{2}.
        \end{align*}
    Now we use~\cref{eq:hat-f-norm-bound} to get
        \begin{align*}
            \mathcal{E}_{\mathrm{good}}(\lift f,\lift f) &\le (1+\frac{50\Delta^{2}}{k}) (2k+20\Delta^{2})k\|f\|_{\pi_{1}^{\sigma}}^{2}\\
            &= (2k+120\Delta^{2}+\frac{1000\Delta^{4}}{k}) k\|f\|_{\pi_{1}^{\sigma}}^{2}\\
            &\le (2k+220\Delta^{2})k\|f\|_{\pi_{1}^{\sigma}}^{2}.
        \end{align*}
    The last inequality uses $k\ge10\Delta^{2}$. This proves~\cref{eq:lemma-bound-1}.
\end{proof}

For $X\in\mathcal{Q}$, a transposition $\theta_{\vec{v},\vec{u}}$ is called \emph{invalid} (relative to $X$) if $\theta_{\vec{v},\vec{u}}X\notin\mathcal{Q}$. For each $g\in\R^{\mathcal{P}}$ and $X\in\mathcal{Q}$, define
    \begin{align*}
        (\gen_{2}g)(X) &= \sum_{\substack{Y\sim X\\Y\notin\mathcal{Q}}} (g(X)-g(Y)) = \frac{1}{2}\sum_{\substack{\{\vec{v},\vec{u}\}\in\binom{\mathcal{S}}{2}\\ \theta_{\vec{v},\vec{u}}X\notin\mathcal{Q}}} (g(X)-g(\theta_{\vec{v},\vec{u}}X)).
    \end{align*}
The second equality follows from the same observation used in~\cref{eq:gen-definition}: each neighbor of $X$ is obtained by exactly two transpositions.

We now state and prove the second of the two main estimates which facilitate the proof of~\cref{lem:covariance}.

\begin{lemma}\label{lem:invalid-transpositions}
    Assume $k\ge 10 \Delta^{2}$ and let $f\in\ell^{2}(\mathcal{R}_{\mathbf{d}}^{(\sigma)}(1),\pi_{1}^{\sigma})$. Then
        \begin{align}\label{eq:bound-2-lemma}
            \|\gen_{2}\lift f\|_{\pi_{\mathcal{Q}} }^{2} &\le 192 k^{2}\Delta^{4}\|f\|_{\pi_{1}^{\sigma}}^{2}.
        \end{align}
\end{lemma}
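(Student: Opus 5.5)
The plan is to bound $(\gen_{2}\lift f)(X)$ pointwise with Cauchy--Schwarz and then average over $X\sim\pi_{\mathcal{Q}}$. The count of invalid transpositions supplies one factor $k\Delta^{2}$. A weighted sum of squared increments supplies the other factor $k\Delta^{2}\|f\|_{\pi_{1}^{\sigma}}^{2}$.

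\emph{Step 1 (counting invalid transpositions).} Fix $X\in\mathcal{Q}$ and a transposition $\theta_{\vec{v},\vec{u}}$ with $\{\vec{v},\vec{u}\}\notin X$. Let $\vec{z},\vec{w}$ be the partners of $\vec{v},\vec{u}$. The transposition replaces $\{v,z\},\{u,w\}$ by $\{v,w\},\{u,z\}$. It is invalid only if $w\in N_{\sigma\cup\Phi(X)}[v]$ or $z\in N_{\sigma\cup\Phi(X)}[u]$, exactly as in the proof of \cref{lem:single-pair-marginals}. For each of the $2k$ choices of $\vec{v}$ there are at most $2\Delta^{2}$ bad $\vec{w}$, hence bad $\vec{u}$, and symmetrically. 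So the number $N(X)$ of invalid unordered transpositions satisfies $N(X)\le 8k\Delta^{2}$. Then Cauchy--Schwarz gives
\begin{align*}
(\gen_{2}\lift f)(X)^{2}\le \tfrac14 N(X)\sum_{\theta\ \text{invalid}}\bigl((\lift f)(\theta X)-(\lift f)(X)\bigr)^{2}.
\end{align*}
By \cref{lem:increment-lift-f}, each increment squared is at most $4$ times the sum of the four squares $\widehat f(\{\vec v,\vec z\})^{2}+\widehat f(\{\vec u,\vec w\})^{2}+\widehat f(\{\vec v,\vec w\})^{2}+\widehat f(\{\vec u,\vec z\})^{2}$.

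\emph{Step 2 (old pairs).} The first two squares are values of $\widehat f$ on pairs $\beta\in X$. A fixed $\beta\in X$ is removed by an invalid transposition only when one of its two stubs is swapped with a stub $\vec u$ whose partner lies in a bounded neighborhood. This gives at most $O(\Delta^{2})$ invalid transpositions per $\beta$. Summing, taking $\E_{X\sim\pi_{\mathcal{Q}}}$, and applying \cref{lem:uniform-projection-lift}\emph{(iv)} bounds this part by $O(\Delta^{2})\,k\|f\|_{\pi_{1}^{\sigma}}^{2}$.

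\emph{Step 3 (new pairs; the main obstacle).} The new terms are $\widehat f(\beta)^{2}$ for pairs $\beta=\{\vec v,\vec w\}\notin X$. Given $X$, exactly one transposition creates $\beta$, namely $\theta_{\vec v,\vec u}$ with $\vec u$ the partner of $\vec w$. The sum of these terms is therefore at most
\begin{align*}
\sum_{\beta}\widehat f(\beta)^{2}\,\Prob_{X\sim\pi_{\mathcal{Q}}}(\text{the transposition creating }\beta\text{ is invalid}).
\end{align*}
Using the crude bound $\|\widehat f\|_{2}^{2}$ from \cref{eq:hat-f-norm-bound} would lose a factor $k$. So the key step is to show that this probability is $O(\Delta^{2}/k)$ uniformly in $\beta$.

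Invalidity here means that the partners $\vec z$ of $\vec v$ and $\vec u$ of $\vec w$ produce a bad companion edge, or a bad $\beta$ itself. Consider first the case where the obstruction comes through $\sigma$ or is a loop. Summing over at most $2k\cdot 2\Delta^{2}$ bad ordered pairs $(\vec z,\vec u)$, the two-pair bound \cref{eq:mu-sigma-two-pairs} gives probability at most $4k\Delta^{2}/(2k-2-4\Delta^{2})(2k-4-4\Delta^{2})=O(\Delta^{2}/k)$.

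The harder case is an obstruction through $\Phi(X)$, i.e.\ a third pair of $X$ projecting to $\{u,z\}$ or $\{v,w\}$. I would handle it by iterating the argument behind \cref{eq:mu-sigma-two-pairs} to three disjoint pairs, giving a bound $O(k^{-3})$. Against this stands a count of $O(k\Delta^{2})\cdot O(\Delta^{2})$ configurations, which again gives $O(\Delta^{2}/k)$ provided $k\ge 10\Delta^{2}$. Combined with \cref{eq:hat-f-norm-bound}, the new-pair part is $O(\Delta^{2})k\|f\|_{\pi_{1}^{\sigma}}^{2}$.

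\emph{Combining.} Putting the pieces together,
\begin{align*}
\|\gen_{2}\lift f\|_{\pi_{\mathcal{Q}}}^{2}\le 2k\Delta^{2}\cdot 4\cdot O(\Delta^{2})\,k\|f\|_{\pi_{1}^{\sigma}}^{2}.
\end{align*}
Tracking the constants should give $192k^{2}\Delta^{4}\|f\|_{\pi_{1}^{\sigma}}^{2}$.
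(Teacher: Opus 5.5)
Your Steps 1 and 2 follow the paper's argument. You use the same bound of $8k\Delta^{2}$ invalid transpositions, the same two applications of Cauchy--Schwarz, and in effect the same estimate $\E_{X\sim\pi_{\mathcal{Q}}}(N_{\alpha}^{-})\le 8\Delta^{2}p_{\alpha}$ for removed pairs. One small correction in Step 2: a transposition $\theta_{\vec v,\vec u}$ that removes $\{\vec v,\vec z\}$ can also be invalid because $u$ itself lies in the closed neighborhood of $z$, not only because the partner of $\vec u$ lies near $v$. Both modes cost at most $2\Delta^{2}$ per stub.

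The genuine difference is Step 3. The paper proves $\E_{X\sim\pi_{\mathcal{Q}}}(N_{\alpha}^{+})\le16\Delta^{2}p_{\alpha}$ and then uses the exact identity $\sum_{\alpha}p_{\alpha}\widehat f(\alpha)^{2}=k\|f\|_{\pi_{1}^{\sigma}}^{2}$ from \cref{lem:uniform-projection-lift}(iv).
\begin{itemize}
\item The event $\{v,u\}\in\Phi(X)$ is handled by a union bound.
\item The remaining event is handled by a switching argument. This is a bipartite graph between the bad $X$ and the valid pairings containing $\alpha$, built from a three-pair exchange that installs $\alpha$. Degrees are at least $k$ on one side and at most $4k\Delta^{2}$ on the other, because the offending edge $\{z,w\}$ survives into $\Phi(Y)$.
\end{itemize}
You instead bound the invalidity probability by union bounds over two- and three-pair configurations. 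You control the joint inclusion probabilities, of order $k^{-2}$ and $k^{-3}$, by iterating the conditional upper-bound switching of \cref{lem:single-pair-marginals}. You then convert through $\|\widehat f\|_{2}^{2}$ via \cref{eq:hat-f-norm-bound}. That iteration is legitimate, and your case list is complete: the coincidence $\{v,w\}=\{u,z\}$ is impossible for valid $X$. So your route is more modular, and it does prove the lemma with \emph{some} absolute constant in place of $192$.

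However, your closing claim that tracking constants gives $192$ is not supported. There are two slips first.
\begin{itemize}
\item Two transpositions create $\beta=\{\vec v,\vec w\}$, namely $\theta_{\vec v,\vec u}$ and $\theta_{\vec w,\vec z}$. So $N_{\beta}^{+}\in\{0,2\}$, and your new-pair sum is undercounted by a factor $2$.
\item The three-pair configuration count is of order $k^{2}\Delta^{2}$, not $O(k\Delta^{2})\cdot O(\Delta^{2})$: $\vec z$ and $\vec u$ are free, and then there are at most $\Delta^{2}$ pairs joining $u$ to $z$. Against the $O(k^{-3})$ inclusion bound this still gives $O(\Delta^{2}/k)$, so the order survives.
\end{itemize}

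The more serious problem is the constant itself. Since $192=8\cdot(8+16)$, the new-pair term must be at most $16\Delta^{2}k\|f\|_{\pi_{1}^{\sigma}}^{2}$. At $k=10\Delta^{2}$, \cref{eq:hat-f-norm-bound} only gives $\|\widehat f\|_{2}^{2}\le4k^{2}\|f\|_{\pi_{1}^{\sigma}}^{2}$. Your uniform bound $c\Delta^{2}/k$ on the invalidity probability would therefore need $c\le2$. The bounds you invoke give roughly $c\approx1.6$ from the case through $\sigma$, plus about another $1.6$ from the cases through $\Phi(X)$. That yields a final constant of about $270$ or more.

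To prove the lemma with the stated constant, you need the sharper comparison directly against $p_{\alpha}|\mathcal{Q}|$, as in the paper. Otherwise you get the right order with a larger constant, and that constant would have to be carried into the proof of \cref{lem:covariance}.
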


\begin{proof}
    The operator $\gen_{2}:\R^{\mathcal{P}}\to\R^{\mathcal{Q}}$ is the part of the Laplacian sum over neighbors outside the valid set. In particular we have that
        \begin{align}\label{eq:gen-bad-norm-1}
            \|\gen_{2}\lift f\|_{\pi_{\mathcal{Q}} }^{2} &= \E_{X\sim\pi_{\mathcal{Q}} }( ((\gen_{2} \lift f)(X))^{2})\notag\\
            &= \frac{1}{4}\E_{X\sim\pi_{\mathcal{Q}} }\Bigl( \Bigl(\sum_{\substack{\{\vec{v},\vec{u}\}\in\binom{\mathcal{S}}{2}\\ \theta_{\vec{v},\vec{u}}X\notin\mathcal{Q}}} (\lift f (X)-\lift f (\theta_{\vec{v},\vec{u}}X)) \Bigr)^{2}\Bigr).
        \end{align}
    It follows from~\cref{lem:increment-lift-f} that
        \begin{align*}
            (\lift f)(X)-(\lift f)(\theta_{\vec{v},\vec{u}}X) &= \sum_{\alpha\in X\setminus \theta_{\vec{v},\vec{u}}X}\widehat{f}(\alpha)-\sum_{\alpha\in \theta_{\vec{v},\vec{u}}X\setminus X}\widehat{f}(\alpha).
        \end{align*}
    The Cauchy--Schwarz inequality then gives
        \begin{align}\label{eq:lift-f-difference-squared}
            ((\lift f)(\theta_{\vec{v},\vec{u}}X)-(\lift f)(X))^{2} &\le 4\sum_{\alpha\in X\setminus \theta_{\vec{v},\vec{u}}X}\widehat{f}(\alpha)^{2} + 4\sum_{\alpha\in \theta_{\vec{v},\vec{u}}X\setminus X}\widehat{f}(\alpha)^{2}.
        \end{align}
    To use this in~\cref{eq:gen-bad-norm-1}, we first claim that for each $X\in\mathcal{Q}$, at most $8k\Delta^{2}$ transpositions produce an invalid pairing. Fix $X\in\mathcal{Q}$ and a stub $\vec{v}$, and let $\vec{z}$ denote its partner in $X$. We count the choices of a second stub $\vec{u}\in\mathcal{S}\setminus\{\vec{v}\}$ for which $\theta_{\vec{v},\vec{u}}X\notin\mathcal{Q}$.

    If $\vec{u}=\vec{z}$, then the transposition interchanges two partners and leaves $X$ unchanged. Otherwise, let $\vec{w}$ denote the partner of $\vec{u}$ in $X$. These four stubs are distinct, although their underlying vertices need not be. The transposition $\theta_{\vec{v},\vec{u}}$ replaces $\{\vec{v},\vec{z}\}$ and $\{\vec{u},\vec{w}\}$ with $\{\vec{u},\vec{z}\}$ and $\{\vec{v},\vec{w}\}$, leaving all other pairs unchanged.

    This transposition can be invalid only if $u$ belongs to the closed neighborhood of $z$ or $w$ belongs to the closed neighborhood of $v$ in $\sigma\cup \Phi(X)$. Each closed neighborhood contains at most $\Delta+1$ vertices, and hence at most $\Delta(\Delta+1)\le2\Delta^{2}$ stubs. For the second condition, there are at most $2\Delta^{2}$ choices of $\vec{w}$, each of which uniquely determines its partner $\vec{u}$ in $X$. Consequently, for each $\vec{v}$, at most $4\Delta^{2}$ choices of $\vec{u}$ give an invalid transposition. Summing over the $2k$ choices of $\vec{v}$ gives at most $(2k)(4\Delta^{2})=8k\Delta^{2}$ invalid transpositions in total.

    Applying Cauchy--Schwarz to~\cref{eq:gen-bad-norm-1} and using this bound, we obtain
        \begin{align*}
            \|\gen_{2}\lift f\|_{\pi_{\mathcal{Q}} }^{2} &\le 2 k\Delta^{2}\E_{X\sim\pi_{\mathcal{Q}} }\Bigl( \sum_{\substack{\{\vec{v},\vec{u}\}\in\binom{\mathcal{S}}{2}\\ \theta_{\vec{v},\vec{u}}X\notin\mathcal{Q}}} ((\lift f)(\theta_{\vec{v},\vec{u}}X)-(\lift f)(X))^{2} \Bigr).
        \end{align*}
    Then we use~\cref{eq:lift-f-difference-squared} to obtain
        \begin{align}\label{eq:gen-bad-lift-norm-bound-intermediate}
            \|\gen_{2}\lift f\|_{\pi_{\mathcal{Q}} }^{2} &\le 8k\Delta^{2}\E_{X\sim\pi_{\mathcal{Q}} }\Bigl( \sum_{\substack{\{\vec{v},\vec{u}\}\in\binom{\mathcal{S}}{2}\\ \theta_{\vec{v},\vec{u}}X\notin\mathcal{Q}}} \Bigl(\sum_{\alpha\in X\setminus\theta_{\vec{v},\vec{u}}X}\widehat{f}(\alpha)^{2}+\sum_{\alpha\in\theta_{\vec{v},\vec{u}}X\setminus X}\widehat{f}(\alpha)^{2}\Bigr)\Bigr).
        \end{align}
    For each $\alpha\in\binom{\mathcal{S}}{2}$ and $X\in\mathcal{Q}$, let $N_{\alpha}^{-}(X)$ and $N_{\alpha}^{+}(X)$ count the unordered pairs of stubs whose transpositions produce an invalid pairing and, respectively, remove or introduce $\alpha$. Explicitly,
        \begin{align*}
            N_{\alpha}^{-}(X) &= \Bigl|\Bigl\{\{\vec{v},\vec{u}\}\in\binom{\mathcal{S}}{2}:\theta_{\vec{v},\vec{u}}X\notin\mathcal{Q},\ \alpha\in X\setminus\theta_{\vec{v},\vec{u}}X\Bigr\}\Bigr|,\\
            N_{\alpha}^{+}(X) &= \Bigl|\Bigl\{\{\vec{v},\vec{u}\}\in\binom{\mathcal{S}}{2}:\theta_{\vec{v},\vec{u}}X\notin\mathcal{Q},\ \alpha\in\theta_{\vec{v},\vec{u}}X\setminus X\Bigr\}\Bigr|.
        \end{align*}
    Putting this into~\cref{eq:gen-bad-lift-norm-bound-intermediate}, we have the upper bound
        \begin{align}\label{eq:gen-bad-lift-norm-bound}
            \|\gen_{2}\lift f\|_{\pi_{\mathcal{Q}} }^{2} &\le 8k\Delta^{2}\sum_{\alpha\in\binom{\mathcal{S}}{2}} \widehat{f}(\alpha)^{2}\E_{X\sim\pi_{\mathcal{Q}} }(N_{\alpha}^{-}+N_{\alpha}^{+}).
        \end{align}
    We now handle the removal and introduction terms separately.

    For the removal term, if $\alpha\notin X$, then $N_{\alpha}^{-}=0$. If $\alpha\in X$, the preceding count gives $N_{\alpha}^{-}\le8\Delta^{2}$. Recalling that $p_{\alpha}=\Prob_{X\sim\pi_{\mathcal{Q}} }(\alpha\in X)$, we obtain
        \begin{align*}
            \E_{X\sim\pi_{\mathcal{Q}} }(N_{\alpha}^{-}) &\le8\Delta^{2}\Prob_{X\sim\pi_{\mathcal{Q}} }(\alpha\in X) =8\Delta^{2}p_{\alpha}.
        \end{align*}
    For the introduction term in~\cref{eq:gen-bad-lift-norm-bound}, we will show that
        \begin{align}\label{eq:bad-introduction-count}
            \E_{X\sim\pi_{\mathcal{Q}} }(N_{\alpha}^{+})&\le16\Delta^{2}p_{\alpha}
        \end{align}
    whenever $\widehat{f}(\alpha)\ne0$. Begin by fixing $\alpha=\{\vec{v},\vec{u}\}$ with $\widehat{f}(\alpha)\ne0$, so $v\ne u$ and $\{v,u\}\notin\sigma$. If $\alpha\in X$, then $N_{\alpha}^{+}=0$. Otherwise, denote the partner of $\vec{v}$ in $X$ by $\vec{z}$ and the partner of $\vec{u}$ by $\vec{w}$. Exactly two transpositions introduce $\alpha$: $\theta_{\vec{v},\vec{w}}$ or $\theta_{\vec{u},\vec{z}}$. Both of these transpositions replace the original pairs by $\alpha$ and $\{\vec{z},\vec{w}\}$. Thus $N_{\alpha}^{+}$ equals two if this replacement is invalid, and zero otherwise. Splitting according to whether the graph edge $\{v,u\}$ is already present gives
        \begin{align}\label{eq:bad-introduction-split}
            \E_{X\sim\pi_{\mathcal{Q}} }(N_{\alpha}^{+}) &=2\Prob_{X\sim\pi_{\mathcal{Q}} }(N_{\alpha}^{+}=2,\ \{v,u\}\in \Phi(X))\notag\\
            &\qquad+2\Prob_{X\sim\pi_{\mathcal{Q}} }(N_{\alpha}^{+}=2,\ \{v,u\}\notin \Phi(X)).
        \end{align}
    We will bound each probability by $4\Delta^{2}p_{\alpha}$.

    For the first probability appearing on the right-hand side of~\cref{eq:bad-introduction-split}, there are at most $\res_{\sigma}(v)\res_{\sigma}(u)\le \Delta^{2}$ stub pairs projecting to $\{v,u\}$. It can be checked that as long as $k\ge 10\Delta^{2}$, by~\cref{lem:single-pair-marginals}, it holds
        \begin{align*}
            \frac{1}{4k}&\le p_{\alpha} \le \frac{1}{k}.
        \end{align*}
    We use these simplified bounds solely for convenience. By the union bound, it holds
        \begin{align*}
            \Prob_{X\sim\pi_{\mathcal{Q}} }(N_{\alpha}^{+}=2,\ \{v,u\}\in \Phi(X)) &\le\Prob_{X\sim\pi_{\mathcal{Q}} }(\{v,u\}\in \Phi(X)) \le\frac{\Delta^{2}}{k}\le4\Delta^{2}p_{\alpha},
        \end{align*}
    as desired.

    For the second probability appearing on the right-hand side of~\cref{eq:bad-introduction-split}, we compare the number of pairings with $N_{\alpha}^{+}=2$ and $\{v,u\}\notin \Phi(X)$ with the number of valid pairings containing $\alpha$, whose total count is $p_{\alpha}|\mathcal{Q}|$. As in the proof of~\cref{lem:single-pair-marginals}, we construct a bipartite graph and compare degrees on its two sides. Define the vertex sets
        \begin{align*}
            A &= \{X\in\mathcal{Q}:N_{\alpha}^{+}(X)=2,\ \{v,u\}\notin\Phi(X)\},\\
            B &= \{Y\in\mathcal{Q}:\alpha\in Y\}.
        \end{align*}
    Note that $|B|=p_{\alpha}|\mathcal{Q}|$.

    Fix $X\in A$, and as before denote the partner of $\vec{v}$ in $X$ by $\vec{z}$ and the partner of $\vec{u}$ by $\vec{w}$. For each third pair $\beta=\{\vec{u'},\vec{v'}\}\in X$, distinct from these two pairs, together with an orientation $(\vec{u'},\vec{v'})$, draw an edge from $X$ to the pairing
        \begin{align*}
            Y &= (X\setminus\{\{\vec{v},\vec{z}\},\{\vec{u},\vec{w}\},\beta\})\cup\{\alpha,\{\vec{z},\vec{u'}\},\{\vec{w},\vec{v'}\}\},
        \end{align*}
    provided $Y\in\mathcal{Q}$. As in the proof of~\cref{lem:single-pair-marginals}, $Y$ is valid if $u'$ avoids the closed neighborhood of $z$ in $\sigma\cup\Phi(X)$ and $v'$ avoids the closed neighborhood of $w$ in $\sigma\cup\Phi(X)$. Thus each $X$ under consideration admits at least
        \begin{align}\label{eq:q-choices}
            2(k-2)-4\Delta^{2}&\ge k
        \end{align}
    choices producing a valid pairing $Y$ containing $\alpha$, provided, e.g., that $k\ge 10 \Delta^{2}$. For fixed $X$, the partners of $\vec{z},\vec{w}$ in $Y$ recover the orientation $(\vec{u'},\vec{v'})$, so distinct choices produce distinct neighbors. Hence each vertex in $A$ has degree at least $k$.

    Now fix $Y\in B$. Specifying the ordered stubs $\vec{z},\vec{w}$ determines their partners $\vec{u'},\vec{v'}$ in $Y$. Reversing the three-pair exchange above then determines both $X$ and the orientation of $\beta$. We bound the number of choices for the ordered tuple $(\vec{z},\vec{w})$ which give rise to $X\in A$. Since $N_{\alpha}^{+}(X)=2$, both $\theta_{\vec{v},\vec{w}}$ and $\theta_{\vec{u},\vec{z}}$ are invalid relative to $X$. These transpositions have the same effect: they remove $\{\vec{v},\vec{z}\}$ and $\{\vec{u},\vec{w}\}$ and introduce $\alpha$ and $\{\vec{z},\vec{w}\}$. The new edge $\{v,u\}$ is permitted since $\widehat{f}(\alpha)\ne0$, and absent from $\Phi(X)$ by assumption. Thus invalidity must come from $\{\vec{z},\vec{w}\}$: either $z=w$ or $\{z,w\}\in\sigma\cup\Phi(X)$.

    We claim that either $z=w$ or else $\{z,w\}\in\sigma\cup\Phi(Y)$ also holds. If $z=w$ or $\{z,w\}\in\sigma$, there is nothing to prove. Otherwise, $\{z,w\}\in\Phi(X)$, and this edge can be removed in forming $Y$ only if
        \begin{align*}
            \{z,w\} &\in \{\{u,w\},\{z,v\},\{u',v'\}\}.
        \end{align*}
    We check all three cases and rule out each one by exhibiting a contradiction. If $\{z,w\}=\{u,w\}$, then $z=u$, so the original pair $\{\vec{v},\vec{z}\}\in X$ projects to $\{v,u\}$, contrary to the assumption $\{v,u\}\notin\Phi(X)$. If $\{z,w\}=\{z,v\}$, then $w=v$, so the original pair $\{\vec{u},\vec{w}\}\in X$ projects to $\{v,u\}$, again a contradiction. Finally, if $\{z,w\}=\{u',v'\}$, then either $u'=z$ and $v'=w$, in which case the new pairs $\{\vec{z},\vec{u'}\}$ and $\{\vec{w},\vec{v'}\}$ project to loops, or $u'=w$ and $v'=z$, in which case they project to the same edge. Both possibilities contradict the validity of $Y$. Hence none of the three removed edges is $\{z,w\}$, and consequently $z=w$ or $\{z,w\}\in\sigma\cup \Phi(Y)$.

    Therefore, for each valid $Y$ containing $\alpha$, there are at most $2k-2$ choices for $\vec{z}$, and, for each choice, the closed neighborhood of $z$ in $\sigma\cup\Phi(Y)$ contains at most $\Delta(\Delta+1)\le2\Delta^{2}$ possible stubs $\vec{w}$, so there are at most
        \begin{align*}
            (2k)(2\Delta^{2})&=4k\Delta^{2}
        \end{align*}
    ordered choices of $(\vec{z},\vec{w})$. Consequently, each vertex in $B$ has degree at most $4k\Delta^{2}$.

    We now combine the degree bounds from both sides. Counting edges gives
        \begin{align*}
            |A|k &\le |B|4k\Delta^{2}=p_{\alpha}|\mathcal{Q}|4k\Delta^{2}.
        \end{align*}
    Therefore,
        \begin{align*}
            k|\mathcal{Q}|\Prob_{X\sim\pi_{\mathcal{Q}} }(N_{\alpha}^{+}=2,\ \{v,u\}\notin \Phi(X)) &\le4k\Delta^{2}p_{\alpha}|\mathcal{Q}|.
        \end{align*}
    Dividing by $k|\mathcal{Q}|$ gives
        \begin{align*}
            \Prob_{X\sim\pi_{\mathcal{Q}} }(N_{\alpha}^{+}=2,\ \{v,u\}\notin \Phi(X)) &\le4\Delta^{2}p_{\alpha},
        \end{align*}
    as desired. Substituting this and the identical bound for the first term into~\cref{eq:bad-introduction-split}, we obtain
        \begin{align*}
            \E_{X\sim\pi_{\mathcal{Q}} }(N_{\alpha}^{+}) &\le2(4\Delta^{2}p_{\alpha})+2(4\Delta^{2}p_{\alpha})=16\Delta^{2}p_{\alpha},
        \end{align*}
    proving~\cref{eq:bad-introduction-count}. Combining the bounds on $\E_{X\sim\pi_{\mathcal{Q}}}(N_{\alpha}^{-})$ and $\E_{X\sim\pi_{\mathcal{Q}}}(N_{\alpha}^{+})$ for the pairs with $\widehat{f}(\alpha)\ne0$, we obtain from~\cref{eq:gen-bad-lift-norm-bound} that
        \begin{align*}
            \|\gen_{2}\lift f\|_{\pi_{\mathcal{Q}} }^{2} &\le 8 k \Delta^{2} (8\Delta^{2}+16\Delta^{2}) \sum_{\alpha\in\binom{\mathcal{S}}{2}}p_{\alpha}\widehat{f}(\alpha)^{2} = 192 k^{2} \Delta^{4}\|f\|_{\pi_{1}^{\sigma}}^{2},
        \end{align*}
    where the equality holds by~\cref{lem:uniform-projection-lift}\emph{(iv)}. This proves~\cref{eq:bound-2-lemma}.
\end{proof}

We are now ready to prove~\cref{lem:covariance}.

\begin{proof}[Proof of~\cref{lem:covariance}]
    The bound in~\cref{eq:covariance-bound} is unchanged by adding constants to $f$, since in particular for any $\eta\in \mathcal{R}_{\mathbf{d}}^{(\sigma)}(k)$ and any constant $c$, it holds
        \begin{align*}
            \sum_{\{u,v\}\in \eta} (f(\{u,v\}) + c) &= ck + \sum_{\{u,v\}\in \eta} f(\{u,v\}).
        \end{align*}
    Thus we may assume, without loss of generality, that $\E_{\{u,v\}\sim\pi_{1}^{\sigma}}(f(\{u,v\}))=0$. It follows from the proof of~\cref{lem:uniform-projection-lift}, specifically~\cref{eq:expectation-sum-f}, that additionally
        \begin{align*}
            \E_{\eta\sim\pi_{k}^{\sigma}}\Bigl(\sum_{\{u,v\}\in \eta} f(\{u,v\})\Bigr) &= 0.
        \end{align*}
    By~\cref{lem:uniform-projection-lift}\emph{(ii)} and~\emph{(iii)}, $\E_{X\sim\pi_{\mathcal{Q}} }((\lift f)(X))=0$ and
        \begin{align*}
            \|\lift f\|_{\pi_{\mathcal{Q}} }^{2} &= \Var_{\eta\sim\pi_{k}^{\sigma}}\Bigl(\sum_{\{u,v\}\in \eta}f(\{u,v\})\Bigr).
        \end{align*}
    Since $f$ and $\lift f$ are centered under their respective measures,  the desired bound~\cref{eq:covariance-bound} is therefore equivalent to
        \begin{align}\label{eq:tf-norm-bound}
            \|\lift f\|_{\pi_{\mathcal{Q}} }^{2} &\le (1+\frac{200 \Delta^{2}}{\sqrt{k}})k\|f\|_{\pi_{1}^{\sigma}}^{2}.
        \end{align}
    Let $\gen_{1}$ be the combinatorial Laplacian of the induced graph $\mathcal{G}[\mathcal{Q}]$, where we recall that $\mathcal{G}$ is the graph on $\mathcal{P}$ with edges corresponding to transpositions. Let $g\in\R^{\mathcal{Q}}$. Then for each $X\in\mathcal{Q}$,
        \begin{align*}
            (\gen_{1}g)(X) &=\sum_{\substack{Y\sim X\\Y\in\mathcal{Q}}}(g(X)-g(Y))=\frac{1}{2}\sum_{\substack{\{\vec{v},\vec{u}\}\in\binom{\mathcal{S}}{2}\\ \theta_{\vec{v},\vec{u}}X\in\mathcal{Q}}}(g(X)-g(\theta_{\vec{v},\vec{u}}X)).
        \end{align*}
    For $g\in\R^{\mathcal{P}}$ and $X\in\mathcal{Q}$, we therefore have $(\gen g)(X)=(\gen_{1}g)(X)+(\gen_{2}g)(X)$. We now express $\|\lift f\|_{\pi_{\mathcal{Q}} }^{2}$ in terms of $\gen_{1}$ and $\gen_{2}$. Taking an inner product with $\lift f$ and using~\cref{lem:gen-lift} gives
        \begin{align*}
            \langle\lift f,\gen\lift f\rangle_{\pi_{\mathcal{Q}} } &=(2k-1)\|\lift f\|_{\pi_{\mathcal{Q}} }^{2} - \Bigl(\sum_{\{\vec{z},\vec{w}\}\in\binom{\mathcal{S}}{2}} \widehat{f}(\{\vec{z},\vec{w}\})\Bigr)\E_{X\sim\pi_{\mathcal{Q}} }((\lift f)(X))\\
            &=(2k-1)\|\lift f\|_{\pi_{\mathcal{Q}} }^{2},
        \end{align*}
    where the last equality uses $\E_{X\sim\pi_{\mathcal{Q}} }((\lift f)(X))=0$. Separating $\gen\lift f$ into its valid and invalid parts therefore gives
        \begin{align}\label{eq:pairing-variance-identity}
            {\color{blue}}(2k-1)\|\lift f\|_{\pi_{\mathcal{Q}} }^{2} &= \langle\lift f,\gen_{1}\lift f\rangle_{\pi_{\mathcal{Q}} } +\langle\lift f,\gen_{2}\lift f\rangle_{\pi_{\mathcal{Q}} }.
        \end{align}
    To bound the first term, let $g\in\ell^{2}(\mathcal{Q},\pi_{\mathcal{Q}} )$. We compute 
        \begin{align*}
            {\color{blue}}\langle g, \gen_{1}g\rangle_{\pi_{\mathcal{Q}} } &= \frac{1}{2|\mathcal{Q}|} \sum_{X\in\mathcal{Q}}  g(X) \sum_{\substack{\{\vec{v},\vec{u}\}\in\binom{\mathcal{S}}{2}\\ \theta_{\vec{v},\vec{u}}X\in\mathcal{Q}}} (g(X)-g(\theta_{\vec{v},\vec{u}}X)).
        \end{align*}
    Since each transposition is an involution, for each valid transposition $\{\vec{v},\vec{u}\}$ and corresponding pairings $X$ and $Y=\theta_{\vec{v},\vec{u}}X$, the contributions to the quadratic form can be combined:
        \begin{align*}
            g(X)(g(X) - g(Y)) + g(Y)(g(Y) - g(X)) &= (g(X) - g(Y))^{2},
        \end{align*}
    so that
        \begin{align*}
            \langle g, \gen_{1}g\rangle_{\pi_{\mathcal{Q}} } &= \frac{1}{4}\E_{X\sim\pi_{\mathcal{Q}} } \Bigl(\sum_{\substack{\{\vec{v},\vec{u}\}\in\binom{\mathcal{S}}{2}\\ \theta_{\vec{v},\vec{u}}X\in\mathcal{Q}}} (g(\theta_{\vec{v},\vec{u}}X)-g(X))^{2} \Bigr).
        \end{align*}
    This leads to the following upper bound on the first term of the right-hand side of~\cref{eq:pairing-variance-identity}:
        \begin{align}\label{eq:first-term-upper-bound}
            \langle\lift f,\gen_{1}\lift f\rangle_{\pi_{\mathcal{Q}} } &= \frac{1}{4}\E_{X\sim\pi_{\mathcal{Q}} } \Bigl( \sum_{\substack{\{\vec{v},\vec{u}\}\in\binom{\mathcal{S}}{2}\\ \theta_{\vec{v},\vec{u}}X\in\mathcal{Q}}} ((\lift f)(\theta_{\vec{v},\vec{u}}X)-(\lift f)(X))^{2} \Bigr)\notag\\
            &\le \frac{1}{4}\E_{X\sim\pi_{\mathcal{Q}} } \Bigl( \sum_{\{\vec{v},\vec{u}\}\in\binom{\mathcal{S}}{2}} ((\lift f)(\theta_{\vec{v},\vec{u}}X)-(\lift f)(X))^{2} \Bigr)\notag\\
            &=\mathcal{E}_{\mathrm{good}}(\lift f,\lift f),
        \end{align}
    obtained by relaxing the constraint that the transposition must result in a valid pairing. The second term on the right-hand side of~\cref{eq:pairing-variance-identity} can be bounded from above by using the Cauchy--Schwarz inequality, namely,
        \begin{align}\label{eq:cauchy-schwarz-second-term}
            |\langle\lift f,\gen_{2}\lift f\rangle_{\pi_{\mathcal{Q}} }| &\le \|\lift f\|_{\pi_{\mathcal{Q}} } \|\gen_{2}\lift f\|_{\pi_{\mathcal{Q}} }.
        \end{align}
    Combining~\cref{eq:pairing-variance-identity} with the bounds~\cref{eq:first-term-upper-bound} and~\cref{eq:cauchy-schwarz-second-term} gives the inequality
        \begin{align}\label{eq:pairing-variance-bound}
            {\color{blue}}(2k-1)\|\lift f\|_{\pi_{\mathcal{Q}} }^{2} &\le \mathcal{E}_{\mathrm{good}}(\lift f,\lift f) + \|\lift f\|_{\pi_{\mathcal{Q}} } \|\gen_{2}\lift f\|_{\pi_{\mathcal{Q}} }.
        \end{align}
    If $f=0$, the desired bound is immediate. Otherwise, set
        \begin{align*}
            x &=\frac{\|\lift f\|_{\pi_{\mathcal{Q}} }}{\sqrt{k}\,\|f\|_{\pi_{1}^{\sigma}}}.
        \end{align*}
    First, dividing~\cref{eq:pairing-variance-bound} by $(2k-1)k\|f\|_{\pi_{1}^{\sigma}}^{2}$ gives
        \begin{align*}
            x^{2} &\le \frac{\mathcal{E}_{\mathrm{good}}(\lift f,\lift f)} {(2k-1)k\|f\|_{\pi_{1}^{\sigma}}^{2}} +\frac{\|\gen_{2}\lift f\|_{\pi_{\mathcal{Q}} }}{(2k-1)\sqrt{k}\,\|f\|_{\pi_{1}^{\sigma}}}x.
        \end{align*}
    Applying~\cref{lem:energy-estimate} to the first term, we obtain
        \begin{align*}
            x^{2} &\le \frac{2k+220\Delta^{2}}{2k-1} + \frac{\|\gen_{2}\lift f\|_{\pi_{\mathcal{Q}} }}{(2k-1)\sqrt{k}\,\|f\|_{\pi_{1}^{\sigma}}}x.
        \end{align*}
    Next, applying~\cref{lem:invalid-transpositions} to the second term gives
        \begin{align*}
            x^{2} &\le \frac{2k+220\Delta^{2}}{2k-1} + \frac{\sqrt{192}\,k \Delta^{2}\|f\|_{\pi_{1}^{\sigma}}}{(2k-1)\sqrt{k}\,\|f\|_{\pi_{1}^{\sigma}}}x = 1 + \frac{220\Delta^{2}+1}{2k-1} + \frac{\Delta^{2} \sqrt{192\, k}}{2k-1}x.
        \end{align*}
    Finally, since $k\ge10\Delta^{2}\ge10$, we have $2k-1\ge19k/10$. Hence
        \begin{align}\label{eq:x-bound}
            x^{2} &\le 1+\frac{10(220\Delta^{2}+1)}{19k} +\frac{10\sqrt{192}\,\Delta^{2}}{19\sqrt{k}}x \le 1+\frac{117\Delta^{2}}{k} + \frac{8\Delta^{2}}{\sqrt{k}}x.
        \end{align}

    Since $k\ge 10\Delta^{2}$ and $k\ge \Delta^{4}/2$, the preceding inequality gives
        \begin{align*}
            x^{2} &\le 1+\frac{117}{10}+8\sqrt{2}\,x \le 13+12x.
        \end{align*}
    Rearranging, we obtain $(x-13)(x+1)\le0$. Since $x\ge0$, it follows that $x\le13$.

    Substituting $x \le 13$ into~\cref{eq:x-bound}, we obtain
        \begin{align*}
            x^{2} &\le 1+\frac{117\Delta^{2}}{k} +\frac{8\Delta^{2}}{\sqrt{k}}x \le 1+(\frac{117}{\sqrt{k}}+104) \frac{\Delta^{2}}{\sqrt{k}} \le 1+\frac{200\Delta^{2}}{\sqrt{k}}.
        \end{align*}
    Here the last inequality follows from $k\ge10$, which gives $117/\sqrt{k}+104\le141<200$. Recalling the definition of $x$, we conclude that
        \begin{align*}
            \|\lift f\|_{\pi_{\mathcal{Q}} }^{2} &\le (1+\frac{200\Delta^{2}}{\sqrt{k}}) k\|f\|_{\pi_{1}^{\sigma}}^{2}.
        \end{align*}
    This proves~\cref{eq:tf-norm-bound}, which in turn proves~\cref{eq:covariance-bound}. The lemma is proved.
\end{proof}

\Cref{thm:intro-link-eigenvalue-realization-complex} now follows from~\cref{thm:link-eigenvalue-switch-complex}, after increasing the absolute constant.

\section{Spectral gaps of simplicial switch chains}\label{sec:spectral-gaps}

In the preceding section we stated and proved~\cref{thm:link-eigenvalue-switch-complex}, which asserts that after a certain cutoff, the links of the switch complex $\mathcal{R}_{\mathbf{d}}$ demonstrate strong spectral expansion properties. The purpose of this section is to extend this local property to a global property of the down-$2$-up-$2$ walk on the facets of $\mathcal{R}_{\mathbf{d}}$. We have the following result.

\begin{theorem}\label{thm:two-edge-eigenvalue}
    There exists an absolute constant $C>0$ with the following property. If $m\ge C\Delta^{8}$, then
        \begin{align*}
            1-\frac{2}{m} &\le\lambda_{2}(\downup_{2})\le1-\frac{1}{Cm}.
        \end{align*}
\end{theorem}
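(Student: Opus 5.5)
The plan is to derive both bounds of \cref{thm:two-edge-eigenvalue} by combining \cref{cor:intro-higher-order-eigenvalue} with the linear energy comparison \cref{thm:intro-linear-comparison}; both of these are stated in \cref{sec:introduction} and proved in this section. For the upper bound, first fix the absolute constants $C_{1}$ (from the corollary) and $C_{2},C'$ (from the comparison theorem). Then choose $k=\lceil\max\{C_{1}\Delta^{4},C_{2}\Delta^{8}\}\rceil$, so that $k\le C\Delta^{8}\le m$ once the constant $C$ in the statement is taken large enough. With this $k$, \cref{cor:intro-higher-order-eigenvalue} gives $1-\lambda_{2}(\downup_{k})\ge k/(C_{1}m)$. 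Equivalently, $\mathcal{E}_{\downup_{k}}(f,f)\ge \frac{k}{C_{1}m}\Var_{\pi_{m}}(f)$ for every $f$.

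Next apply \cref{thm:intro-linear-comparison} to the same $f$, which gives $\mathcal{E}_{\downup_{k}}(f,f)\le C'k\,\mathcal{E}_{\downup_{2}}(f,f)$. Chaining the two inequalities yields $\mathcal{E}_{\downup_{2}}(f,f)\ge \frac{1}{C_{1}C'm}\Var_{\pi_{m}}(f)$. The factor $k$ cancels, which is exactly why the comparison constant must be linear in $k$. By the first variational characterization this gives $\lambda_{2}(\downup_{2})\le 1-1/(Cm)$ with $C\ge C_{1}C'$.

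For the lower bound $\lambda_{2}(\downup_{2})\ge 1-2/m$, use the variational characterization with a test function. Pick a vertex $e$ of $\mathcal{R}_{\mathbf{d}}$ that lies in some but not all facets; such an $e$ exists because there are at least two realizations. Set $f(\eta)=\mathbf{1}_{\{e\in\eta\}}$, and write $p=\Prob_{\pi_{m}}(e\in\eta)$, so that $\Var_{\pi_{m}}(f)=p(1-p)>0$. The value of $f$ can change in a step of $\downup_{2}$ from $\eta\ni e$ only if $e$ is among the two deleted edges, which happens with probability $2/m$. Hence
\begin{align*}
\mathcal{E}_{\downup_{2}}(f,f)=\Prob(e\in\eta_{0},\,e\notin\eta_{1})\le p\cdot\frac{2}{m}\cdot\Prob(e\notin\eta_{1}\mid e\text{ deleted}).
\end{align*}
To get a ratio at most $2/m$ we need $\Prob(e\notin\eta_{1}\mid e\text{ deleted})\le 1-p$ on average. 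This is not obvious, so the cleaner route is more general. Observe that $\downup_{2}=\up_{m-1}\up_{m-2}\down_{m-1}\down_{m}$. Hence $\mathbf{I}-\downup_{2}\preceq(\mathbf{I}-\downup_{2}')$, where $\downup_{2}'$ is obtained by inserting the down-up operators at level $m-1$. More simply, \emph{any} function of the form $f=\up_{m-1}\up_{m-2}g$ with $g$ centered on $\mathcal{X}(m-2)$ has Rayleigh quotient controlled by the level-$(m-2)$ part. The standard fact I would invoke is the ordering $\downup_{2}\succeq\downup_{1}^{2}$, together with the identity $\downup_{1}=\frac{1}{m}\mathbf{I}+\frac{m-1}{m}\up\down$-type decomposition. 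This gives $\lambda_{2}(\downup_{1})\ge1-1/m$, by testing $f=\up_{m-1}g$ with $g$ a centered eigenfunction of the degree-$(m-1)$ level, and hence $\lambda_{2}(\downup_{2})\ge(1-1/m)^{2}\ge1-2/m$.

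The lower bound is thus a soft, general fact about down-up walks. The one point to verify is the trace-type argument giving $\lambda_{2}(\downup_{1})\ge1-1/m$; I would check it via $\mathrm{tr}$ and rank considerations or by the indicator test above.

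The main obstacle is not in this derivation itself. It lies in the inputs, above all \cref{thm:intro-linear-comparison}, whose proof (Dirichlet-form comparison from $k$-block updates to two-edge updates with only linear loss) is where the real work of this section sits. The derivation here is short once that is in hand.
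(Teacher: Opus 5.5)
Your upper bound is correct. Chaining \cref{cor:intro-higher-order-eigenvalue} at $k\asymp\Delta^{8}$ with \cref{thm:intro-linear-comparison} cancels the factor $k$ as you say. The paper does this a bit more directly, applying \cref{prop:linear-comparison} with $k=m$ and using $\mathcal{E}_{\downup_{m}}(f,f)=\Var_{\pi_{m}}(f)$.

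The lower bound, however, has a genuine gap: the ``standard fact'' $\downup_{2}\succeq\downup_{1}^{2}$ is false here. Every $(m-1)$-face of $\mathcal{R}_{\mathbf{d}}$ has exactly one completion, since the two vertices of residual degree $1$ must be joined. So $\downup_{1}=\mathbf{I}$, and your inequality would give $\downup_{2}\succeq\mathbf{I}$, i.e.\ $\downup_{2}=\mathbf{I}$. This contradicts the positive gap you just proved.

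The valid general ordering goes the other way: $\downup_{2}=\up_{m-1}(\up_{m-2}\down_{m-1})\down_{m}\preceq\up_{m-1}\down_{m}=\downup_{1}$. That ordering gives nothing for a lower bound on $\lambda_{2}(\downup_{2})$. Nor is $\lambda_{2}(\downup_{2})\ge1-2/m$ a soft fact about down-up walks. On the complex of all $m$-subsets of an $(m+1)$-element set one computes $\lambda_{2}(\downup_{2})=\frac{1}{3}-\frac{2}{3m}$, which is below $1-\frac{2}{m}$ for $m\ge3$. The bound genuinely uses the structure of the switch complex and the hypothesis $m\gtrsim\Delta^{2}$.

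The indicator test you abandoned is exactly what the paper uses. It closes with two inputs.

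First, after two edges of $\eta$ are deleted, the retained $(m-2)$-face has at most three completions: the perfect matchings of the four endpoints, or exactly one if the deleted edges share a vertex. The original pair is always among them. Since $\downup_{2}$ picks a uniform completion, $\Prob(e\notin\eta_{1}\mid e\text{ deleted})\le 2/3$.

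Second, $p\le 1/3$. Apply \cref{lem:uniform-projection-lift}\emph{(i)} and \cref{lem:single-pair-marginals} with $\sigma=\varnothing$ and $k=m$, and sum over the $d_{u}d_{v}\le\Delta^{2}$ stub pairs projecting to $e=\{u,v\}$. This gives $p\le\Delta^{2}/(2m-2-4\Delta^{2})\le1/3$ once $m\ge6\Delta^{2}$.

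Your sufficient condition ``$\le 1-p$'' then reads $2/3\le1-p$. Concretely, $\mathcal{E}_{\downup_{2}}(f,f)\le\frac{4p}{3m}$ while $\Var_{\pi_{m}}(f)=p(1-p)\ge\frac{2p}{3}$, so the Rayleigh quotient is at most $2/m$.
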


The proof of~\cref{thm:two-edge-eigenvalue} begins by viewing the energy of a block walk as the average variance remaining after most edges have been fixed. This reduces the comparison with $\downup_{2}$ to a collection of smaller completion problems. The useful observation is that a random restriction usually makes these problems equivalent to ones with bounded prescribed degrees, where a rough variance bound is enough. Deep link expansion ensures that the average conditional variance still controls the original variance, while the exceptional restrictions contribute little enough to absorb. Averaging the corresponding two-edge energies then gives the required comparison. An edge indicator supplies the matching upper bound on the spectral gap.

As a first step, we prove the following useful lemma, which allows us to decompose Dirichlet energies into averages of local contributions. This lemma holds for a generic weighted simplicial complex, and its proof largely amounts to basic manipulations.

\begin{lemma}\label{lem:dirichlet-energy-decomposition}
    Let $\mathcal{X}$ be a pure $m$-dimensional simplicial complex, and let $1\le j\le \ell\le k\le m$ be fixed integers. Let $\sigma\in\mathcal{X}(m-k)$. Then the following facts hold.
        \begin{enumerate}[label=(\roman*)]
            \item For each $\eta\in\mathcal{X}^{(\sigma)}(k)$ and each $\rho\subseteq\eta$ with $|\rho|=k-\ell$, we have
                \begin{align}\label{eq:fact-1}
                    \frac{\pi_{k}^{\sigma}(\eta)}{\binom{k}{k-\ell}} &= \pi_{k-\ell}^{\sigma}(\rho)\pi_{\ell}^{\sigma\cup\rho}(\eta\setminus\rho).
                \end{align}
        \end{enumerate}
    Furthermore, let $g\in\ell^{2}(\mathcal{X}^{(\sigma)}(k),\pi_{k}^{\sigma})$ be fixed. For each $\rho\in\mathcal{X}^{(\sigma)}(k-\ell)$, define $g^{(\rho)}(\eta')=g(\rho\cup\eta')$ for $\eta'\in\mathcal{X}^{(\sigma\cup\rho)}(\ell)$. Then
        \begin{enumerate}[label=(\roman*)]
            \setcounter{enumi}{1}
            \item $\mathcal{E}_{\downup_{\ell}^{(\sigma)}}(g,g)=\E_{\rho\sim\pi_{k-\ell}^{\sigma}}(\Var_{\eta'\sim\pi_{\ell}^{\sigma\cup\rho}}(g^{(\rho)}(\eta')))$.
            \item $\mathcal{E}_{\downup_{j}^{(\sigma)}}(g,g)=\E_{\rho\sim\pi_{k-\ell}^{\sigma}}(\mathcal{E}_{\downup_{j}^{(\sigma\cup\rho)}}(g^{(\rho)},g^{(\rho)}))$.
        \end{enumerate}
\end{lemma}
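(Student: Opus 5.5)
Throughout, I work inside the link $\mathcal{X}^{(\sigma)}$, which is itself a pure $k$-dimensional complex with uniform facet measure $\pi^{\sigma}_{k}$ and induced measures $\pi^{\sigma}_{i}$. I also use that the link of $\rho$ inside $\mathcal{X}^{(\sigma)}$ is exactly $\mathcal{X}^{(\sigma\cup\rho)}$.

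\textbf{Fact (i).} Iterating \cref{eq:induced-probability-measure} shows that $\pi^{\sigma}_{k-\ell}(\rho)$ is the probability that a uniform facet $\eta$ of the link, together with a uniform $(k-\ell)$-subset of it, yields $\rho$. Hence
\begin{align*}
\pi^{\sigma}_{k-\ell}(\rho)=\binom{k}{k-\ell}^{-1}\frac{N(\rho)}{|\mathcal{X}^{(\sigma)}(k)|},
\end{align*}
where $N(\rho)$ is the number of facets of $\mathcal{X}^{(\sigma)}$ containing $\rho$. Such facets correspond bijectively, via $\eta\mapsto\eta\setminus\rho$, to the facets of $\mathcal{X}^{(\sigma\cup\rho)}$. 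Therefore $\pi^{\sigma\cup\rho}_{\ell}(\eta\setminus\rho)=1/N(\rho)$. Multiplying the two expressions and using $\pi^{\sigma}_{k}(\eta)=1/|\mathcal{X}^{(\sigma)}(k)|$ gives \cref{eq:fact-1}.

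\textbf{Fact (ii).} I use the operational description of $\downup^{(\sigma)}_{\ell}$. From $\eta$, it deletes a uniform $\ell$-subset, which is equivalent to retaining a uniform $(k-\ell)$-subset $\rho$. It then resamples $\eta'$ uniformly from $\mathcal{X}^{(\sigma\cup\rho)}(\ell)$. By Fact (i), the pair $(\rho,\eta\setminus\rho)$ obtained from $\eta\sim\pi^{\sigma}_{k}$ and a uniform $\rho\subseteq\eta$ has the law: $\rho\sim\pi^{\sigma}_{k-\ell}$, then $\eta\setminus\rho\sim\pi^{\sigma\cup\rho}_{\ell}$ conditionally. Given $\rho$, the new completion $\eta'$ is an independent copy with the same conditional law. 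The Dirichlet energy is $\frac12\E[(g(\eta)-g(\rho\cup\eta'))^2]$. Conditioning on $\rho$, this is half the expected squared difference of two i.i.d.\ samples of $g^{(\rho)}$, which equals $\Var_{\pi^{\sigma\cup\rho}_{\ell}}(g^{(\rho)})$. Averaging over $\rho$ gives (ii).

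\textbf{Fact (iii).} Here I would decompose the $j$-step in two stages. Choose the $j$ deleted elements $\tau$ together with a uniform superset $\tau\subseteq\omega\subseteq\eta$ with $|\omega|=\ell$, and set $\rho=\eta\setminus\omega$. Then $\rho$ is a uniform $(k-\ell)$-subset of $\eta$. Conditionally on $\rho$, the set $\tau$ is a uniform $j$-subset of $\omega=\eta\setminus\rho$. The resampling step picks a uniform facet of $\mathcal{X}^{(\sigma\cup(\eta\setminus\tau))}$. Since $\rho\subseteq\eta\setminus\tau$, this is a facet of the link of $(\eta\setminus\tau)\setminus\rho$ inside $\mathcal{X}^{(\sigma\cup\rho)}$. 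So, conditionally on $\rho$, the transition $\eta\setminus\rho\mapsto\eta''\setminus\rho$ is exactly one step of $\downup^{(\sigma\cup\rho)}_{j}$, and the result still contains $\rho$. Moreover, by Fact (i), $\eta\setminus\rho$ is distributed according to $\pi^{\sigma\cup\rho}_{\ell}$ given $\rho\sim\pi^{\sigma}_{k-\ell}$. Writing $\mathcal{E}=\frac12\E[(g(\eta)-g(\eta''))^2]$ and conditioning on $\rho$ gives (iii).

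The only delicate point is this last identification. One must check that averaging over the auxiliary $\omega$ reproduces the original law of $\tau$: this holds because a uniform $j$-subset of a uniform $\ell$-subset is a uniform $j$-subset. One must also check that the resampled facet law does not depend on $\rho$, which holds because the up-step depends only on $\eta\setminus\tau$.
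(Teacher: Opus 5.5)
Your proof is correct. Parts (i) and (ii) follow the paper's argument, phrased probabilistically. The paper expands $\mathcal{E}_{\downup_{\ell}^{(\sigma)}}$, groups terms by the retained face $\rho$, and uses (i) to factor $\pi_{k}^{\sigma}(\eta)/\binom{k}{k-\ell}$. That is the same as your observation that $(\rho,\eta\setminus\rho)$ has law $\pi_{k-\ell}^{\sigma}(\rho)\,\pi_{\ell}^{\sigma\cup\rho}(\eta\setminus\rho)$ and that the resampled completion is a conditionally independent copy.

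For (iii) your route genuinely differs. The paper never couples the walks. It applies (ii) inside each link $\mathcal{X}^{(\sigma\cup\rho)}$ with deletion size $j$, then substitutes $\tau=\rho\cup\tau'$. It then uses the level-$(k-j)$ analogue of (i), namely $\pi_{k-\ell}^{\sigma}(\rho)\,\pi_{\ell-j}^{\sigma\cup\rho}(\tau\setminus\rho)=\pi_{k-j}^{\sigma}(\tau)/\binom{k-j}{k-\ell}$, and counts the $\binom{k-j}{k-\ell}$ choices of $\rho\subseteq\tau$. Finally it recognizes (ii) with $\ell$ replaced by $j$. So the paper reduces (iii) to two applications of (ii) plus a measure identity, working only with energies.

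Your argument proves a stronger, kernel-level statement. Inserting a uniform intermediate $\ell$-set $\omega$ is legitimate because $\binom{k}{j}\binom{k-j}{\ell-j}=\binom{k}{\ell}\binom{\ell}{j}$. This exhibits $\downup_{j}^{(\sigma)}(\eta,\cdot)$ as the average, over uniform $(k-\ell)$-subsets $\rho\subseteq\eta$, of one step of $\downup_{j}^{(\sigma\cup\rho)}$ from $\eta\setminus\rho$ with $\rho$ re-attached. Combined with (i) for the stationary law, this gives the identity for any functional of a stationary pair $(\eta,\eta'')$, not only the Dirichlet form, and it does not use (ii) at all. The paper's version is more mechanical bookkeeping, but it makes the compatibility of the induced measures at level $k-j$ explicit; your coupling encodes that compatibility implicitly.

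One small wording point: the uniformity of $\tau$ inside $\omega=\eta\setminus\rho$ holds conditionally on $(\eta,\rho)$, not just on $\rho$. That is what your argument actually uses, and it follows from your nested-subset description.
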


\begin{proof}
    To prove claim~\emph{(i)}, fix $\eta\in\mathcal{X}^{(\sigma)}(k)$ and $\rho\subseteq\eta$ with $|\rho|=k-\ell$. There are exactly $|\mathcal{X}^{(\sigma\cup\rho)}(\ell)|$ facets of $\mathcal{X}^{(\sigma)}$ containing $\rho$. Since $\pi_{k-\ell}^{\sigma}$ is obtained by choosing a facet uniformly and then choosing one of its $(k-\ell)$-faces uniformly, we have
        \begin{align*}
            \pi_{k-\ell}^{\sigma}(\rho) &= \frac{|\mathcal{X}^{(\sigma\cup\rho)}(\ell)|}{\binom{k}{k-\ell}|\mathcal{X}^{(\sigma)}(k)|}.
        \end{align*}
    The measures on the facets of both links are uniform. Consequently,
        \begin{align*}
            \pi_{k-\ell}^{\sigma}(\rho)\pi_{\ell}^{\sigma\cup\rho}(\eta\setminus\rho) &= \frac{|\mathcal{X}^{(\sigma\cup\rho)}(\ell)|}{\binom{k}{k-\ell}|\mathcal{X}^{(\sigma)}(k)|}\frac{1}{|\mathcal{X}^{(\sigma\cup\rho)}(\ell)|} = \frac{\pi_{k}^{\sigma}(\eta)}{\binom{k}{k-\ell}}.
        \end{align*}
    This proves claim~\emph{(i)}.

    We next prove claim~\emph{(ii)}. By definition, the Dirichlet energy is given by
        \begin{align*}
            \mathcal{E}_{\downup_{\ell}^{(\sigma)}}(g,g) &= \frac{1}{2}\sum_{\eta,\tau\in\mathcal{X}^{(\sigma)}(k)}\pi_{k}^{\sigma}(\eta)\downup_{\ell}^{(\sigma)}(\eta,\tau)(g(\eta)-g(\tau))^{2}.
        \end{align*}
    To expand $\downup_{\ell}^{(\sigma)}(\eta,\tau)$, note that $\downup_{\ell}^{(\sigma)}$ sums over retained faces $\rho\subseteq\eta\cap\tau$ of size $k-\ell$. Each such face is chosen with probability $1/\binom{k}{k-\ell}$, after which the completion $\tau\setminus\rho$ is chosen with probability $\pi_{\ell}^{\sigma\cup\rho}(\tau\setminus\rho)$. Expanding the kernel and grouping by $\rho$, we obtain
        \begin{align*}
            &\mathcal{E}_{\downup_{\ell}^{(\sigma)}}(g,g)\\
            &\quad= \frac{1}{2}\sum_{\rho\in\mathcal{X}^{(\sigma)}(k-\ell)}\sum_{\substack{\eta,\tau\in\mathcal{X}^{(\sigma)}(k)\\\rho\subseteq\eta\cap\tau}}\frac{\pi_{k}^{\sigma}(\eta)}{\binom{k}{k-\ell}}\pi_{\ell}^{\sigma\cup\rho}(\tau\setminus\rho)(g(\eta)-g(\tau))^{2}\\
            &\quad= \frac{1}{2}\sum_{\rho\in\mathcal{X}^{(\sigma)}(k-\ell)}\pi_{k-\ell}^{\sigma}(\rho)\sum_{\substack{\eta,\tau\in\mathcal{X}^{(\sigma)}(k)\\\rho\subseteq\eta\cap\tau}}\pi_{\ell}^{\sigma\cup\rho}(\eta\setminus\rho)\pi_{\ell}^{\sigma\cup\rho}(\tau\setminus\rho)(g^{(\rho)}(\eta\setminus\rho)-g^{(\rho)}(\tau\setminus\rho))^{2},
        \end{align*}
    where the second equality uses claim~\emph{(i)} and the definition of $g^{(\rho)}$. By the change of variables $\eta'=\eta\setminus\rho$ and $\eta''=\tau\setminus\rho$, we have
        \begin{align*}
            &\mathcal{E}_{\downup_{\ell}^{(\sigma)}}(g,g)\\
            &\quad= \frac{1}{2}\sum_{\rho\in\mathcal{X}^{(\sigma)}(k-\ell)}\Bigl(\pi_{k-\ell}^{\sigma}(\rho)\sum_{\eta',\eta''\in\mathcal{X}^{(\sigma\cup\rho)}(\ell)}\Bigl(\pi_{\ell}^{\sigma\cup\rho}(\eta')\pi_{\ell}^{\sigma\cup\rho}(\eta'')\\
            &\qquad\qquad\times(g^{(\rho)}(\eta')-g^{(\rho)}(\eta''))^{2}\Bigr)\Bigr)\\
            &\quad= \sum_{\rho\in\mathcal{X}^{(\sigma)}(k-\ell)}\Bigl(\pi_{k-\ell}^{\sigma}(\rho)\Bigl(\sum_{\eta'\in\mathcal{X}^{(\sigma\cup\rho)}(\ell)}\pi_{\ell}^{\sigma\cup\rho}(\eta')g^{(\rho)}(\eta')^{2}\\
            &\qquad\qquad-\Bigl(\sum_{\eta'\in\mathcal{X}^{(\sigma\cup\rho)}(\ell)}\pi_{\ell}^{\sigma\cup\rho}(\eta')g^{(\rho)}(\eta')\Bigr)^{2}\Bigr)\Bigr)\\
            &\quad= \E_{\rho\sim\pi_{k-\ell}^{\sigma}}(\Var_{\eta'\sim\pi_{\ell}^{\sigma\cup\rho}}(g^{(\rho)}(\eta'))).
        \end{align*}
    Finally, we prove claim~\emph{(iii)}. Fix $\tau\in\mathcal{X}^{(\sigma)}(k-j)$ and $\rho\subseteq\tau$ with $|\rho|=k-\ell$. By the definition of the induced measures and the same facet count used in claim~\emph{(i)}, we have
        \begin{align*}
            \pi_{k-\ell}^{\sigma}(\rho)\pi_{\ell-j}^{\sigma\cup\rho}(\tau\setminus\rho) &= \frac{\pi_{k-j}^{\sigma}(\tau)}{\binom{k-j}{k-\ell}}.
        \end{align*}
    For the energy average, we have
        \begin{align*}
            &\E_{\rho\sim\pi_{k-\ell}^{\sigma}}(\mathcal{E}_{\downup_{j}^{(\sigma\cup\rho)}}(g^{(\rho)},g^{(\rho)}))\\
            &\quad= \sum_{\rho\in\mathcal{X}^{(\sigma)}(k-\ell)}\pi_{k-\ell}^{\sigma}(\rho)\mathcal{E}_{\downup_{j}^{(\sigma\cup\rho)}}(g^{(\rho)},g^{(\rho)})\\
            &\quad= \sum_{\rho\in\mathcal{X}^{(\sigma)}(k-\ell)}\pi_{k-\ell}^{\sigma}(\rho)\sum_{\tau'\in\mathcal{X}^{(\sigma\cup\rho)}(\ell-j)}\pi_{\ell-j}^{\sigma\cup\rho}(\tau')\Var_{\eta'\sim\pi_{j}^{\sigma\cup\rho\cup\tau'}}(g^{(\rho)}(\tau'\cup\eta')).
        \end{align*}
    The second equality applies claim~\emph{(ii)} within each link $\mathcal{X}^{(\sigma\cup\rho)}$, with deletion size $j$. By the change of variables $\tau=\rho\cup\tau'$ and the definition of $g^{(\rho)}$, we obtain
        \begin{align*}
            &\E_{\rho\sim\pi_{k-\ell}^{\sigma}}(\mathcal{E}_{\downup_{j}^{(\sigma\cup\rho)}}(g^{(\rho)},g^{(\rho)}))\\
            &\quad= \sum_{\tau\in\mathcal{X}^{(\sigma)}(k-j)}\sum_{\substack{\rho\subseteq\tau\\|\rho|=k-\ell}}\pi_{k-\ell}^{\sigma}(\rho)\pi_{\ell-j}^{\sigma\cup\rho}(\tau\setminus\rho)\Var_{\eta'\sim\pi_{j}^{\sigma\cup\tau}}(g(\tau\cup\eta'))\\
            &\quad= \sum_{\tau\in\mathcal{X}^{(\sigma)}(k-j)}\frac{\pi_{k-j}^{\sigma}(\tau)}{\binom{k-j}{k-\ell}}\Bigl(\sum_{\substack{\rho\subseteq\tau\\|\rho|=k-\ell}}1\Bigr)\Var_{\eta'\sim\pi_{j}^{\sigma\cup\tau}}(g(\tau\cup\eta'))\\
            &\quad= \sum_{\tau\in\mathcal{X}^{(\sigma)}(k-j)}\pi_{k-j}^{\sigma}(\tau)\Var_{\eta'\sim\pi_{j}^{\sigma\cup\tau}}(g(\tau\cup\eta'))\\
            &\quad= \mathcal{E}_{\downup_{j}^{(\sigma)}}(g,g).
        \end{align*}
    The second equality uses the identity proved above, and the third evaluates the inner sum as $\binom{k-j}{k-\ell}$. The last equality is claim~\emph{(ii)} with $\ell$ replaced by $j$. This proves claim~\emph{(iii)} and completes the proof.
\end{proof}

The following lemma combines~\cref{thm:link-eigenvalue-switch-complex} with~\cref{thm:alev-rao} to bound the spectral gap of the down-$j$-up-$j$ walk on each link, provided $j$ is sufficiently large in terms of $\Delta$.

\begin{lemma}\label{lem:higher-order-poincare}
    Let $j,k$ be integers satisfying $10^{6}\Delta^{4}\le j\le k\le m$, and let $\sigma\in\mathcal{R}_{\mathbf{d}}(m-k)$. Then for each $g\in\ell^{2}(\mathcal{R}_{\mathbf{d}}^{(\sigma)}(k),\pi_{k}^{\sigma})$, it holds
        \begin{align}\label{eq:higher-order-poincare}
            \Var_{\eta\sim\pi_{k}^{\sigma}}(g(\eta)) &\le \frac{2k}{j} \mathcal{E}_{\downup_{j}^{(\sigma)}}(g,g).
        \end{align}
\end{lemma}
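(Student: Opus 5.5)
The plan is to apply the Alev--Rao local-to-global bound, \cref{thm:alev-rao}, to the link complex $\mathcal{X}=\mathcal{R}_{\mathbf{d}}^{(\sigma)}$ itself. This complex is pure of dimension $k$, its facet measure is $\pi_{k}^{\sigma}$, and its down-$j$-up-$j$ walk is $\downup_{j}^{(\sigma)}$. If $\mathcal{X}$ has only one facet, the variance is zero and there is nothing to prove, so assume it has at least two. The variational characterization of the spectral gap then gives
\begin{align*}
    \Var_{\eta\sim\pi_{k}^{\sigma}}(g(\eta)) &\le \frac{1}{1-\lambda_{2}(\downup_{j}^{(\sigma)})}\,\mathcal{E}_{\downup_{j}^{(\sigma)}}(g,g),
\end{align*}
so it suffices to show $1-\lambda_{2}(\downup_{j}^{(\sigma)})\ge j/(2k)$. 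By \cref{thm:alev-rao},
\begin{align*}
    1-\lambda_{2}(\downup_{j}^{(\sigma)}) &\ge \frac{j}{k}\prod_{i=0}^{k-j-1}(1-\gamma_{i}),
\end{align*}
where $\gamma_{i}$ is the largest second eigenvalue of a link walk at an $i$-face of $\mathcal{X}$.

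The next step is to identify these links. A face $\tau\in\mathcal{X}(i)$ is a set of edges with $\sigma\cup\tau\in\mathcal{R}_{\mathbf{d}}(m-k+i)$, and the link of $\tau$ in $\mathcal{X}$ equals the link $\mathcal{R}_{\mathbf{d}}^{(\sigma\cup\tau)}$. The measures also agree, since both are uniform on facets and induced downward by the same rule \cref{eq:induced-probability-measure}. The codimension of $\sigma\cup\tau$ is $k-i$, and for $0\le i\le k-j-1$ this is at least $j+1\ge10^{6}\Delta^{4}$, which exceeds $\max\{10\Delta^{2},\frac12\Delta^{4}\}$. Therefore \cref{thm:link-eigenvalue-switch-complex} applies and gives
\begin{align*}
    \gamma_{i} &\le \frac{200\Delta^{2}}{(k-i-1)\sqrt{k-i}}.
\end{align*}

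The main (though routine) obstacle is bounding the product by $1/2$. Substituting $r=k-i$, which ranges over $j+1\le r\le k$, each factor satisfies $\gamma_{i}\le 400\Delta^{2}r^{-3/2}\le1/2$, using $r-1\ge r/2$. Hence, using $1-x\ge e^{-2x}$ for $x\le1/2$,
\begin{align*}
    \prod_{i}(1-\gamma_{i}) &\ge \exp\Bigl(-800\Delta^{2}\sum_{r>j}r^{-3/2}\Bigr) \ge \exp\bigl(-1600\Delta^{2}j^{-1/2}\bigr).
\end{align*}
Since $j\ge10^{6}\Delta^{4}$, we have $j^{-1/2}\le10^{-3}\Delta^{-2}$, so the exponent is at least $-1.6$. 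That gives only $e^{-1.6}$, not $1/2$, so the constants need tightening. Two adjustments fix this. First, replace $r-1\ge r/2$ by $r-1\ge(1-10^{-6})r$, which gives $\gamma_{i}\le 200.001\,\Delta^{2}r^{-3/2}\le 10^{-6}$. Second, use $1-x\ge e^{-x-x^{2}}$ for small $x$. The exponent becomes roughly $-200\cdot2\Delta^{2}j^{-1/2}\approx-0.4$, and $e^{-0.41}>1/2$. With this, $1-\lambda_{2}\ge j/(2k)$, and \cref{eq:higher-order-poincare} follows.
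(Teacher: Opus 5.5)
Your proof is correct and follows the paper's argument: apply \cref{thm:alev-rao} to the link complex $\mathcal{R}_{\mathbf{d}}^{(\sigma)}$, identify its links with $\mathcal{R}_{\mathbf{d}}^{(\sigma\cup\tau)}$, and bound each $\gamma_{i}$ with \cref{thm:link-eigenvalue-switch-complex}. The one difference is in bounding the product. The paper skips the exponential detour: it uses $\frac{1}{(\ell-1)\sqrt{\ell}}\le(\ell-1)^{-3/2}$ to get $\sum_{i}\gamma_{i}\le 400\Delta^{2}/\sqrt{j-1}\le\frac12$, and then $\prod_{i}(1-a_{i})\ge1-\sum_{i}a_{i}$ gives the factor $\frac12$ directly, which is why your constants needed retuning.
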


\begin{proof}
    If the link $\mathcal{R}_{\mathbf{d}}^{(\sigma)}$ has only a single facet, both sides vanish. If $j=k$, the walk resamples the entire completion and the resulting Markov kernel has spectral gap at least one, in which case~\cref{eq:higher-order-poincare} holds. We may therefore assume $j<k$. For each $j+1\le\ell\le k$ and $\rho\in\mathcal{R}_{\mathbf{d}}^{(\sigma)}(k-\ell)$, we have
        \begin{align*}
            (\mathcal{R}_{\mathbf{d}}^{(\sigma)})^{(\rho)} &= \mathcal{R}_{\mathbf{d}}^{(\sigma\cup\rho)},
        \end{align*}
    where $\sigma\cup\rho\in\mathcal{R}_{\mathbf{d}}(m-\ell)$. For the complex $\mathcal{R}_{\mathbf{d}}^{(\sigma)}$, as in~\cref{thm:alev-rao}, let $\gamma_{k-\ell}$ denote the maximum second eigenvalue of the link walks over all $(k-\ell)$-faces. By~\cref{thm:link-eigenvalue-switch-complex} and our assumption on $j$,
        \begin{align*}
            \gamma_{k-\ell} &= \max_{\rho\in\mathcal{R}_{\mathbf{d}}^{(\sigma)}(k-\ell)}\lambda_{2}(\downup^{(\sigma\cup\rho)}) \le \frac{200\Delta^{2}}{(\ell-1)\sqrt{\ell}}.
        \end{align*}
    We furthermore have
        \begin{align*}
            \sum_{\ell=j+1}^{k}\frac{200\Delta^{2}}{(\ell-1)\sqrt{\ell}} &\le 200\Delta^{2}\sum_{\ell=j}^{\infty}\ell^{-3/2}\le 200\Delta^{2}\int_{j-1}^{\infty}s^{-3/2}\,ds = \frac{400\Delta^{2}}{\sqrt{j-1}} \le \frac{1}{2},
        \end{align*}
    since $j\ge10^{6}\Delta^{4}$. Applying~\cref{thm:alev-rao} and reindexing the product gives
        \begin{align*}
            1-\lambda_{2}(\downup_{j}^{(\sigma)}) &\ge \frac{j}{k}\prod_{\ell=j+1}^{k}(1-\gamma_{k-\ell}) \ge \frac{j}{k}\prod_{\ell=j+1}^{k}\Bigl(1-\frac{200\Delta^{2}}{(\ell-1)\sqrt{\ell}}\Bigr).
        \end{align*}
    Using $\prod_{i}(1-a_{i})\ge1-\sum_{i}a_{i}$ for $0\le a_{i}\le1$, together with the preceding sum bound, we obtain
        \begin{align*}
            1-\lambda_{2}(\downup_{j}^{(\sigma)}) &\ge \frac{j}{k}\Bigl(1-\sum_{\ell=j+1}^{k}\frac{200\Delta^{2}}{(\ell-1)\sqrt{\ell}}\Bigr) \ge \frac{j}{k}\Bigl(1-\frac{400\Delta^{2}}{\sqrt{j-1}}\Bigr) \ge \frac{j}{2k}.
        \end{align*}
    By~\cref{prop:up-down-adjoint}, the walk $\downup_{j}^{(\sigma)}$ is reversible with respect to $\pi_{k}^{\sigma}$. The variational characterization of its spectral gap therefore gives
        \begin{align*}
            \mathcal{E}_{\downup_{j}^{(\sigma)}}(g,g) &\ge (1-\lambda_{2}(\downup_{j}^{(\sigma)}))\Var_{\eta\sim\pi_{k}^{\sigma}}(g(\eta)) \ge \frac{j}{2k}\Var_{\eta\sim\pi_{k}^{\sigma}}(g(\eta)).
        \end{align*}
    This proves~\cref{eq:higher-order-poincare}.
\end{proof}

\Cref{cor:intro-higher-order-eigenvalue} now follows from~\cref{lem:higher-order-poincare}, applied with $\sigma=\varnothing$ and $k=m$, by the variational characterization of the spectral gap.

The next step is to transfer the preceding Poincar\'{e} inequality to the down-$2$-up-$2$ walk by comparing their Dirichlet energies.

\begin{lemma}\label{lem:finite-comparison}
    Let $j,k$ be integers satisfying $15\Delta^{2}<j\le k\le m$, and let $\sigma\in\mathcal{R}_{\mathbf{d}}(m-k)$. Then, for every $g\in\ell^{2}(\mathcal{R}_{\mathbf{d}}^{(\sigma)}(k),\pi_{k}^{\sigma})$,
        \begin{align}\label{eq:finite-comparison}
            \mathcal{E}_{\downup_{j}^{(\sigma)}}(g,g) &\le C_{j}\mathcal{E}_{\downup_{2}^{(\sigma)}}(g,g).
        \end{align}
\end{lemma}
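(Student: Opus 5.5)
By \cref{lem:dirichlet-energy-decomposition} the comparison reduces to a statement about links of codimension exactly $j$, where everything is finite and depends only on $j$. Apply claim~\emph{(ii)} with $\ell=j$ and claim~\emph{(iii)} with $\ell=j$ and deletion size $2$:
\begin{align*}
\mathcal{E}_{\downup_{j}^{(\sigma)}}(g,g)&=\E_{\rho\sim\pi_{k-j}^{\sigma}}\bigl(\Var_{\eta'\sim\pi_{j}^{\sigma\cup\rho}}(g^{(\rho)}(\eta'))\bigr),\\
\mathcal{E}_{\downup_{2}^{(\sigma)}}(g,g)&=\E_{\rho\sim\pi_{k-j}^{\sigma}}\bigl(\mathcal{E}_{\downup_{2}^{(\sigma\cup\rho)}}(g^{(\rho)},g^{(\rho)})\bigr).
\end{align*}
It therefore suffices to prove a Poincar\'e inequality
\begin{align*}
\Var_{\pi_{j}^{\tau}}(h)\le C_{j}\,\mathcal{E}_{\downup_{2}^{(\tau)}}(h,h)
\end{align*}
for every face $\tau\in\mathcal{R}_{\mathbf{d}}(m-j)$ and every function $h$ on the facets of its link. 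We need $C_{j}$ to depend only on $j$, not on $k$, $m$, $\Delta$ or $\tau$.

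The Poincar\'e inequality will follow from a canonical-paths (Diaconis--Stroock) argument, once we know the graph of $\downup_{2}^{(\tau)}$ on the facets of $\mathcal{R}_{\mathbf{d}}^{(\tau)}$ is connected with diameter bounded in terms of $j$. The uniform quantities we need are these.
\begin{enumerate}[label=(\alph*)]
\item Every completion is a pairing of the $2j$ residual stubs, so the number $N$ of facets of the link is at most $(2j-1)!!$.
\item After deleting two edges from a completion, the residual degrees sum to $4$. Hence at most $3$ completions of the retained face exist, and every nonzero transition satisfies $\downup_{2}^{(\tau)}(\eta,\eta')\ge 1/(3\binom{j}{2})$.
\item The stationary law is uniform.
\end{enumerate}
If every pair of facets is joined by a path of length at most $D_{j}$, the standard bound gives
\begin{align*}
1-\lambda_{2}\ge \frac{1}{3\binom{j}{2}\,D_{j}^{2}\,N},
\end{align*}
which bounds each edge's congestion by $N^{2}$ paths of length at most $D_{j}$ and normalizes by $\pi(\eta)\downup_{2}(\eta,\eta')\ge 1/(3N\binom{j}{2})$. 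This is a function of $j$ alone, so we can take $C_{j}=3\binom{j}{2}D_{j}^{2}(2j-1)!!$.

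The main obstacle is connectivity: links of small codimension can be disconnected, and the hypothesis $j>15\Delta^{2}$ must be what rules this out. My plan is to show that any two completions $\eta,\eta'$ of $\tau$ are joined by two-edge moves, with each short block of moves strictly increasing $|\eta\cap\eta'|$. This gives $D_{j}=O(j)$. Take the symmetric difference $\eta\triangle\eta'$, which decomposes into alternating cycles. Pick an edge $\{u,v\}\in\eta'\setminus\eta$, with $\{u,z\}\in\eta\setminus\eta'$ and $\{v,w\}\in\eta\setminus\eta'$.
\begin{itemize}
\item If the direct switch of these two edges to $\{u,v\},\{z,w\}$ is valid, perform it.
\item Otherwise $z=w$ or $\{z,w\}$ is already present. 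In that case, imitate the three-pair surgery in the proof of \cref{lem:single-pair-marginals}. Choose an auxiliary edge $\{u',v'\}\in\eta$ whose endpoints avoid the closed neighbourhoods of $z,w,u,v$ in $\sigma\cup\tau\cup\eta\cup\eta'$. Then realize the rotation to $\{u,v\},\{z,u'\},\{w,v'\}$ as two successive valid switches. The first switch is $\{u,z\},\{u',v'\}\to\{u,u'\},\{z,v'\}$, or a variant thereof, and the second is chosen so as to produce $\{u,v\}$.
\end{itemize}
A neighbourhood count, as in \cref{lem:single-pair-marginals}, excludes at most about $14\Delta^{2}$ of the $j$ edges. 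So $j>15\Delta^{2}$ guarantees an auxiliary edge exists. The rotation gains $\{u,v\}$, and care is needed to check it loses no edge of $\eta\cap\eta'$; if the auxiliary edge lies in $\eta\cap\eta'$, this must be handled by a bounded number of extra moves restoring it. I expect the bookkeeping of which intermediate edges might coincide or already be present to be the delicate part, handled by the same case analysis as in \cref{lem:invalid-transpositions}.
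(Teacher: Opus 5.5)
Your reduction through \cref{lem:dirichlet-energy-decomposition}~(ii)--(iii) to a Poincar\'e inequality on links of codimension exactly $j$ is the same as the paper's. So are the inputs $N\le(2j-1)!!$, $\downup_{2}^{(\tau)}(\eta,\eta')\ge1/(3\binom{j}{2})$ for nonzero transitions, and uniformity of the stationary law. The paper uses an even cruder single-path bound $\Var(g)\le(\max g-\min g)^{2}$ instead of canonical paths; your canonical-paths estimate is fine and in fact only needs $D_{j}$, not $D_{j}^{2}$.

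\textbf{The gap is the connectivity/diameter step}, which you leave open exactly where it is hard. With $\eta'$ held fixed and progress measured by $|\eta\cap\eta'|$, your rotation gains $\{u,v\}$. But when the auxiliary edge $\{u',v'\}$ lies in $\eta\cap\eta'$, it also loses that edge, so the overlap does not change. This case is not exceptional; it can be forced. Suppose $\eta\triangle\eta'$ is a single alternating cycle all of whose direct switches are blocked by edges of $\tau$. An example is an alternating $6$-cycle whose three long diagonals lie in $\tau$, which is the basic way small-codimension links disconnect. Then every edge of $\eta$ off the cycle is common, so every admissible auxiliary edge is common. ``Restoring'' $\{u',v'\}$ inside $\eta$ does not help. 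The one-switch repair $\{z,v'\},\{u',w\}\mapsto\{u',v'\},\{z,w\}$ is blocked by the very obstruction ($z=w$ or $\{z,w\}$ present) you started from. So no bounded sequence of moves of $\eta$ alone has been shown to increase the overlap.

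\textbf{The missing idea is to move both completions toward each other.}

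\emph{Setup.} Since $\{u,z\}\in\eta\setminus\eta'$ and both completions have the same residual degrees, there is an edge $\{z,x\}\in\eta'\setminus\eta$. Choose $\{u',v'\}\in\eta$ avoiding the closed neighbourhoods of the five vertices $u,v,w,z,x$ in $\tau\cup\eta\cup\eta'$. This excludes at most $5\Delta(2\Delta+1)\le15\Delta^{2}<j$ edges of $\eta$, which is exactly where the hypothesis comes from.

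\emph{Moves in $\eta$.} Perform your two switches $\{u,z\},\{u',v'\}\mapsto\{u,u'\},\{z,v'\}$ and then $\{u,u'\},\{v,w\}\mapsto\{u,v\},\{u',w\}$. If $\{u',v'\}\notin\eta'$, the overlap rises by one.

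\emph{Move in $\eta'$ when needed.} If $\{u',v'\}\in\eta'$, also perform the single switch $\{z,x\},\{u',v'\}\mapsto\{z,v'\},\{x,u'\}$ in $\eta'$. It is valid by the choice of $\{u',v'\}$. The new pair of completions loses the common edge $\{u',v'\}$ but gains both $\{u,v\}$ and $\{z,v'\}$, so the overlap still rises by at least one.

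\emph{Path length.} Iterate at most $j$ times, then concatenate the moves from $\eta$ with the reversed moves from $\eta'$. This gives a path of length at most $3j$, which is what your Poincar\'e bound needs. The paper runs the same construction as a contradiction argument, applied to a pair of completions in different components with maximal overlap.
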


\begin{proof}
    We first claim that
        \begin{align}\label{eq:finite-local-variance}
            \Var_{\eta\sim\pi_{k}^{\sigma}}(g(\eta)) &\le C_{k}\mathcal{E}_{\downup_{2}^{(\sigma)}}(g,g),
        \end{align}
    for some $C_{k}\ge 1$ depending only on $k$. We will then use~\cref{lem:dirichlet-energy-decomposition}.

    We first give a crude bound on $|\mathcal{R}_{\mathbf{d}}^{(\sigma)}(k)|$. Each $\eta\in \mathcal{R}_{\mathbf{d}}^{(\sigma)}(k)$ consists of exactly $k$ edges such that $\sigma\cup\eta$ is a valid realization of $\mathbf{d}$. Any such completion can be obtained by pairing $2k$ labeled half-edges, with $\res_{\sigma}(v)$ half-edges assigned to each vertex $v\in V$. There are exactly $(2k-1)!!$ partitions of $2k$ objects, in this case half-edges, into pairs, so $|\mathcal{R}_{\mathbf{d}}^{(\sigma)}(k)|\le(2k-1)!!$.

    Next we claim that the graph of the walk $\downup_{2}^{(\sigma)}$ is connected. Suppose otherwise, and choose facets $\eta,\eta'\in\mathcal{R}_{\mathbf{d}}^{(\sigma)}(k)$ in different connected components maximizing $|\eta\cap\eta'|$. Choose an edge $\{v,w\}\in\eta'\setminus\eta$. Since the two completions have the same degrees, by removing their common edges we have that
        \begin{align*}
            \deg_{\eta\setminus\eta'}(z') &= \deg_{\eta'\setminus\eta}(z')
        \end{align*}
    for each $z'\in V$. The chosen edge $\{v,w\}\in\eta'\setminus\eta$ makes the right-hand side positive at both $v$ and $w$. The left-hand side is therefore positive at these vertices as well, so we can choose edges
        \begin{align*}
            \{u,v\},\{w,z\} &\in\eta\setminus\eta'.
        \end{align*}
    Now $\{u,v\}$ makes the left-hand side positive at $u$. Applying the same equality at $u$ gives an edge
        \begin{align*}
            \{u,u'\} &\in\eta'\setminus\eta.
        \end{align*}
    Simplicity gives $u\ne v$ and $z\ne w$. Moreover, $u=w$ or $z=v$ would make one of $\{u,v\},\{w,z\}$ equal to $\{v,w\}$, contradicting their membership in $\eta\setminus\eta'$. Thus $u,z\notin\{v,w\}$, which also implies $\{u,u'\}\ne\{v,w\}$. The equality $u=z$ is allowed. Thus we have a walk $(u',u,v,w,z)$ on at most five vertices. We illustrate the setup so far and the argument which follows in~\cref{fig:auxiliary-edge}. 

    \begin{figure}[t!]
\centering

\def\FigureScale{1.0}

\tikzset{
  ordinary edge/.style={draw=black,line width=.55pt,solid},
  step arrow/.style={draw=black,line width=.6pt,-{Latex[length=2.5mm,width=1.8mm]}},
  forbidden set/.style={fill=black!7,draw=black!45,line width=.55pt,dash pattern=on 3pt off 2.5pt},
  vertex label/.style={font=\fontsize{10}{12}\selectfont,inner sep=0pt},
  state label/.style={font=\fontsize{12}{14}\selectfont,inner sep=0pt}
}

\newcommand{\FigFourVertexPositions}{%
  \coordinate (up) at (-.02,-.03);
  \coordinate (u) at (.68,.58);
  \coordinate (v) at (1.50,.32);
  \coordinate (w) at (.30,-.70);
  \coordinate (z) at (1.25,-.72);
  \coordinate (vp) at (2.75,.52);
  \coordinate (wp) at (2.75,-.72);
}

\newcommand{\FigFourForbiddenBlob}{%
  \path[forbidden set]
    (-.34,.66)
    .. controls (-.09,1.01) and (.74,1.06) .. (1.32,.94)
    .. controls (1.78,.86) and (1.91,.58) .. (1.85,.05)
    .. controls (1.88,-.58) and (1.77,-.96) .. (1.25,-1.07)
    .. controls (.61,-1.19) and (-.04,-1.07) .. (-.32,-.75)
    .. controls (-.52,-.46) and (-.51,.33) .. cycle;
}

\newcommand{\FigFourVerticesAndLabels}{%
  \foreach \q in {up,u,v,w,z,vp,wp} {
    \fill[black] (\q) circle[radius=2.1pt];
  }
  \node[vertex label] at (-.04,.20) {$u'$};
  \node[vertex label] at (.68,.82) {$u$};
  \node[vertex label] at (1.72,.32) {$v$};
  \node[vertex label] at (.26,-.46) {$w$};
  \node[vertex label] at (1.26,-.96) {$z$};
  \node[vertex label] at (2.77,.77) {$v'$};
  \node[vertex label,anchor=west] at (2.89,-.72) {$w'$};
}

\def\FigFourTopInitial{u/v,w/z,vp/wp}
\def\FigFourTopMiddle{u/vp,w/z,v/wp}
\def\FigFourTopFinal{u/vp,v/w,z/wp}
\def\FigFourBottomInitial{u/up,v/w,vp/wp}
\def\FigFourBottomFinal{u/vp,v/w,up/wp}

\newcommand{\FigFourDrawGraph}[3][]{%
  \begin{scope}[#1]
    \FigFourForbiddenBlob
    \FigFourVertexPositions
    \edef\FigFourEdgeList{#2}
    \foreach \a/\b in \FigFourEdgeList {
      \draw[ordinary edge] (\a)--(\b);
    }
    \FigFourVerticesAndLabels
    \node[state label] at (1.24,1.36) {$#3$};
  \end{scope}%
}

\begin{subfigure}{\linewidth}
  \centering
  \resizebox{\FigureScale\linewidth}{!}{%
    \begin{tikzpicture}[x=1cm,y=1cm,line cap=round,line join=round]
      \path[use as bounding box] (-.85,-1.25) rectangle (13.30,1.65);

      \FigFourDrawGraph{\FigFourTopInitial}{\eta}
      \FigFourDrawGraph[shift={(5,0)}]{\FigFourTopMiddle}{}
      \FigFourDrawGraph[shift={(10,0)}]{\FigFourTopFinal}{\eta''}

      \draw[step arrow] (3.40,-.08)--(4.22,-.08);
      \draw[step arrow] (8.40,-.08)--(9.22,-.08);

    \end{tikzpicture}%
  }
  \caption{Two steps of $\downup_2^{(\sigma)}$ from $\eta$ to $\eta''$.}
  \label{fig:auxiliary-edge-top}
\end{subfigure}

\par\medskip

\begin{subfigure}{\linewidth}
  \centering
  \resizebox{\FigureScale\linewidth}{!}{%
    \begin{tikzpicture}[x=1cm,y=1cm,line cap=round,line join=round]
      \path[use as bounding box] (-.85,-1.25) rectangle (13.30,1.65);

      \FigFourDrawGraph{\FigFourBottomInitial}{\eta'}
      \FigFourDrawGraph[shift={(5,0)}]{\FigFourBottomInitial}{\eta'}
      \FigFourDrawGraph[shift={(10,0)}]{\FigFourBottomFinal}{\eta'''}

      \draw[step arrow] (3.40,-.08)--(4.22,-.08);
      \draw[step arrow] (8.40,-.08)--(9.22,-.08);
    \end{tikzpicture}%
  }
  \caption{A step of $\downup_2^{(\sigma)}$ from $\eta'$ to $\eta'''$; the first two panels show the same state.}
  \label{fig:auxiliary-edge-bottom}
\end{subfigure}

\caption{Increasing overlap using an auxiliary edge $\{v',w'\}\in\eta\cap\eta'$. Only the relevant edges are drawn, in the case of seven distinct named vertices; all other edges remain unchanged. The gray regions depict the fixed forbidden set $B$, defined using the original $\sigma\cup\eta\cup\eta'$, with unnamed neighbors omitted. Both auxiliary endpoints lie outside $B$. The final completions lose the common edge $\{v',w'\}$ and gain the common edges $\{v,w\}$ and $\{u,v'\}$, so $|\eta''\cap\eta'''|\ge |\eta\cap\eta'|+1$.}
\label{fig:auxiliary-edge}
\end{figure}

    We use an auxiliary edge to increase the overlap, allowing both completions to move within their components. Let $B$ consist of $u,v,w,z,u'$ and all their neighbors in $\sigma\cup\eta\cup\eta'$. Both $\sigma\cup\eta$ and $\sigma\cup\eta'$ have maximum degree at most $\Delta$, so each vertex has at most $2\Delta$ neighbors in their union. Each of the five specified vertices therefore contributes at most $2\Delta+1$ vertices to $B$, giving
        \begin{align*}
            |B| &\le5(2\Delta+1).
        \end{align*}
    Every edge of $\eta$ meeting $B$ is counted at least once in $\sum_{z'\in B}\deg_{\eta}(z')$, and $\deg_{\eta}(z')\le\Delta$. Consequently,
        \begin{align*}
            |\{\{u'',v''\}\in\eta:\{u'',v''\}\cap B\ne\varnothing\}| &\le\sum_{z'\in B}\deg_{\eta}(z')\\
            &\le\Delta|B|\le5\Delta(2\Delta+1)\\
            &\le15\Delta^{2}<k.
        \end{align*}
    Choose $\{v',w'\}\in\eta$ with $\{v',w'\}\cap B=\varnothing$. In particular,
        \begin{align*}
            \{\{u,v'\},\{v,w'\},\{z,w'\},\{u',w'\}\}\cap(\sigma\cup\eta\cup\eta') &= \varnothing.
        \end{align*}
    In $\eta$, perform the following two steps of $\downup_2^{(\sigma)}$:
        \begin{align*}
            \{\{u,v\},\{w,z\},\{v',w'\}\} &\mapsto\{\{u,v'\},\{w,z\},\{v,w'\}\}\mapsto\{\{u,v'\},\{v,w\},\{z,w'\}\},
        \end{align*}
    obtaining $\eta''$ in the same component as $\eta$. The endpoint exclusions above and the choice of $\{v',w'\}$ ensure that both switches are valid. If $\{v',w'\}\notin\eta'$, these switches remove no common edges and add the common edge $\{v,w\}$, giving
        \begin{align*}
            |\eta''\cap\eta'| &= |\eta\cap\eta'|+1,
        \end{align*}
    contrary to maximality. Thus $\{v',w'\}\in\eta'$. Now perform one switch in $\eta'$:
        \begin{align*}
            \{\{u,u'\},\{v',w'\}\} &\mapsto\{\{u,v'\},\{u',w'\}\},
        \end{align*}
    obtaining $\eta'''$ in the same component as $\eta'$. This switch is valid by the choice of $\{v',w'\}$. Every original common edge except $\{v',w'\}$ remains common, while $\{v,w\}$ and $\{u,v'\}$ are now both common. Therefore
        \begin{align*}
            |\eta''\cap\eta'''| &\ge|\eta\cap\eta'|-1+2>|\eta\cap\eta'|.
        \end{align*}
    Since $\eta''$ and $\eta'''$ remain in different components, this contradicts maximality and proves the claim.

    The preceding argument proves something stronger and useful. Starting from any two completions, repeat the construction until they coincide. Each application uses two valid steps from the first completion and at most one from the second, increasing their overlap by at least one. Since both completions have $k$ edges, at most $k$ applications are needed. Following the switches from the first completion and then reversing those from the second gives a walk of length at most $3k$ between them.

    We now prove~\cref{eq:finite-local-variance} using the crude bound on $|\mathcal{R}_{\mathbf{d}}^{(\sigma)}(k)|$ and the preceding path bound. If $|\mathcal{R}_{\mathbf{d}}^{(\sigma)}(k)|=1$, the variance is zero. Otherwise, choose a simple path $\eta_{0},\dotsc,\eta_{\ell}$ from a minimum of $g$ to a maximum, with $\ell\le3k$. Telescoping and Cauchy--Schwarz give
        \begin{align*}
            \Var_{\eta\sim\pi_{k}^{\sigma}}(g(\eta)) &\le(g(\eta_{\ell})-g(\eta_{0}))^{2}\\
            &= \Bigl(\sum_{i=0}^{\ell-1}(g(\eta_{i+1})-g(\eta_{i}))\Bigr)^{2}\\
            &\le\ell\sum_{i=0}^{\ell-1}(g(\eta_{i+1})-g(\eta_{i}))^{2}.
        \end{align*}
    By the preceding and the fact that $\downup_{2}^{(\sigma)}$ is reversible with respect to the uniform measure on $\mathcal{R}_{\mathbf{d}}^{(\sigma)}(k)$, it follows that
        \begin{align*}
            \mathcal{E}_{\downup_{2}^{(\sigma)}}(g,g) &= \frac{1}{2|\mathcal{R}_{\mathbf{d}}^{(\sigma)}(k)|}\sum_{\eta,\eta'\in\mathcal{R}_{\mathbf{d}}^{(\sigma)}(k)}\downup_{2}^{(\sigma)}(\eta,\eta')(g(\eta')-g(\eta))^{2}\\
            &\ge\frac{1}{|\mathcal{R}_{\mathbf{d}}^{(\sigma)}(k)|}\sum_{i=0}^{\ell-1}\downup_{2}^{(\sigma)}(\eta_{i},\eta_{i+1})(g(\eta_{i+1})-g(\eta_{i}))^{2}\\
            &\ge\frac{1}{3\binom{k}{2}|\mathcal{R}_{\mathbf{d}}^{(\sigma)}(k)|}\sum_{i=0}^{\ell-1}(g(\eta_{i+1})-g(\eta_{i}))^{2}.
        \end{align*}
    In the last line, we use the basic bound
        \begin{align*}
            \downup_{2}^{(\sigma)}(\eta_{i},\eta_{i+1}) &\ge\frac{1}{3\binom{k}{2}},
        \end{align*}
    since the walk chooses one of $\binom{k}{2}$ edge pairs and then one of at most three completions. Combining these inequalities,
        \begin{align*}
            \Var_{\eta\sim\pi_{k}^{\sigma}}(g(\eta)) &\le3\binom{k}{2}|\mathcal{R}_{\mathbf{d}}^{(\sigma)}(k)|\ell\,\mathcal{E}_{\downup_{2}^{(\sigma)}}(g,g)\\
            &\le9k\binom{k}{2}(2k-1)!!\,\mathcal{E}_{\downup_{2}^{(\sigma)}}(g,g).
        \end{align*}
    This proves~\cref{eq:finite-local-variance}.

    Finally, for each $\rho\in\mathcal{R}_{\mathbf{d}}^{(\sigma)}(k-j)$ and $\eta\in\mathcal{R}_{\mathbf{d}}^{(\sigma\cup\rho)}(j)$, write $g^{(\rho)}(\eta)=g(\rho\cup\eta)$. Applying~\cref{eq:finite-local-variance} with $k$ replaced by $j$ and retained face $\sigma\cup\rho$, and using~\cref{lem:dirichlet-energy-decomposition}(ii), we obtain
        \begin{align*}
            &\mathcal{E}_{\downup_{j}^{(\sigma)}}(g,g)\\
            &\quad= \E_{\rho\sim\pi_{k-j}^{\sigma}}\Bigl(\Var_{\eta\sim\pi_{j}^{\sigma\cup\rho}}(g^{(\rho)}(\eta))\Bigr)\\
            &\quad\le C_{j}\E_{\rho\sim\pi_{k-j}^{\sigma}}\Bigl(\mathcal{E}_{\downup_{2}^{(\sigma\cup\rho)}}(g^{(\rho)},g^{(\rho)})\Bigr)\\
            &\quad= C_{j}\mathcal{E}_{\downup_{2}^{(\sigma)}}(g,g).
        \end{align*}
    This proves~\cref{eq:finite-comparison}.
\end{proof}

In~\cref{eq:finite-comparison}, one may take, e.g., $C_{j}=9j\binom{j}{2}(2j-1)!!$. In principle, combining~\cref{lem:higher-order-poincare,lem:finite-comparison} with $k=m$ already gives a bound on the spectral gap of $\downup_{2}$. However, the superpolynomial growth of $C_{j}$ limits the range of maximum degrees for which this bound is informative. We will improve this dependence through a good set--bad set argument that uses the finite comparison only for bounded degree sequences.

To this end, let $k_{0}=10^{10}$ and $C_{10}=2C_{k_{0}}/k_{0}$. Whenever $\Delta\le10$ and $k_{0}\le k\le m$,~\cref{lem:higher-order-poincare} and~\cref{lem:finite-comparison} give, for every $\sigma\in\mathcal{R}_{\mathbf{d}}(m-k)$ and $g\in\ell^{2}(\mathcal{R}_{\mathbf{d}}^{(\sigma)}(k),\pi_{k}^{\sigma})$,
    \begin{align}\label{eq:degree-ten-poincare}
        \Var_{\eta\sim\pi_{k}^{\sigma}}(g(\eta)) &\le\frac{2k}{k_{0}}\mathcal{E}_{\downup_{k_{0}}^{(\sigma)}}(g,g) \le C_{10}k\,\mathcal{E}_{\downup_{2}^{(\sigma)}}(g,g).
    \end{align}
The constant $C_{10}$ is independent of the degree sequence, retained face, and completion size.

For any face $\sigma\in\mathcal{R}_{\mathbf{d}}$, define the set of \emph{active vertices} by $V_{\sigma}=\{v\in V:\res_{\sigma}(v)>0\}$, and let $\widetilde{\sigma} = \sigma[V_{\sigma}]$ consist of all edges belonging to $\sigma$ with both endpoints in $V_{\sigma}$. Write $\widetilde{\mathbf{d}}=(\widetilde{d}_{v})_{v\in V_{\sigma}}$, where the \emph{modified residual degrees} $\widetilde{d}_{v}$ are given by
    \begin{align*}
        \widetilde{d}_{v} &= \res_{\sigma}(v)+\deg_{\widetilde{\sigma}}(v),
    \end{align*}
for each $v\in V_{\sigma}$. That is, $\widetilde{d}_{v}$ can be viewed as the target degree that $v$ must have in any completed realization restricted to the active vertex set $V_{\sigma}$. 

Since all residual degrees outside $V_{\sigma}$ vanish, the completions of $\sigma$ are exactly the edge sets $\eta$ on $V_{\sigma}$ that avoid $\widetilde{\sigma}$ and satisfy
    \begin{align*}
        \deg_{\eta}(v) &= \res_{\sigma}(v)=\widetilde{d}_{v}-\deg_{\widetilde{\sigma}}(v),
    \end{align*}
for each $v\in V_{\sigma}$. Thus $\widetilde{\sigma}\cup\eta$ realizes $\widetilde{\mathbf{d}}$, which implies that $\widetilde{\mathbf{d}}$ is graphical. Conversely, the same conditions characterize the completions of $\widetilde{\sigma}$ for $\widetilde{\mathbf{d}}$. Taking subsets gives the following equality of simplicial complexes:
    \begin{align}\label{eq:identical-links}
        \mathcal{R}_{\mathbf{d}}^{(\sigma)} &= \mathcal{R}_{\widetilde{\mathbf{d}}}^{(\widetilde{\sigma})}.
    \end{align}
The following useful fact is the key to our good set--bad set argument in proving an improved energy comparison.

\begin{lemma}\label{lem:good-restriction-probability}
    Let $k,\ell$ be integers satisfying $k\le m$ and $10^{6}\Delta^{4}\le\ell\le\sqrt{k}$, and let $\sigma\in\mathcal{R}_{\mathbf{d}}(m-k)$. Let $\mathcal{A}\subseteq\mathcal{R}_{\mathbf{d}}^{(\sigma)}(k-\ell)$ consist of those partial completions $\rho$ such that
        \begin{align*}
            \res_{\sigma\cup\rho}(v)+\deg_{\widetilde{\sigma\cup\rho}}(v) &\le10
        \end{align*}
    for each $v\in V_{\sigma\cup\rho}$. For each completion $\eta\in\mathcal{R}_{\mathbf{d}}^{(\sigma)}(k)$, let $\mu_{\eta}$ be the uniform probability measure on the $(k-\ell)$-edge subsets of $\eta$. Then
        \begin{align}\label{eq:bad-probability}
            \Prob_{\rho\sim\mu_{\eta}}(\rho\notin\mathcal{A}) &\le\frac{\ell}{4k}.
        \end{align}
\end{lemma}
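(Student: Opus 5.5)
Call $R=\eta\setminus\rho$ the set of $\ell$ \emph{removed} edges. When $\rho\sim\mu_{\eta}$, $R$ is a uniform $\ell$-subset of the $k$ edges of $\eta$. The plan is to rewrite the condition defining $\mathcal{A}$ in terms of $R$ and the fixed graph $G=\sigma\cup\eta$, whose maximum degree is at most $\Delta$. Then a union bound over small local configurations in $G$ gives \cref{eq:bad-probability}.

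\textbf{Reformulation.} Since $\sigma\cup\eta$ is a realization, $\res_{\sigma\cup\rho}(v)=\deg_{R}(v)$. Hence $V_{\sigma\cup\rho}$ is exactly the set of endpoints of edges of $R$. Also $\deg_{\widetilde{\sigma\cup\rho}}(v)$ counts the edges of $\sigma\cup\rho$ joining $v$ to another active vertex. Every edge of $R$ has both endpoints active, so for active $v$ the quantity $\res_{\sigma\cup\rho}(v)+\deg_{\widetilde{\sigma\cup\rho}}(v)$ equals the number of $G$-neighbors of $v$ that are active. Therefore $\rho\notin\mathcal{A}$ exactly when there are
\begin{itemize}
\item a vertex $v$,
\item an edge $e=\{v,u\}\in R$, and
\item at least $10$ further $G$-neighbors $w_{1},\dots,w_{10}$ of $v$ other than $u$, each incident to some edge $f_{i}\in R$.
\end{itemize}
Here I count the neighbor $u$ via $e$ itself and need $11$ active neighbors in total; using only $w_{1},\dots,w_{9}$ is enough and slightly simpler.

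\textbf{Extracting many distinct removed edges.} Take such a witness with $e$ and $w_{1},\dots,w_{9}$. None of the $w_{i}$ lies in $e=\{v,u\}$, so $f_{i}\neq e$. Each $f_{i}$ can contain at most two of the $w_{i}$. Hence $\{f_{1},\dots,f_{9}\}$ contains at least $5$ distinct edges, and together with $e$ we get a set $F\subseteq R$ of $6$ distinct edges of $\eta$ of the following shape:
\begin{itemize}
\item $F$ contains an edge $e$ at $v$;
\item each of the other $5$ edges of $F$ is incident to a $G$-neighbor of $v$.
\end{itemize}
The number of such ordered configurations is at most
\begin{align*}
2k\cdot(\Delta\cdot\Delta)^{5}=2k\Delta^{10},
\end{align*}
from choosing $e\in\eta$ with an orientation to designate $v$, then each further edge by choosing a neighbor of $v$ and one of its at most $\Delta$ incident $\eta$-edges. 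For a fixed set of $t$ distinct edges, the probability that all lie in a uniform $\ell$-subset of a $k$-set is at most $(\ell/k)^{t}$. The union bound gives
\begin{align*}
\Prob_{\rho\sim\mu_{\eta}}(\rho\notin\mathcal{A}) &\le 2k\Delta^{10}\Bigl(\frac{\ell}{k}\Bigr)^{6}=\frac{2\Delta^{10}\ell^{6}}{k^{5}}.
\end{align*}

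\textbf{Using the hypotheses.} It remains to check $2\Delta^{10}\ell^{6}/k^{5}\le \ell/(4k)$, that is, $8\Delta^{10}\ell^{5}\le k^{4}$.
\begin{itemize}
\item From $\ell\le\sqrt{k}$ we get $\ell^{5}\le k^{5/2}$, so it suffices that $8\Delta^{10}\le k^{3/2}$.
\item From $k\ge \ell^{2}\ge 10^{12}\Delta^{8}$ we get $k^{3/2}\ge 10^{18}\Delta^{12}$, which is far more than needed.
\end{itemize}

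The main obstacle I expect is the reformulation step: one must verify carefully that the modified residual degree equals the number of active $G$-neighbors. This needs the facts that edges of $\sigma\cup\rho$ between two active vertices are exactly the non-removed such edges, and that removed edges are counted by $\res$. The other delicate point is making sure the extracted edges are genuinely distinct, so that the probability really decays as $(\ell/k)^{6}$.
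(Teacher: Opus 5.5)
Your proposal is correct and follows essentially the same route as the paper. Both arguments identify the modified residual degree with the number of active neighbours of $v$ in the fixed realization $\sigma\cup\eta$, extract six distinct edges of $\eta\setminus\rho$ near a vertex with at least $11$ active neighbours, and take a union bound with $(\ell/k)^{6}$. Your one change is to anchor the configuration at an edge of $\eta\setminus\rho$ containing $v$, which gives the count $2k\Delta^{10}$ in place of the paper's $2k\binom{2\Delta^{2}}{6}$ and changes nothing essential.
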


\begin{proof}
    Fix $\eta\in\mathcal{R}_{\mathbf{d}}^{(\sigma)}(k)$, so that only $\rho\sim\mu_{\eta}$ is random. For every such $\rho$,
        \begin{align*}
            \res_{\sigma\cup\rho}(v) &= \res_{\sigma}(v)-\deg_{\rho}(v)=\deg_{\eta\setminus\rho}(v),\qquad v\in V.
        \end{align*}
    Thus $V_{\sigma\cup\rho}$ consists exactly of the endpoints of the $\ell$ edges in $\eta\setminus\rho$. For each $v\in V_{\sigma\cup\rho}$, the modified residual degree is therefore
        \begin{align*}
            \res_{\sigma\cup\rho}(v)+\deg_{\widetilde{\sigma\cup\rho}}(v) &= \deg_{\eta\setminus\rho}(v)+\deg_{\widetilde{\sigma\cup\rho}}(v)\\
            &= \deg_{(\sigma\cup\eta)[V_{\sigma\cup\rho}]}(v).
        \end{align*}
    Consequently, if $\rho\notin\mathcal{A}$, there exists some $v\in V_{\sigma\cup\rho}$ with at least $11$ active neighbors in the fixed realization $\sigma\cup\eta$.

    For each $v\in V_{\sigma}$, let $B_{v}$ be the closed neighborhood of $v$ in $\sigma\cup\eta$. Then $|B_{v}|\le\Delta+1$. If $\rho\notin\mathcal{A}$, there is some $v\in V_{\sigma\cup\rho}\subseteq V_{\sigma}$ for which
        \begin{align*}
            |B_{v}\cap V_{\sigma\cup\rho}| &\ge12.
        \end{align*}
    Each of these vertices is incident to an edge of $\eta\setminus\rho$, and one edge accounts for at most two vertices. Thus at least six distinct edges of $\eta\setminus\rho$ meet $B_{v}$. These edges need not be vertex-disjoint. The degree cutoff $10$ is what forces these six distinct edges.

    For a fixed center $v$, the number of edges of $\eta$ meeting $B_{v}$ is at most
        \begin{align*}
            |\{\{u,w\}\in\eta:\{u,w\}\cap B_{v}\ne\varnothing\}| &\le\sum_{u\in B_{v}}\deg_{\eta}(u) \le\Delta(\Delta+1) \le2\Delta^{2}.
        \end{align*}
    The set $\eta\setminus\rho$ is uniform among the $\ell$-edge subsets of $\eta$. Hence, for any fixed $\tau\subseteq\eta$ with $|\tau|=6$,
        \begin{align*}
            \Prob_{\rho\sim\mu_{\eta}}(\tau\subseteq\eta\setminus\rho) &= \frac{\binom{k-6}{\ell-6}}{\binom{k}{\ell}} = \frac{\ell}{k}\frac{\ell-1}{k-1}\cdots\frac{\ell-5}{k-5} \le\Bigl(\frac{\ell}{k}\Bigr)^{6}.
        \end{align*}
    The set $V_{\sigma}$ consists of the endpoints of the $k$ edges of $\eta$, so there are at most $2k$ possible centers. Taking a union bound over these centers and their six-edge subsets, and using $k\ge\ell^{2}$, gives
        \begin{align*}
            \Prob_{\rho\sim\mu_{\eta}}(\rho\notin\mathcal{A}) &\le2k\binom{2\Delta^{2}}{6}\Bigl(\frac{\ell}{k}\Bigr)^{6} \le\frac{128\Delta^{12}\ell^{6}}{k^{5}} \le\frac{128\Delta^{12}}{\ell^{3}}\frac{\ell}{k} \le\frac{128}{10^{18}}\frac{\ell}{k} \le\frac{\ell}{4k}.
        \end{align*}
    Here $\binom{a}{6}=0$ when $a<6$, and the penultimate inequality uses $\ell\ge10^{6}\Delta^{4}$. The completion $\eta$ was arbitrary, so~\cref{eq:bad-probability} holds for every starting completion.
\end{proof}

\begin{proposition}\label{prop:linear-comparison}
    There exist absolute constants $C,C'>0$ with the following property. Let $k$ be an integer satisfying $C\Delta^{8}\le k\le m$. Then, for every $f\in\ell^{2}(\mathcal{R}_{\mathbf{d}}(m),\pi_{m})$,
        \begin{align}\label{eq:linear-comparison}
            \mathcal{E}_{\downup_{k}}(f,f) &\le C'k\,\mathcal{E}_{\downup_{2}}(f,f).
        \end{align}
\end{proposition}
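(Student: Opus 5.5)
We prove \cref{eq:linear-comparison} with an intermediate block size $\ell$, taken as $\ell=\lceil 10^{6}\Delta^{4}\rceil$. Then $\ell\le\sqrt{k}$ once $k\ge C\Delta^{8}$ with $C\ge 10^{13}$, so \cref{lem:good-restriction-probability} applies. By \cref{lem:dirichlet-energy-decomposition}(ii) with $\sigma=\varnothing$, we have $\mathcal{E}_{\downup_{k}}(f,f)=\E_{\tau\sim\pi_{m-k}}\Var_{\pi_{k}^{\tau}}(f^{(\tau)})$, and by \cref{lem:dirichlet-energy-decomposition}(iii) the energy $\mathcal{E}_{\downup_{2}}$ averages in the same way. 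It therefore suffices to prove, for each fixed $\sigma\in\mathcal{R}_{\mathbf{d}}(m-k)$ and $g=f^{(\sigma)}$, the local inequality
\begin{align*}
\Var_{\eta\sim\pi_{k}^{\sigma}}(g(\eta)) &\le C'k\,\mathcal{E}_{\downup_{2}^{(\sigma)}}(g,g).
\end{align*}

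\textbf{Step 1 (local bound).} \Cref{lem:higher-order-poincare} with $j=\ell$ gives $\Var(g)\le\frac{2k}{\ell}\mathcal{E}_{\downup_{\ell}^{(\sigma)}}(g,g)$. By \cref{lem:dirichlet-energy-decomposition}(ii),
\begin{align*}
\mathcal{E}_{\downup_{\ell}^{(\sigma)}}(g,g) &= \E_{\rho\sim\pi_{k-\ell}^{\sigma}}\Var_{\pi_{\ell}^{\sigma\cup\rho}}(g^{(\rho)}).
\end{align*}
Split this average according to whether $\rho\in\mathcal{A}$ or $\rho\notin\mathcal{A}$. For good $\rho$, \cref{eq:identical-links} identifies the link with $\mathcal{R}_{\widetilde{\mathbf{d}}}^{(\widetilde{\sigma\cup\rho})}$, whose degrees are at most $10$. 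We may then apply \cref{eq:degree-ten-poincare}, which holds because $\ell\ge k_{0}$ after enlarging the constant in the choice of $\ell$ if necessary. This gives $\Var_{\pi_{\ell}^{\sigma\cup\rho}}(g^{(\rho)})\le C_{10}\ell\,\mathcal{E}_{\downup_{2}^{(\sigma\cup\rho)}}(g^{(\rho)},g^{(\rho)})$. Averaging and using \cref{lem:dirichlet-energy-decomposition}(iii) (energies are nonnegative, so dropping the restriction to $\mathcal{A}$ only increases the sum), the good part contributes at most $C_{10}\ell\,\mathcal{E}_{\downup_{2}^{(\sigma)}}(g,g)$. After the factor $2k/\ell$, this becomes $2C_{10}k\,\mathcal{E}_{\downup_{2}^{(\sigma)}}$, which is exactly the desired linear-in-$k$ size.

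\textbf{Step 2 (bad part, the main obstacle).} For bad $\rho$, bound the conditional variance by the conditional second moment of $g-\bar g$, where $\bar g=\E_{\pi_{k}^{\sigma}}g$. By \cref{lem:dirichlet-energy-decomposition}(i), drawing $\rho\sim\pi_{k-\ell}^{\sigma}$ and then $\eta'\sim\pi_{\ell}^{\sigma\cup\rho}$ is the same as drawing $\eta\sim\pi_{k}^{\sigma}$ and then $\rho\sim\mu_{\eta}$. Hence
\begin{align*}
\E_{\rho}\bigl(\mathbf{1}_{\rho\notin\mathcal{A}}\Var(g^{(\rho)})\bigr) &\le \E_{\eta\sim\pi_{k}^{\sigma}}\bigl((g(\eta)-\bar g)^{2}\,\Prob_{\rho\sim\mu_{\eta}}(\rho\notin\mathcal{A})\bigr) \le \frac{\ell}{4k}\Var(g),
\end{align*}
using \cref{lem:good-restriction-probability}. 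Multiplying by $2k/\ell$, the bad part contributes at most $\frac12\Var(g)$. This term can be absorbed into the left-hand side, giving $\Var(g)\le 4C_{10}k\,\mathcal{E}_{\downup_{2}^{(\sigma)}}(g,g)$.

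The delicate point is the constants. The absorption requires the factor $\ell/(4k)$ from \cref{lem:good-restriction-probability} to cancel exactly against the factor $2k/\ell$ from \cref{lem:higher-order-poincare}. It also requires $\ell$ to satisfy simultaneously $\ell\ge10^{6}\Delta^{4}$, $\ell\ge k_{0}$, and $\ell\le\sqrt{k}$. Taking $\ell=\max\{\lceil10^{6}\Delta^{4}\rceil,k_{0}\}$ and $C$ large enough that $\ell^{2}\le C\Delta^{8}\le k$ should work. Finally, integrating the local inequality over $\sigma\sim\pi_{m-k}$ yields \cref{eq:linear-comparison} with $C'=4C_{10}$.
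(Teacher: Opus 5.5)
Your proposal is correct and follows the paper's proof step for step. Both arguments use the same ingredients: the reduction to a local Poincar\'e inequality via \cref{lem:dirichlet-energy-decomposition}, the use of \cref{lem:higher-order-poincare} at an intermediate block size $\ell$, the good/bad split via $\mathcal{A}$ with \cref{eq:degree-ten-poincare} on good restrictions and the second-moment bound with \cref{lem:good-restriction-probability} on bad ones, and absorption of $\tfrac12\Var(g)$, giving $C'=4C_{10}$. The only difference is cosmetic: the paper takes $\ell=k_{0}\Delta^{4}$ and $C=k_{0}^{2}$, which meets $\ell\ge10^{6}\Delta^{4}$, $\ell\ge k_{0}$ and $\ell\le\sqrt{k}$ at once, while your choice $\ell=\max\{10^{6}\Delta^{4},k_{0}\}$ also works with the same $C=k_{0}^{2}$.
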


\begin{proof}
    Set $C=k_{0}^{2}$, and let $f\in\ell^{2}(\mathcal{R}_{\mathbf{d}}(m),\pi_{m})$. For each $\sigma\in\mathcal{R}_{\mathbf{d}}(m-k)$, write $f^{(\sigma)}(\eta)=f(\sigma\cup\eta)$ for $\eta\in\mathcal{R}_{\mathbf{d}}^{(\sigma)}(k)$. We first decompose the energy according to the retained $(m-k)$-face. By~\cref{lem:dirichlet-energy-decomposition}(ii), applied with the empty retained face, we have
        \begin{align*}
            \mathcal{E}_{\downup_{k}}(f,f) &= \E_{\sigma\sim\pi_{m-k}}\Bigl(\Var_{\eta\sim\pi_{k}^{\sigma}}(f^{(\sigma)}(\eta))\Bigr).
        \end{align*}
    We will bound each variance term appearing inside the mean above by the corresponding Dirichlet energy with respect to ${\downup_{2}^{(\sigma)}}$. These local energies average to the two-edge energy on the full complex: by~\cref{lem:dirichlet-energy-decomposition}(iii),
        \begin{align*}
            \E_{\sigma\sim\pi_{m-k}}\Bigl(\mathcal{E}_{\downup_{2}^{(\sigma)}}(f^{(\sigma)},f^{(\sigma)})\Bigr) &= \mathcal{E}_{\downup_{2}}(f,f).
        \end{align*}
    Thus it suffices to prove a uniform variance bound within each link. Fix $\sigma\in\mathcal{R}_{\mathbf{d}}(m-k)$ and $g\in\ell^{2}(\mathcal{R}_{\mathbf{d}}^{(\sigma)}(k),\pi_{k}^{\sigma})$. We will show that
        \begin{align}\label{eq:target-local-poincare}
            \Var_{\eta\sim\pi_{k}^{\sigma}}(g(\eta)) &\le4C_{10}k\,\mathcal{E}_{\downup_{2}^{(\sigma)}}(g,g).
        \end{align}
    Subtracting the mean of $g$ changes neither its variance nor its Dirichlet energies, so we may assume freely that $\E_{\eta\sim\pi_{k}^{\sigma}}(g(\eta))= 0$. Let $\ell=k_{0}\Delta^{4}$ be the number of edges left free after the additional restriction. Our assumptions give
        \begin{align}\label{eq:restriction-parameters}
            10^{6}\Delta^{4} &\le\ell\le\sqrt{k},
        \end{align}
    so~\cref{lem:higher-order-poincare} yields
        \begin{align}\label{eq:restriction-poincare}
            \Var_{\eta\sim\pi_{k}^{\sigma}}(g(\eta)) &\le\frac{2k}{\ell}\mathcal{E}_{\downup_{\ell}^{(\sigma)}}(g,g).
        \end{align}
    For each $\rho\in\mathcal{R}_{\mathbf{d}}^{(\sigma)}(k-\ell)$, write $g^{(\rho)}(\eta')=g(\rho\cup\eta')$ for $\eta'\in\mathcal{R}_{\mathbf{d}}^{(\sigma\cup\rho)}(\ell)$. Let $\mathcal{A}$ be the set in~\cref{lem:good-restriction-probability}, consisting of those faces $\rho$ for which the modified residual degrees satisfy
        \begin{align}\label{eq:good-restrictions}
            \res_{\sigma\cup\rho}(v)+\deg_{\widetilde{\sigma\cup\rho}}(v) &\le10,\qquad v\in V_{\sigma\cup\rho}.
        \end{align}
    This condition depends only on the retained face $\sigma\cup\rho$. By~\cref{lem:dirichlet-energy-decomposition}(iii), we can split the energy according to whether this condition holds:
        \begin{align}\label{eq:restriction-energy-split}
            \mathcal{E}_{\downup_{\ell}^{(\sigma)}}(g,g) &= \E_{\rho\sim\pi_{k-\ell}^{\sigma}}\Bigl(\mathbf{1}_{\rho\in\mathcal{A}}\Var_{\eta'\sim\pi_{\ell}^{\sigma\cup\rho}}(g^{(\rho)}(\eta'))\Bigr)\notag\\
            &\qquad+\E_{\rho\sim\pi_{k-\ell}^{\sigma}}\Bigl(\mathbf{1}_{\rho\notin\mathcal{A}}\Var_{\eta'\sim\pi_{\ell}^{\sigma\cup\rho}}(g^{(\rho)}(\eta'))\Bigr).
        \end{align}

    We first bound the contribution from the good set $\mathcal{A}$. For each $\rho\in\mathcal{A}$,~\cref{eq:identical-links} and the surrounding discussion identify $\mathcal{R}_{\mathbf{d}}^{(\sigma\cup\rho)}$ with a link of a switch complex whose prescribed degrees are positive and at most $10$. This identification preserves the uniform facet measure and the down-up walks, and the facets still have size $\ell$. Since $\ell\ge k_{0}$, it follows that
        \begin{align*}
            \Var_{\eta'\sim\pi_{\ell}^{\sigma\cup\rho}}(g^{(\rho)}(\eta')) &\le C_{10}\ell\,\mathcal{E}_{\downup_{2}^{(\sigma\cup\rho)}}(g^{(\rho)},g^{(\rho)}).
        \end{align*}
    Averaging over $\rho\in\mathcal{A}$, using nonnegativity of the energies, and applying~\cref{lem:dirichlet-energy-decomposition}(iii), we obtain
        \begin{align}\label{eq:averaged-good-bound}
            &\E_{\rho\sim\pi_{k-\ell}^{\sigma}}\Bigl(\mathbf{1}_{\rho\in\mathcal{A}}\Var_{\eta'\sim\pi_{\ell}^{\sigma\cup\rho}}(g^{(\rho)}(\eta'))\Bigr)\notag\\
            &\quad\le C_{10}\ell\,\E_{\rho\sim\pi_{k-\ell}^{\sigma}}\Bigl(\mathbf{1}_{\rho\in\mathcal{A}}\mathcal{E}_{\downup_{2}^{(\sigma\cup\rho)}}(g^{(\rho)},g^{(\rho)})\Bigr)\notag\\
            &\quad\le C_{10}\ell\,\mathcal{E}_{\downup_{2}^{(\sigma)}}(g,g).
        \end{align}

    For the second term appearing on the right-hand side of~\cref{eq:restriction-energy-split}, we bound the variance by the second moment. For each completion $\eta\in\mathcal{R}_{\mathbf{d}}^{(\sigma)}(k)$, let $\mu_{\eta}$ be uniform on its $(k-\ell)$-edge subsets. Expanding the second moment and using~\cref{lem:dirichlet-energy-decomposition}(i), we have
        \begin{align}\label{eq:bad-second-moment}
            &\E_{\rho\sim\pi_{k-\ell}^{\sigma}}\Bigl(\mathbf{1}_{\rho\notin\mathcal{A}}\Var_{\eta'\sim\pi_{\ell}^{\sigma\cup\rho}}(g^{(\rho)}(\eta'))\Bigr)\notag\\
            &\quad\le\sum_{\rho\in\mathcal{R}_{\mathbf{d}}^{(\sigma)}(k-\ell)}\Bigl(\pi_{k-\ell}^{\sigma}(\rho)\mathbf{1}_{\rho\notin\mathcal{A}}\sum_{\eta'\in\mathcal{R}_{\mathbf{d}}^{(\sigma\cup\rho)}(\ell)}\pi_{\ell}^{\sigma\cup\rho}(\eta')g^{(\rho)}(\eta')^{2}\Bigr)\notag\\
            &\quad=\sum_{\eta\in\mathcal{R}_{\mathbf{d}}^{(\sigma)}(k)}\Bigl(\pi_{k}^{\sigma}(\eta)g(\eta)^{2} \sum_{\substack{\rho\subseteq\eta\\|\rho|=k-\ell}}\frac{\mathbf{1}_{\rho\notin\mathcal{A}}}{\binom{k}{k-\ell}}\Bigr)\notag\\
            &\quad=\E_{\eta\sim\pi_{k}^{\sigma}}\Bigl(g(\eta)^{2}\Prob_{\rho\sim\mu_{\eta}}(\rho\notin\mathcal{A})\Bigr).
        \end{align}
    By~\cref{eq:restriction-parameters}, the hypotheses of~\cref{lem:good-restriction-probability} hold. Applying its probability bound inside the expectation in~\cref{eq:bad-second-moment} and using the fact that $g$ is centered, we obtain
        \begin{align}\label{eq:averaged-bad-bound}
            \E_{\rho\sim\pi_{k-\ell}^{\sigma}}\Bigl(\mathbf{1}_{\rho\notin\mathcal{A}}\Var_{\eta'\sim\pi_{\ell}^{\sigma\cup\rho}}(g^{(\rho)}(\eta'))\Bigr) & \le\frac{\ell}{4k}\E_{\eta\sim\pi_{k}^{\sigma}}(g(\eta)^{2}) = \frac{\ell}{4k}\Var_{\eta\sim\pi_{k}^{\sigma}}(g(\eta)).
        \end{align}

    Combining~\cref{eq:restriction-poincare,eq:restriction-energy-split,eq:averaged-good-bound,eq:averaged-bad-bound}, we obtain
        \begin{align*}
            \Var_{\eta\sim\pi_{k}^{\sigma}}(g(\eta)) &\le\frac{2k}{\ell}\mathcal{E}_{\downup_{\ell}^{(\sigma)}}(g,g) \le2C_{10}k\,\mathcal{E}_{\downup_{2}^{(\sigma)}}(g,g)+\frac{1}{2}\Var_{\eta\sim\pi_{k}^{\sigma}}(g(\eta)).
        \end{align*}
    Subtracting the last term and multiplying by $2$ proves~\cref{eq:target-local-poincare}. 

    Finally, applying~\cref{eq:target-local-poincare} with $g=f^{(\sigma)}$ for each $\sigma$ and using the two energy decompositions from the start of the proof, we obtain
        \begin{align*}
            \mathcal{E}_{\downup_{k}}(f,f) &= \E_{\sigma\sim\pi_{m-k}}\Bigl(\Var_{\eta\sim\pi_{k}^{\sigma}}(f^{(\sigma)}(\eta))\Bigr)\\
            &\le4C_{10}k\,\E_{\sigma\sim\pi_{m-k}}\Bigl(\mathcal{E}_{\downup_{2}^{(\sigma)}}(f^{(\sigma)},f^{(\sigma)})\Bigr)\\
            &=4C_{10}k\,\mathcal{E}_{\downup_{2}}(f,f).
        \end{align*}
    Thus $C=k_{0}^{2}$ and $C'=4C_{10}$ satisfy~\cref{eq:linear-comparison}. The proposition is proved.
\end{proof}

\Cref{thm:intro-linear-comparison} now follows from~\cref{prop:linear-comparison}.

We may now prove the main result~\cref{thm:two-edge-eigenvalue} stated at the beginning of this section.

\begin{proof}[Proof of~\cref{thm:two-edge-eigenvalue}]
    We first bound $\lambda_{2}(\downup_{2})$ from above. Taking $C \ge 6$ sufficiently large, we may apply~\cref{prop:linear-comparison} with $k=m$ to obtain
        \begin{align*}
            \mathcal{E}_{\downup_{2}}(f,f) &\ge\frac{1}{Cm}\mathcal{E}_{\downup_{m}}(f,f)=\frac{1}{Cm}\Var_{\eta\sim\pi_{m}}(f(\eta))
        \end{align*}
    for every $f\in\ell^{2}(\mathcal{R}_{\mathbf{d}}(m),\pi_{m})$. Here the equality follows because $\downup_{m}$ resamples the entire realization. By the variational characterization of the spectral gap, this gives the desired upper bound on $\lambda_{2}(\downup_{2})$.

    For the reverse bound, fix $\{u,v\}\in\mathcal{R}_{\mathbf{d}}(1)$ and write $p=\Prob_{\eta\sim\pi_{m}}(\{u,v\}\in\eta)$. By \cref{lem:uniform-projection-lift}\emph{(i)} and~\cref{lem:single-pair-marginals}, applied with $\sigma=\varnothing$ and $k=m$, summing over the $d_{u}d_{v}$ stub pairs projecting to $\{u,v\}$ gives
        \begin{align*}
            0<p &\le\frac{d_{u}d_{v}}{2m-2-4\Delta^{2}}\le\frac{\Delta^{2}}{2m-2-4\Delta^{2}}\le\frac{1}{3},
        \end{align*}
    where the last inequality uses $m\ge6\Delta^{2}$. Define $g(\eta)=\mathbf{1}_{\{\{u,v\}\in\eta\}}$, so that $\Var_{\eta\sim\pi_{m}}(g(\eta))=p(1-p)$.

    An edge present in the current realization can disappear only if it is among the two deleted edges, which occurs with probability $2/m$. After deletion there are at most three valid completions, including the original pair, so the conditional probability of losing $\{u,v\}$ is at most $2/3$. By reversibility,
        \begin{align*}
            \mathcal{E}_{\downup_{2}}(g,g) &=\sum_{\substack{\eta\in\mathcal{R}_{\mathbf{d}}(m)\\\{u,v\}\in\eta}}\pi_{m}(\eta)\sum_{\substack{\tau\in\mathcal{R}_{\mathbf{d}}(m)\\\{u,v\}\notin\tau}}\downup_{2}(\eta,\tau) \le p\frac{2}{m}\frac{2}{3}=\frac{4p}{3m}.
        \end{align*}
    Applying the variational characterization once more, we obtain
        \begin{align*}
            1-\lambda_{2}(\downup_{2}) &\le\frac{\mathcal{E}_{\downup_{2}}(g,g)}{\Var_{\eta\sim\pi_{m}}(g(\eta))}\le\frac{4}{3m(1-p)}\le\frac{2}{m}.
        \end{align*}
    This proves the remaining bound.
\end{proof}

\Cref{thm:intro-two-edge-eigenvalue} now follows from~\cref{thm:two-edge-eigenvalue}.

\section{Mixing times of simplicial switch chains}\label{sec:mixing-times-simplicial}

In this section we prove a simple bound on the total variation mixing time of the simplicial switch chain. Our main result is the following theorem.

\begin{theorem}\label{thm:two-edge-mixing}
    There exists an absolute constant $C>0$ with the following property. If $m\ge C\Delta^{8}$, then
        \begin{align*}
            t_{\mathrm{mix}}(\downup_{2},1/4) &\le C\Delta^{2}m\log(m).
        \end{align*}
\end{theorem}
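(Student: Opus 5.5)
The idea is to split the run into a short burn-in phase controlled by relative entropy and a second phase controlled by the spectral gap from \cref{thm:two-edge-eigenvalue}. A pure $\ell^{2}$ argument started from a point mass is too lossy. The chi-square distance to $\pi_{m}$ is $|\mathcal{R}_{\mathbf{d}}(m)|-1$, and $\log|\mathcal{R}_{\mathbf{d}}(m)|$ is of order $m\log m$ in general. So the relaxation time $O(m)$ would only give $O(m^{2}\log m)$. The extra $\Delta^{2}$ factor in the target bound suggests going instead through a cheap entropy estimate and a lemma of Salez type. That lemma turns a bound on $D(\mu\,\Vert\,\pi)$ into a mixing estimate at cost roughly $t_{\mathrm{rel}}\cdot\log(1+D)$, rather than $t_{\mathrm{rel}}\cdot D$.

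\textbf{Step 1: entropy after a short warm-up.} Start from a facet $x$ and run $\downup_{2}$ for about $s_{0}\approx Cm\log m$ steps. By coupon collecting, with high probability every edge of $x$ has been deleted at least once. I would not try to show directly that $\delta_{x}\downup_{2}^{s_{0}}$ has small entropy. Instead I would compare it with an auxiliary measure $\mu'$ whose density against $\pi_{m}$ is explicitly controlled, and use \cref{lem:weak-triangle-inequality}. A natural candidate is $\mu'=\delta_{x}\downup_{k}$ for a block size $k\approx m$, or a short composition of such block walks. By \cref{lem:dirichlet-energy-decomposition} and \cref{cor:intro-higher-order-eigenvalue}, the chi-square distance of such a $\mu'$ from $\pi_{m}$ should be bounded by $\exp(O(\Delta^{2}m))$-type quantities, or better. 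For $D(\mu\,\Vert\,\mu')$ I would use the chain rule \cref{lem:entropy-chain-rule}. Each step of $\downup_{2}$ resamples a completion inside a link whose size is bounded by roughly $\Delta^{2}$ (the number of completions of two deleted edges is at most $3$). So the per-step kernel entropy should be $O(\log\Delta)$ or $O(1)$. The target at this stage is $D(\delta_{x}\downup_{2}^{s_{0}}\,\Vert\,\pi_{m})\le C\Delta^{2}m$, or at least a bound polynomial in $m$ times $\Delta^{2}$.

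\textbf{Step 2: Salez's lemma.} Next I would use the lemma of Salez~\cite{salez2024cutoff}. For a reversible chain with spectral gap $\gamma$, this lemma bounds the total variation distance after $t$ further steps by a quantity like $\sqrt{D}\,e^{-\gamma t}$ or $\log(1+D)/(\gamma t)$-type expressions. By \cref{thm:two-edge-eigenvalue}, $\gamma\ge1/(Cm)$. Combined with $D\le C\Delta^{2}m$, running a further $O(\Delta^{2}m\log m)$ steps, or even only $O(m\log(\Delta^{2}m))$ steps, brings the distance below $1/4$. The total is then $O(\Delta^{2}m\log m)$. Here the assumption $m\ge C\Delta^{8}$ is used only through \cref{thm:two-edge-eigenvalue}.

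\textbf{Main obstacle.} The hardest step is Step 1: getting an entropy bound linear in $m$, with only the $\Delta^{2}$ loss, after $O(\Delta^{2}m\log m)$ steps. My first attempt would be cruder. Once every original edge has been touched, compare the law with $\pi_{m}$ via the trivial bound $D\le\log|\mathcal{R}_{\mathbf{d}}(m)|\le\log(2m-1)!!\le m\log(2m)$, which holds for any measure. Then check whether Salez's lemma, in its entropy form $t\gtrsim t_{\mathrm{rel}}\log D$ (or in its form with an entropy-contraction constant at rate $\gamma/\Delta^{2}$), already yields $C\Delta^{2}m\log m$. If it does, Step 1 reduces to this trivial bound, and the $\Delta^{2}$ in the statement comes from the rate at which entropy contracts versus the variance gap. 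In that case the real work is verifying the hypotheses of Salez's lemma for $\downup_{2}$, namely laziness and nonnegative spectrum, which hold since $\downup_{2}=\up\down$.
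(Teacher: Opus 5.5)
Your plan has a genuine gap, and it comes from the form of Salez's lemma you rely on. The lemma (\cref{lem:salez-entropy-gap}) is \emph{linear} in the entropy. If $D(\downup_{2}^{r}(x,\cdot)\,\Vert\,\pi_{m})\le K$ for every $x$, then $t_{\mathrm{mix}}(\downup_{2},1/4)\le r+4(K+1)/(1-\lambda_{2}(\downup_{2}))$; its truncation step yields a $\chi^{2}$ bound of order $e^{8K}$, not $\sqrt{K}$. No $t_{\mathrm{rel}}\log(1+D)$ version can follow from a spectral gap alone. A lazy walk on $\{0,\dots,n\}$ with drift toward $0$ and $\pi(i)\propto 2^{-i}$ has gap of order one. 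Yet from the endpoint $n$, where $D\approx n\log 2$, it needs order $n$ steps to mix. With the correct form and $1-\lambda_{2}(\downup_{2})\ge 1/(Cm)$, your Step 1 target $D\le C\Delta^{2}m$ gives only $t_{\mathrm{mix}}=O(\Delta^{2}m^{2})$. Your fallback $D\le\log|\mathcal{R}_{\mathbf{d}}(m)|\le m\log(2m)$ gives $O(m^{2}\log m)$, which is exactly the $\ell^{2}$ bound you set out to beat. To reach $C\Delta^{2}m\log m$ you need $K=O(\Delta^{2}\log m)$ after $O(m\log m)$ warm-up steps. The factor $\Delta^{2}$ in the theorem is this entropy bound times $t_{\mathrm{rel}}=O(m)$. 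It is not an entropy-contraction rate, and no modified log-Sobolev inequality is available here.

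Your Step 1 mechanism also cannot deliver an entropy that small. \cref{lem:entropy-chain-rule} compares two chains step by step, so it does not apply to $\delta_{x}\downup_{2}^{s_{0}}$ versus $\delta_{x}\downup_{k}$. A per-step cost of $O(1)$ or $O(\log\Delta)$ would in any case accumulate to order $m\log m$ over $s_{0}$ steps. The missing idea is a comparison chain that differs from $\downup_{2}$ on only an $O(\Delta^{2}/m)$ fraction of moves and is itself provably close to its equilibrium in $\chi^{2}$.

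The paper lifts $\downup_{2}$ to a heat-bath walk $\widehat{\downup}$ on valid stub pairings. Relative entropy only decreases under the projection $\Phi$, and replacing $\pi_{\mathcal{Q}}$ by $\pi_{\mathcal{P}}$ costs $\log(|\mathcal{Q}|/|\mathcal{P}|)\le 0$. It then compares $\widehat{\downup}$ with the unrestricted walk $\mathbf{K}$ on all pairings that re-matches two uniformly chosen pairs. At most $8m\Delta^{2}$ transpositions from a valid pairing are invalid, so $D(\widehat{\downup}(X,\cdot)\,\Vert\,\mathbf{K}(X,\cdot))\le 24\Delta^{2}/m$. The chain rule then gives $D(\widehat{\downup}^{\,r}(X,\cdot)\,\Vert\,\mathbf{K}^{r}(X,\cdot))\le 24r\Delta^{2}/m$. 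Meanwhile $\mathbf{K}$ is a projection of a lazy random-transposition walk on $\mathfrak{S}_{2m}$. Hence \cref{thm:random-transposition-l2}, with positive semidefiniteness of $\mathbf{K}$ to pass to discrete time, gives $\|\mathbf{K}^{r}(X,\cdot)/\pi_{\mathcal{P}}-\mathbf{1}\|_{\pi_{\mathcal{P}}}^{2}\le 1$ for $r\ge 3m\log(2m)$. The weak triangle inequality then yields $D(\downup_{2}^{r}(x,\cdot)\,\Vert\,\pi_{m})\le 48r\Delta^{2}/m+\log 2=O(\Delta^{2}\log m)$ for $r\approx 3m\log(2m)$. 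This is \cref{prop:two-edge-entropy-bound}, and \cref{lem:salez-entropy-gap} with \cref{thm:two-edge-eigenvalue} finishes the proof.
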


We will prove~\cref{thm:two-edge-mixing} by applying the following generic mixing time bound for positive semidefinite Markov kernels due to Salez~\cite{salez2024cutoff}. We rework the statement of the original lemma slightly to focus on the setting of discrete time processes, and impose stronger reversibility assumptions for convenience. For completeness, we provide a proof.

\begin{lemma}[{\cite[Lem.~7]{salez2024cutoff}}]\label{lem:salez-entropy-gap}
    Let $\mathbf{Q}$ be a Markov kernel on a finite set $\mathcal{Y}$ with at least two elements and stationary measure $\pi$. Let $r\ge0$ be an integer and let $K\ge0$. Suppose that $\mathbf{Q}$ is irreducible, reversible, positive semidefinite, and that
        \begin{align*}
            D(\mathbf{Q}^{r}(x,\cdot)\,\Vert\,\pi) &\le K
        \end{align*}
    for every $x\in\mathcal{Y}$. Then
        \begin{align*}
            t_{\mathrm{mix}}(\mathbf{Q},1/4) &\le r+ \frac{4(K+1)}{1-\lambda_{2}(\mathbf{Q})}.
        \end{align*}
\end{lemma}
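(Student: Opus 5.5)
We will prove the lemma by combining contraction of relative entropy along the chain with the spectral gap bound for $\chi^{2}$-distance. Fix $x\in\mathcal{Y}$, write $\mu_{t}=\mathbf{Q}^{t}(x,\cdot)$, $\lambda=\lambda_{2}(\mathbf{Q})$ and $\gamma=1-\lambda$. The plan is to run the chain for $r$ steps, which gives entropy at most $K$. We then run it for a further $s$ steps and show that the entropy of $\mu_{r+s}$ relative to $\pi$ drops below a threshold forcing total variation distance at most $1/4$ (via Pinsker, $\|\mu-\pi\|_{\mathrm{TV}}\le\sqrt{D/2}$, so $D\le 1/8$ suffices).

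The central tool will be the weak triangle inequality (\cref{lem:weak-triangle-inequality}) together with the chain rule (\cref{lem:entropy-chain-rule}). We compare $\mu_{r+s}=\mu_{r}\mathbf{Q}^{s}$ against an auxiliary measure $\mu'=\nu\mathbf{Q}^{s}$, where $\nu$ is a measure whose density with respect to $\pi$ is bounded. The first term $2D(\mu_{r}\mathbf{Q}^{s}\Vert\nu\mathbf{Q}^{s})\le 2D(\mu_{r}\Vert\nu)$ is controlled by the data processing inequality, which is \cref{lem:entropy-chain-rule} with $\mathbf{Q}=\mathbf{Q}'$. The second term $\log(1+\|\nu\mathbf{Q}^{s}/\pi-\mathbf{1}\|_{\pi}^{2})$ decays like $\lambda^{2s}\|\nu/\pi-\mathbf{1}\|_{\pi}^{2}$. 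This decay uses reversibility: the density of $\nu\mathbf{Q}^{s}$ is $\mathbf{Q}^{s}(\nu/\pi)$. It also uses positive semidefiniteness, which guarantees that every nontrivial eigenvalue lies in $[0,\lambda]$, so no negative eigenvalue near $-1$ obstructs contraction.

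The natural choice is to take $\nu$ equal to $\mu_{r}$ conditioned on the set $\{y:\mu_{r}(y)/\pi(y)\le e^{M}\}$, for a truncation level $M$ of order $K+1$. Markov's inequality applied to the log-likelihood, with a standard correction for its negative part (which is bounded by $1/e$ in expectation), shows that this set has $\mu_{r}$-mass at least $1-(K+1)/M$. Hence $D(\mu_{r}\Vert\nu)$ is of order $\log\bigl(1/(1-(K+1)/M)\bigr)$. Here the absolute continuity hypothesis of \cref{lem:weak-triangle-inequality} fails, since $\nu$ vanishes where $\mu_{r}$ does not. We therefore reverse the roles: choose $\nu$ as a mixture, such as $\nu=\frac12\tilde\mu+\frac12\pi$ with $\tilde\mu$ the truncated measure, so that $D(\mu_{r}\Vert\nu)\le\log 2+O(1)$ while $\|\nu/\pi\|_{\infty}\le e^{M}$.

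With this choice, $\|\nu/\pi-\mathbf{1}\|_{\pi}^{2}\le e^{M}$, and so
\[
D(\mu_{r+s}\Vert\pi)\le 2D(\mu_{r}\Vert\nu)+\log\bigl(1+\lambda^{2s}e^{M}\bigr).
\]
Taking $s\approx (M+c)/(2\gamma)$ and using $\lambda^{2s}\le e^{-2\gamma s}$ makes the last term small. This gives a bound of the form $r+O((K+1)/\gamma)$. The main obstacle is arranging that the first term is genuinely small (below $1/8$) rather than a constant such as $\log 2$. To handle it, we plan to aim for TV directly instead of entropy. We bound $\|\mu_{r+s}-\pi\|_{\mathrm{TV}}$ by $\|\mu_{r}-\tilde\mu\|_{\mathrm{TV}}+\|\tilde\mu\mathbf{Q}^{s}-\pi\|_{\mathrm{TV}}$, using that $\mathbf{Q}^{s}$ contracts TV. The first term is at most $(K+1)/M\le 1/8$ for $M=8(K+1)$. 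The second is at most $\frac12\lambda^{s}e^{M/2}\le 1/8$ once $s\ge (M/2+\log 4)/\gamma$, by the $\chi^{2}$ bound. Adding $r$, and checking that the resulting constants fit within $4(K+1)/\gamma$ (possibly requiring a sharper truncation estimate), completes the plan.
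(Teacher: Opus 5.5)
Your final TV-based plan is essentially the paper's argument. You truncate the density $h=\mathbf{Q}^{r}(x,\cdot)/\pi$ at an exponential level and pay for the discarded mass in total variation, which $\mathbf{Q}^{s}$ does not increase. You then control the bounded remainder by $\tfrac12\lambda^{s}\|\tilde h-\mathbf{1}\|_{\pi}$, using reversibility and positive semidefiniteness. The weak-triangle/Pinsker detour can simply be dropped.

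What your proposal does not close is the constant, and as written it does not give the stated bound. With $M=8(K+1)$ and a $\tfrac18+\tfrac18$ split, you need
\[
\gamma s\ge 4(K+1)+\log 4+\tfrac12\log(8/7).
\]
The last term comes from the factor $a^{-1}$ you dropped in $\|\tilde h\|_{\infty}\le e^{M}/a$. But $s=\lfloor 4(K+1)/\gamma\rfloor$ only guarantees $\gamma s\ge 4K+3$, using $\gamma\le 1$. So your plan yields $r+(4(K+1)+O(1))/\gamma$, not $r+4(K+1)/\gamma$.

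The paper closes this with a sharper truncation. For $K>0$, it uses $\sum_{y}\pi(y)(h\log h-h+1)=D(\mathbf{Q}^{r}(x,\cdot)\,\Vert\,\pi)\le K$ together with the pointwise bound $u\log u-u+1\ge 8Ku$ for $u\ge e^{1+8K}$. This shows the set $\{h\ge e^{1+8K}\}$ carries $\mathbf{Q}^{r}(x,\cdot)$-mass at most $1/8$. The threshold is therefore $e^{1+8K}$ rather than your $e^{8K+8}$, and the $\chi^{2}$ term becomes $\tfrac12 e^{1+4K-\gamma s}\le\tfrac12 e^{-2}$. The total is then $\tfrac18+\tfrac12e^{-2}<\tfrac14$.

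Alternatively, your own Markov estimate, which gives discarded mass at most $(K+1/e)/M$, suffices if you rebalance. Take $M=8K+3$. The discarded mass is then at most $1/8$, since $8/e<3$. The second term is at most $\tfrac12\sqrt{8/7}\,e^{-3/2}<0.12$, because $M/2-\gamma s\le -3/2$. The total is again below $1/4$.
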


\begin{proof}
    If $K=0$, then $\mathbf{Q}^{r}(x,\cdot)=\pi$ for every $x$, and the conclusion follows, so we assume $K>0$. Fix $x\in\mathcal{Y}$ and write $\mu(y)=\mathbf{Q}^{r}(x,y)/\pi(y)$. Set $A=\{y\in\mathcal{Y}:\mu(y)< e^{1+8K}\}$. The function $u\mapsto u\log u - u + 1$ is nonnegative on $u\ge 0$ and satisfies $u\log u-u+1\ge 8Ku$ for each $u\ge e^{1+8K}$. Thus
        \begin{align*}
            8K\sum_{y\notin A}\pi(y)\mu(y) &\le\sum_{y\notin A}\pi(y)(\mu(y)\log \mu(y)-\mu(y)+1)\\
            &\le\sum_{y\in\mathcal{Y}}\pi(y)(\mu(y)\log \mu(y)-\mu(y)+1)\\
            &=D(\mathbf{Q}^{r}(x,\cdot)\,\Vert\,\pi)\le K.
        \end{align*}
    Consequently $a=\sum_{y\in A}\pi(y)\mu(y)\ge7/8$. Define $\nu(y)=\mu(y)\mathbf{1}_{\{y\in A\}}/a$. Then $\E_{y\sim\pi}(\nu(y))=1$, and
        \begin{align*}
            \frac{1}{2}\sum_{y\in\mathcal{Y}}\pi(y)|\mu(y)-\nu(y)| &=\frac{1}{2}(a(\frac{1}{a}-1)+1-a)=1-a\le\frac{1}{8}.
        \end{align*}
    Additionally, we have
        \begin{align*}
            \Var_{y\sim\pi}(\nu(y)) &\le\sum_{y\in\mathcal{Y}}\pi(y)\nu(y)^{2}\le \frac{e^{1+8K}}{a} \sum_{y\in\mathcal{Y}}\pi(y)\nu(y)\le\frac{8}{7}e^{1+8K}\le e^{2+8K}.
        \end{align*}
    For each integer $s\ge1$, the triangle inequality and stationarity of $\pi$ give
        \begin{align}\label{eq:triangle-inequality-h-h0}
            \sum_{y\in\mathcal{Y}}\pi(y)|\mathbf{Q}^{s}(\mu-\nu)(y)| &\le\sum_{y,z\in\mathcal{Y}}\pi(y)\mathbf{Q}^{s}(y,z)|\mu(z)-\nu(z)|\notag\\
            &=\sum_{z\in\mathcal{Y}}\pi(z)|\mu(z)-\nu(z)|\le\frac{1}{4}.
        \end{align}
    By reversibility,
        \begin{align*}
            \pi(y)(\mathbf{Q}^{s}\mu)(y) &=\sum_{z\in\mathcal{Y}}\pi(z)\mathbf{Q}^{s}(z,y)\mu(z)=\mathbf{Q}^{r+s}(x,y),
        \end{align*}
    and therefore
        \begin{align*}
            \|\mathbf{Q}^{r+s}(x,\cdot)-\pi\|_{\mathrm{TV}} &=\frac{1}{2}\sum_{y\in\mathcal{Y}} \pi(y) |(\mathbf{Q}^{s} \mu)(y) - 1|\\
            &\le \frac{1}{2}\E_{y\sim\pi}(|(\mathbf{Q}^{s}\mu)(y) - (\mathbf{Q}^{s}\nu)(y)|) + \frac{1}{2}\E_{y\sim\pi}(|(\mathbf{Q}^{s}\nu)(y) - 1|)\\
            &\le \frac{1}{8} + \frac{1}{2}\Bigl(\sum_{y\in\mathcal{Y}}\pi(y) |(\mathbf{Q}^{s}\nu)(y) - 1|^{2} \Bigr)^{1/2},
        \end{align*}
    where the last inequality follows from~\cref{eq:triangle-inequality-h-h0} and the Cauchy--Schwarz inequality. By reversibility and positive semidefiniteness, $\mathbf{Q}$ has an $\ell^{2}(\mathcal{Y},\pi)$-orthonormal basis of eigenfunctions with eigenvalues in $[0,1]$. Since $\nu-\mathbf{1}$ is orthogonal to the first such eigenfunction with eigenvalue $\lambda_{1}(\mathbf{Q}) = 1$, it follows that
        \begin{align*}
            \sum_{y\in\mathcal{Y}}\pi(y) |(\mathbf{Q}^{s}\nu)(y) - 1|^{2} & \le \lambda_{2}(\mathbf{Q})^{2s}\|\nu-\mathbf{1}\|_{\pi}^{2} = \lambda_{2}(\mathbf{Q})^{2s}\Var_{y\sim\pi}(\nu(y)).
        \end{align*}
    Combining the preceding bounds, we obtain
        \begin{align*}
            \|\mathbf{Q}^{r+s}(x,\cdot)-\pi\|_{\mathrm{TV}} &\le \frac{1}{8} + \frac{1}{2}\lambda_{2}(\mathbf{Q})^{s}\sqrt{\Var_{y\sim\pi}(\nu(y))}\\
            &\le\frac{1}{8}+\frac{1}{2}\exp(1+4K-s(1-\lambda_{2}(\mathbf{Q}))).
        \end{align*}
    Taking $s=\lfloor4(K+1)/(1-\lambda_{2}(\mathbf{Q}))\rfloor$, we have $s(1-\lambda_{2}(\mathbf{Q}))\ge4(K+1)-(1-\lambda_{2}(\mathbf{Q}))\ge4K+3$, since $0<1-\lambda_{2}(\mathbf{Q})\le1$. The last expression is therefore at most $1/8+e^{-2}/2<1/4$, uniformly in $x$. Since $s\le4(K+1)/(1-\lambda_{2}(\mathbf{Q}))$, this proves the lemma.
\end{proof}

To prove~\cref{thm:two-edge-mixing} via~\cref{thm:two-edge-eigenvalue} and~\cref{lem:salez-entropy-gap}, it remains to certify that after order $m\log{m}$ steps, the entropy of $\downup_2^{r}(\sigma,\cdot)$ relative to the uniform measure is at most order $\Delta^{2}\log{m}$ uniformly in $\sigma$. This input is supplied by the following proposition.

\begin{proposition}\label{prop:two-edge-entropy-bound}
    Suppose that $m\ge2$, and let $r$ be an integer satisfying $r\ge3m\log(2m)$. Then
        \begin{align*}
            \max_{\sigma\in\mathcal{R}_{\mathbf{d}}(m)}D(\downup_{2}^{r}(\sigma,\cdot)\,\Vert\,\pi_{m}) &\le193\Delta^{2}\log(2m).
        \end{align*}
\end{proposition}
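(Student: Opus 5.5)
The plan is to combine the weak triangle inequality (\cref{lem:weak-triangle-inequality}) with the chain rule (\cref{lem:entropy-chain-rule}). The chain rule compares the walk started at $\sigma$ with the walk started from a reference measure close to $\pi_{m}$. Fix $\sigma\in\mathcal{R}_{\mathbf{d}}(m)$ and write $\mu_{r}=\downup_{2}^{r}(\sigma,\cdot)$. I would introduce an intermediate measure $\mu'$ with two properties: its $\chi^{2}$-distance to $\pi_{m}$ is at most $(2m)^{O(\Delta^{2})}$, so that the logarithmic term in \cref{lem:weak-triangle-inequality} is $O(\Delta^{2}\log(2m))$; and $D(\mu_{r}\,\Vert\,\mu')$ is also $O(\Delta^{2}\log(2m))$. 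A natural choice is $\mu'=\nu\downup_{2}^{r}$, where $\nu$ is the uniform measure on realizations containing a fixed set $\tau\subseteq\sigma$ of edges, with $|\tau|=O(\Delta^{2})$. By \cref{lem:single-pair-marginals}, summed over stub pairs as in the proof of \cref{thm:two-edge-eigenvalue}, and by \cref{lem:uniform-projection-lift}\emph{(i)}, conditioning on a bounded number of edges costs a factor $(2m)^{O(1)}$ per edge. This gives $\|\nu/\pi_{m}-\mathbf{1}\|_{\pi_{m}}^{2}\le(2m)^{O(|\tau|)}$. Since $\pi_{m}$ is stationary, convexity of $\chi^{2}$ (equivalently, contraction of $\downup_{2}$ in $\ell^{2}(\pi_{m})$) transfers the same bound to $\mu'$.

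The second step uses the time $r\ge3m\log(2m)$. Each step deletes a uniformly random pair of edges, so a given edge survives $r$ steps untouched with probability at most $(1-2/m)^{r}\le(2m)^{-6}$. A union bound over the $m$ edges shows that, outside an event of probability at most $(2m)^{-5}$, every edge of the starting realization is deleted at some point. I would then couple the walk from $\sigma$ and the walk from $\nu$ so that they make the same deletion choices (which pair of positions is removed at each step). I would apply \cref{lem:entropy-chain-rule} step by step to the resulting joint laws on trajectories, bounding each one-step kernel divergence $D(\downup_{2}(x,\cdot)\,\Vert\,\downup_{2}(y,\cdot))$ by a constant. This holds because each up step chooses among at most three completions, so the kernel ratios are bounded whenever the supports match. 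The key point is that after the coupon-collector time, the two coupled trajectories can disagree only through the neighbourhood structure of the edges in which their starting states differ. The neighbourhood counts from \cref{lem:invalid-transpositions} (at most $8\Delta^{2}$ invalid transpositions per stub, and closed neighbourhoods of at most $2\Delta^{2}$ stubs) should limit the number of steps at which the two kernels disagree to $O(\Delta^{2})$ in expectation, each contributing $O(\log(2m))$. The exceptional event is handled by the trivial bound $D\le\log|\mathcal{R}_{\mathbf{d}}(m)|\le\log(2m-1)!!\le m\log(2m)$, which is multiplied by the probability $(2m)^{-5}$ and is therefore negligible.

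The main obstacle is making the second step rigorous, namely controlling $D(\mu_{r}\,\Vert\,\mu')$ with a bound of order $\Delta^{2}\log(2m)$ rather than one linear in $m$. The supports of the two coupled kernels can differ whenever a proposed completion is valid in one state and invalid in the other, and such steps give infinite divergence unless they are excluded. If the direct coupling fails, I would first try a different $\mu'$: the law of the walk from $\sigma$ after each edge has been deleted once, with the up steps replaced by fresh uniform resampling in the link. \Cref{lem:dirichlet-energy-decomposition}\emph{(i)} then expresses this law as a mixture of link-uniform measures, and the $\chi^{2}$ cost reduces to marginal bounds of the type in \cref{lem:single-pair-marginals}, summed over at most $\binom{2\Delta^{2}}{2}\cdot O(1)$ configurations. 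This is where I expect the constant $193=192+1$, echoing \cref{lem:invalid-transpositions}, to appear.
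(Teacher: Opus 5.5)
Your skeleton is the right one: an intermediate measure $\mu'$ in \cref{lem:weak-triangle-inequality}, with $\chi^2$ control of $\mu'$ against equilibrium and entropy control of $\mu_r$ against $\mu'$. Your first step is essentially fine, provided $m\gtrsim\Delta^2$; otherwise the trivial bound $D\le m\log(2m)$ already suffices.

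\textbf{The gap is in the second step.} \Cref{lem:entropy-chain-rule} compares two kernels evaluated at the \emph{same} state. With $\mu_r=\delta_\sigma\downup_2^r$ and $\mu'=\nu\downup_2^r$ driven by the same kernel, it gives only $D(\mu_r\,\Vert\,\mu')\le D(\delta_\sigma\,\Vert\,\nu)=\log(1/\nu(\sigma))$. This is $\log|\mathcal{R}_{\mathbf{d}}(m)|-O(\Delta^2\log m)$, far larger than $\Delta^2\log(2m)$ in general. Passing to trajectory laws gives exactly the same quantity.

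Your coupling cannot repair this, for three reasons.
\begin{enumerate}
\item A coupling controls total variation through coalescence, not relative entropy.
\item The quantity you propose to bound, $D(\downup_2(x,\cdot)\,\Vert\,\downup_2(y,\cdot))$ for coupled states $x\ne y$, is $+\infty$ as soon as $|x\setminus y|>4$. One step changes at most two edges, so the two supports are then disjoint.
\item The walk from $\sigma$ and a walk from $\eta\sim\nu$ differ in almost all of their $m$ edges, not in an $O(\Delta^2)$-neighbourhood of $\tau$. The neighbourhood counts of \cref{lem:invalid-transpositions} therefore have nothing to act on.
\end{enumerate}
The fallback measure in your last paragraph is too loosely specified to check. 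As far as I can tell, bounding $D(\mu_r\,\Vert\,\mu')$ for it is the original mixing problem in disguise. What is missing is a second chain whose convergence is known explicitly and which tracks the switch dynamics step by step from the \emph{same} starting point.

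\textbf{The missing idea is the configuration-model walk, which is the paper's route.}
\begin{itemize}
\item Lift $\downup_2$ to a kernel $\widehat{\downup}$ on valid pairings (with $\sigma=\varnothing$, $k=m$). Every realization has the same number of preimages, so the log-sum inequality gives $D(\downup_2^r(\Phi(X),\cdot)\,\Vert\,\pi_m)\le D(\widehat{\downup}^{\,r}(X,\cdot)\,\Vert\,\pi_{\mathcal{P}})$.
\item Take $\mu'=\mathbf{K}^r(X,\cdot)$, where $\mathbf{K}$ resamples two uniformly chosen pairs among all three matchings of their stubs, with no validity constraint.
\item From any valid state, the support of $\widehat{\downup}$ lies inside that of $\mathbf{K}$. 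The two kernels differ only on invalid proposals, which have $\mathbf{K}$-probability at most $8\Delta^2/m$. Hence $D(\widehat{\downup}(X,\cdot)\,\Vert\,\mathbf{K}(X,\cdot))\le 24\Delta^2/m$.
\item The chain rule started at the common point $X$ has no initial term, so it gives $24r\Delta^2/m$.
\item $\mathbf{K}$ is the image of a lazy random-transposition walk on $\mathfrak{S}_{2m}$. So \cref{thm:random-transposition-l2} gives $\chi^2\le 1$ once $r\ge 3m\log(2m)$, and the logarithmic term is at most $\log 2\le 1$. This is where the coupon-collector time genuinely enters.
\item Taking $r\le 4m\log(2m)$ gives $2\cdot 24\cdot 4\,\Delta^2\log(2m)+1\le 193\Delta^2\log(2m)$. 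Later times follow from monotonicity of $D(\downup_2^s(\tau,\cdot)\,\Vert\,\pi_m)$ in $s$.
\end{itemize}
So the $192$ is $2\cdot 24\cdot 4$; it is not an echo of \cref{lem:invalid-transpositions}.
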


The basic idea behind the proof of~\cref{prop:two-edge-entropy-bound} is to lift the down-$2$-up-$2$ walk to a corresponding walk on valid pairings and compare it to an unrestricted transposition walk on all pairings. Using the weak triangle inequality, we can then split the relative entropy into separate pieces corresponding to the cost of the original comparison to the unrestricted walk and the gap between the unrestricted walk and its equilibrium measure. These can then be handled separately; the former case boils down to controlling the number of invalid transpositions, for which we have a number of tools at this point, and the latter case can be handled by existing results concerning walks on the symmetric group. Namely, we use the following theorem of Diaconis and Saloff-Coste.

\begin{theorem}[{\cite[Sec.~4.3, Eq.~(4.2)]{diaconis1996logarithmic}}]\label{thm:random-transposition-l2}
    Let $N\ge2$, and let $\mathbf{Q}$ be the Markov kernel on the symmetric group $\mathfrak{S}_{N}$ on $N$ elements that at each step, from a permutation $\varphi$, moves to $\varphi\theta$, where $\theta$ is chosen uniformly among the $\binom{N}{2}$ transpositions. Let $\pi_{\mathfrak{S}}$ denote the uniform probability measure on $\mathfrak{S}_{N}$, and set $\mu_{t}=e^{t(\mathbf{Q}-\mathbf{I})}(\iota,\cdot)$ for each $t\ge0$, where $\iota$ is the identity permutation. Then, if $t\ge N\log N$, it holds
        \begin{align*}
            \Bigl\|\frac{\mu_{t}}{\pi_{\mathfrak{S}}}-\mathbf{1}\Bigr\|_{\pi_{\mathfrak{S}}}^{2} &\le1.
        \end{align*}
\end{theorem}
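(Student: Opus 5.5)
The plan is to prove this bound with the Fourier analysis on $\mathfrak{S}_{N}$ of Diaconis and Shahshahani, adapted to continuous time. The increment law is uniform on the conjugacy class of transpositions, so $\mathbf{Q}$ acts on each isotypic component of $\ell^{2}(\mathfrak{S}_{N},\pi_{\mathfrak{S}})$ by a scalar. Write $r(\lambda)=\chi_{\lambda}(\theta)/d_{\lambda}$ for this scalar, where $\lambda\vdash N$ indexes the irreducible representations, $d_{\lambda}$ is the dimension and $\theta$ is any transposition. Then $e^{t(\mathbf{Q}-\mathbf{I})}$ acts by $e^{-t(1-r(\lambda))}$. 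Plancherel's formula gives the exact identity
\begin{align*}
\Bigl\|\frac{\mu_{t}}{\pi_{\mathfrak{S}}}-\mathbf{1}\Bigr\|_{\pi_{\mathfrak{S}}}^{2} &= \sum_{\lambda\ne(N)} d_{\lambda}^{2}\,e^{-2t(1-r(\lambda))}.
\end{align*}
Frobenius' formula gives $r(\lambda)=\binom{N}{2}^{-1}\sum_{i}\bigl(\binom{\lambda_{i}}{2}-\binom{\lambda_{i}'}{2}\bigr)$, where $\lambda'$ is the conjugate partition. So everything reduces to bounding an explicit sum over partitions at $t=N\log N$. Monotonicity in $t$ is automatic, since every exponent is nonpositive.

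Next I would organize the sum by $j=N-\lambda_{1}$. For $\lambda_{1}=N-j$ with $j\le N/2$, I would use the standard estimates $d_{\lambda}\le\binom{N}{j}\sqrt{j!}$ and $r(\lambda)\le r((N-j,j))$. The second holds because moving boxes up to the first rows increases $r$, so the two-row shape is extremal. An explicit computation then gives
\begin{align*}
1-r(\lambda) &\ge \frac{2j(N-j+1)}{N(N-1)}.
\end{align*}
Substituting $t=N\log N$, each term is at most $\frac{N^{2j}}{j!}\,N^{-4j(N-j+1)/(N-1)}$. For $j\le N/2$ this is at most $N^{-cj}/j!$ with $c$ close to $2$ when $j$ is small. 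Multiplying by the number $p(j)\le e^{\pi\sqrt{2j/3}}$ of partitions of $j$ and summing over $j\ge1$ gives a convergent series that is $O(N^{-1})$, hence at most a small constant.

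For the remaining shapes with $\lambda_{1}<N/2$, I would use the symmetry $\lambda\mapsto\lambda'$. It preserves $d_{\lambda}$ and sends $r$ to $-r$. So these shapes satisfy $1-r(\lambda)\ge 1-|r(\lambda)|$, and $|r(\lambda)|$ is bounded away from $1$ unless $\lambda_{1}'$ is close to $N$. Those near-sign shapes are mirror images of the cases already treated, and even better ones, since $1-r\approx2$. All other shapes have $1-r(\lambda)\ge c'$ for an absolute $c'>0$, so their terms carry a factor $e^{-2c'N\log N}$. That factor beats the crude bound $\sum_{\lambda}d_{\lambda}^{2}=N!\le N^{N}$ once $c'>1/2$. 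Otherwise I would refine by splitting further according to $\lambda_{1}$ and using $1-r(\lambda)\ge 1-\lambda_{1}/N$.

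The main obstacle I expect is getting the explicit constant $1$, rather than merely $O(1)$, uniformly for all $N\ge2$. That requires care in the intermediate range $j\approx N/2$, where the exponent $4j(N-j+1)/(N-1)$ degrades to about $N$ while $d_{\lambda}^{2}$ can be as large as $4^{N}$. For small $N$ I would verify the bound directly from the character table, as a finite check. In the large-$N$ regime I would follow the bookkeeping of Diaconis's upper-bound lemma closely, since it is this precise computation that \cite{diaconis1996logarithmic} records as Eq.~(4.2).
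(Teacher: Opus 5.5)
Your proof is correct. The paper gives no argument of its own for this statement. It quotes the bound from \cite{diaconis1996logarithmic}, and that bound is a consequence of the Diaconis--Shahshahani character analysis of random transpositions. So what you have written is a self-contained reconstruction of the argument behind the citation, not a competing route. The citation buys brevity. Your derivation buys transparency, and in particular shows how much slack the statement has: the $(N-1,1)$ term is $(N-1)^{2}e^{-4t/(N-1)}$, so the $L^{2}$ cutoff is at $t\approx\frac{1}{2}N\log N$, and at $t=N\log N$ the whole sum is $O(N^{-2})$.

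Two parts of your sketch can be simplified.

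The range $j\approx N/2$ that you single out as the main obstacle is harmless. In your own term bound the net exponent of $N$ is $2j-\frac{4j(N-j+1)}{N-1}=-\frac{2j(N+3-2j)}{N-1}$, which is below $-2$ for all $1\le j\le N/2$. The numerator is concave in $j$, so it suffices to check $j=1$ and $j=N/2$. Hence the shapes with $\lambda_{1}\ge N/2$ contribute at most $N^{-2}\sum_{j\ge1}p(j)/j!<2.8\,N^{-2}$.

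For $\lambda_{1}<N/2$, the detour through $1-r\ge1-|r|$ and conjugate shapes is lossy. It is also not quite supported by what you proved: your first-region estimates bound $r$ from above, while the mirror step needs a lower bound on $r(\lambda')$. The fallback you mention settles the whole region at once if you use it in the sharper form $r(\lambda)\le\frac{\lambda_{1}-1}{N-1}$, which follows from $\binom{\lambda_{i}}{2}\le\frac{\lambda_{1}-1}{2}\lambda_{i}$. It gives $1-r(\lambda)\ge\frac{1}{2}+\frac{1}{N-1}$, so each term is at most $d_{\lambda}^{2}N^{-N-2N/(N-1)}$. Then $\sum_{\lambda}d_{\lambda}^{2}=N!\le N^{N}$ bounds this region by $N^{-2}$.

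The left-hand side is therefore below $3.8\,N^{-2}<1$ for every $N\ge2$, and no finite check of small $N$ is needed.
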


For further background, we refer the reader to, e.g.,~\cite[Prop.~4.6]{saloffcoste2008refined}.

\begin{proof}[Proof of~\cref{prop:two-edge-entropy-bound}]
    We use the pairing construction from~\cref{sec:deep-link-expansion} with $\sigma=\varnothing$ and $k=m$. As in that setting, write $\pi_{\mathcal{P}}$ and $\pi_{\mathcal{Q}}$ for the uniform probability measures on $\mathcal{P}$ and $\mathcal{Q}$, respectively. Fix $X\in\mathcal{Q}$.
    
    Define the kernel $\widehat{\downup}$ on $\mathcal{Q}$ as follows. From $X\in\mathcal{Q}$, choose $W\in\binom{X}{2}$ uniformly and replace its two pairs by a uniformly chosen pairing of their four stubs for which the resulting pairing belongs to $\mathcal{Q}$. For each such $W$, let $\mathcal{M}(W)$ be the set of all three pairings of these four stubs, and write
        \begin{align*}
            g_{X}(W) &=|\{W'\in\mathcal{M}(W):(X\setminus W)\cup W'\in\mathcal{Q}\}|.
        \end{align*}
    Since $X$ was valid to begin with, $1\le g_{X}(W)\le3$. The two selected stub pairs correspond bijectively to two selected graph edges. If those edges are vertex-disjoint, valid stub completions correspond bijectively to graph completions, each with probability $1/g_{X}(W)$. If the selected pairs are $\{\vec{v},\vec{z}\},\{\vec{u},\vec{w}\}$ with $v=u$, the matching $\{\{\vec{u},\vec{z}\},\{\vec{v},\vec{w}\}\}$ gives the same graph as the original matching, while the third matching contains the pair $\{\vec{v},\vec{u}\}$, which projects to a loop. Thus $g_{X}(W)=2$, and the probability of restoring the graph is $1/2+1/2=1$. Averaging over the selected pairs gives, for every $\tau'\in\mathcal{R}_{\mathbf{d}}(m)$,
        \begin{align*}
            &\sum_{\substack{Y\in\mathcal{Q}\\\Phi(Y)=\tau'}}\widehat{\downup}(X,Y)\\
            &\quad=\frac{1}{\binom{m}{2}}\sum_{W\in\binom{X}{2}}\frac{|\{W'\in\mathcal{M}(W):(X\setminus W)\cup W'\in\mathcal{Q},\ \Phi((X\setminus W)\cup W')=\tau'\}|}{g_{X}(W)}\\
            &\quad=\downup_{2}(\Phi(X),\tau').
        \end{align*}
    The same projection identity holds for powers of the kernels, by induction. Let $s\ge 1$ be fixed. Then 
        \begin{align}\label{eq:pairing-projection-recursion}
            \sum_{\substack{Y\in\mathcal{Q}\\\Phi(Y)=\tau'}}\widehat{\downup}^{\,s+1}(X,Y) &=\sum_{Z\in\mathcal{Q}}\widehat{\downup}^{\,s}(X,Z)\downup_{2}(\Phi(Z),\tau')\notag\\
            &=\sum_{\tau''\in\mathcal{R}_{\mathbf{d}}(m)}\downup_{2}(\tau'',\tau') \sum_{\substack{Z\in\mathcal{Q} \\ \Phi(Z)=\tau''}}  \widehat{\downup}^{\,s}(X,Z)\notag\\
            &=\sum_{\tau''\in\mathcal{R}_{\mathbf{d}}(m)}\downup_{2}^{s}(\Phi(X),\tau'')\downup_{2}(\tau'',\tau')\notag\\
            &=\downup_{2}^{s+1}(\Phi(X),\tau'),
        \end{align}
    where in the second equality we group the terms according to $\tau''=\Phi(Z)$.
    
    Now let $r\ge 1$ be an integer. Using~\cref{lem:uniform-projection-lift}\emph{(i)} and the identity~\cref{eq:pairing-projection-recursion}, the log-sum inequality gives
        \begin{align*}
            &D(\downup_{2}^{r}(\Phi(X),\cdot)\,\Vert\,\pi_{m})\\
            &\quad=\sum_{\tau'\in\mathcal{R}_{\mathbf{d}}(m)}\downup_{2}^{r}(\Phi(X),\tau')\log\frac{\downup_{2}^{r}(\Phi(X),\tau')}{\pi_{m}(\tau')}\\
            &\quad=\sum_{\tau'\in\mathcal{R}_{\mathbf{d}}(m)}\Bigl(\sum_{\substack{Y\in\mathcal{Q}\\\Phi(Y)=\tau'}}\widehat{\downup}^{\,r}(X,Y)\Bigr)\log\frac{\displaystyle\sum_{\substack{Y\in\mathcal{Q}\\\Phi(Y)=\tau'}}\widehat{\downup}^{\,r}(X,Y)}{\displaystyle\sum_{\substack{Y\in\mathcal{Q}\\\Phi(Y)=\tau'}}{\pi_{\mathcal{Q}}}(Y)}\\
            &\quad\le\sum_{\tau'\in\mathcal{R}_{\mathbf{d}}(m)}\sum_{\substack{Y\in\mathcal{Q}\\\Phi(Y)=\tau'}}\widehat{\downup}^{\,r}(X,Y)\log\frac{\widehat{\downup}^{\,r}(X,Y)}{\pi_{\mathcal{Q}}(Y)}\\
            &\quad=\sum_{Y\in\mathcal{Q}}\widehat{\downup}^{\,r}(X,Y)\log\frac{\widehat{\downup}^{\,r}(X,Y)}{\pi_{\mathcal{Q}}(Y)}\\
            &\quad=\sum_{Y\in\mathcal{Q}}\widehat{\downup}^{\,r}(X,Y)\log\frac{\widehat{\downup}^{\,r}(X,Y)}{\pi_{\mathcal{P}}(Y)}+\log\frac{|\mathcal{Q}|}{|\mathcal{P}|}\\
            &\quad\le D(\widehat{\downup}^{\,r}(X,\cdot)\,\Vert\,\pi_{\mathcal{P}}).
        \end{align*}
    Next, define the kernel $\mathbf{K}$ on $\mathcal{P}$ by uniformly choosing two distinct pairs and resampling the pairing of their four stubs. Each of the three possible pairings is chosen with probability $1/3$, and all other pairs remain unchanged. Extend $\widehat{\downup}$ to a Markov kernel on $\mathcal{P}$ by setting $\widehat{\downup}(Z,\cdot)=\mathbf{K}(Z,\cdot)$ for $Z\notin\mathcal{Q}$. This leaves its laws started in $\mathcal{Q}$ unchanged. Every transition with positive probability under $\widehat{\downup}$ also has positive probability under $\mathbf{K}$, and the same holds for their powers. Hence~\cref{lem:weak-triangle-inequality} gives
        \begin{align}\label{eq:relative-entropy-weak-triangle}
            D(\widehat{\downup}^{\,r}(X,\cdot)\,\Vert\,\pi_{\mathcal{P}}) &\le2D(\widehat{\downup}^{\,r}(X,\cdot)\,\Vert\,\mathbf{K}^{r}(X,\cdot))\notag\\
            &\qquad+\log\Bigl(1+\Bigl\|\frac{\mathbf{K}^{r}(X,\cdot)}{\pi_{\mathcal{P}}}-\mathbf{1}\Bigr\|_{\pi_{\mathcal{P}}}^{2}\Bigr).
        \end{align}
    We bound the two terms on the right-hand side separately.

    For the first term in~\cref{eq:relative-entropy-weak-triangle}, recall from the proof of~\cref{lem:invalid-transpositions}, with $\sigma=\varnothing$ and $k=m$, that the number of invalid transpositions from $X$ is at most $8m\Delta^{2}$. Each nonidentity replacement of two pairs is obtained by exactly two transpositions: for example, replacing $\{\vec{v},\vec{z}\},\{\vec{u},\vec{w}\}$ by $\{\vec{u},\vec{z}\},\{\vec{v},\vec{w}\}$ is obtained by $\theta_{\vec{v},\vec{u}}$ or $\theta_{\vec{z},\vec{w}}$. Each such replacement has probability $1/(3\binom{m}{2})$ under $\mathbf{K}$, so
        \begin{align}\label{eq:invalid-transitions-bound}
            \sum_{Y\in\mathcal{P}\setminus\mathcal{Q}}\mathbf{K}(X,Y) &\le\frac{8m\Delta^{2}}{2}\frac{1}{3\binom{m}{2}}=\frac{8\Delta^{2}}{3(m-1)}\le\frac{8\Delta^{2}}{m}.
        \end{align}
    Conditional on selecting $W\in\binom{X}{2}$ for replacement, each of the three pairings of its four stubs is chosen with probability $1/3$ under $\mathbf{K}$, and each valid completion is selected with probability $1/g_{X}(W)$ under $\widehat{\downup}$. Applying the log-sum inequality when grouping choices $(W,W')$ that give the same pairing yields
        \begin{align*}
            &D(\widehat{\downup}(X,\cdot)\,\Vert\,\mathbf{K}(X,\cdot))\\
            &\quad=\sum_{Y\in\mathcal{Q}}\widehat{\downup}(X,Y)\log\frac{\widehat{\downup}(X,Y)}{\mathbf{K}(X,Y)}\\
            &\quad=\sum_{Y\in\mathcal{Q}} \Bigl(\sum_{\substack{(W,W')\\(X\setminus W)\cup W'=Y}}\frac{1}{\binom{m}{2}\,g_{X}(W)}\Bigr) \log\frac{\displaystyle\sum_{\substack{(W,W')\\(X\setminus W)\cup W'=Y}}\frac{1}{\binom{m}{2}\,g_{X}(W)}}{\displaystyle\sum_{\substack{(W,W')\\(X\setminus W)\cup W'=Y}}\frac{1}{3\binom{m}{2}}}\\
            &\quad\le \frac{1}{\binom{m}{2}}\sum_{W\in\binom{X}{2}}\sum_{\substack{W'\in\mathcal{M}(W)\\(X\setminus W)\cup W'\in\mathcal{Q}}} \frac{1}{g_{X}(W)}\log\frac{3}{g_{X}(W)}\\
            &\quad=\frac{1}{\binom{m}{2}}\sum_{W\in\binom{X}{2}}\log\frac{3}{g_{X}(W)}.
        \end{align*}
    Using $\log(3/u)\le3-u$ for $1\le u\le3$, followed by the bound in~\cref{eq:invalid-transitions-bound}, we obtain
        \begin{align}\label{eq:heatbath-local-kl}
            D(\widehat{\downup}(X,\cdot)\,\Vert\,\mathbf{K}(X,\cdot)) &\le\frac{1}{\binom{m}{2}}\sum_{W\in\binom{X}{2}}(3-g_{X}(W)) =3\sum_{Y\in\mathcal{P}\setminus\mathcal{Q}}\mathbf{K}(X,Y)\le\frac{24\Delta^{2}}{m}.
        \end{align}
    By~\cref{lem:entropy-chain-rule}, for every integer $s\ge1$,
        \begin{align*}
            D(\widehat{\downup}^{\,s}(X,\cdot)\,\Vert\,\mathbf{K}^{s}(X,\cdot)) &\le D(\widehat{\downup}^{\,s-1}(X,\cdot)\,\Vert\,\mathbf{K}^{s-1}(X,\cdot))+\frac{24\Delta^{2}}{m},
        \end{align*}
    where we use~\cref{eq:heatbath-local-kl}. Applying this inequality repeatedly gives
        \begin{align}\label{eq:pairing-path-kl}
            D(\widehat{\downup}^{\,r}(X,\cdot)\,\Vert\,\mathbf{K}^{r}(X,\cdot)) &\le r\frac{24\Delta^{2}}{m},
        \end{align}
    since both initial distributions assign mass one to $X$.
    
    For the second term in~\cref{eq:relative-entropy-weak-triangle}, fix a labelling of the set $\mathcal{S}$ of all $2m$ stubs, and let $\widetilde{\mathcal{P}}$ denote the set of orderings of $\mathcal{S}$. Define the map $\Psi:\widetilde{\mathcal{P}}\to\mathcal{P}$ according to
        \begin{align*}
            (\vec{v}_{1},\dotsc,\vec{v}_{2m}) &\mapsto\{\{\vec{v}_{1},\vec{v}_{2}\},\dotsc,\{\vec{v}_{2m-1},\vec{v}_{2m}\}\}
        \end{align*}
    for each $(\vec{v}_{1},\dotsc,\vec{v}_{2m})\in\widetilde{\mathcal{P}}$. $\Psi$ identifies each ordering of the stubs with a pairing of the stubs by forgetting the order of the pairs and the order of the stubs within each pair. Each fiber $\Psi^{-1}(Z)$ of this map contains $(m!)2^{m}$ ordered presentations of $Z$, independent of $Z\in\mathcal{P}$. Set $p=\frac{2m-1}{3(m-1)}\in[2/3,1]$. Then $\frac{2p}{\binom{2m}{2}}=\frac{1}{3\binom{m}{2}}$, which is the probability assigned by $\mathbf{K}$ to each nontrivial transition. Define the kernel $\widetilde{\mathbf{K}}$ on $\widetilde{\mathcal{P}}$ by staying at the current ordering with probability $1-p$, and otherwise exchanging two uniformly chosen distinct entries.

    Fix an ordering $\widetilde{X}=(\vec{v}_{1},\dotsc,\vec{v}_{2m})$ with $\Psi(\widetilde{X})=X$, and let $t\ge3m\log(2m)$. For each ordering $\widetilde{Y}\in\widetilde{\mathcal{P}}$, let $\varphi_{\widetilde{Y}}\in\mathfrak{S}_{2m}$ be the unique permutation satisfying
        \begin{align*}
            \widetilde{Y} &=(\vec{v}_{\varphi_{\widetilde{Y}}(1)},\dotsc,\vec{v}_{\varphi_{\widetilde{Y}}(2m)}).
        \end{align*}
    This gives a bijection between $\widetilde{\mathcal{P}}$ and $\mathfrak{S}_{2m}$ under which the initial ordering $\widetilde{X}$ corresponds to the identity permutation $\iota$.

    Let $\mathbf{Q}$ denote the random transposition kernel from~\cref{thm:random-transposition-l2}, with $N=2m$. Under the identification $\widetilde{Y}\mapsto\varphi_{\widetilde{Y}}$, exchanging entries $i$ and $j$ of an ordering corresponds to replacing $\varphi_{\widetilde{Y}}$ by $\varphi_{\widetilde{Y}}\circ(i\,j)$. Choosing the two entries uniformly therefore gives one step of $\mathbf{Q}$. Thus, after identifying the rows and columns by this bijection, $\widetilde{\mathbf{K}}=(1-p)\mathbf{I}+p\mathbf{Q}$. Since $\varphi_{\widetilde{X}}=\iota$, it follows that for each $\widetilde{Y}\in\widetilde{\mathcal{P}}$,
        \begin{align*}
            e^{t(\widetilde{\mathbf{K}}-\mathbf{I})}(\widetilde{X},\widetilde{Y}) &=e^{t((1-p)\mathbf{I}+p\mathbf{Q}-\mathbf{I})}(\iota,\varphi_{\widetilde{Y}}) =e^{pt(\mathbf{Q}-\mathbf{I})}(\iota,\varphi_{\widetilde{Y}}) =\mu_{pt}(\varphi_{\widetilde{Y}}),
        \end{align*}
    where in the last equality we recall the definition of $\mu_{t}$ in~\cref{thm:random-transposition-l2}. Since $p\ge2/3$ and $t\ge3m\log(2m)$, we have $pt\ge2m\log(2m)$. Thus, by~\cref{thm:random-transposition-l2},
        \begin{align*}
            \Bigl\|\frac{e^{t(\widetilde{\mathbf{K}}-\mathbf{I})}(\widetilde{X},\cdot)}{\pi_{\mathfrak{S}}}-\mathbf{1}\Bigr\|_{\pi_{\mathfrak{S}}}^{2} &\le1.
        \end{align*}
    
    We now apply this estimate to the walk $\mathbf{K}$ on $\mathcal{P}$. For every $\widetilde{Z}\in\widetilde{\mathcal{P}}$ and $Y\in\mathcal{P}$, we claim that
        \begin{align}\label{eq:pairing-projection}
            \sum_{\substack{\widetilde{Y}\in\widetilde{\mathcal{P}}\\\Psi(\widetilde{Y})=Y}}\widetilde{\mathbf{K}}(\widetilde{Z},\widetilde{Y}) &=\mathbf{K}(\Psi(\widetilde{Z}),Y).
        \end{align}
    Fix $\widetilde{Z}$ and write $Z=\Psi(\widetilde{Z})$. Suppose first that $Y\ne Z$ is obtained by replacing two pairs of $Z$. The two exchanges producing this replacement give two distinct orderings representing $Y$, each reached with probability $p/\binom{2m}{2}$. Thus, using $p=(2m-1)/(3(m-1))$, we have
        \begin{align*}
            \sum_{\substack{\widetilde{Y}\in\widetilde{\mathcal{P}}\\\Psi(\widetilde{Y})=Y}}\widetilde{\mathbf{K}}(\widetilde{Z},\widetilde{Y}) &=\frac{p}{\binom{2m}{2}}+\frac{p}{\binom{2m}{2}} =\frac{1}{3\binom{m}{2}}=\mathbf{K}(Z,Y).
        \end{align*}
    For $Y=Z$, the sum includes the original ordering, reached with probability $1-p$, and the $m$ orderings obtained by exchanging the two stubs within one pair, each reached with probability $p/\binom{2m}{2}$. Therefore
        \begin{align*}
            \sum_{\substack{\widetilde{Y}\in\widetilde{\mathcal{P}}\\\Psi(\widetilde{Y})=Z}}\widetilde{\mathbf{K}}(\widetilde{Z},\widetilde{Y}) &=1-p+\frac{pm}{\binom{2m}{2}} =\frac{1}{3}=\mathbf{K}(Z,Z).
        \end{align*}
    If $Y\ne Z$ cannot be obtained by replacing two pairs, both sides of the claimed identity are zero. For $X$ and its ordering $\widetilde{X}$ fixed earlier, the same induction as in~\cref{eq:pairing-projection-recursion} gives, for every integer $s\ge0$,
        \begin{align*}
            \mathbf{K}^{s}(X,Y) &=\sum_{\substack{\widetilde{Y}\in\widetilde{\mathcal{P}}\\\Psi(\widetilde{Y})=Y}}\widetilde{\mathbf{K}}^{\,s}(\widetilde{X},\widetilde{Y}).
        \end{align*}
    Therefore it follows from~\cref{eq:pairing-projection} that
        \begin{align*}
            e^{t(\mathbf{K}-\mathbf{I})}(X,Y) &=e^{-t}\sum_{s=0}^{\infty}\frac{t^{s}}{s!}\mathbf{K}^{s}(X,Y) =e^{-t}\sum_{s=0}^{\infty}\frac{t^{s}}{s!}\sum_{\substack{\widetilde{Y}\in\widetilde{\mathcal{P}}\\\Psi(\widetilde{Y})=Y}}\widetilde{\mathbf{K}}^{\,s}(\widetilde{X},\widetilde{Y}) =\sum_{\substack{\widetilde{Y}\in\widetilde{\mathcal{P}}\\\Psi(\widetilde{Y})=Y}}e^{t(\widetilde{\mathbf{K}}-\mathbf{I})}(\widetilde{X},\widetilde{Y}).
        \end{align*}
    Since each pairing has $2^{m} m!$ ordered presentations, we have
        \begin{align*}
            |\widetilde{\mathcal{P}}| &=(2m)!=2^{m} m!\,|\mathcal{P}|,
        \end{align*}
    and, for each $\widetilde{Y}\in\widetilde{\mathcal{P}}$ with $\Psi(\widetilde{Y})=Y$, we have
        \begin{align*}
            \frac{\pi_{\mathcal{P}}(Y)}{2^{m} m!}&=\frac{1}{(2m)!}=\pi_{\mathfrak{S}}(\widetilde{Y}).
        \end{align*}
    Dividing the preceding transition identity by $\pi_{\mathcal{P}}(Y)$ therefore gives
        \begin{align*}
            \frac{e^{t(\mathbf{K}-\mathbf{I})}(X,Y)}{\pi_{\mathcal{P}}(Y)}-1 &=\frac{1}{2^{m} m!}\sum_{\substack{\widetilde{Y}\in\widetilde{\mathcal{P}}\\\Psi(\widetilde{Y})=Y}}\Bigl(\frac{e^{t(\widetilde{\mathbf{K}}-\mathbf{I})}(\widetilde{X},\widetilde{Y})}{\pi_{\mathfrak{S}}(\widetilde{Y})}-1\Bigr).
        \end{align*}
    Applying Jensen's inequality to the preceding average, we obtain
        \begin{align*}
            \Bigl\|\frac{e^{t(\mathbf{K}-\mathbf{I})}(X,\cdot)}{\pi_{\mathcal{P}}}-\mathbf{1}\Bigr\|_{\pi_{\mathcal{P}}}^{2} &\le\sum_{Y\in\mathcal{P}}\frac{\pi_{\mathcal{P}}(Y)}{2^{m} m!}\sum_{\substack{\widetilde{Y}\in\widetilde{\mathcal{P}}\\\Psi(\widetilde{Y})=Y}}\Bigl(\frac{e^{t(\widetilde{\mathbf{K}}-\mathbf{I})}(\widetilde{X},\widetilde{Y})}{\pi_{\mathfrak{S}}(\widetilde{Y})}-1\Bigr)^{2}\\
            &=\sum_{\widetilde{Y}\in\widetilde{\mathcal{P}}}{\pi_{\mathfrak{S}}}(\widetilde{Y})\Bigl(\frac{e^{t(\widetilde{\mathbf{K}}-\mathbf{I})}(\widetilde{X},\widetilde{Y})}{\pi_{\mathfrak{S}}(\widetilde{Y})}-1\Bigr)^{2}\\
            &=\Bigl\|\frac{e^{t(\widetilde{\mathbf{K}}-\mathbf{I})}(\widetilde{X},\cdot)}{\pi_{\mathfrak{S}}}-\mathbf{1}\Bigr\|_{\pi_{\mathfrak{S}}}^{2}\le1,
        \end{align*}
    using the choice $t\ge3m\log(2m)$. Here, the first equality uses $\pi_{\mathcal{P}}(Y)/(2^{m} m!)=\pi_{\mathfrak{S}}(\widetilde{Y})$ and the fact that each ordering projects to exactly one pairing.

    It remains to pass from continuous time back to discrete time. The kernel $\mathbf{K}$ is symmetric, since each nonidentity transition has probability $1/(3\binom{m}{2})$. For $f\in\ell^{2}(\mathcal{P},\pi_{\mathcal{P}})$, let $W$ range over partial pairings with $m-2$ pairs. Each $W$ has three completions, and each pairing contains $\binom{m}{2}$ such $W$. Therefore
        \begin{align*}
            \langle f,\mathbf{K}f\rangle_{\pi_{\mathcal{P}}} &=\frac{1}{3|\mathcal{P}|\binom{m}{2}}\sum_{W}\sum_{\substack{Y,Z\in\mathcal{P}\\W\subseteq Y\cap Z}}f(Y)f(Z) = \frac{1}{3|\mathcal{P}|\binom{m}{2}}\sum_{W}\Bigl(\sum_{\substack{Y\in\mathcal{P}\\W\subseteq Y}}f(Y)\Bigr)^{2}\ge0.
        \end{align*}
    Thus $\mathbf{K}$ has eigenvalues in $[0,1]$. Let $\phi_{1},\dotsc,\phi_{|\mathcal{P}|}$ be an orthonormal basis of right eigenfunctions in $\ell^{2}(\mathcal{P},\pi_{\mathcal{P}})$, with $\phi_{1}=\mathbf{1}$ and corresponding eigenvalues $\lambda_{j}(\mathbf{K})$. For every integer $r\ge3m\log(2m)$, using $x^{2r}\le e^{-2r(1-x)}$ for $0\le x\le 1$ and evaluating the preceding heat-kernel bound at $t=r$, we have
        \begin{align*}
            \Bigl\|\frac{\mathbf{K}^{r}(X,\cdot)}{\pi_{\mathcal{P}}}-\mathbf{1}\Bigr\|_{\pi_{\mathcal{P}}}^{2} &=\sum_{j=2}^{|\mathcal{P}|}\lambda_{j}(\mathbf{K})^{2r}\phi_{j}(X)^{2}\\
            &\le\sum_{j=2}^{|\mathcal{P}|}e^{-2r(1-\lambda_{j}(\mathbf{K}))}\phi_{j}(X)^{2}\\
            &=\Bigl\|\frac{e^{r(\mathbf{K}-\mathbf{I})}(X,\cdot)}{\pi_{\mathcal{P}}}-\mathbf{1}\Bigr\|_{\pi_{\mathcal{P}}}^{2}\le1.
        \end{align*}
    Thus the second term in~\cref{eq:relative-entropy-weak-triangle} satisfies
        \begin{align*}
            \log\Bigl(1+\Bigl\|\frac{\mathbf{K}^{r}(X,\cdot)}{\pi_{\mathcal{P}}}-\mathbf{1}\Bigr\|_{\pi_{\mathcal{P}}}^{2}\Bigr) &\le\log2\le1.
        \end{align*}
    Combining this with~\cref{eq:relative-entropy-weak-triangle,eq:pairing-path-kl} and the entropy projection estimate at the start of the proof gives
        \begin{align*}
            D(\downup_{2}^{r}(\Phi(X),\cdot)\,\Vert\,\pi_{m}) &\le2r\frac{24\Delta^{2}}{m}+1.
        \end{align*}
    The preceding upper bound increases with $r$, so we first bound it on a finite interval and then use entropy monotonicity to cover all later times. For every integer $3m\log(2m)\le r\le4m\log(2m)$, we have
        \begin{align*}
            2r\frac{24\Delta^{2}}{m}+1 &\le192\Delta^{2}\log(2m)+1\le193\Delta^{2}\log(2m).
        \end{align*}
    This interval contains $r_{0}=\lceil3m\log(2m)\rceil$, since $m\ge2$. Finally, stationarity of $\pi_{m}$ and~\cref{lem:entropy-chain-rule}, applied with both kernels equal to $\downup_{2}$, give
        \begin{align*}
            D(\downup_{2}^{s+1}(\tau,\cdot)\,\Vert\,\pi_{m}) &\le D(\downup_{2}^{s}(\tau,\cdot)\,\Vert\,\pi_{m})
        \end{align*}
    for every realization $\tau$ and integer $s\ge0$. Thus the bound at $r_{0}$ holds at every later integer time. Since $X\in\mathcal{Q}$ was arbitrary and every realization has a preimage under $\Phi$, this proves~\cref{prop:two-edge-entropy-bound}.
\end{proof}

We may now prove the mixing time bound stated at the beginning of the section.

\begin{proof}[Proof of~\cref{thm:two-edge-mixing}]
    The walk $\downup_{2}$ is reversible with respect to $\pi_{m}$ and has nonnegative eigenvalues by~\cref{prop:up-down-adjoint}. For sufficiently large $C$, our hypothesis and~\cref{thm:two-edge-eigenvalue} give a positive spectral gap, so the walk is irreducible.

    Choose an integer $r$ satisfying $3m\log(2m)\le r\le4m\log(2m)$. Applying~\cref{lem:salez-entropy-gap} with the entropy bound from~\cref{prop:two-edge-entropy-bound}, we obtain
        \begin{align*}
            t_{\mathrm{mix}}(\downup_{2},1/4) &\le r+\frac{4(193\Delta^{2}\log(2m)+1)}{1-\lambda_{2}(\downup_{2})}\\
            &\le\Bigl(4m+\frac{780\Delta^{2}}{1-\lambda_{2}(\downup_{2})}\Bigr)\log(2m),
        \end{align*}
    where we use $r\le4m\log(2m)$, $\Delta\ge1$ and $\log(2m)\ge\log2$.

    If $m\ge C\Delta^{8}$ for a sufficiently large absolute constant $C$, then~\cref{thm:two-edge-eigenvalue} gives $1-\lambda_{2}(\downup_{2})\ge1/(C'm)$ for an absolute constant $C'>0$. Substituting into the preceding bound yields
        \begin{align*}
            t_{\mathrm{mix}}(\downup_{2},1/4) &\le(4+780C'\Delta^{2})m\log(2m)\le C\Delta^{2}m\log(2m),
        \end{align*}
    after increasing $C$ if necessary. The theorem is proved.
\end{proof}

\Cref{cor:intro-salez-sharp-switch-mixing} now follows from~\cref{thm:two-edge-mixing}.

\section{Comparing simplicial and classical switch chains}\label{sec:classical-switch-chain}

The purpose of this section is to transfer the spectral gap and mixing time bounds for $\downup_{2}$ to the classical switch chain of Cooper, Dyer, and Greenhill~\cite[Fig.~1]{cooper2007sampling}. This process is defined on the same state space $\mathcal{R}_{\mathbf{d}}(m)$ as $\downup_{2}$, but exhibits a different transition rule. Let $\mathbf{S}$ denote the Markov kernel associated to the switch chain introduced in~\cref{sec:introduction}.

Write $b=\binom{m}{2}$ and $a=\binom{m}{2}-\sum_{v\in V}\binom{d_{v}}{2}$. For every realization, $a$ is the number of unordered pairs of vertex-disjoint edges. For distinct realizations $\tau,\tau'$ related by a single switch, it follows from the definition of the switch chain that
    \begin{align*}
        \mathbf{S}(\tau,\tau') &= \frac{1}{6a}.
    \end{align*}
The transition probabilities are symmetric, so $\mathbf{S}$ is reversible with respect to $\pi_{m}$.

\begin{proposition}\label{prop:classical-simplicial-comparison}
    Let $f\in\ell^{2}(\mathcal{R}_{\mathbf{d}}(m),\pi_{m})$ be arbitrary. Then
        \begin{align}\label{eq:classical-simplicial-energy}
            \frac{b}{3a}\mathcal{E}_{\downup_{2}}(f,f) &\le \mathcal{E}_{\mathbf{S}}(f,f) \le \frac{b}{2a}\mathcal{E}_{\downup_{2}}(f,f).
        \end{align}
    Consequently,
        \begin{align}\label{eq:classical-simplicial-eigenvalue}
            \frac{b}{3a}(1-\lambda_{2}(\downup_{2})) &\le 1-\lambda_{2}(\mathbf{S}) \le \frac{b}{2a}(1-\lambda_{2}(\downup_{2})).
        \end{align}
\end{proposition}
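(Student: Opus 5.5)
The plan is a termwise comparison of off-diagonal transition probabilities. Both $\downup_{2}$ and $\mathbf{S}$ are reversible with respect to the same uniform measure $\pi_{m}$. The Dirichlet energy $\mathcal{E}_{\mathbf{Q}}(f,f)=\frac12\sum_{\tau,\tau'}\pi_{m}(\tau)\mathbf{Q}(\tau,\tau')(f(\tau)-f(\tau'))^{2}$ only involves pairs $\tau\ne\tau'$. So \cref{eq:classical-simplicial-energy} will follow once I show that, for every pair of distinct realizations $\tau,\tau'$,
\begin{align*}
    \frac{b}{3a}\downup_{2}(\tau,\tau') &\le \mathbf{S}(\tau,\tau') \le \frac{b}{2a}\downup_{2}(\tau,\tau').
\end{align*}
The eigenvalue bounds \cref{eq:classical-simplicial-eigenvalue} then follow immediately from the variational characterization of the spectral gap, since the variance term is the same for both kernels.

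\textbf{Step 1: the non-holding moves of $\downup_{2}$.} I first identify which pairs have $\downup_{2}(\tau,\tau')>0$. A step of $\downup_{2}$ deletes two edges $e_{1},e_{2}$ of $\tau$ (one of $b$ equally likely pairs) and resamples a uniform completion of the retained $(m-2)$-face.
\begin{itemize}
    \item If $e_{1}=\{u,x\}$ and $e_{2}=\{u,y\}$ share a vertex, the residual degrees are $2$ at $u$ and $1$ at $x,y$. Since loops are forbidden, the only completion is $\{\{u,x\},\{u,y\}\}$ itself, so the step holds.
    \item If $e_{1},e_{2}$ are vertex-disjoint on vertices $w_{1},\dots,w_{4}$, each has residual degree $1$. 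The completions are therefore exactly the perfect matchings of $\{w_{1},\dots,w_{4}\}$ that avoid the retained edges. There are $g\in\{1,2,3\}$ of them, and the original matching is always one of them.
\end{itemize}
Hence $\downup_{2}(\tau,\tau')>0$ for $\tau\ne\tau'$ exactly when $\tau'$ arises from $\tau$ by a single switch. This is the same adjacency relation as for $\mathbf{S}$, since $\mathbf{S}$'s proposals are likewise perfect matchings of the four endpoints of two vertex-disjoint edges, rejected precisely when they collide with a retained edge.

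\textbf{Step 2: comparing the probabilities.} Fix such a pair $\tau\ne\tau'$. The deleted pair must be exactly $\tau\setminus\tau'$, so the move is generated by a single choice of deleted pair. Moreover $g\ge 2$, because both the original matching and $\tau'\setminus\tau$ are valid completions. Therefore
\begin{align*}
    \downup_{2}(\tau,\tau') &= \frac{1}{b\,g}\in\Bigl[\frac{1}{3b},\frac{1}{2b}\Bigr].
\end{align*}
With $\mathbf{S}(\tau,\tau')=1/(6a)$ as recorded above, the ratio $\mathbf{S}(\tau,\tau')/\downup_{2}(\tau,\tau')=bg/(6a)$ lies in $[b/(3a),\,b/(2a)]$. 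Summing this termwise against $\pi_{m}(\tau)(f(\tau)-f(\tau'))^{2}$ gives \cref{eq:classical-simplicial-energy}.

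The only step needing real care is Step 1: checking that adjacent-edge deletions in $\downup_{2}$ never move, and that the completion sets coincide with the matchings accepted by $\mathbf{S}$. The rest is bookkeeping.
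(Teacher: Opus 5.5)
Your proposal is correct and follows essentially the same route as the paper's proof. Both use the same three facts: the deleted pair is forced to be $\tau\setminus\tau'$, the retained face has $g\in\{2,3\}$ completions so that $\downup_{2}(\tau,\tau')=1/(bg)$ against $\mathbf{S}(\tau,\tau')=1/(6a)$, and adjacent-edge deletions are holding steps; these give a termwise comparison in the Dirichlet forms, and the variational characterization then gives the spectral gap bounds.
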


\begin{proof}
    Fix distinct realizations $\tau,\tau'$ related by a switch, and let $\sigma=\tau\cap \tau'$. The two deleted edges are determined by $\tau,\tau'$, and $\sigma$ has $g(\sigma)\in\{2,3\}$ valid completions. Thus
        \begin{align*}
            \downup_{2}(\tau,\tau')&=\frac{1}{b\,g(\sigma)}\;, 
        \end{align*}
    and similarly,
        \begin{align*}
            \frac{\mathbf{S}(\tau,\tau')}{\downup_{2}(\tau,\tau')}&=\frac{b\,g(\sigma)}{6a}.
        \end{align*}
    If the two edges selected by $\downup_{2}$ share a vertex, their residual degrees determine their completion uniquely, resulting in a holding step. Using the preceding two identities to bound the transition probabilities in the Dirichlet forms of $\downup_{2}$ and $\mathbf{S}$ proves~\cref{eq:classical-simplicial-energy}. Taking the infimum over functions of positive variance proves~\cref{eq:classical-simplicial-eigenvalue}.
\end{proof}

For sufficiently large $C$, the hypothesis $m\ge C\Delta^{8}$ ensures that $b\ge a\ge b-m(\Delta-1)\ge b/2$. Combining this with~\cref{thm:two-edge-eigenvalue,prop:classical-simplicial-comparison} gives the following corollary.

\begin{corollary}\label{cor:classical-switch-eigenvalue}
    There exists an absolute constant $C>0$ with the following property. If $m\ge C\Delta^{8}$, then
        \begin{align*}
            1-\frac{2}{m} &\le\lambda_{2}(\mathbf{S})\le1-\frac{1}{Cm}.
        \end{align*}
\end{corollary}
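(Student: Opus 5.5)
The plan is to read both bounds off \cref{thm:two-edge-eigenvalue} by way of the spectral-gap comparison \cref{eq:classical-simplicial-eigenvalue} in \cref{prop:classical-simplicial-comparison}. The only extra work is to control the prefactors $b/(3a)$ and $b/(2a)$ using the inequalities $b/2\le a\le b$, which are available once $m\ge C\Delta^{8}$.

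First I would justify $b\ge a\ge b/2$. The upper bound $a\le b$ holds by definition. For the lower bound, $\sum_{v}\binom{d_{v}}{2}\le\frac{\Delta-1}{2}\sum_{v}d_{v}=m(\Delta-1)$, so $a\ge b-m(\Delta-1)$. Since $b=m(m-1)/2$, we get $m(\Delta-1)\le b/2$ as soon as $m-1\ge 4(\Delta-1)$. This is implied by $m\ge C\Delta^{8}$ for any $C\ge4$.

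For the upper bound on $\lambda_{2}(\mathbf{S})$, I would use the left inequality in \cref{eq:classical-simplicial-eigenvalue} together with \cref{thm:two-edge-eigenvalue}. These give
\begin{align*}
1-\lambda_{2}(\mathbf{S})\ge\frac{b}{3a}\bigl(1-\lambda_{2}(\downup_{2})\bigr)\ge\frac{1}{3}\cdot\frac{1}{C_{0}m},
\end{align*}
where $C_{0}$ is the constant of \cref{thm:two-edge-eigenvalue} and we used $b/a\ge1$. Taking $C\ge 3C_{0}$ (and $C\ge C_{0}$, so that the hypothesis of that theorem holds) then yields $\lambda_{2}(\mathbf{S})\le1-1/(Cm)$.

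For the lower bound, the right inequality in \cref{eq:classical-simplicial-eigenvalue} together with $b/a\le2$ gives $1-\lambda_{2}(\mathbf{S})\le 1-\lambda_{2}(\downup_{2})$. The lower bound $\lambda_{2}(\downup_{2})\ge1-2/m$ from \cref{thm:two-edge-eigenvalue} then gives $\lambda_{2}(\mathbf{S})\ge1-2/m$.

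None of these steps is a real obstacle. The only thing to watch is that the constant factor $b/(2a)\le1$ is exactly what is needed for the lower bound to keep the clean form $1-2/m$. If one preferred not to rely on this, the same conclusion would follow from the edge-indicator test function argument in the proof of \cref{thm:two-edge-eigenvalue}, applied directly to $\mathbf{S}$: there the loss probability is at most $\frac{2}{m}\cdot\frac{2}{3}\cdot\frac{b}{a}\cdot\frac12$, which again gives at most $2/m$ after dividing by the variance $p(1-p)$.
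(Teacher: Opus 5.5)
Your proposal is correct and is the paper's own argument. The paper also derives $b\ge a\ge b-m(\Delta-1)\ge b/2$ from $m\ge C\Delta^{8}$, then combines \cref{thm:two-edge-eigenvalue} with \cref{eq:classical-simplicial-eigenvalue}, using $b/(3a)\ge 1/3$ for the upper bound on $\lambda_{2}(\mathbf{S})$ and $b/(2a)\le 1$ for the lower bound; your bound $\sum_{v}\binom{d_{v}}{2}\le m(\Delta-1)$ and your alternative edge-indicator check are both sound.
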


The proofs of~\cref{thm:two-edge-mixing} and~\cref{prop:two-edge-entropy-bound}, together with~\cref{cor:classical-switch-eigenvalue}, yield the following bound on the mixing time of the switch chain.

\begin{corollary}\label{cor:classical-switch-mixing}
    There exists an absolute constant $C>0$ with the following property. If $m\ge C\Delta^{8}$, then
        \begin{align}\label{eq:classical-switch-mixing-quantitative}
            t_{\mathrm{mix}}(\mathbf{S},1/4) &\le C\Delta^{2}m\log(m).
        \end{align}
\end{corollary}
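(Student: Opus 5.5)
We apply \cref{lem:salez-entropy-gap} directly to $\mathbf{S}$. This requires three inputs: positive semidefiniteness and irreducibility of $\mathbf{S}$, a spectral gap of order $1/m$, and an entropy bound of order $\Delta^{2}\log m$ after order $m\log m$ steps.

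\textbf{Positive semidefiniteness, irreducibility, and the gap.} Since $\mathbf{S}$ holds with probability $1/2$, we can write $\mathbf{S}=\frac12(\mathbf{I}+\mathbf{S}')$ for a reversible kernel $\mathbf{S}'$. Its eigenvalues therefore lie in $[0,1]$, so $\mathbf{S}$ is reversible and positive semidefinite. Irreducibility holds by the Hakimi--Havel argument cited in \cref{sec:introduction}; it also follows from the positive spectral gap. \Cref{cor:classical-switch-eigenvalue} supplies $1-\lambda_{2}(\mathbf{S})\ge 1/(Cm)$ when $m\ge C\Delta^{8}$.

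\textbf{Entropy bound.} The task is the analogue of \cref{prop:two-edge-entropy-bound} for $\mathbf{S}$. We lift $\mathbf{S}$ to valid pairings exactly as in that proof, taking $\sigma=\varnothing$ and $k=m$. From $X\in\mathcal{Q}$:
\begin{itemize}
\item with probability $1/2$, hold;
\item otherwise, pick a uniform pair $W\in\binom{X}{2}$ whose projected edges are vertex-disjoint;
\item pick a uniform one of the three stub pairings of $W$, and reject it if the result is invalid.
\end{itemize}
Because the chosen edges are vertex-disjoint, stub matchings correspond bijectively to graph matchings. The fiber-summation identity \cref{eq:pairing-projection-recursion} therefore holds for this lift, and the log-sum step again bounds the graph-level entropy by $D(\widehat{\mathbf{S}}^{r}(X,\cdot)\,\Vert\,\pi_{\mathcal{P}})$.

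As the comparison kernel we take $\mathbf{K}''=\frac12(\mathbf{I}+\mathbf{K})$, the lazy version of $\mathbf{K}$. The $\ell^2$ mixing of $\mathbf{K}''$ after order $m\log m$ steps follows from the same random-transposition argument with time doubled, using \cref{thm:random-transposition-l2} and the eigenvalue bound in $[0,1]$. The per-step divergence $D(\widehat{\mathbf{S}}(X,\cdot)\,\Vert\,\mathbf{K}''(X,\cdot))$ has two sources.
\begin{itemize}
\item Rejected proposals put their mass on holding. Invalid moves have $\mathbf{K}$-probability $O(\Delta^{2}/m)$ by \cref{eq:invalid-transitions-bound}. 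The holding mass of $\mathbf{K}''$ is at least $1/2$, so the log ratio at $X$ is $O(\Delta^{2}/m)$.
\item $\widehat{\mathbf{S}}$ chooses $W$ uniformly among the $a$ vertex-disjoint pairs rather than among all $b$ pairs. This rescales every non-holding transition by $b/a=1+O(\Delta^{2}/m)$, which uses $b-a\le\sum_v\binom{d_v}{2}\le m\Delta$. It also sends the pairs sharing a vertex, which have mass $O(\Delta/m)$, to holding. This costs $O(\Delta^{2}/m)$ in divergence, by the log-sum inequality and $\log(1+x)\le x$.
\end{itemize}
Hence the per-step divergence is $O(\Delta^{2}/m)$. \Cref{lem:entropy-chain-rule} then gives $O(r\Delta^{2}/m)$ after $r$ steps, and \cref{lem:weak-triangle-inequality} yields $D(\mathbf{S}^{r}(\tau,\cdot)\,\Vert\,\pi_{m})=O(\Delta^{2}\log m)$ for $r\asymp m\log m$. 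By the monotonicity argument at the end of that proof, the bound persists for all later $r$.

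\textbf{Conclusion.} \Cref{lem:salez-entropy-gap} gives
\begin{align*}
t_{\mathrm{mix}}(\mathbf{S},1/4)\le O(m\log m)+O(\Delta^{2}\log m)\cdot Cm=O(\Delta^{2}m\log m).
\end{align*}

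The main obstacle is the divergence bookkeeping in the second step. The difficulty is to verify that rejected proposals and the shared-vertex pairs only reshuffle mass into holding, with ratios bounded by $1+O(\Delta^{2}/m)$, and never create transitions that $\mathbf{K}''$ assigns zero mass. Laziness of both kernels makes the holding ratio harmless, and this is what we will check first.
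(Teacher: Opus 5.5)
Your proposal is correct and follows essentially the paper's route: lift $\mathbf{S}$ to valid pairings, compare it with a lazy version of $\mathbf{K}$ via \cref{lem:entropy-chain-rule} and \cref{lem:weak-triangle-inequality}, bound the $\ell^{2}$ term using \cref{thm:random-transposition-l2}, and conclude with \cref{lem:salez-entropy-gap} and \cref{cor:classical-switch-eigenvalue}. The only difference is calibration: the paper proposes among all $b$ pairs with laziness $1-\alpha$, $\alpha=b/(2a)$, so its lift is exactly $\mathbf{K}_{\alpha}$ with rejection and only invalid proposals contribute to the divergence, whereas you propose only among the $a$ vertex-disjoint pairs with laziness $1/2$ and pay an extra, harmless $\tfrac12\log(b/a)=O(\Delta/m)$ per step (in your kernel the vertex-sharing pairs are never proposed rather than ``sent to holding,'' which costs nothing in the divergence).
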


\Cref{cor:classical-switch-mixing} can be shown using an argument very similar to that used in the proof of~\cref{prop:two-edge-entropy-bound}, with some replacements. Borrowing the notation appearing in that proof, the basic idea is to replace the kernel $\mathbf{K}$ with the lazy kernel 
    \begin{align*}
        \mathbf{K}_{\alpha} &= (1-\alpha)\mathbf{I} + \alpha\mathbf{K},
    \end{align*}
where $\alpha=b/(2a)\in[1/2,1]$. Similarly, replace $\widehat{\downup}$ with the kernel which rejects proposals of $\mathbf{K}_{\alpha}$ outside the set of valid pairings $\mathcal{Q}$. This kernel projects to $\mathbf{S}$ under $\Phi$, since each switch between distinct realizations has probability $\alpha/(3b)=1/(6a)$. The remaining entropy comparisons and the application of~\cref{lem:salez-entropy-gap} proceed much as in the proof of~\cref{thm:two-edge-mixing}, with only absolute constants changed;~\cref{cor:classical-switch-eigenvalue} then supplies the stated bound on the mixing time of the switch chain.

\Cref{thm:intro-mixing} now follows from~\cref{cor:classical-switch-mixing}.

\section*{Acknowledgements and AI Methodology}

This work was completed while the author was appointed as a research fellow at the Simons Institute for the Theory of Computing, as part of their program \emph{Spectral Theory Beyond Graphs}. The author wishes to thank Lap Chi Lau for helpful conversations at the Simons Institute.

ChatGPT-6 was used throughout the preparation of this manuscript. The author developed the overall proof strategy independently of AI, including the conjectures that led to the deep link expansion theorem (\cref{thm:link-eigenvalue-switch-complex}) and the energy comparison result for higher order switch chains (\cref{prop:linear-comparison}). ChatGPT-6 was used to assist in the development of the proofs of the main steps and their technical lemmas, and later in refining those proofs, as well as for routine copyediting. The author has deliberately made no effort to query ChatGPT-6 or other AI tools for greatly compacted ``black box'' proofs of the main result(s). While simpler proofs of these facts would presumably be of interest to the community, the author places at least as much importance on preserving and disseminating the human-driven ideas that ultimately led to the resolution of~\cref{conj:cdg}.


\end{document}